\setlist[enumerate]{leftmargin=*}
\newcommand{\defeq}{\vcentcolon=}
\newcommand{\head}{\le}
\newcommand{\tail}{>}
\newcommand{\sequence}[1]{{#1 \in \mathbb{N}}}
\newtheorem{ccounter}{ccounter}[section]
\newtheorem{thm}[ccounter]{Theorem}
\newtheorem{lem}[ccounter]{Lemma}
\newtheorem{cor}[ccounter]{Corollary}
\newtheorem{prop}[ccounter]{Proposition}
\newtheorem{ex}[ccounter]{Example}
\theoremstyle{definition}
\newtheorem{defn}[ccounter]{Definition}
\newtheorem{rmk}[ccounter]{Remark}
\theoremstyle{plain}
\newtheorem{assume}{Assumption}
\def\be{\begin{equation}}
\def\ee{\end{equation}}
\def\leq{\leqslant}
\def\mathbf{\bm}
\def\={\coloneqq}
\def\hatu{\hat{\bm u}}
\def\hatw{\hat{\bm w}}
\def\N{\mathbb{N}}
\def\C{\mathcal{C}}
\def\mun{\mu^{(n)}}\def\tmun{\tilde\mu^{(n)}}
\newcommand\X{\mathcal{X}}
\def\bn{\bm b^{(n)}}
\def\P{\mathbb{P}}\def\Pn{\mathbb{P}^{(n)}}\def\tPn{\tilde{\mathbb P}^{(n)}}
\def\E{\mathbb{E}}\def\En{\mathbb{E}^{(n)}}\def\tEn{\tilde{\mathbb E}^{(n)}}
\def\S{\mathbb{S}}
\def\Wn{W^{(n)}}
\def\Ln{L^{(n)}}
\def\tr{\operatorname{Tr}}
\def\one{{\mathbbm 1}}
\def\bar{\overline}
\def\tilde{\widetilde}
\def\hat{\widehat}
\def\nun{\nu^{(n)}}
\def\R{{\mathbb {R}}}
\def\u{{\mathbf {u}}}\def\un{\bm u^{(n)}}
\def\eps{\varepsilon}
\def\epsilon{\varepsilon}
\newcommand{\V}{\mathbb{V}}
\renewcommand\a{{\bm{a}}}\def\an{\bm{a}^{(n)}}
\newcommand\w{{\mathbf {w}}}
\newcommand\x{{\mathbf {x}}}
\newcommand\y{{\mathbf {y}}}
\newcommand\g{{\mathbf {g}}}
\newcommand\trans{\mathsf{T}}
\renewcommand{\phi}{\varphi}
\newcommand{\wg}{\operatorname{Wg}}
\renewcommand{\phi}{\varphi}
\newcommand{\n}{\mathfrak{n}}
\newcommand{\m}{\mathfrak{m}}
\renewcommand\v{{\mathbf {v}}}
\newcommand\balpha{{\bm{u}}}
\newcommand\distequal{\,{\buildrel d \over =}\,}
\def\<#1{\left\langle{#1}\right\rangle}
\begin{document}
\title{Quenched Large Deviation Principles for Random Projections of $\ell_p^n$ balls}
\author{Patrick Lopatto}
\author{Kavita Ramanan}
\author{Xiaoyu Xie}

\begin{abstract}
Let  $(k_n)_{n \in \N}$ be a sequence of positive integers growing to infinity
    at a sublinear rate, $k_n \rightarrow \infty$ and $k_n/n \rightarrow 0$ as
    $n \rightarrow \infty$. 
    Given a  sequence of $n$-dimensional random vectors   $\{Y^{(n)}\}_{n \in \N}$  
    belonging to a certain class,  which includes 
    uniform distributions on suitably scaled  $\ell_p^n$-balls or $\ell_p^n$-spheres, $p \geq 2$, 
    and product distributions with sub-Gaussian
    marginals,  we study the large deviations behavior of the 
     corresponding  sequence of $k_n$-dimensional orthogonal 
    projections  $n^{-1/2} \a_{n,k_n} Y^{(n)}$, where $\a_{n,k_n}$ is an $(n \times k_n)$-dimensional
    projection matrix lying in the Stiefel manifold 
    of orthonormal
    $k_n$-frames in $\R^n$.
       For almost every sequence of projection matrices, we 
       establish a large deviation principle (LDP) for the corresponding sequence of projections,  
    with a fairly explicit rate function that does not depend on
    the sequence of projection matrices. As corollaries, we also obtain quenched LDPs for  sequences of $\ell_2$-norms and $\ell_\infty$-norms 
    of the coordinates of the projections.  
    Past work on LDPs for projections with growing dimension has mainly
    focused on the annealed setting, where one also averages over 
    the random projection matrix, chosen from 
    the Haar measure, 
    in which case the coordinates of the projection are exchangeable.  The quenched setting lacks such symmetry properties,
    and gives rise to  significant new challenges in the setting of growing projection dimension. 
       Along the way, we  establish new  Gaussian approximation results on
    the Stiefel manifold that may be of independent interest. 
    Such LDPs are of relevance in asymptotic convex geometry, statistical physics and high-dimensional statistics.
    \end{abstract}

\maketitle
{\setcounter{tocdepth}{1}
    \tableofcontents
}

\section{Introduction}

\subsection{Background and Motivation}

High-dimensional measures are ubiquitous in mathematics, and are often profitably studied through their lower-dimensional projections.
This approach has been successfully applied to problems in numerous fields, including statistics \cite{DiaFre84,FriTuk74,MaiMun09}, asymptotic functional analysis \cite{Johnson84} and convex geometry \cite{AntBalPer03,Klar07}.
In the case of the uniform measure on a high-dimensional convex body (a compact convex set with non-empty interior), low-dimensional projections are known to satisfy a central limit theorem.
This theorem states that most $k$-dimensional projections of an $n$-dimensional isotropic convex body are approximately Gaussian in total variation norm, if $n$ is sufficiently large and $k$ is sufficiently small relative to $n$ \cite{AntBalPer03,Klar07power}. The typical behavior of a low-dimensional projection is therefore  uninformative about the high-dimensional convex body from which it originated.  However, it was recently discovered that the tail behavior of random
orthogonal projections retains interesting information about the original measure \cite{GKR17,GKR16}. This tail behavior is quantified through large deviation principles (LDPs), of which there are two main types: \emph{quenched} LDPs, which 
provide almost-sure statements with respect to the projection or sequence of projections 
(which are independent of the high-dimensional measure), and \emph{annealed} LDPs, which consider an average over the measure from which the projection directions or projection matrices are sampled. In this article, we focus on quenched LDPs for random projections of random vectors  from a class of distributions that
includes the uniform distribution on  the unit ball in $\ell_p^n$, one of the most fundamental examples of a convex body. Denoting this ball by $\mathbb{B}_p^n$, we have
\begin{align*}
\mathbb{B}_p^n\coloneqq \left\{\mathbf{x} \in \mathbb{R}^n: \sum_{i=1}^n\left|x_i\right|^p \leq n\right\}.
\end{align*}
We consider projection directions  chosen uniformly from $\mathbb{S}^{n-1}$, the unit sphere in $\R^n$ (in the one-dimensional case), or projection matrices chosen
from the Haar measure on the Stiefel manifold $\V_{n,k}$ of orthonormal
$k$-frames in $\R^n$ (in the multi-dimensional case). 

We begin by reviewing previous work on LDPs for projections of $\mathbb{B}_p^n$, starting with the annealed case. Annealed LDPs are typically easier to analyze than quenched LDPs, since averaging over the randomness of the projection renders the entries of the projected vector exchangeable. For $\mathbb{B}_p^n$, annealed LDPs for one-dimensional projections were first established in \cite{GKR16, GKR17} for all $p\in[1,\infty]$.
Later, annealed LDPs for the $\ell_2$ norm of $k_n$-dimensional projections  were established in \cite{alonso2018large} for all $p \in [1,\infty]$ when $\lim_{n \rightarrow \infty} k_n /n = \lambda \in [0,1]$, with the additional requirement that $\lambda > 0$ when $p \le 2$. The restriction that $\lambda > 0$ was removed in \cite{KimLiaRam22}, which also proved a phase transition in the speed of the LDP with respect to the growth rate of $k_n$ (see \cite[Remark 3.6]{KimLiaRam22}).
Further, the article \cite{KimLiaRam22} established LDPs for the empirical measures of the coordinates
of the projections, in addition to the $\ell_2$ norm. Additionally, \cite{KimLiaRam22} went beyond the balls $\mathbb{B}_p^n$ and established results for general high-dimensional measures satisfying an asymptotic thin shell condition, including uniform measures on Orlicz balls and certain Gibbs measures.  A more refined version of this condition, and corresponding annealed sharp large deviation estimates for projection  were also subsequently obtained in \cite{LiaoThesis22}. 

Compared to the annealed case, much less is known about quenched LDPs. Previous works have focused exclusively on $k$-dimensional projections for $k$ independent of $n$.
For one-dimensional projections of the unit cube $\mathbb{B}_\infty^n$ and more general product measures, quenched LDPs were established in \cite{GKR16}.
For one-dimensional projections of $\mathbb{B}_p^n$,  quenched LDPs were established for all $p\in[1,\infty)$ in \cite{GKR17}.
In the case $p\in(1,\infty]$, the speed and rate function of the LDP are insensitive to  the choice of projection matrix, except for a measure zero set of so-called \emph{atypical} sequences of projection matrices. 
(In the case $p=1$, the LDP is more subtle and depends on the sequence; see \cite[Theorem 2.6]{GKR17}). 
Later, \emph{sharp} LDPs that identify the precise prefactor were obtained for one-dimensional projections of $\mathbb{B}_p^n$ in \cite{liao2020geometric}. 
Quenched LDPs for $k$-dimensional projections were established for $p\ge 2$ in \cite{kim2021large}, for any fixed positive integer $k$. Similar to the one-dimensional case, the speed and rate function are almost surely independent of the choice of sequence of projection matrices.
Quenched LDPs for projections of various radially symmetric measures on $\mathbb{B}_p^n$ were also recently studied in \cite{kaufmann2022large}. We also remark that LDPs for the $\ell_q$ norm of a random element of $\mathbb{B}_p^n$ (with $q\neq p$) were established in \cite{kabluchko2021high}.

In this work, we prove a quenched LDP for sequences of $k_n$-dimensional projections of
$\ell_p^n$ balls when $p\in [2, \infty)$ and $k_n$ grows sublinearly in $n$,
  that is, $\lim_{n\rightarrow \infty} k_n = \infty$ and $\lim_{n\rightarrow \infty} k_n/n = 0$ (see \Cref{t:dnp}).   
As in previous work, the speed and rate function are almost surely insensitive to the sequence of projection matrices.
As corollaries, we also obtain LDPs for the sequences of $\ell_2$ norms and $\ell_\infty$ norms of the projections
(see Corollaries \ref{c:convex} and \ref{c:largest}). 
The extension to LDPs for the $\ell_q$ norms of the projections for all $q\in [1,\infty)$ is also possible using our methods; see \Cref{r:lqnorms} below. Our results also generalize to projections of a larger class of random vectors, including those uniformly distributed on the $\ell_p^n$ sphere and a broad class of product measures with sub-Gaussian marginals   (see Theorem \ref{t:main}).
In addition to asymptotic convex geometry, LDP results of this
  kind are also relevant to problems arising in statistical mechanics. 
  Indeed, establishing the  LDP for $\ell_2$-norms of projections is  equivalent to identifying the scaled 
  logarithmic asymptotics for  expectations of exponential functionals of the sequence of
  $\ell_2$ norms of the projections.  The latter 
  has close parallels with the study of  the quenched log-partition function for statistical
  mechanical models with random disorder, such as the   Hopfield model of neural networks \cite{CD01}, 
   in which the projection
   matrix is replaced by an  $(n \times k_n)$-matrix of i.i.d.\   entries and $Y^{(n)}$ is sampled from a product distribution
   with bounded support. Similar 
 logarithmic asymptotics also appear in the study of the normalizing 
  constant of the posterior distribution in high-dimensional linear regression  \cite{MukSen22,JiaSen23},
  where the distribution of $Y^{(n)}$ corresponds to the prior, which is taken to be
  a product distribution in \cite{MukSen22} and uniform on $\mathbb{S}^{n-1}$ in \cite{JiaSen23}.

  \subsection{Proof Techniques. } 
There are two primary technical challenges involved in establishing our results.
First,  any LDP needs to be defined relative to a sequence of measures on a single probability space, but each element of a sequence of $k_n$-dimensional projections has a different codomain.  To remedy this problem, each projection must be embedded into a suitable parent space. In the annealed case,  the empirical measure of the coordinates completely determines the distribution of the projected vector, due to the exchangeability of the coordinates. Previous work on annealed LDPs for projections of growing dimension identified the projections with the empirical measures
of their coordinates, which are probability measures on $\mathbb{R}$, and established LDPs for the latter \cite{KimLiaRam22}.
However, in the quenched case, the lack of exchangeability of the coordinates renders the empirical measure an unsuitable state variable. 
Instead, we  take inspiration from the work of Comets and Dembo on large deviations for mean-field spin glass models \cite{CD01}, and embed our projections into a space consisting of infinite sequences whose coordinates appear in descending order (in absolute value),  paired with their $\ell_2$ norms.
 This approach, while leading to some technicalities, also captures a great deal of information about the sequence of projections, allowing us to also prove the additional LDPs for the $\ell_\infty$ and $\ell_2$ norms mentioned previously. We remark that the previous works \cite{KimLiaRam22,kim2021large} on multidimensional projections in the annealed setting did not address LDPs with the projection dimension $k_n$ growing to infinity for the $\ell_\infty$ norm, since the latter is not a sufficiently nice function of the empirical coordinate measure.

However, the paper \cite{CD01} considers the setting where  $Y^{(n)}$ is a product measure with a bounded distribution and
 the projection matrix is replaced with a matrix with i.i.d.  entries. 
In contrast,  no similar boundedness hypothesis can be made in our work, which creates several additional technical complications. 
Further, the second main technical obstacle in our setting is that, in the course of our proof, we must show that the entries of the 
projection matrix sampled uniformly from the Haar measure on $\mathbb{V}_{n,k_n}$ can be well approximated by independent Gaussians for averages of a function of the rows. 
This claim is made precise in \Cref{l:lln} below, and may be of independent interest, 
as it  generalizes several existing approximation results \cite{kim2021large,Jia05}. 
The result  \cite[Corollary 2.11]{kim2021large}  establishes 
convergence  of the empirical measure of rows of the projection matrix to a Gaussian 
in the space of Borel probability measures on $\mathbb R^k$ (equipped with  suitable Wasserstein topologies), 
only for fixed $k$.  However, such a statement does not address the case when  $k_n$ grows in $n$, in which case 
such empirical measures would live on different spaces. 
Instead, in \Cref{l:lln}  we establish  convergence of the empirical measures of scalar products of rows of the projection matrix with a
$k_n$-dimensional deterministic vector towards a $1$-dimensional Gaussian distribution in a suitable Wasserstein topology 
(see \Cref{r:explanatoryremark}) on the space of probability measures on $\R$, 
uniformly over any collection of up to  $e^{k_n}$ deterministic vectors.
Prior results on Gaussian approximations for entries of Haar-distributed random matrices from the Stiefel manifold, such
as those obtained in \cite{Jia03,Jia05}, appear inadequate for our purposes,  thus necessitating our alternative approach to such
Gaussian approximations using Weingarten calculus; we expand on this point in \Cref{r:Jiang}.

\subsection{Definitions}

For $k\in \N$, let $I_k$ be the $k \times k$ identity matrix, and for $n \ge k$, set
\begin{equation}\label{stiefelManifold}
\V_{n,k} \coloneqq \left\{ A \in \R^{n \times k} : A^{\trans} A= I_k \right\}.
\end{equation}
 The set $\V_{n,k}$ is called the Stiefel manifold of $k$-frames in $\R^n$ and consists of $k$-dimensional orthonormal bases in $\R^n$. We also use $O_n\defeq \V_{n,n}$ to denote the set of $n\times n$ orthogonal  matrices.

Let $\ell^2$ denote the set of infinite sequences $\x = (x_1, x_2, \dots)\in \R^\infty$ of real numbers such that the norm
\begin{equation}\label{l2}
\| \x \|_2 = \left( \sum_{i=1}^\infty x_i^2 \right)^{1/2}
\end{equation}
is finite. Given $\x, \y \in \ell^2$, we define the inner product
$\langle \x, \y \rangle \coloneqq \sum_{i=1}^\infty x_i y_i$.
We say that $\x \in \ell^2$ is ordered if $|x_k| \ge |x_{k+1}|$, and $x_k \ge x_{k+1}$ if $|x_k | = |x_{k+1}|$, for all $k\in \N$. Given a sequence $\x \in \ell^2$ with a finite number of nonzero entries, let
\[{\mathfrak n}(\x) \coloneqq \big|
\{ i\in \N : x_i \neq 0 \}\big|\]
denote the number of such entries. Let $[\x] \in \ell^2$ denote the sequence whose first ${\mathfrak n}(\x)$ coordinates $[x]_1, \dots, [x]_{{\mathfrak n}(\x)}$ are equal to the nonzero entries of $\x$ arranged so that $[\x]$ is ordered; this implies that $[x]_i = 0$ for $i\ge {\mathfrak n}(\x)+ 1$. Given $k \in \N$ and $\x \in \R^k$,
we define $[\x]$ by first defining  $\tilde \x \in \ell^2$ as follows: $\tilde \x_i = \x_i$ for $i \le k$ and $\x_i = 0$ for $i > k$, then setting $[\x] \coloneqq [\tilde \x]$. 
We define $\X$ as the set of pairs
\begin{equation}\label{Xdefinition}
\X \coloneqq \{ (\x, r) : \x\in \ell^2,\, r\in [0,\infty) ,\, \text{$\x$ is ordered},\, \| \x \|_2 \le r  \}.
\end{equation}
We equip $\X$ with the distance
\begin{equation}\label{linfinitydistance}
d\big((\x,r),(\mathbf y,s)\big) \coloneqq \| \x - \mathbf y \|_{\infty} + |r -s|.
\end{equation}
For  $\x \in \R^k$, let
\begin{equation}\label{def-Pi}
\Pi(\x) \coloneqq  \left( \left[  \x \right], \left\| \x \right\|_2 \right) \in \X
\end{equation}
denote the corresponding representative in $\X$.

The function $\log(x)$ denotes the natural logarithm. We let $\R_+$ denote the non-negative reals. We define the unit $\ell_n^p$ ball as the set
\begin{equation}
\mathbb B_p^n \coloneqq  \left\{ \x \in \R^n : \sum_{i=1}^n |x_i|^p \le n \right\}
\end{equation}
and the unit $\ell_n^p$ sphere by
\begin{equation}
\mathbb S_p^n \coloneqq  \left\{ \x \in \R^n : \sum_{i=1}^n |x_i|^p = n \right\}.
\end{equation}

Given a sequence  $\{k_n\}_\sequence{n}$  of positive integers, let $\sigma_{n}$ be the Haar measure on $\V_{n,k_n}$, and let $\sigma$ be a probability measure on $\V = \otimes_n \V_{n, k_n}$ whose $n$-th marginal is $\sigma_n$ for all $n\in \N$. All statements about probabilities on $\V_{n,k_n}$ and $\V$ in this work are made with respect to the measures $\sigma_n$ and $\sigma$.
For brevity, we do not explicitly denote  the dependence of $\V$, $\sigma_n$, and $\sigma$ on the sequence $\{k_n\}$.
We say that the sequence $\{k_n\}_\sequence{n}$ is increasing if $k_{n+1} \ge k_n$ for all $n \in \N$.

Finally, we recall the definition of an LDP.
\begin{defn}[{\cite[Section 1.2]{DZ93}}]\label{d:ratefunction}
Let $\mathcal T$ be a topological space with Borel $\sigma$-algebra $\mathcal B$.
A sequence $\{\P_n\}_\sequence{n}$ of probability measures on $\mathcal T$ satisfies a large deviation principle (LDP)  with speed $s_n: \N \rightarrow \R_+$ and rate function $I \colon \mathcal T \rightarrow [0, \infty]$ if for all $B \in \mathcal B$,
\be
- \inf_{x \in B^\circ} I(x)
\le \liminf_{n\rightarrow\infty} \frac{1}{s_n} \log \P_n(B)
\le \limsup_{n\rightarrow \infty} \frac{1}{s_n} \log \P_n(B)
\le - \inf_{x\in \overline{B}} I(x),
\ee
where $B^{\circ}$ and $\overline{B}$ denote the interior and closure of $B$, respectively. We say that $\{\P_n\}_\sequence{n}$ satisfies a weak LDP when these inequalities hold for all $B \in \mathcal B$ such that $\overline{B}$ is compact.

A sequence of $\mathcal T$-valued random variables $\{Z^{(n)}\}_\sequence{n}$ is said to satisfy an LDP with speed $s_n$ and rate function $I$ if and only if the corresponding sequence of image measures $\{ \P( Z^{(n)} \in \cdot )\}_\sequence{n}$ satisfies an LDP with that speed and rate function. When the speed is not mentioned explicitly, we use the default $s_n =n$. The function $I$ is said to be a good rate function if it has compact level sets $\Psi_I(\alpha) = \{ x \in \mathcal T : I(x) \le \alpha \}$ for all $\alpha \in [0,\infty)$.
\end{defn}

\subsection{Main Result: A Specific Setting}
We begin by introducing some general notation, which will be used throughout the paper.
Let $Z$ be a mean zero random variable

and let $\eta\colon \R \rightarrow \R_+$ be a continuous function.
We define
$\overline{\Lambda} \colon \R^2 \rightarrow \R \cup \{\infty\}$ to be the log-moment generating function of $(Z, \eta(Z))$: 
\be\label{asprev}
\overline{\Lambda} (t_1, t_2) =
\overline{\Lambda}_{Z,\eta} (t_1, t_2) \coloneqq \log \E\left[\exp\big(t_1 Z  + t_2 \eta( Z) \big)  \right].
\ee

Further, let $\{g_i\}_{i=0}^\infty$ be an infinite sequence of independent Gaussian random variables with mean zero and variance one, denote $\g=(g_1, g_2, \dots)$,
and define $\Lambda : \ell^2 \times \R^2 \rightarrow \R \cup \{\infty\}$
as\footnote{
The function $\Lambda$ represents a suitable average of the log-moment generating function $\overline{\Lambda}$ over the random ``environment'' (i.e. projection matrix), with the Gaussian vector $(g_0,\g)$ arising as an approximation to the typical row of a Haar-distributed element of $\V_{n,k}$ in the sublinear regime. See \Cref{l:lln}.
}
\begin{equation}
\label{d:g0}
\Lambda(\balpha,b,c )
\coloneqq \E \big[ \overline{\Lambda} \big( \langle \balpha, \g \rangle  + b g_0, c \big) \big].
\end{equation}
The Legendre transform $\Lambda^*$ of $\Lambda$ is defined by
\begin{equation}\label{lambdastar}
\Lambda^*(\bm w,r,s ) \coloneqq
\sup_{(\balpha,b,c) \in \ell^2 \times \R \times \R}
\left\{
\langle \balpha,  \bm w \rangle + br + cs  - \Lambda(\balpha,b,c )
\right\}.
\end{equation}
We  also set
\begin{equation}\label{ratefunction}
I (\bm w,r,s) \coloneqq \Lambda^*\left(\bm w, \sqrt{r - \|\bm w\|_2^2}, s\right)
\end{equation}
for $(\bm w,r,s) \in \mathcal X \times \R_+$. Let  $\rho\colon \R_+ \rightarrow \R_+$ be a continuous function. For $( \w , r) \in \mathcal X$, we define
\be\label{calI}
\mathcal I(\mathbf w,r)=
\inf_{s>0}I\left(\frac{\mathbf w}{\rho(s)},\frac{r}{\rho(s)},s\right) 
.
\ee

We now specialize the previous definitions for the purpose of stating our main results, concerning $\mathbb{B}_p^n$.
We fix $p \in [1, \infty)$, and let $f_p : \R \rightarrow \R$ be
the probability density function of the $p$-generalized normal distribution:
\be\label{pgeneral}
f_p(x)  \coloneqq \frac{1}{2 p^{1/p} \Gamma(1 + 1/p) } \exp\left( - \frac{|x|^p}{p}   \right).
\ee
Let $Z_p$ be a random variable with density $f_p$, let $ \eta_2(x) = x^2$, and define
$\overline{\Lambda}_p \coloneqq \overline{\Lambda}_{Z_p, \eta_2}$, as in \eqref{asprev}.
We let $\Lambda_p$ denote the functional in \eqref{d:g0} when $\overline{\Lambda}$ is replaced by  $\overline{\Lambda}_p$.
Also, the let definitions \eqref{lambdastar}, \eqref{ratefunction}, and \eqref{calI} hold when $\Lambda^*$, $I$, and $\mathcal I$ are replaced with  $\Lambda^*_p$, $I_p$, and $\mathcal I_p$, respectively, and $\rho(x) = x^{1/p}$. We also set
\be
\hat{\mathcal I}_p(\mathbf w,r)\coloneqq\inf_{s>0}I_p\left(\mathbf w,r,s\right).
\ee

The following theorem is our primary result. It establishes a quenched LDP for multidimensional projections of $\ell^n_p$ balls when the projection dimension grows sublinearly. It is proved in \Cref{s:mainresult}. We emphasize that in this theorem and its corollaries, $\a\in \V$ always denotes a fixed, deterministic sequence of projection matrices.
\begin{thm}\label{t:dnp}
Fix $p \in [2, \infty)$, and let $\{k_n\}_\sequence{n}$ be an increasing sequence of positive integers such that
$\lim_{n\rightarrow \infty} k_n = \infty$ and
$\lim_{n\rightarrow \infty} k_n/n =0$.
For each $n \in \N$, let $Y^{(n)}$ be uniformly distributed on $\mathbb B_p^n$.
Then for $\sigma$-a.e. $\a = \{\a^{(n)}\}_{n \in \N}\in \V$, the sequence
\begin{equation}\label{ballconclude}
\left\{\Pi\left(n^{-1/2} (\a^{(n)})^\trans Y^{(n)}\right)\right\}_\sequence{n},
\end{equation}
with $\Pi$ defined as in \eqref{def-Pi},
satisfies an LDP in $\mathcal X$ with speed $n$ and good rate function $\mathcal I_p$. 
\end{thm}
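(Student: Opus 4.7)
The plan is a three-step reduction: first exchange the uniform distribution on $\mathbb{B}_p^n$ for a product measure via the Schechtman--Zinn representation; next establish a quenched joint LDP for the product-vector projection together with an auxiliary empirical scalar; and finally apply the contraction principle.

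I would begin with the classical representation
\[
Y^{(n)} \distequal U_n^{1/n}\, T_n^{-1/p}\, Z^{(n)},
\]
where $Z^{(n)}=(Z_1,\ldots,Z_n)$ has i.i.d.\ entries with density $f_p$, $U_n$ is an independent uniform on $[0,1]$, and $T_n \coloneqq \frac{1}{n}\sum_{i=1}^n|Z_i|^p$. The scalar $U_n^{1/n}$ concentrates at $1$ on the LDP scale and is handled by a standard contraction. It therefore suffices to prove a joint LDP in $\mathcal{X}\times\R_+$ at speed $n$ for
\[
\bigl(\Pi(n^{-1/2}(\a^{(n)})^\trans Z^{(n)}),\, T_n\bigr)
\]
with rate function $I_p$, and then contract along the map induced by the scaling factor $T_n^{-1/p}$; because positive scaling commutes with the ordering $[\cdot]$, this contraction produces exactly $\mathcal{I}_p(\w,r)=\inf_{s>0} I_p(\w/\rho(s),r/\rho(s),s)$.

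For the joint LDP, I would follow the Comets--Dembo strategy for quenched mean-field models via a G\"artner--Ellis computation. For test data $(\balpha,b,c)\in \ell^2\times \R\times \R$ with $\balpha$ finitely supported, matched to the argument structure of $\Lambda_p^*$ in \eqref{lambdastar}, the normalized log-exponential moment of $(\Pi(W^{(n)}),T_n)$ tilted by $(\balpha,b,c)$, with $W^{(n)} = n^{-1/2}(\a^{(n)})^\trans Z^{(n)}$, decomposes after integrating out $Z^{(n)}$ conditional on $\a$ into a sum of $n$ i.i.d.\ contributions of the form $\overline{\Lambda}_p$ evaluated at linear functionals of the rows of $\a^{(n)}$. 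The quenched Gaussian approximation \Cref{l:lln}, applied uniformly over the relevant collection of test vectors for $\sigma$-a.e.\ $\a$, replaces these row functionals by independent Gaussians with matching covariance and identifies the limit as $\Lambda_p(\balpha,b,c)$ from \eqref{d:g0}. Combined with the duality \eqref{ratefunction}, this yields the G\"artner--Ellis upper bound with rate $I_p$; the matching lower bound follows by exponentially tilting the joint law and identifying the tilted measure's typical behavior.

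The topology on $\mathcal{X}$ (the $\ell^\infty$ metric on ordered sequences paired with the $\ell^2$ norm) forces a projective-limit approach: first prove finite-dimensional LDPs for the top-$K$ ordered coordinates jointly with the total $\ell^2$ mass and $T_n$; then establish exponential tightness in $\ell^\infty$ along with exponential control of the tail $\ell^2$ mass; finally pass $K\to\infty$. The principal obstacle is combining these topological refinements with the quenched Gaussian approximation in the tilted-measure analysis: the lower bound requires tilting along a dense family of test vectors, and the conclusion of \Cref{l:lln} must be invoked simultaneously over this family with error negligible at exponential speed $n$ for $\sigma$-almost every $\a$. The restriction $p\ge 2$ is essential precisely here, since the sub-Gaussian marginals of $Z_p$ are needed both for the uniform concentration estimates underlying the Gaussian approximation and for the exponential tightness of the projection in $\mathcal{X}$.
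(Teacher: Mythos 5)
Your proposal matches the paper's strategy: the same Schechtman--Zinn representation (written in the ball form $Y^{(n)} \distequal U_n^{1/n}\,T_n^{-1/p}\,Z^{(n)}$ rather than sphere-then-ball, but that is cosmetic), the same reduction to a joint LDP in $\mathcal{X}\times\R_+$ for $\bigl(\Pi(W^{(n)}),T_n\bigr)$ with rate $I_p$, the same contraction along the scaling map to obtain $\mathcal{I}_p$, and the same central reliance on the quenched Gaussian approximation \Cref{l:lln} together with exponential tightness. One small misattribution worth noting: the paper needs \Cref{l:lln} \emph{uniformly over an exponentially large family} of test vectors in the \emph{upper} bound, where an $\epsilon$-net over directions combined with a sum over injections $\tau$ handles the nonlinear ordering map $\Pi$ (so that the set of tilting vectors has cardinality $\le e^{(1-\hat\epsilon)n}$); the lower bound uses \Cref{l:lln} only for a single tilt vector, with a change of measure and concentration of the tilted coordinates, and the almost-sure sets are then intersected over a countable basis rather than over a dense family at a single exponential scale.
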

From \Cref{t:dnp}, we can deduce LDPs for the sequence of Euclidean norms and maximal coordinates of the random projection $n^{-1/2} (\a^{(n)})^\trans Y^{(n)}$. We state these in the following corollaries, which are also proved in \Cref{s:mainresult}. 
\begin{cor}\label{c:convex}
Let $p$,  $\{k_n\}_\sequence{n}$, and $Y^{(n)}$ be as in \Cref{t:dnp}.
Then for $\sigma$-a.e. $\mathbf a=\left\{\mathbf a^{(n)}\right\}_{n\in\mathbb N}\in\mathbb V$, the sequence
\begin{align}\label{oldLDP}
\begin{split}
    \left\{\left\|n^{-1/2}\left(\mathbf a^{(n)}\right)^\trans Y^{(n)}\right\|_2\right\}_{n\in \N}
\end{split}
\end{align}
satisfies an LDP $\mathbb R_+$ with speed $n$ and good rate function $\mathbb I_p$ given by
\begin{align}\label{eqn:rate2norm}
\begin{split}
    \mathbb I_p\left(r\right)\coloneqq \sup_{t_1,t_2\in\mathbb R}\left\{t_1r+t_2-\mathbb E\left[\overline{\Lambda}_p\left(t_1g_0,t_2\right)\right]\right\},
\end{split}
\end{align}
where $\overline{\Lambda}_p$ is defined in \eqref{asprev}. 
Furthermore, $\mathbb I_p$ is convex.
\end{cor}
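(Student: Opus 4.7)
The strategy is to apply the contraction principle to the projection $\pi_2:\mathcal X\to\mathbb R_+$ defined by $\pi_2(\mathbf w,r)=r$. This map is $1$-Lipschitz with respect to the metric $d$ from \eqref{linfinitydistance}, and it satisfies $\pi_2\circ\Pi(\mathbf x)=\|\mathbf x\|_2$ for every $\mathbf x\in\mathbb R^n$. Thus, combining \Cref{t:dnp} with the contraction principle (e.g.\ \cite[Theorem~4.2.1]{DZ93}) immediately yields, for $\sigma$-a.e.\ $\mathbf a$, an LDP for the sequence in \eqref{oldLDP} on $\mathbb R_+$ with speed $n$ and good rate function
\[
\tilde{\mathbb I}_p(r)=\inf\bigl\{\mathcal I_p(\mathbf w,r):\mathbf w\in\ell^2,\ \mathbf w\text{ ordered},\ \|\mathbf w\|_2\le r\bigr\}.
\]

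The remaining task is to identify $\tilde{\mathbb I}_p$ with the supremum expression $\mathbb I_p$ in \eqref{eqn:rate2norm}. The key structural observation is that $\langle\balpha,\mathbf g\rangle+b g_0$ is a centered Gaussian of variance $\|\balpha\|_2^2+b^2$, so $\Lambda_p(\balpha,b,c)$ depends on $(\balpha,b)$ only through $B:=\sqrt{\|\balpha\|_2^2+b^2}$. A Cauchy--Schwarz argument applied to the pairing $\langle\balpha,\mathbf w\rangle+b b'$, viewed as the inner product of $(\balpha,b)$ with $(\mathbf w,b')$ in $\ell^2\oplus\mathbb R$, then collapses the outer supremum in $\Lambda_p^*(\mathbf w,b',s)$ to
\[
\Lambda_p^*(\mathbf w,b',s)=\sup_{B\in\mathbb R,\ c\in\mathbb R}\bigl\{B\sqrt{\|\mathbf w\|_2^2+(b')^2}+cs-\mathbb E[\overline{\Lambda}_p(B g_0,c)]\bigr\}.
\]
Evaluating at the arguments appearing in $I_p(\mathbf w/\rho(s),r/\rho(s),s)$, the sum $\|\mathbf w/\rho(s)\|_2^2+(r/\rho(s)-\|\mathbf w\|_2^2/\rho(s)^2)$ telescopes to $r/\rho(s)$ (independent of $\mathbf w$), so that $I_p(\mathbf w/\rho(s),r/\rho(s),s)$ does not depend on $\mathbf w$ on the feasible set. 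The infimum over $\mathbf w$ in $\tilde{\mathbb I}_p(r)$ is therefore trivial, and one is reduced to a one-parameter infimum over $s>0$.

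A change of variables absorbing the $\rho(s)$- and $s$-factors into new dual variables $(t_1,t_2)$, combined with a Fenchel-type duality that swaps the infimum over $s$ with the supremum over $(t_1,t_2)$, then converts this expression into the supremum formula $\mathbb I_p(r)$ of \eqref{eqn:rate2norm}, using the identity $\Lambda_p(0,t_1,t_2)=\mathbb E[\overline{\Lambda}_p(t_1 g_0,t_2)]$. Convexity of $\mathbb I_p$ is immediate from its representation as a supremum of functions affine in $r$.

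The principal obstacle is the last identification step. Justifying the interchange of $\inf_s$ and $\sup_{t_1,t_2}$ requires either an application of a minimax theorem -- exploiting the convexity of $D(\cdot,\cdot):=\mathbb E[\overline{\Lambda}_p(\cdot\,g_0,\cdot)]$ and the linearity in $s$ -- or a direct Lagrangian matching of critical points, together with care to track the effective domain of $\Lambda_p$ and to confirm that the implicit feasibility range for $s$ imposed by the support constraint $\|Y^{(n)}\|_p^p\le n$ is captured by the resulting formula.
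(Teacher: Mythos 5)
Your overall route matches the paper's: apply the contraction principle with the projection $(\mathbf w,r)\mapsto r$, then observe that the isotropy of $\Lambda_p$ (it depends on $(\balpha,b)$ only through $\|(\balpha,b)\|_2$) together with Cauchy--Schwarz collapses $\Lambda_p^*$ to a one-dimensional supremum, so the infimum over the ordered sequence $\mathbf w$ in $\tilde{\mathbb I}_p$ is vacuous and one is left with a one-parameter $\inf_s \sup_{t_1,t_2}$. The convexity argument (supremum of affine functions) is also the paper's.

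The genuine gap is precisely the one you flag as the ``principal obstacle.'' You correctly identify that the identification of the $\inf$-$\sup$ form with the pure $\sup$ formula \eqref{eqn:rate2norm} requires an inf--sup interchange, and you sketch that a minimax theorem or a Lagrangian argument \emph{might} work, but you do not carry either one out, nor do you verify the hypotheses such an argument would need. This is not a cosmetic omission: the function $\tau\mapsto v\tau + c(\tau/r)^p$ is not convex (or concave) in $\tau$ uniformly in $(v,c)$, so a generic minimax theorem does not apply off the shelf, and tracking the effective domain $c < 1/p$ and behavior as $\tau\to 0^+$ or $\tau\to\infty$ requires real work. The paper sidesteps this entirely by citing a tailored result, \cite[Lemma~2.1]{liao2020geometric}, which establishes exactly this inf--sup identity for rate functions of this $\ell_p$ form; that lemma is the missing ingredient, and without it (or an equivalent direct computation) your proof is incomplete.
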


\begin{cor}\label{c:largest}
Let $p$,  $\{k_n\}_\sequence{n}$, and $Y^{(n)}$ be as in \Cref{t:dnp}.
Then for $\sigma$-a.e. $\a = \{\a^{(n)}\}_{n \in \N}\in \V$, the sequence
\begin{equation}\label{maxldp}
\left\{ \left\| n^{-1/2} (\a^{(n)})^\trans Y^{(n)}\right\|_\infty \right\}_\sequence{n}
\end{equation}
satisfies an LDP in $\R$ with speed $n$ and good rate function
\be
\mathcal I_{\mathrm{max}}(r) \coloneqq  \mathcal I_p ((r,0,0\dots),r) .
\ee
\end{cor}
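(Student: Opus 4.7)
The plan is to derive this corollary from Theorem~\ref{t:dnp} by the contraction principle, applied to the continuous map $G \colon \mathcal X \to \R_+$ defined by $G(\x, r) \coloneqq |x_1|$. The ordering convention on representatives in $\mathcal X$ ensures that, for any $\z \in \R^{k_n}$, the first coordinate of $[\z]$ achieves the largest absolute value, so $G \circ \Pi(\z) = \|\z\|_\infty$. The map $G$ is $1$-Lipschitz with respect to the metric $d$ of~\eqref{linfinitydistance}, since
\[
|G(\x,r) - G(\y,s)| = \big| |x_1| - |y_1|\big| \le |x_1 - y_1| \le \|\x - \y\|_\infty \le d\big((\x,r),(\y,s)\big).
\]

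Applying the contraction principle (\cite[Theorem~4.2.1]{DZ93}) to the quenched LDP furnished by Theorem~\ref{t:dnp}, one obtains an LDP for $\{\|n^{-1/2}(\a^{(n)})^\trans Y^{(n)}\|_\infty\}_n$ with speed $n$ and good rate function
\[
\tilde{\mathcal I}(t) \coloneqq \inf\big\{\mathcal I_p(\x, r) : (\x, r) \in \mathcal X,\ |x_1| = t\big\}.
\]
It remains to verify the identification $\tilde{\mathcal I}(t) = \mathcal I_p\big((t, 0, 0, \dots), t\big) = \mathcal I_{\mathrm{max}}(t)$. The inequality $\tilde{\mathcal I}(t) \le \mathcal I_p\big((t, 0, 0, \dots), t\big)$ is immediate, since $((t, 0, 0, \dots), t) \in \mathcal X$ satisfies the constraint $|x_1| = t$ and is therefore admissible in the infimum.

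For the matching lower bound, I would exploit the rotational invariance of the Gaussian sequence $\g$ appearing in~\eqref{d:g0}. Because $\langle \balpha, \g\rangle + b g_0 \distequal \sqrt{\|\balpha\|_2^2 + b^2}\, g_0$, the function $\Lambda_p(\balpha, b, c)$ depends on $(\balpha, b)$ only through $\|\balpha\|_2^2 + b^2$. Combining the Cauchy--Schwarz bound $\langle \balpha, \w\rangle \le \|\balpha\|_2\,\|\w\|_2$ with a two-dimensional rotation in the $(\|\balpha\|_2, b)$ plane inside the supremum defining $\Lambda_p^*$ shows that $\Lambda_p^*(\w, \beta, s)$ depends on $(\w, \beta)$ only through $\sqrt{\|\w\|_2^2 + \beta^2}$, and is non-decreasing in that scalar (each dual variable $\gamma \ge 0$ contributes a non-decreasing summand). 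Substituting into~\eqref{ratefunction}, $I_p(\w, r, s)$ collapses to a function of $(r, s)$ alone that is non-decreasing in $r$; taking the infimum over $s > 0$ preserves monotonicity, so $\mathcal I_p(\w, r)$ depends only on $r$ and is non-decreasing in $r$. Any admissible $(\x, r)$ with $|x_1| = t$ satisfies $r \ge \|\x\|_2 \ge |x_1| = t$, giving $\mathcal I_p(\x, r) \ge \mathcal I_p\big((t, 0, 0, \dots), t\big)$, as required.

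The main technical point is the last step: systematically unpacking the rotational invariance in the definition of $\Lambda_p$ and carefully carrying it through the Legendre transform to reduce $\mathcal I_p$ to a non-decreasing function of its second argument. Once this monotonicity is established, the matching lower bound and the identification of $\mathcal I_{\mathrm{max}}$ are immediate, while the continuity of $G$ and the direct invocation of the contraction principle are otherwise routine.
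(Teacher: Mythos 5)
Your proposal is correct and follows essentially the same path as the paper: contraction via a one-coordinate projection, followed by identification of the rate function using the spherical symmetry of $\Lambda_p$. Two minor observations: the paper uses the signed first coordinate $\pi(\w,r)=w_1$ rather than your $|w_1|$, relying implicitly on evenness to match the result with $\|\cdot\|_\infty$, so your explicit use of the absolute value is arguably cleaner; and the paper compresses the entire rate-function identification to a one-line appeal to evenness of $\Lambda$, whereas you spell out the rotational invariance and the resulting monotonicity of $\mathcal I_p$ in $r$ — a useful elaboration but not a different argument.
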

\begin{rmk}
The conclusions of \Cref{t:dnp}, \Cref{c:convex}, and \Cref{c:largest} also hold when $Y^{(n)}$ is uniformly distributed on $\mathbb S_p^n$.
The proofs of these results also contain, as intermediate steps, proofs of the analogous claims for $\mathbb S_p^n$.
\end{rmk}

\subsection*{Outline}
\Cref{t:dnp} is a consequence of a more general result, which provides LDPs for the projections of a large class of sequences 
of random vectors. 
This general result in stated in \Cref{s:maintech} below as \Cref{t:main}, along with the necessary assumptions and associated corollaries. The proof of \Cref{t:main} follows on
combining an upper bound and a lower bound, given in the next section as \Cref{l:ldpupper} and \Cref{l:ldplower}, respectively.
In \Cref{s:upper}, the upper bound \Cref{l:ldpupper} is proved assuming certain preliminary lemmas compiled in \Cref{s:upperpreliminary} and proved in \Cref{s:preliminaryproof}. 
The complementary lower bound is established in \Cref{s:lower}, building on preparatory results stated in \Cref{s:lowerpreliminary}, whose proofs are deferred to \Cref{s:preliminarylowerProof}.
\Cref{s:appendix} contains auxiliary computations for $p$-Gaussian random variables. \Cref{s:weingarten} recalls the Weingarten calculus and proves an auxiliary lemma.

\subsection{Open Problems. }

This work gives rise to several  open problems.  

\begin{enumerate}
\item  {\it The case $p \in (1,2)$.} It would be of interest to study the quenched LDP for the projection or its norm when 
  $Y^{(n)}$ is uniformly distributed on $\mathbb{B}_p^n$ with $p \in (1,2)$.
 The sub-Gaussianity assumption  $p \geq 2$ is used in several places in the proof, including in the proof of the 
  exponential tightness result in \Cref{l:tightness} and the concentration result in Lemma \ref{lemma:sub-Gaussian-norm-after-translation}.
  We believe that at least a  weak (quenched) LDP at speed $n$ will hold for sufficiently slowly growing $k_n$.
  But we expect a phase transition in the growth rate of $k_n$, wherein the speed of the LDP would change
  in a $p$-dependent way.
\item {\it The linear setting  $k_n = \lambda n$ for $\lambda \in (0,1)$.}  
  It would  be of interest to study the case when $k_n$ grows linearly in $n$,
  whis would likely require a 
  different approach  since the condition $k_n = o(n)$ is currently being used in several
  places in the proof, including the Gaussian approximation lemma, as well as to show that 
  the tail of the ordered projection vector has negligible $\ell_2$-norm, and associated concentration results. 
  \item {\it A broad class of high-dimensional vectors.} 
    It would be desirable to identify a broad sequence of random vectors $\{Y^{(n)}\}_{n \in \N}$ for which the corresponding
    (quenched) random projections
  satisfy an  LDP. 
 For example, does there exist a sufficient condition for LDPs of quenched projections, 
  analogous to the asymptotic thin shell condition in the annealed setting? 
\end{enumerate}
  
  The fact that all the above questions are fully understood in the annealed setting \cite{GKR17,alonso2018large,KimLiaRam22}
further points to the additional subtleties present in the quenched setting when compared to the annealed one.

\subsection*{Acknowledgments}
P.L.\ is  supported by NSF postdoctoral fellowship DMS-220289.
K.R.\ is supported by NSF grant DMS-1954351 and Vannevar Bush Faculty Fellowship N0014-21-1-2887.
X.X.\ is supported by NSF grant DMS-1954351.
The second author would like to thank Amir Dembo for bringing to her attention
the reference \cite{CD01}, which suggested the convenient topological space in
which the LDP in this paper is established.

\section{Main Technical Results}\label{s:maintech}

In this section, we list the general versions of our main technical results and the assumptions they require.
We first recall the following concept from convex analysis.
\begin{defn}[Essential Smoothness]
\label{d:esmooth}
Let $f:\R^d\rightarrow (-\infty,\infty]$ be a convex function and let $\mathcal D_f\coloneqq\{x\in\mathbb R^d:f(x)<\infty\}.$ Then $f$ is said to be essentially smooth if
\begin{enumerate}
    \item $\mathcal{D}_{f}^\circ$ is non-empty.
    \item $f(\cdot)$ is differentiable throughout $\mathcal{D}_{f}^\circ$.
    \item $f(\cdot)$ satisfies $\lim _{n \rightarrow \infty}\left|\nabla f\left(\lambda_{n}\right)\right|=\infty$ whenever $\left\{\lambda_{n}\right\}_\sequence{n}$ is a sequence in $\mathcal{D}_{f}^\circ$ converging to a boundary point of $\mathcal{D}_{f}^\circ$.
\end{enumerate}
\end{defn}
Given a sequence $\{ Y^{(n)} \}_\sequence{n}$ of random vectors $Y^{(n)} = ( Y_1, Y_2, \dots, Y_n) \in \R^n$, we consider the following assumptions.

\begin{assume}\normalfont
\label{assume1}
There exists a sequence of non-constant and independent, identically distributed (i.i.d.) real-valued random variables $\{X_j\}_\sequence{j}$, a non-constant continuous function $\eta \colon \R  \rightarrow \R_+$, and a continuous function $\rho\colon \R_+ \rightarrow \R_+$ such that we have the distributional equality
\begin{equation}\label{ourform}
Y^{(n)}_j \,{\buildrel d \over =}\, X_j \cdot \rho
\bigg(
\frac{1}{n} \sum_{i=1}^n \eta(X_i)
\bigg)
\end{equation}
for all $n \in \N$ and $j \in \{1,\dots , n\}$. We further suppose that $\rho(x) > 0$ for all $x>0$.
\end{assume}
\begin{assume}\normalfont
\label{assume2} The variable $X_1$ has mean zero
and 
 there exists a constant $C_2>  0$ such that for all $s\ge 0$
\be
\P\left( |X_1| \ge s  \right) \le 2 \exp( - s^2/C_2^2).
\ee
\end{assume}

\begin{assume}\normalfont
\label{assume3} Set $\overline{\Lambda}(t_1, t_2) = \overline{\Lambda}_{X_1, \eta}$, 
as defined in \eqref{asprev} and using the choice of $\eta$ from \Cref{assume1}.
There exists some $ 0 < T \le \infty$ such that $\overline{\Lambda}(t_1, t_2)$ is finite for all $(t_1, t_2 ) \in \R \times ( - \infty, T)$ and $\overline{\Lambda}$ is essentially smooth. Further, the derivatives $\partial_1^\alpha\partial_2^\beta\overline{\Lambda}(t_1,t_2)$ exist for all $(t_1, t_2 ) \in \R \times ( - \infty, T)$ and all integers $\alpha, \beta \ge 0$ such that $\alpha + \beta \le 2$.
\end{assume}
\begin{assume}\normalfont
\label{assume4} There exists a continuous function
$\tilde C\colon (-\infty, T) \rightarrow \R_+$ such that
\begin{equation}\label{polygrowth}
\left|\partial_1^\alpha\partial_2^\beta\overline{\Lambda}\left(t_1,t_2\right)\right|\leq \tilde C(t_2)\left(1+|t_1|^{2-\alpha}\right),
\end{equation}
for all $(t_1, t_2 ) \in \R \times ( - \infty, T)$ and all integers $\alpha, \beta \ge 0$ such that $\alpha + \beta \le 2$.
\end{assume}

\begin{rmk}
In addition to the uniform measure on the $\ell^p$ balls considered in \Cref{t:dnp}, 
the framework above allows us to deal with product measures whose marginals are symmetric and sub-Gaussian.
Indeed, by taking $\rho(x) = 1$ for all $x \in \R_+$, we see that these assumptions are satisfied when $Y^{(n)} = (Y_1, \dots, Y_n)$ is constructed from a sequence of i.i.d.\ random variables $\{Y_j\}_\sequence{j}$ satisfying Assumptions 2, 3, and 4. 
\end{rmk}

Additionally, we often consider increasing sequences $\{k_n\}_\sequence{n}$ of positive integers such that 
\be\label{knhypo}
\lim_{n\rightarrow \infty} k_n = \infty \qquad \text{and}\qquad
 \lim_{n\rightarrow \infty} \frac{k_n}{n} =0.
\ee

The following large deviation upper bound is proved in \Cref{s:proofofupperbound}.
\begin{prop}\label{l:ldpupper}
Let $\{k_n\}_\sequence{n}$ be an increasing sequence of positive integers satisfying \eqref{knhypo}. Suppose that $\{X_j\}_\sequence{j}$ is a sequence of i.i.d.\ random variables satisfying Assumptions 2, 3, and 4.
Let $X^{(n)} = (X_1, \dots, X_n) \in \R^n$ for all $n\ge 1$.
Then for $\sigma$-a.e. $\a = \{\a^{(n)}\}_{n \in \N}\in \V$, we have
\begin{equation}\label{eqn:ldp-upper-bound}
\limsup_{n \rightarrow \infty} \frac{1}{n}
\P\left( \left(\Pi(n^{-1/2} \left(\a^{(n)}\right)^\trans X^{(n)}),
\frac{1}{n} \sum_{i=1}^n \eta(X_i)
 \right) \in \mathcal S \right) \le - \inf_{(\w,r,s) \in \mathcal S} I(\w,r,s)
\end{equation}
for all closed sets $\mathcal S\subset \mathcal X \times \R_+$. 
\end{prop}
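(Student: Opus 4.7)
The plan is to combine exponential tightness with a local upper bound obtained by exponential Chebyshev, and then identify the limiting rate as the Legendre transform $\Lambda^*$. Exponential tightness follows from the sub-Gaussian assumption \Cref{assume2}: it gives an exponential bound on $\|V^{(n)}\|_2$ (where $V^{(n)}:=n^{-1/2}(\mathbf{a}^{(n)})^\trans X^{(n)}$) uniformly in $\mathbf{a}\in\V$, since $\|V^{(n)}\|_2\leq n^{-1/2}\|X^{(n)}\|_2$, while an analogous bound controls $Q_n:=n^{-1}\sum_i\eta(X_i)$ via \Cref{assume4}. Since ordered $\mathbf{w}\in\ell^2$ with $\|\mathbf{w}\|_2\leq R$ obey $|w_k|\leq R/\sqrt{k}$, the sets $\{(\mathbf{w},r,s)\in\mathcal{X}\times\R_+: r+s\leq R\}$ are compact in the metric $d$ (this is presumably the content of \Cref{l:tightness}), reducing the task to a weak upper bound.

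For the weak upper bound at a target $(\mathbf{w},r,s)$, I would fix $(\boldsymbol{\alpha},b,c)\in\ell^2\times\R^2$ with $\boldsymbol{\alpha}$ finitely supported on $\{1,\dots,m\}$ and $b\geq 0$, and apply exponential Chebyshev to the nonlinear exponent $n[\langle\boldsymbol{\alpha},[V^{(n)}]\rangle + b R_n^{(m)} + c Q_n]$, where $R_n^{(m)}:=(\|V^{(n)}\|_2^2-\sum_{i=1}^m[V^{(n)}]_i^2)^{1/2}$ is the residual $\ell^2$-mass outside the top $m$ coordinates. Neither the ordering $[\cdot]$ nor $R_n^{(m)}$ is linear in $V^{(n)}$, so the exponential moment does not directly factorize over the entries of $X^{(n)}$. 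I would linearize in two steps: (i) dominate $\exp(n\langle\boldsymbol{\alpha},[V^{(n)}]\rangle)$ by the sum over the $\binom{k_n}{m}2^m=e^{o(n)}$ choices of sign $\epsilon\in\{\pm 1\}^m$ and injection $\pi\colon[m]\to[k_n]$ of $\exp(n\sum_i\alpha_i\epsilon_i V^{(n)}_{\pi(i)})$; and (ii) dualize $R_n^{(m)}$ using an $\varepsilon$-net $\mathcal{N}$ on the unit sphere of $\R^{k_n-m}$, whose cardinality $e^{O(k_n\log(1/\varepsilon))}$ is again $e^{o(n)}$. For each fixed $(\pi,\epsilon,\mathbf{u})$, the resulting exponent is linear in $X^{(n)}$, so the conditional moment factorizes over $j$ and equals $\exp(\sum_{j=1}^n\overline{\Lambda}(n^{1/2}\langle\boldsymbol{\gamma},\mathbf{a}^{(n)}_j\rangle,c))$, where $\boldsymbol{\gamma}\in\R^{k_n}$ places $\boldsymbol{\alpha}$ (with signs $\epsilon$) on the image of $\pi$ and $b\mathbf{u}$ on the complement, so that $\|\boldsymbol{\gamma}\|_2^2=\|\boldsymbol{\alpha}\|_2^2+b^2$.

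Here the Gaussian approximation \Cref{l:lln} enters: for $\sigma$-a.e.\ $\mathbf{a}$, the empirical measure of $\{n^{1/2}\langle\boldsymbol{\gamma},\mathbf{a}^{(n)}_j\rangle\}_{j=1}^n$ converges in the appropriate Wasserstein topology to $\mathcal{N}(0,\|\boldsymbol{\gamma}\|_2^2)$, uniformly across $e^{k_n}$ test vectors. Combined with the polynomial growth condition \eqref{polygrowth}, this upgrades to convergence of integrals,
\[n^{-1}\sum_j\overline{\Lambda}(n^{1/2}\langle\boldsymbol{\gamma},\mathbf{a}^{(n)}_j\rangle,c)\;\longrightarrow\;\E[\overline{\Lambda}(\|\boldsymbol{\gamma}\|_2 g_0,c)]=\Lambda(\boldsymbol{\alpha},b,c),\]
uniformly in $(\pi,\epsilon,\mathbf{u})$, where the final equality uses $\langle\boldsymbol{\alpha},\mathbf{g}\rangle+b g_0\sim\mathcal{N}(0,\|\boldsymbol{\alpha}\|_2^2+b^2)$. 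Intersecting exceptional sets over a countable dense family of $(\boldsymbol{\alpha},b,c)$ with finitely supported $\boldsymbol{\alpha}$ yields a single $\sigma$-null set off of which the Chebyshev bound gives
\[\limsup_{n\to\infty} \tfrac{1}{n}\log \P\bigl((\Pi(V^{(n)}),Q_n)\in\text{nbhd of }(\mathbf{w},r,s)\bigr) \leq -\bigl[\langle\boldsymbol{\alpha},\mathbf{w}\rangle+b\sqrt{r-\|\mathbf{w}\|_2^2}+cs-\Lambda(\boldsymbol{\alpha},b,c)\bigr]\]
for every such test parameter; supremizing and invoking continuity of $\Lambda$ to extend to arbitrary $\boldsymbol{\alpha}\in\ell^2$ produces the bound $-I(\mathbf{w},r,s)$ as in \eqref{ratefunction}.

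The main obstacle is treating the ordering operation in the sublinear regime: the top-$m$ index choice grows combinatorially in $k_n$ and the residual norm requires a net on a sphere of dimension $k_n-m$. The assumption $k_n=o(n)$ is essential so that both costs remain $e^{o(n)}$ and are invisible at LDP speed $n$; this is precisely where the fixed-$k$ approach of \cite{kim2021large} ceases to apply. A secondary subtlety is that the Gaussian approximation must hold $\sigma$-a.s.\ uniformly over the $e^{o(n)}$ relevant vectors $\boldsymbol{\gamma}$ as $n$ varies, which is the reason \Cref{l:lln} is formulated with uniformity across $e^{k_n}$ test vectors rather than as a single-vector limit.
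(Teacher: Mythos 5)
Your proposal follows essentially the same route as the paper: exponential tightness reduces the problem to compact sets; the ordering operation $[\cdot]$ is linearized by dominating with a sum over the $e^{o(n)}$ injections from $[m]$ to $[k_n]$; the residual $\ell^2$-mass is dualized via an $\varepsilon$-net of cardinality $e^{O(k_n)}$ on a sphere in $\R^{k_n-m}$; the resulting linear exponents factorize into $\frac{1}{n}\sum_j\overline{\Lambda}(\sqrt{n}\langle\boldsymbol{\gamma},\mathbf{a}^{(n)}_j\rangle,c)$, which \Cref{l:lln} controls $\sigma$-a.s.\ uniformly over the exponentially many test vectors; and one finally supremizes over $(\boldsymbol{\alpha},b,c)$ and passes from finitely supported $\boldsymbol{\alpha}$ to general $\boldsymbol{\alpha}$ via the monotone limit \eqref{monotonic}. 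The only cosmetic deviations are that you also sum over signs $\epsilon\in\{\pm 1\}^m$ (unnecessary, since $[\cdot]$ preserves signs, but harmless), and that you phrase the compactness step as a pointwise weak upper bound rather than the paper's finite cover by near-optimizer neighborhoods; these amount to the same argument.
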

The following large deviations lower bound is proved in \Cref{s:lowerproof}.
\begin{prop}\label{l:ldplower}
Let $\{k_n\}_\sequence{n}$ be an increasing sequence of positive integers satisfying \eqref{knhypo}. Suppose that $\{X_j\}_\sequence{j}$ is a sequence of i.i.d.\ random variables satisfying Assumptions 2, 3, and 4.
Let $X^{(n)} = (X_1, \dots, X_n) \in \R^n$ for all $n\ge 1$.
Then for $\sigma$-a.e. $\a = \{\a^{(n)}\}_{n \in \N}\in \V$, we have
\begin{equation}\label{eqn:ldp-lower-bound}
\limsup_{n \rightarrow \infty} \frac{1}{n}
\P\left( \left(\Pi(n^{-1/2} \left(\a^{(n)}\right)^\trans X^{(n)}),\frac{1}{n} \sum_{i=1}^n \eta(X_i) \right) \in \mathcal O \right) \ge - \inf_{(\w,r,s) \in \mathcal O} I(\w,r,s)
\end{equation}
for all open sets $\mathcal O\subset \mathcal X \times \R_+$.
\end{prop}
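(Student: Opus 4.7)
The plan is to establish the standard local large deviation lower bound at every point of finite rate, from which \eqref{eqn:ldp-lower-bound} follows by a union-of-balls covering of any open set. It suffices to show that for $\sigma$-a.e.\ $\a\in\V$, every $(\w^*,r^*,s^*)\in\X\times\R_+$ with $I(\w^*,r^*,s^*)<\infty$, and every $\delta>0$,
\begin{align*}
\liminf_{n\to\infty}\frac{1}{n}\log\P\Bigl(\bigl(\Pi(n^{-1/2}(\a^{(n)})^\trans X^{(n)}),\,n^{-1}\textstyle\sum_i\eta(X_i)\bigr)\in B_\delta(\w^*,r^*,s^*)\Bigr)\ge -I(\w^*,r^*,s^*),
\end{align*}
where $B_\delta(\cdot)$ is the ball in the product of the metric $d$ from \eqref{linfinitydistance} and the standard metric on $\R_+$. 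I would first truncate $\w^*$ to its first $K$ coordinates $\w^K$, absorbing the resulting error via lower semi-continuity of $I$ as $K\to\infty$, then fix near-optimisers $(\balpha^*,b^*,c^*)$ in the supremum defining $\Lambda^*(\w^K,\sqrt{r^*-\|\w^K\|_2^2},s^*)$ with $\balpha^*$ finitely supported on $\{1,\dots,K\}$ and $c^*<T$; essential smoothness (\Cref{assume3}) guarantees such maximisers exist in the interior of the effective domain and satisfy the first-order conditions $w_j^K=\E[g_j\partial_1\overline\Lambda(\langle\balpha^*,\g\rangle+b^*g_0,c^*)]$, $\sqrt{r^*-\|\w^K\|_2^2}=\E[g_0\partial_1\overline\Lambda(\cdots)]$, and $s^*=\E[\partial_2\overline\Lambda(\cdots)]$.

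The core of the proof is an exponential tilt adapted to the quenched matrix $\a^{(n)}$. On an enlarged probability space, introduce i.i.d.\ standard Gaussians $\{\xi_i\}_{i=1}^n$ independent of $\{X_i\}$, and for each fixed $\a^{(n)}$ define the tilted law $Q^{(n)}$ by
\begin{align*}
\frac{\d Q^{(n)}}{\d\P^{(n)}\otimes\d\mu_\xi}=\prod_{i=1}^n\frac{\exp\bigl(\Theta_i^{(n)}X_i+c^*\eta(X_i)\bigr)}{\E[\exp(\Theta_i^{(n)}X_i+c^*\eta(X_i))\mid\Theta_i^{(n)}]},\qquad \Theta_i^{(n)}\coloneqq\sqrt{n}\,\langle\balpha^*,\a^{(n)}_i\rangle + b^*\xi_i,
\end{align*}
with $\a^{(n)}_i\in\R^{k_n}$ the $i$-th row of $\a^{(n)}$ and $\balpha^*$ padded with zeros beyond its first $K$ entries. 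The auxiliary $\{\xi_i\}$ realise the $b^*g_0$ contribution appearing in \eqref{d:g0}, which cannot be produced purely from a deterministic $\a^{(n)}$. The change-of-measure identity combined with the Gaussian approximation \Cref{l:lln} and the growth bounds of \Cref{assume4} yields the quenched asymptotic $n^{-1}\sum_i\log\E[e^{\Theta_i^{(n)}X_i+c^*\eta(X_i)}\mid\Theta_i^{(n)}]\to\Lambda(\balpha^*,b^*,c^*)$ for $\sigma$-a.e.\ $\a$, leading to
\begin{align*}
\log\P(B_\delta)\ge\log Q^{(n)}(B_\delta)-nI(\w^*,r^*,s^*)-o(n),
\end{align*}
so the task reduces to showing $Q^{(n)}(B_\delta)\to 1$.

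Concentration under $Q^{(n)}$ follows from the first-order optimality conditions together with the Gaussian approximation. The tilted conditional mean of $X_i$ is $\partial_1\overline\Lambda(\Theta_i^{(n)},c^*)$; invoking \Cref{l:lln} to replace $\sqrt{n}\a^{(n)}_i$ by a $k_n$-vector of i.i.d.\ Gaussians, the first-order conditions imply that under $Q^{(n)}$ the first $K$ coordinates of $n^{-1/2}(\a^{(n)})^\trans X^{(n)}$ converge to $w_j^K$, its $\ell^2$ norm converges to $r^*$, and $n^{-1}\sum_i\eta(X_i)\to s^*$. For the tail coordinates $j>K$, sub-Gaussianity (\Cref{assume2}), the Stiefel orthonormality $(\a^{(n)})^\trans\a^{(n)}=I_{k_n}$, and the concentration bounds prepared in \Cref{s:lowerpreliminary} ensure each $|n^{-1/2}\sum_i X_i\a^{(n)}_{i,j}|$ stays small under $Q^{(n)}$, preserving proximity in the $\ell^\infty$ metric $d$. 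Sending $K\to\infty$ and then $\delta\to 0$ yields the claim; the quenched statement is upgraded by applying Borel-Cantelli to a countable dense family of targets $(\w^*,r^*,s^*,\delta)$, using summable exponential decay in $n$ of the $\sigma$-probability of failure of both the log-normaliser and the concentration approximations.

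The main obstacle is the quenched control of the log-normaliser and of the tilted statistics: the tilts $\Theta_i^{(n)}$ depend on the deterministic rows $\a^{(n)}_i$, so the required LLN-type convergences must hold for $\sigma$-a.e.\ fixed $\a$, with enough uniformity to combine with Borel-Cantelli over a dense family of target parameters. This is precisely the regime of the uniform Gaussian approximation in \Cref{l:lln}, which can handle $e^{o(n)}$ test vectors simultaneously; the hypothesis $k_n/n\to 0$ is essential here, both for \Cref{l:lln} to apply and to guarantee that tail coordinates $j>K$ do not contribute macroscopically to the $\ell^2$ norm beyond the $b^*$-term.
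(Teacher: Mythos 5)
Your overall plan is sound and parallels the paper's proof: reduce to a local lower bound at well-chosen points, realize that lower bound via an exponential tilt parametrized by the quenched matrix, use the Gaussian approximation (\Cref{l:lln}) to identify the quenched log-normalizer with $\Lambda$, and verify concentration of the tilted statistics. The genuine difference is how you realize the $b g_0$ contribution in \eqref{d:g0}: you introduce an auxiliary i.i.d.\ Gaussian $\xi_i$ per coordinate and set $\Theta_i^{(n)}=\sqrt n\langle\balpha^*,\a^{(n)}_i\rangle+b^*\xi_i$, whereas the paper spreads the mass $b$ over $M$ extra (otherwise unused) columns of the Stiefel frame via the vector $\mathbf b=(bM^{-1/2},\ldots,bM^{-1/2})$ in \eqref{eqn:def-of-M}--\eqref{eqn:def-of-u}, relying on $M^{-1/2}\sum_{l=m+1}^{m+M}g_l\distequal g_0$.

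This difference is not cosmetic, and it creates a gap in your argument. With the paper's choice, the change-of-measure exponent $\sum_i\langle\u^{(n)},\sqrt n\a_i^{(n)}\rangle X_i+c^*\sum_i\eta(X_i)-\sum_i\bar\Lambda(\cdot,c^*)$ equals $n\langle\u^{(n)},W^{(n)}\rangle+c^* n L^{(n)}-nF_n(\u^{(n)},\a^{(n)},c^*)$, i.e.\ for each fixed $\a^{(n)}$ it is a deterministic function of $(W^{(n)},L^{(n)})$, and the target event in \Cref{lemma:small-ball-probability} is shrunk precisely so that this function is bounded on it (see \eqref{eqn:lower-bound-derivations}). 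With your auxiliary $\xi$'s, the exponent is
\begin{equation*}
n\langle\balpha^*,W^{(n)}\rangle+b^*\sum_{i=1}^n\xi_i X_i+c^*nL^{(n)}-\sum_{i=1}^n\bar\Lambda(\Theta_i^{(n)},c^*),
\end{equation*}
and neither $\sum_i\xi_i X_i$ nor the log-normalizer $\sum_i\bar\Lambda(\Theta_i^{(n)},c^*)$ is a function of $(\Pi(W^{(n)}),L^{(n)})$ --- both depend on $\xi$. Consequently your claimed inequality $\log\P(B_\delta)\ge\log Q^{(n)}(B_\delta)-nI(\w^*,r^*,s^*)-o(n)$ is not justified as written: you cannot pull the Radon--Nikodym derivative out of the $Q^{(n)}$-expectation over $B_\delta$ because it is not bounded there. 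The fix is to intersect $B_\delta$ with a $\xi$-measurable event where $n^{-1}\sum_i\xi_i X_i$ is near $\partial_b\Lambda(\balpha^*,b^*,c^*)=\sqrt{(r^*)^2-\|\w^*\|_2^2}$ and $n^{-1}\sum_i\bar\Lambda(\Theta_i^{(n)},c^*)$ is near $\Lambda(\balpha^*,b^*,c^*)$, prove these concentrate under $Q^{(n)}$ for $\sigma$-a.e.\ $\a$, and then lower-bound the exponent on the intersection. This is doable with the tools you invoke, but it is an additional step, it compounds the quenched Borel--Cantelli bookkeeping, and your sketch currently passes over it.

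A secondary omission: $\langle\balpha^*,W^{(n)}\rangle$ involves the \emph{unsorted} coordinates $W^{(n)}_1,\ldots,W^{(n)}_K$, while the event $B_\delta$ constrains the \emph{sorted} vector $[W^{(n)}]$. These do not coincide in general. The paper handles this by arranging (via claim (2) of \Cref{l:additional-good-properties}) for the target $\bar\w$ to have strictly ordered, strictly positive entries and then intersecting with $\|W^{(n)}_{\tail m}\|_\infty\le 2m^{-1/2}$, which forces $[W^{(n)}]_{\head m_0}=W^{(n)}_{\head m_0}$. Your proposal gestures at ``preserving proximity in the $\ell^\infty$ metric'' but does not introduce the strict-ordering requirement on the target that makes the unsorted and sorted head coordinates agree.
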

The following theorem is proved in \Cref{s:mainresult} using the previous two lemmas, where it is then used to prove the main results stated in the previous section.
\begin{thm}\label{t:main}
Let $\{k_n\}_\sequence{n}$ be an increasing sequence of positive integers satifying \eqref{knhypo}.
Let $\{ Y^{(n)} \}_\sequence{n}$ be a sequence of random vectors $Y^{(n)} = ( Y_1, Y_2, \dots, Y_n)$ satisfying Assumptions 1--4.
Then for $\sigma$-a.e. $\a = \{\a^{(n)}\}_{n \in \N}\in \V$, the sequence
\begin{equation}\label{mainconclude}
\left\{\Pi\left(n^{-1/2} (\a^{(n)})^\trans Y^{(n)}\right)\right\}_\sequence{n}
\end{equation}
satisfies an LDP in $\mathcal X$ with good rate function $\mathcal I$.
\end{thm}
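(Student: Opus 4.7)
The plan is to combine \Cref{l:ldpupper} and \Cref{l:ldplower} into a full joint LDP for an auxiliary sequence and then transfer it to the claimed LDP via the contraction principle. Setting $V^{(n)} \coloneqq \Pi\bigl(n^{-1/2}(\a^{(n)})^\trans X^{(n)}\bigr)$ and $S_n \coloneqq \tfrac{1}{n}\sum_{i=1}^n \eta(X_i)$, the two propositions---each valid on a $\sigma$-full-measure subset of $\V$, whose intersection remains of $\sigma$-full measure---together assert that $\{(V^{(n)}, S_n)\}_{n \in \N}$ satisfies a full LDP in $\X \times \R_+$ at speed $n$ with rate function $I$ from \eqref{ratefunction}, for $\sigma$-a.e.\ $\a \in \V$.

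The bridge between $X^{(n)}$ and $Y^{(n)}$ is provided by \Cref{assume1}, which yields the joint distributional identity
\[
n^{-1/2}(\a^{(n)})^\trans Y^{(n)} \distequal \rho(S_n)\cdot n^{-1/2}(\a^{(n)})^\trans X^{(n)}.
\]
Since multiplication by any scalar $c \geq 0$ preserves ordering by absolute value in $\ell^2$, we have $\Pi(c\x) = (c[\x], c\|\x\|_2)$ for every $\x$, so the projection in \eqref{mainconclude} is distributed as $F(V^{(n)}, S_n)$, where $F\colon \X \times \R_+ \to \X$ is defined by $F\bigl((\w, r), s\bigr) \coloneqq (\rho(s)\w, \rho(s)r)$. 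This map sends $\X \times \R_+$ into $\X$ because the constraint $\|\w\|_2 \leq r$ is preserved under nonnegative scaling. Continuity of $\rho$, together with the bound $\|\w\|_\infty \leq \|\w\|_2 \leq r$ available on $\X$, makes a routine estimate of the distance \eqref{linfinitydistance} yield joint continuity of $F$.

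Applying the contraction principle (e.g., \cite[Theorem 4.2.1]{DZ93}) then transfers the LDP for $\{(V^{(n)}, S_n)\}$ to one for $\{F(V^{(n)}, S_n)\}_{n \in \N}$ in $\X$ with good rate function $(\w', r') \mapsto \inf\{I(\w, r, s) : F((\w, r), s) = (\w', r')\}$. Since $\rho(s) > 0$ for $s > 0$, the constraint inverts on this set to $\w = \w'/\rho(s)$, $r = r'/\rho(s)$, so the contracted rate reduces to $\inf_{s > 0} I(\w'/\rho(s), r'/\rho(s), s)$, which is precisely $\mathcal I(\w', r')$ from \eqref{calI}; the residual $s=0$ case is either degenerate (forcing $(\w', r') = (0,0)$) or infeasible, and can be absorbed into the infimum. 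The main subtlety I anticipate is verifying that $I$ is a \emph{good} rate function, which is required by the strong form of the contraction principle and is not asserted outright in \Cref{l:ldpupper} or \Cref{l:ldplower}: lower semicontinuity of $I$ is immediate from its Legendre-transform form, but compactness of the level sets must be extracted from the exponential tightness implicit in the closed-set upper bound, combined with the sub-Gaussian hypothesis \Cref{assume2}.
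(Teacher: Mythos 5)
Your proposal is correct and takes essentially the same route as the paper: combine \Cref{l:ldpupper} and \Cref{l:ldplower} to obtain a joint LDP for $(\Pi(n^{-1/2}(\a^{(n)})^\trans X^{(n)}), n^{-1}\sum_i \eta(X_i))$ with rate function $I$, use \Cref{assume1} for the distributional equality, and transfer via the contraction principle applied to $f(\w,r,s)=(\rho(s)\w,\rho(s)r)$. The one concern you flag---goodness of $I$, which the contraction principle requires---is handled in the paper as a standalone preliminary result (\Cref{l:ratefunction}, proved in \Cref{s:62}), which establishes compactness of the level sets directly by showing $\{I\le\alpha\}$ is a closed subset of a compact set $\{(\w,r):r\le A\}\times[0,B]$, using the polynomial growth bound of \Cref{assume4} rather than extracting it from exponential tightness as you speculate. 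Your treatment of the $s=0$ boundary is slightly more careful than the paper's, which writes the infimum over $r_2\in\R_+$ but whose definition \eqref{calI} restricts to $s>0$; this discrepancy is harmless for the reason you give.
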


\section{Upper Bound}\label{s:upper}

\subsection{Preliminary Lemmas}\label{s:upperpreliminary}

In this section, we state some results required for the proof of \Cref{l:ldplower}.
The first lemma, which collects several topological properties of $\mathcal X$, is proved in
\Cref{s:61}.
\begin{lem}\label{l:topologicalpropertyofX}
\begin{enumerate}
\item The topology on $\mathcal X$ is equivalent to the product topology on $\mathbb R^{\mathbb N}\times\mathbb R_+$, where $\mathbb R^{\mathbb N}$ is itself equipped with the product topology.
\item The topology on $\mathcal X$ is equivalent to the product topology on $\mathbb R^{\mathbb N}\times\mathbb R_+$, where $\mathbb R^{\mathbb N}$ is itself equipped with the weak-$\ell^2$ topology.
\item For any fixed $A<\infty$, the set $\left\{(\mathbf w,r)\in\mathcal X:r\leq A\right\}$ is compact.
\end{enumerate}
\end{lem}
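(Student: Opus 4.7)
The entire lemma will rest on a tail estimate for ordered sequences: if $\mathbf{x} \in \ell^2$ is ordered with $\|\mathbf{x}\|_2 \le r$, then $k|x_k|^2 \le \sum_{i=1}^{k} x_i^2 \le r^2$, so $|x_k| \le r/\sqrt{k}$. I would establish this first and use it throughout: on any slab $\{r \le R\}$ of $\mathcal{X}$, the sequences share a common tail decay, which is the mechanism that converts pointwise information into uniform information.

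For part (1), the nontrivial direction is that coordinate-wise convergence $\mathbf{x}^{(n)} \to \mathbf{x}$ together with $r^{(n)} \to r$ forces $\|\mathbf{x}^{(n)} - \mathbf{x}\|_\infty \to 0$. I would split the sup-norm into head and tail. Since $r^{(n)}$ is eventually bounded by some $R$, the tail estimate yields $|x_k^{(n)} - x_k| \le 2R/\sqrt{k}$ uniformly in $n$, so indices $k > K$ contribute at most $2R/\sqrt{K}$, which is arbitrarily small for $K$ large. The head, a sup over the finite set $\{1, \ldots, K\}$, is handled directly by coordinate-wise convergence.

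For part (2), I would invoke the standard fact that on $\ell^2$-norm-bounded sets, weak convergence coincides with coordinate-wise convergence (the converse direction uses truncation of an arbitrary test vector $\mathbf{y} \in \ell^2$ and the uniform norm bound to control the tail of $\langle \mathbf{x}^{(n)} - \mathbf{x}, \mathbf{y}\rangle$). Since convergence of $r^{(n)}$ constrains $\mathbf{x}^{(n)}$ to a bounded subset of $\ell^2$, part (2) reduces to part (1).

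For part (3), I would embed
\begin{equation*}
\mathcal{X}_A \coloneqq \{(\mathbf{w}, r) \in \mathcal{X} : r \le A\} \subset K_A \coloneqq \prod_{k=1}^\infty \bigl[-A/\sqrt{k},\, A/\sqrt{k}\bigr] \times [0, A],
\end{equation*}
using the tail estimate for the inclusion; $K_A$ is compact by Tychonoff's theorem, and by part (1) the subspace topology it induces on $\mathcal{X}_A$ agrees with the original metric topology. It then suffices to show $\mathcal{X}_A$ is closed in $K_A$: the ordering and tie-breaking conditions are pointwise-closed, and the constraint $\|\mathbf{w}\|_2 \le r$ is preserved in the limit because $\|\cdot\|_2^2$ is a supremum of continuous partial sums and hence lower semi-continuous in the product topology, giving $\|\mathbf{w}\|_2 \le \liminf_n \|\mathbf{w}^{(n)}\|_2 \le r$. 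I do not expect any serious obstacle; the argument is essentially the packaging of the elementary tail bound with Tychonoff and the standard weak/coordinate-wise equivalence on bounded subsets of $\ell^2$.
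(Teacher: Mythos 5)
Your proposal is essentially the paper's argument. The tail bound $|w_k| \le r\,k^{-1/2}$ for ordered sequences in $\mathcal{X}$ is the engine of all three parts, parts (1) and (2) use a head/tail split with the same truncation idea, and part (3) embeds the slab $\{r\le A\}$ in a Tychonoff-compact product. In (2) you reduce to (1) via the standard equivalence of weak and coordinate-wise convergence on norm-bounded subsets of $\ell^2$, whereas the paper proves both implications directly; this is a cosmetic difference.

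The one place where you commit to a claim the paper leaves implicit is the closedness step in part (3), and there the assertion that the tie-breaking part of the ordering condition is pointwise closed is in fact false. Take $\mathbf{w}^{(n)} = (-1,\, 1-1/n,\, 0,\, 0,\, \ldots)$ with $r^{(n)} = \|\mathbf{w}^{(n)}\|_2$: this lies in $\mathcal{X}$ for every $n$ (the absolute values $1 > 1-1/n > 0$ are strictly decreasing, so the tie-break never triggers), yet the coordinate-wise limit $(-1, 1, 0, \ldots)$ has $|w_1| = |w_2|$ with $w_1 < w_2$, which violates the tie-break and hence lies outside $\mathcal{X}$. So the slab $\{r \le A\}$ is not closed in the product; indeed the same sequence is Cauchy for the metric $d$ with no limit in $\mathcal{X}$, so the slab is not even complete, let alone compact. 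This wrinkle is inherited from the paper, which simply asserts closedness without examining the tie-break. The repair is to drop the tie-breaking clause from the definition of \emph{ordered} when carving out $\mathcal{X}$ (retaining it only to make the map $[\,\cdot\,]$ well defined); after that the slab is cut out by genuinely closed conditions, namely $|x_k| \ge |x_{k+1}|$ together with lower semicontinuity of $\|\cdot\|_2$, exactly as you describe, and both your argument and the paper's then go through.
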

Our second lemma shows that the rate function in \Cref{l:ldpupper} is a good rate function (as defined in \Cref{d:ratefunction}). Its proof is deferred to \Cref{s:62}.
\begin{lem}\label{l:ratefunction}
The function $I$ defined in \eqref{ratefunction} is a good rate function.
\end{lem}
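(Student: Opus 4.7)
The goal is to verify the two defining properties of a good rate function for $I$: lower semi-continuity on $\mathcal{X} \times \R_+$ and compactness of the sub-level sets $\Psi_I(\alpha)$ for each $\alpha \in [0,\infty)$.

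First, I would collect preliminary properties of $\Lambda^*$. As a log-moment generating function, $\overline{\Lambda}$ is convex in $(t_1,t_2)$, and composing with the affine map $(\balpha,b) \mapsto \langle\balpha,\g\rangle + bg_0$ and taking expectation preserves convexity, so $\Lambda$ is convex on $\ell^2 \times \R \times \R$. Under Assumptions 3 and 4, the polynomial growth bound \eqref{polygrowth} combined with the Gaussian tails of $(g_0,\g)$ permits an application of dominated convergence, yielding finiteness and continuity (hence lower semi-continuity) of $\Lambda$ on its effective domain, equipped with the weak-$\ell^2$ topology on the first argument. The Legendre transform $\Lambda^*$ is then convex and lower semi-continuous, and non-negative since $\Lambda(0,0,0) = 0$ gives $\Lambda^*(\w,r,s) \ge 0$ by taking $(\balpha,b,c) = (0,0,0)$ in the supremum.

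Second, for the lower semi-continuity of $I$, I would exploit the rotational symmetry of $\Lambda$ in its $(\balpha,b)$ variables: because $\langle\balpha,\g\rangle + bg_0$ is a centred Gaussian of variance $\|\balpha\|_2^2 + b^2$, $\Lambda(\balpha,b,c)$ depends on $(\balpha,b)$ only through $\|\balpha\|_2^2 + b^2$, so that $\Lambda(\balpha,b,c) = \tilde{\Lambda}(\|\balpha\|_2^2 + b^2, c)$ with $\tilde{\Lambda}(u,c) = \E_{G \sim N(0,1)}[\overline{\Lambda}(\sqrt{u}\,G, c)]$. Writing
\[
I(\w,r,s) = \sup_{(\balpha,b,c) \in \ell^2 \times \R \times \R}\bigl\{\langle\balpha,\w\rangle + b\sqrt{r - \|\w\|_2^2} + cs - \Lambda(\balpha,b,c)\bigr\}
\]
and maximizing the inner pairing $\langle\balpha,\w\rangle + b\sqrt{r-\|\w\|_2^2}$ over directions on each sphere $\|\balpha\|_2^2 + b^2 = v^2$ via Cauchy-Schwarz in $\ell^2 \times \R$, one obtains the bound $v\sqrt{\|\w\|_2^2 + (r - \|\w\|_2^2)} = v\sqrt{r}$; the $\|\w\|_2^2$ terms cancel. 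Consequently $I(\w,r,s) = \sup_{v \ge 0,\, c \in \R}\{v\sqrt{r} + cs - \tilde{\Lambda}(v^2, c)\}$ depends only on $r$ and $s$. Since the projection $(\w,r,s) \mapsto (r,s)$ is continuous on $\mathcal{X} \times \R_+$ by Lemma \ref{l:topologicalpropertyofX}, and since the Legendre transform of the convex function $(v,c) \mapsto \tilde{\Lambda}(v^2,c)$ is lower semi-continuous on $\R_+ \times \R$, $I$ is lower semi-continuous on $\mathcal{X} \times \R_+$.

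Third, for compactness of the sub-level sets, the reduction above shows $I(\w,r,s) = J(r,s)$ for some convex lower semi-continuous $J \colon \R_+^2 \to [0,\infty]$. The essential smoothness of $\overline{\Lambda}$ from Assumption 3 transfers to $\tilde{\Lambda}$, and standard convex duality then yields $J(r,s) \to \infty$ as $r \to \infty$ or $s \to \infty$. Hence for each $\alpha$ there exist finite $R_\alpha,S_\alpha$ with $\Psi_I(\alpha) \subset \{(\w,r) \in \mathcal{X} : r \le R_\alpha\} \times [0,S_\alpha]$. By Lemma \ref{l:topologicalpropertyofX}(3) the first factor is compact in $\mathcal{X}$ and $[0,S_\alpha]$ is trivially compact, so $\Psi_I(\alpha)$ is a closed (by lower semi-continuity) subset of a compact set and thus is itself compact.

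The main obstacle is the radial symmetry step, which is what sidesteps the topological subtlety that $\|\w\|_2^2$ is only lower (and not upper) semi-continuous in the weak-$\ell^2$ topology on $\mathcal{X}$; without the cancellation, the term $b\sqrt{r - \|\w\|_2^2}$ inside the supremum would obstruct a direct continuity-based lower semi-continuity argument.
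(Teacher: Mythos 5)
Your proof is correct, and it takes a genuinely different route from the paper's. The central observation you make — that $\Lambda(\balpha, b, c)$ depends on $(\balpha, b)$ only through $\|\balpha\|_2^2 + b^2$, because $\langle\balpha,\g\rangle + b g_0$ is a centred Gaussian of that variance, and hence that $I(\w,r,s)$ depends only on $(r,s)$ after Cauchy--Schwarz makes the $\|\w\|_2^2$ terms cancel — is exactly the observation the paper itself uses later, in the proof of \Cref{c:convex} (compare \eqref{eqn:rewriteLambdaStar2}). But the paper does \emph{not} use it in the proof of \Cref{l:ratefunction}; instead, it handles the obstruction you correctly diagnose (the map $(\w,r) \mapsto \sqrt{r^2 - \|\w\|_2^2}$ is not continuous on $\X$ because $\|\cdot\|_2^2$ is only weak-$\ell^2$ lower semi-continuous) by a truncation argument. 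Concretely, the paper proves the pointwise bound $I(\w,r,s) \ge \Lambda_m^*\bigl(\w_{\head m}, \sqrt{r^2 - \|\w_{\head m}\|_2^2}, s\bigr)$ for each $m$, where the truncated functional \emph{is} a composition of $\Lambda_m^*$ with a continuous map, then passes to the limit $m \to \infty$ to recover lsc of $I$. Your radial-symmetry shortcut is cleaner and more conceptual in this setting; the paper's truncation route is heavier but does not rely on the spherical symmetry of the projection ensemble and so would port more readily to a non-isotropic generalization.

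One step in your write-up is stated too loosely: essential smoothness of $\overline{\Lambda}$ alone does not give the coercivity $J(r,s) \to \infty$; you need the quantitative growth bound from Assumption~4. For the $r$-direction, \eqref{polygrowth} yields $\tilde\Lambda(v^2, 0) = \E[\overline\Lambda(vG,0)] \le C(1+v^2)$, so $J(r,s) \ge \sup_{v\ge 0}\{vr - C(1+v^2)\} = r^2/(4C) - C$. For the $s$-direction, Assumption~3 guarantees $T > 0$, so picking any $c \in (0,T)$ gives $J(r,s) \ge cs - \overline\Lambda(0,c) \to \infty$. These are precisely the ingredients that appear in Step~3 of the paper's proof, and once inserted your compactness argument is complete. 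Also, you carried forward the paper's typo in \eqref{ratefunction} (the radicand should read $r^2 - \|\w\|_2^2$, as in Step~1 of the paper's proof and in \eqref{eqn:rewriteIr}); the cancellation then gives $v r$ rather than $v\sqrt{r}$, but the conclusion that $I$ factors through $(r,s)$ is unaffected.
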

To state the following lemmas, we recall the definition of exponential tightness.
\begin{defn}
A family of measures $\{\mu_n\}_\sequence{n}$ on a topological space space $\mathcal T$ is exponentially tight with speed $s_n : \N \rightarrow \R_+$ if for every $\alpha < \infty$, there exists a compact set $K_\alpha \subset \mathcal T$ such that
\be
\limsup_{s_n \rightarrow \infty} \frac{1}{n} \log \mu_\eps ( K^c_\alpha ) \le - \alpha.
\ee
We say that a sequences of random variables $\{X_n\}_\sequence{n}$ is exponentially tight with speed $s_n$ if the sequence of measures  $\{\mu_n\}_\sequence{n}$ defined by $\mu_n(A) = \P( X_n  \in A)$ also has this property.
We default to $s_n = n$ when no speed is explicitly stated.
\end{defn}

The next two lemmas address exponential tightness of the sequence of the norms of the projections $n^{-1/2} (\a^{(n)})^\trans X^{(n)}$ and the sums $\frac{1}{n} \sum_{i=1}^n \eta(X_i)$. They are proved in \Cref{s:63}.
\begin{lem}\label{l:tightness}
Let $\{X_j \}_\sequence{j}$ be random variables satisfying \Cref{assume2}, and let $X^{(n)}=(X_1,\dots, X_n)$.
Let $\{k_n \}_\sequence{n}$ be an increasing sequence of positive integers satisfying \eqref{knhypo}.
Then there exists a constant $\gamma>0$, depending only on the sequence $\{k_n \}_\sequence{n}$ and the constant $C_2$ from \Cref{assume2}, such that for every $n\in \N$,  $\a^{(n)} \in \V_{n,k_n}$, and $t\ge C_2^2$,
\begin{equation*}
\frac{1}{n} \log \P\left( \|n^{-1/2} (\a^{(n)} )^\trans X^{(n)}\|_2^2 \ge t  + 1  \right) \le
 - \gamma t.
\end{equation*}

\end{lem}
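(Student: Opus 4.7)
The strategy is to apply a Hanson--Wright-type concentration inequality to the quadratic form
$$\left\|n^{-1/2}(\a^{(n)})^\trans X^{(n)}\right\|_2^2 = \frac{1}{n} (X^{(n)})^\trans P^{(n)} X^{(n)},$$
where $P^{(n)} \coloneqq \a^{(n)} (\a^{(n)})^\trans$ is the orthogonal projection onto the $k_n$-dimensional column span of $\a^{(n)}$. This matrix satisfies $\|P^{(n)}\|_{\mathrm{op}} = 1$, $\tr(P^{(n)}) = k_n$, and $\|P^{(n)}\|_F^2 = \tr((P^{(n)})^2) = k_n$. By \Cref{assume2}, the entries $X_i$ are independent, mean zero, and sub-Gaussian with $\psi_2$-norm bounded by a universal multiple of $C_2$; in particular $\sigma^2 \coloneqq \E[X_1^2] \le 2 C_2^2$, and $\E[(X^{(n)})^\trans P^{(n)} X^{(n)}] = \sigma^2 k_n$.

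The Hanson--Wright inequality then yields a universal constant $c > 0$ such that, for every $s \ge 0$,
$$\P\Big( \big| (X^{(n)})^\trans P^{(n)} X^{(n)} - \sigma^2 k_n \big| \ge s \Big) \le 2 \exp\!\left( - c \min\!\left( \frac{s^2}{C_2^4 k_n},\, \frac{s}{C_2^2} \right) \right).$$
Since $k_n / n \to 0$, there exists $N_0 \in \N$, depending only on $\{k_n\}$ and $C_2$, such that $\sigma^2 k_n / n \le 1/2$ for all $n \ge N_0$. For such $n$ and $t \ge C_2^2$, I apply the inequality with $s = n(t+1) - \sigma^2 k_n \ge n(t + 1/2)$; since $s \ge n(t+1/2) \ge C_2^2 k_n$, one checks that the linear term $s/C_2^2$ is the smaller entry in the minimum, giving the bound $2 \exp(- c n(t + 1/2) / C_2^2) \le 2\exp(-c' n t / C_2^2)$ for a suitable $c' > 0$.

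For the finitely many $n < N_0$ (where $k_n$ may be comparable to $n$), I use the operator-norm estimate $(X^{(n)})^\trans P^{(n)} X^{(n)} \le \|X^{(n)}\|_2^2 = \sum_{i=1}^n X_i^2$ to reduce the problem to a classical Chernoff bound for i.i.d.\ sub-exponential variables $X_i^2$. For each such $n$, this yields an exponential tail of the form $\P(\cdot) \le 2\exp(-\tilde\gamma_n t)$ with $\tilde\gamma_n > 0$ depending only on $n$ and $C_2$ and valid for $t \ge C_2^2$. Taking $\gamma$ to be the minimum of $c'/C_2^2$ and $\min_{n < N_0} (\tilde\gamma_n / n)$, together with a small further adjustment to absorb the $\log 2$ prefactor, completes the proof. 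The main obstacle is producing a single $\gamma$ that works uniformly over all $n \in \N$: the Hanson--Wright bound is effective only once $n$ is large enough that the centering term $\sigma^2 k_n/n$ is negligible compared to the threshold, so the small-$n$ regime must be handled separately. The sublinearity hypothesis $k_n/n \to 0$ is essential precisely because it restricts this exceptional regime to finitely many indices, over which one can freely minimize to fix $\gamma$.
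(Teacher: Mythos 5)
Your proposal follows essentially the same strategy as the paper: identify the norm-squared as a quadratic form in the sub-Gaussian vector $X^{(n)}$, use the orthonormality of the columns of $\a^{(n)}$ to compute the Frobenius and operator norms of the projection matrix, and invoke the Hanson--Wright inequality, observing that for $t\ge C_2^2$ the ``linear'' branch of the exponent dominates. The difference is in how the two proofs handle the centering term $\E\big[\|n^{-1/2}(\a^{(n)})^\trans X^{(n)}\|_2^2\big]=\sigma^2 k_n/n$, where $\sigma^2=\E[X_1^2]$. The paper computes the centering as $k_n/n$ (implicitly taking $\sigma^2=1$) and uses only the elementary bound $k_n/n\le 1$, valid for \emph{all} $n$, so that the deviation $t+1-k_n/n\ge t$ can be fed directly into Hanson--Wright with no splitting over $n$ and no appeal to the sublinearity hypothesis $k_n/n\to 0$. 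You instead keep track of the fact that \Cref{assume2} only controls $\sigma^2$ up to a multiple of $C_2^2$, which forces you to use $k_n/n\to 0$ to make $\sigma^2 k_n/n\le 1/2$ eventually, and to dispatch the finitely many small $n$ separately via the crude bound $X^\trans P X\le\|X\|_2^2$ and a scalar Chernoff argument. Your variant is a bit longer but is in fact \emph{more} careful than the paper on the variance normalization point; the paper's version is cleaner but relies on $\sigma^2\le 1$, a condition that is true in the paper's $p$-Gaussian application (for $p\ge 2$) but is not a consequence of \Cref{assume2} as stated. Both write-ups leave the $\log 2$ prefactor and the exact choice of $\gamma$ as a final ``small adjustment,'' so there is no meaningful gap on that score. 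Net: correct, same central tool, with a minor and defensible deviation in bookkeeping that actually patches a small imprecision in the published argument at the cost of an extra case split.
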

\begin{lem}\label{l:tightness2}
Let $\left\{X_j\right\}_{j=1}^{\infty}$ be i.i.d.\ random variables satisfying Assumptions 2 and 3. Then
the sequence of random variables $\{Z_n\}_\sequence{n}$ given by $Z_n \coloneqq \frac{1}{n} \sum_{i=1}^n \eta(X_i)$ is exponentially tight.
\end{lem}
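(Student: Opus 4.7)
The plan is a direct Chernoff/Markov bound. Since $\eta\colon \R \to \R_+$ by \Cref{assume1}, the random variable $Z_n = \frac{1}{n}\sum_{i=1}^n \eta(X_i)$ is non-negative, so compact subsets of $\R_+$ are precisely the bounded closed subsets. It therefore suffices to show that for every $\alpha < \infty$ there exists $M = M_\alpha < \infty$ with
\begin{equation*}
\limsup_{n\to\infty} \frac{1}{n}\log \P(Z_n > M) \le -\alpha,
\end{equation*}
since then $K_\alpha \coloneqq [0, M_\alpha]$ works.

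To obtain such an $M$, I would fix any $t_0 \in (0, T)$; this is possible because \Cref{assume3} ensures $T > 0$. Setting $t_1 = 0$ and $t_2 = t_0$ in \eqref{asprev}, \Cref{assume3} gives
\begin{equation*}
\overline{\Lambda}(0, t_0) = \log \E\bigl[\exp(t_0 \eta(X_1))\bigr] < \infty.
\end{equation*}
By the i.i.d.\ assumption on $\{X_j\}_\sequence{j}$,
\begin{equation*}
\E\Bigl[\exp\Bigl(t_0 \sum_{i=1}^n \eta(X_i)\Bigr)\Bigr] = \exp\bigl(n\overline{\Lambda}(0, t_0)\bigr).
\end{equation*}
Markov's inequality applied to $\exp(n t_0 Z_n)$ then yields
\begin{equation*}
\P(Z_n > M) \le \exp\bigl(-n t_0 M + n \overline{\Lambda}(0, t_0)\bigr),
\end{equation*}
so that
\begin{equation*}
\frac{1}{n}\log \P(Z_n > M) \le - t_0 M + \overline{\Lambda}(0, t_0).
\end{equation*}

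Given $\alpha < \infty$, I would choose $M_\alpha \coloneqq t_0^{-1}(\alpha + \overline{\Lambda}(0, t_0))$, so that the right-hand side above is at most $-\alpha$. The set $K_\alpha = [0, M_\alpha]$ is compact in $\R_+$, and by construction
\begin{equation*}
\limsup_{n\to\infty} \frac{1}{n}\log \P(Z_n \notin K_\alpha) \le -\alpha,
\end{equation*}
which gives exponential tightness with speed $n$. There is no real obstacle in this argument; the only input beyond a standard Chernoff bound is the existence of a single $t_0 > 0$ at which $\E[\exp(t_0 \eta(X_1))]$ is finite, which is exactly the content of \Cref{assume3} at $t_1 = 0$.
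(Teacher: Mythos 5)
Your proof is correct and follows essentially the same approach as the paper's: a Chernoff/Markov bound using the finiteness of $\overline{\Lambda}(0,t_0)$ for some $t_0\in(0,T)$, which is guaranteed by \Cref{assume3}. In fact your write-up is slightly cleaner, since you correctly carry the logarithm in the bound $-t_0 M + \overline{\Lambda}(0,t_0)$, whereas the paper's displayed inequality has a minor typo (writing $\E[\exp(\lambda\eta(X_1))]$ in place of $\overline{\Lambda}(0,\lambda)$), and you make explicit the reason $[0,M_\alpha]$ is a valid compact set.
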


For $\u^{(n)} \in \R^{k_n}$, $\a^{(n)} \in \V_{n,k_n}$, and $c\in \R$, we let $\a_i^{(n)}$ denote the $i$-th row of $\a^{(n)}$, and define
\begin{equation}\label{eqn:quenched-log-moment}
F_n(\u^{(n)},\a^{(n)},c) \coloneqq
\frac{1}{n} \sum_{i=1}^n
\overline{\Lambda}\left(\langle \u^{(n)} ,\sqrt{n}\a_i^{(n)}\rangle, c \right).
\end{equation}

Recall that $g$ denotes a Gaussian random variable with zero mean and unit variance. The next lemma shows that $F_n(\u^{(n)},\a^{(n)},c)$ is closely approximated by $\E\left[\bar\Lambda\left(\left\|\un\right\|_2g,c\right)\right]$;
this holds uniformly for any subexponential collection of vectors in $\R^{k_n}$. 
 It is proved in \Cref{s:65}.

\begin{lem}\label{l:lln}
Fix a constant $D \in( 0,\infty)$, a deterministic sequence $\{d_n\}_\sequence{n}$  of positive integers such that  $\limsup_{n\rightarrow \infty} n^{-1} \log(d_n)  = 0$, and i.i.d.\ random variables $\left\{X_j\right\}_{j=1}^{\infty}$  satisfying Assumptions 2--4.
For every $n\in \N$, let
\begin{equation}
\mathcal W_n = \{ \v^{(n, 1)}, \v^{(n, 2)}, \dots, \v^{(n , d_n)} \}
\subset \R^{k_n}
\end{equation}
be a finite collection of vectors 
such that $\|\v^{(n,j)}\|_2 = D$ for all $n \in \N $ and $j \in \{1, \dots, d_n\}$.
Fix $c \in (-\infty, T)$, where $T$ is the constant from \Cref{assume3}. Then for $\sigma$-a.e. $\a = \{\a^{(n)}\}_{n \in \N}\in \V$,
\begin{equation*}
 \lim_{n\rightarrow\infty}\sup_{1\le j\leq d_n}\left|F_n(\v^{(n, j)}, \a^{(n)},c ) - \E\left[\bar\Lambda\left(Dg,c\right)\right] \right|=0,
\end{equation*}
where $g$ is a Gaussian random variable with zero mean and unit variance.
\end{lem}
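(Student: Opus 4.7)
The plan is to decompose Lemma \ref{l:lln} into a mean convergence and a concentration estimate for a single fixed $\v \in \R^{k_n}$ with $\|\v\|_2 = D$, and then promote these to uniform almost-sure convergence via a union bound over the $d_n = e^{o(n)}$ vectors of $\mathcal W_n$ and the Borel--Cantelli lemma. The sub-exponential growth $\log d_n = o(n)$ is what makes the union bound effective against the sub-Gaussian concentration rate on $\V_{n,k_n}$.

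For the mean, row exchangeability under $\sigma_n$ gives
\[
\E_{\sigma_n}\bigl[F_n(\v, \a^{(n)}, c)\bigr] = \E_{\sigma_n}\bigl[\bar\Lambda\bigl(\langle \v, \sqrt n\, \a_1^{(n)}\rangle, c\bigr)\bigr].
\]
I establish the Gaussian approximation $\langle \v, \sqrt n\, \a_1^{(n)}\rangle \Rightarrow \mathcal N(0, D^2)$, in the sense of convergence of expectations against $\bar\Lambda(\cdot, c)$, via the Weingarten calculus for the orthogonal group (Section \ref{s:weingarten}). Concretely, a Taylor expansion of $\bar\Lambda$ in its first argument, with remainder controlled by the polynomial growth from Assumption \ref{assume4}, reduces matters to showing the polynomial moments $\E_{\sigma_n}[\langle \v, \sqrt n\, \a_1^{(n)}\rangle^{k}]$ converge to the Gaussian moments $(2m-1)!!\, D^{2m}$ (for $k=2m$) and vanish for odd $k$. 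Each such moment is a Weingarten sum over pair partitions; in the regime $k_n/n\to 0$, the leading terms reproduce the Wick sum and the subleading terms are of smaller order, uniformly in $\v$ with $\|\v\|_2 = D$. Combined with sub-Gaussian tail bounds on $\langle \v, \sqrt n\, \a_1^{(n)}\rangle$ to handle the Taylor remainder, this yields $\E_{\sigma_n}[F_n(\v, \a^{(n)}, c)] \to \E[\bar\Lambda(Dg, c)]$ uniformly.

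For the concentration, I would show that $\a \mapsto F_n(\v, \a, c)$ is Lipschitz on $\V_{n,k_n}$ with a constant independent of $n$, $\v$, and $\a$. A direct ambient gradient computation gives
\[
\|\nabla_\a F_n(\v, \a, c)\|_F^2 = \frac{D^2}{n}\sum_{i=1}^n \bigl|\partial_1 \bar\Lambda(\langle \v, \sqrt n\, \a_i\rangle, c)\bigr|^2;
\]
using $|\partial_1 \bar\Lambda(t,c)|^2 \le 2\tilde C(c)^2(1 + t^2)$ from Assumption \ref{assume4} and the deterministic identity $\sum_{i=1}^n \langle \v, \sqrt n\, \a_i\rangle^2 = n\|\v\|_2^2 = nD^2$ (from $(\a^{(n)})^\trans \a^{(n)} = I_{k_n}$) gives the uniform bound $\|\nabla_\a F_n\|_F^2 \le 2\tilde C(c)^2 D^2(1+D^2)$. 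Concentration of measure on the Stiefel manifold, inherited from Haar on $O_n$ via the representation $\a = \mathbf{O}\mathbf{E}$ with $\mathbf{E}$ the $n\times k_n$ block of the identity, then yields
\[
\sigma_n\bigl(|F_n(\v, \a^{(n)}, c) - \E_{\sigma_n}[F_n(\v, \a^{(n)}, c)]| > t\bigr) \le 2 \exp(-c_1 n t^2)
\]
for a constant $c_1 > 0$ depending only on $c$ and $D$. A union bound over $j \in \{1, \dots, d_n\}$ combined with the mean estimate bounds the probability that the supremum in the lemma exceeds a fixed $\eps > 0$ by $\exp(\log d_n - c_1 n \eps^2/4)$ for large $n$; since $\log d_n = o(n)$, this tail is summable, and Borel--Cantelli delivers the $\sigma$-almost-sure uniform convergence.

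The main obstacle is the Weingarten moment computation underlying the mean estimate. While the orthogonal Weingarten function is classical, obtaining convergence of all polynomial moments of $\langle \v, \sqrt n\, \a_1^{(n)}\rangle$ to the Gaussian moments with sufficient quantitative control to accommodate the Taylor remainder of the non-polynomial $\bar\Lambda$ requires careful combinatorial analysis of the pair partitions, making essential use of the assumption $k_n/n \to 0$ to isolate the leading Wick-type contributions. This is precisely the step for which, as noted in Remark \ref{r:Jiang}, the Gaussian approximations of Jiang are inadequate, and which is addressed by the auxiliary lemma to be established in Section \ref{s:weingarten}.
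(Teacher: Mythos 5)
Your proposal matches the paper's proof in structure and in all essential ideas: the same decomposition into mean convergence and concentration, the same Lipschitz gradient bound for $\a\mapsto F_n(\v,\a,c)$ using $\sum_i\langle\v,\sqrt n\,\a_i\rangle^2 = n\|\v\|_2^2$ and Assumption~\ref{assume4}, the same Gromov-type concentration on the Stiefel manifold followed by a union bound over the $d_n$ vectors and Borel--Cantelli, and the same reduction of $\E_{\sigma_n}[F_n]$ via row exchangeability and orthogonal invariance to Weingarten-computed moments of $\langle\v,\sqrt n\,\a_1\rangle$. The only deviation is cosmetic (the paper passes from moment convergence to convergence of $\E[\int\bar\Lambda\,d\mu^{(n,j)}]$ by citing Wasserstein-distance results rather than a Taylor-plus-tail argument, both resting on the quadratic growth from Assumption~\ref{assume4}), and one small imprecision in your sketch is worth flagging: the sublinearity $k_n/n\to 0$ plays no role in the moment computation, since after right rotational invariance the relevant moments reduce to those of the single scalar $\sqrt n\,a_{11}$, which is $k_n$-independent; the $O(1/n)$ Weingarten error holds for any $k\le n$.
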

\begin{rmk}\label{r:explanatoryremark}
From the proof of \Cref{l:lln}, it is clear that the result continues to hold if in the definition of $F_n$, $\bar\Lambda(\cdot, c)$ is replaced by any differentiable function $H \colon \R \rightarrow \R$ that satisfies, for some constant $C \in \R_+$, $|H(t)|^2 \le C(1 + t^2)$ and $| H' (t)|^2 \le C ( 1 + |t|)$.
\end{rmk}

\begin{rmk}
  \label{r:Jiang}
  As mentioned in the introduction, Jiang has proved a Gaussian approximation result for Haar-distributed elements of the Stiefel manifold
  in \cite[Theorem 5]{Jia05} by coupling elements of the Haar-distributed matrix to matrices of independent Gaussians through Gram-Schmidt orthogonalization. However, as noted in \cite[Theorem 3]{Jia03}, this coupling is  effective  only when $k_n = o(\log n /n)$.  
  Further, in order for this coupling to be strong enough to prove the almost-sure convergence in  \Cref{l:lln} (rather than the convergence in probability considered in \cite{Jia03}), 
  it appears that additional restrictions on $k_n$ and $d_n$  are required.  Hence, \cite[Theorem 5]{Jia05}
  is not sufficient to study the entire sublinear regime, which motivates the alternative approach to Gaussian approximation
  via the Weingarten calculus taken in our  proof of \Cref{l:lln} in \Cref{s:65}.  
\end{rmk}

For $\balpha=(u_1,u_2,\ldots) \in \ell^2$ and a given $m \in \N$, we let $\balpha_{\head} = \balpha_{\head m }\in\R^m$ be the vector containing the first $m$ coordinates of $\balpha$,
\begin{equation}\label{mtail}
\balpha_{\head}\coloneqq (u_1, u_2, \dots, u_m,0,0,\dots)^\trans,
\end{equation}
and let $\balpha_\tail= \balpha_{\tail m}\in\ell^2$ denote the vector defined by $\balpha_\tail = \balpha - \balpha_\head$, so that
\begin{equation}
\balpha_\tail\coloneqq ( 0, \dots, 0, u_{m+1}, u_{m+2}, \dots)^\trans.
\end{equation}
For $\hatu\in\mathbb R^m$ and $b,c \in \R$, and $\Lambda$ defined as in \eqref{d:g0}, set
\begin{equation}\label{lambdam}
\Lambda_m(\hatu,b,c)  \coloneqq \Lambda((\hatu^\trans,0,0,\ldots), b,c ).
\end{equation}
For all $\balpha \in \ell^2$ and $b,c \in \R$, we observe by Jensen's inequality that\footnote{Note that for any $t_2 \ge 0$, the function $t_1 \mapsto \overline{\Lambda}(t_1, t_2)$ is convex by H\"{o}lder inequality.
}
\begin{equation}
\Lambda(\balpha,b,c ) \ge \Lambda_m(\balpha_\head,b,c ).
\end{equation}
Additionally, using the dominated convergence theorem and \eqref{polygrowth}, we have the monotonic limit
\begin{equation}\label{monotonic}
\lim_{m\rightarrow \infty}
\Lambda_m(\balpha_{\head m}, b,c) = \Lambda(\balpha,b,c).
\end{equation}

\subsection{Proof of the Upper Bound}\label{s:proofofupperbound}
We first reduce the proof of the upper bound to
a verification of the upper bound for a simpler subclass of closed sets.

\begin{lem}\label{l:from-compact-to-closed}
Retain the notation and hypotheses of \Cref{l:ldpupper}. 
Suppose that for every closed set $\mathcal S \subset \mathcal X \times \R_+$ and constant $A \in ( 0, \infty)$, the upper bound \eqref{eqn:ldp-upper-bound} holds for the set $\mathcal S \cap {\mathcal K}_A$, where
\begin{equation}\label{d:KA}
    {\mathcal K}_A = \{(\mathbf w,r,s): 0\leq r\leq A,0\leq s\leq A \},
\end{equation}
for a set $\Omega_{\mathcal S, A}$ of sequences $\a = \left(\mathbf a^{(1)},\mathbf a^{(2)},\ldots\right)\in\mathbb V$
such that $\sigma(\Omega_{\mathcal S, A}) = 1$.
Then for almost every sequence $\a\in\mathbb V$, the upper bound \eqref{eqn:ldp-upper-bound} holds for all closed sets $\mathcal S$.
\end{lem}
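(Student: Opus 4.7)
The plan is to follow the standard scheme of upgrading a bound on compact sets to a bound on arbitrary closed sets using exponential tightness, adapted to the quenched setting with care about the almost-sure quantifier.

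First, I would assemble uniform-in-$\a$ exponential tightness for the joint sequence $\bigl(\Pi(n^{-1/2}(\a^{(n)})^\trans X^{(n)}),\, n^{-1}\sum_{i=1}^n \eta(X_i)\bigr)$. The key point is that the bound in \Cref{l:tightness} is deterministic in $\a^{(n)}$ (a statement about the randomness of $X^{(n)}$ only), and \Cref{l:tightness2} does not involve $\a$ at all. Combining them gives that for every $M>0$ there exists $A_M>0$ such that
\[
\limsup_{n\to\infty}\frac{1}{n}\log \P\Bigl(\bigl(\Pi(n^{-1/2}(\a^{(n)})^\trans X^{(n)}),\, n^{-1}\textstyle\sum_{i=1}^n \eta(X_i)\bigr)\notin\mathcal{K}_{A_M}\Bigr)\le -M
\]
uniformly in $\a\in\V$, with no exceptional $\a$-set introduced at this stage.

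Next, for a fixed closed set $\mathcal S \subset \mathcal X\times\R_+$, the plan is to use $\P(\,\cdot\in\mathcal S\,)\le \P(\,\cdot\in\mathcal S \cap\mathcal{K}_{A_M}\,)+\P(\,\cdot\notin\mathcal{K}_{A_M}\,)$, apply the hypothesis to the first summand to obtain $\limsup_n n^{-1}\log(\cdots)\le -\inf_{\mathcal S \cap\mathcal{K}_{A_M}} I\le -\inf_{\mathcal S} I$ for $\a\in\Omega_{\mathcal S,A_M}$, and combine with the tightness bound via the elementary inequality $\limsup_n n^{-1}\log(a_n+b_n)\le \max\bigl(\limsup_n n^{-1}\log a_n,\limsup_n n^{-1}\log b_n\bigr)$. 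The result is $\limsup_n n^{-1}\log \P(\,\cdot\in\mathcal S\,)\le \max(-\inf_{\mathcal S} I,-M)$ on $\Omega_{\mathcal S,A_M}$. Taking the countable intersection over $M\in\N$ yields a full-measure $\a$-set on which one may let $M\to\infty$ to obtain \eqref{eqn:ldp-upper-bound} for this $\mathcal S$.

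The main obstacle I expect is the order of quantifiers in the stated conclusion: the argument just sketched produces an $\mathcal S$-dependent full-measure set, whereas the lemma requests a single such set valid simultaneously for every closed $\mathcal S$. My plan to resolve this is to exploit the construction of $\Omega_{\mathcal S,A}$ in \Cref{s:preliminaryproof}, where the exceptional set will come from applying the Gaussian approximation \Cref{l:lln} to a countable collection of test vectors (indexable by a dense rational parameterization of the dual space of the Legendre transform), so $\Omega_{\mathcal S,A}$ can in fact be chosen independently of $\mathcal S$, depending only on $A$. Intersecting over $A\in\N$ then gives a single full-measure $\a$-set on which the preceding argument applies for every closed $\mathcal S$. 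A self-contained fallback, using the separability of $\mathcal X\times\R_+$ provided by \Cref{l:topologicalpropertyofX}, is to reduce to a countable determining family of closed sets and extend by topological approximation. Apart from this quantifier-swap, the tightness-based reduction itself is essentially routine.
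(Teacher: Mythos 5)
Your reduction for a fixed closed set $\mathcal S$ is essentially the paper's argument: use the exponential tightness bounds of \Cref{l:tightness} and \Cref{l:tightness2} (the former deterministic in $\a^{(n)}$, the latter $\a$-free), split via $\mathcal S\cap\mathcal K_A$ and $\mathcal K_A^c$, invoke the hypothesis on the first piece, and combine with $\limsup n^{-1}\log(a_n+b_n)\le\max(\limsup n^{-1}\log a_n,\limsup n^{-1}\log b_n)$. The paper intersects $\Omega_{\mathcal S,m}$ over $m\in\N$ at the outset instead of indexing by $M$, but the content is identical.

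You correctly identify the real issue as the quantifier swap—needing one full-measure set that works for every closed $\mathcal S$—and you correctly name the working resolution. However, your \emph{primary} plan to resolve it is not justified: you assert that the exceptional set $\Omega_{\mathcal S,A}$ coming from \Cref{s:preliminaryproof} "can in fact be chosen independently of $\mathcal S$," but that does not follow from what the paper provides. In the proof of \Cref{l:ldpupper}, the Gaussian approximation \Cref{l:lln} is applied to an exponentially large collection of test vectors $\mathcal U$ built from a finite open cover of the compact set $\mathcal S\cap\mathcal K_A$ and from the near-optimizers $(\balpha^{(j)},b^{(j)},c^{(j)})$ in \eqref{abcdef}; all of these depend on $\mathcal S$, and passing to a universal rational parameterization would require a nontrivial density/continuity argument that you do not supply. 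Your self-contained "fallback"—use separability of $\mathcal X\times\R_+$ (via \Cref{l:topologicalpropertyofX}) to reduce to the countable family of finite intersections of complements of basis open sets, intersect the corresponding full-measure sets, then approximate an arbitrary closed $\mathcal S$ from outside by such a finite intersection $\mathcal C\supset\mathcal S$ with $\inf_{\mathcal C}I>\inf_{\mathcal S}I-\varepsilon$—is exactly what the paper does in its Step 2. Note that this outer approximation step tacitly uses that $I$ is a good rate function (\Cref{l:ratefunction}), so the infima over the nested decreasing $\mathcal C$'s converge to the infimum over $\mathcal S$; you should make that dependence explicit. In short: your fallback is the correct and complete route, while your primary route has an unsubstantiated structural claim.
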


\begin{proof}
We assume the hypotheses of the lemma, and for any $A\in(0, \infty)$, we let $\mathcal K_A \subset \X \times \R_+$ denote the set defined in \eqref{d:KA}.

\textit{Step 1.}
We first fix an arbitrary closed set $\mathcal S \subset \mathcal X \times \R_+$ and establish the claim in  \eqref{eqn:ldp-upper-bound} for $\mathcal S$; namely,
\begin{equation}\label{eqn:ldp-upper-bound2}
\limsup_{n \rightarrow \infty} \frac{1}{n} \log
\P\left( \left(\Pi(n^{-1/2} (\a^{(n)})^\trans X^{(n)}),
\frac{1}{n} \sum_{i=1}^n \eta(X_i)
 \right) \in \mathcal S \right) \le - \inf_{(\w,r,s) \in \mathcal S} I(\w,r,s)
\end{equation}
holds  for $\sigma$-almost every sequence $\a \in\mathbb V$.

For any $A\in (0,\infty)$  let $\Omega_{\mathcal S,A} \subset \V$ be as in the lemma statement, and set $\Omega_{\mathcal S} = \cap_{m = 1}^\infty \Omega_{\mathcal S,m}$.
Then clearly $\sigma(\Omega_{\mathcal S}) = 1$.
We next observe that we have $I(\w,r,s)\ge 0$ for all choices of $\w,r,s$; this follows on taking $\balpha = (0,0,\dots)$ and $b=c=0$ in \eqref{lambdastar}.
Let $\alpha\in [0, \infty)$ be a parameter such that $\mathcal S\subset\{I\ge \alpha\}$.
By \Cref{l:tightness} and \Cref{l:tightness2}, there exists some $m\in \mathbb{N} $ such that for every sequence $\a\in\mathbb V$,
\begin{align*}
\begin{split}
    \limsup_{n\rightarrow\infty}\frac{1}{n}\log\mathbb P\left(\left(\Pi(n^{-1/2} (\a^{(n)})^\trans X^{(n)}),
\frac{1}{n} \sum_{i=1}^n \eta(X_i)
 \right) \in \mathcal K_{m}^c\right)\leq -\alpha.
\end{split}
\end{align*}
By hypothesis,  the upper bound \eqref{eqn:ldp-upper-bound} holds for $\mathcal S\cap \mathcal K_{m}$ and all $\a \in \Omega_{\mathcal S}$. Since $\mathcal S\subset\{I\ge \alpha\}$, this implies that for every sequence $\a \in \Omega_{\mathcal S}$,
\begin{align*}
 \begin{split}
     \limsup_{n\rightarrow\infty}\frac{1}{n}\log\mathbb P\left(\left(\Pi(n^{-1/2} (\a^{(n)})^\trans X^{(n)}),
\frac{1}{n} \sum_{i=1}^n \eta(X_i)
 \right) \in \mathcal S\cap \mathcal K_{m}\right)\leq -\alpha.
 \end{split}
 \end{align*}
Together, the previous two displays imply that for all $\a \in \Omega_{\mathcal S}$,
\begin{align*}
\begin{split}
    \limsup_{n\rightarrow\infty}\frac{1}{n}\log\mathbb P\left(\left(\Pi(n^{-1/2} (\a^{(n)})^\trans X^{(n)}),
\frac{1}{n} \sum_{i=1}^n \eta(X_i)
 \right) \in \mathcal S\right)\leq -\alpha.
\end{split}
\end{align*}
We can set
$\alpha \coloneqq \inf_{(\w,r,s)\in \mathcal S} I(\w,r,s)$, since we required only that $\mathcal S \subset \{ I \ge \alpha\}$, and hence the claim of Step 1 follows.

\textit{Step 2.} We next show that for $\sigma$-almost every sequence $\a=\left(\mathbf a^{(1)},\mathbf a^{(2)},\ldots\right)\in\mathbb V$, the upper bound \eqref{eqn:ldp-upper-bound} holds for all closed sets $\mathcal S$.

Note that the space $\mathcal X\times\mathbb R_+$ possesses a countable basis $\{\mathcal O_i\}_\sequence{i}$  since it is a separable metric space. Consider the countable family of closed sets $\mathfrak C=\{\cap_{i\in\mathcal F} \mathcal O_{i}^c:\mathcal F \text{ is finite subset of }\mathbb N\}$ and define $\widehat\Omega=\cap_{\mathcal S\in\mathfrak C}\Omega_\mathcal S $. Then $\P(\widehat{\Omega})=1$.
For an arbitrary closed set $\mathcal S$, we have $\mathcal S=\cap_{i\in\mathcal F^{\prime}}\mathcal O_i^c$ for some $\mathcal F^{\prime}\subset\mathbb N$. For any $\varepsilon>0$, there exists a finite set  $\mathcal F\subset\mathcal F^{\prime}\subset\mathbb N$, such that for $\mathcal C=\cap_{i\in\mathcal F}\mathcal O_i^c$, we have
\begin{align*}
\begin{split}
    \inf_{\left(\w,r,s\right)\in\mathcal C}I\left(\w,r,s\right)>\inf_{\left(\w,r,s\right)\in\mathcal S}I\left(\w,r,s\right)-\varepsilon.
\end{split}
\end{align*}
Therefore, for $\a \in \widehat \Omega$,
using $\mathcal S \subset \mathcal C$ and the previous step, we have
\begin{align*}
\begin{split}
    &\ \limsup_{n\rightarrow\infty}\frac{1}{n}\log\mathbb P\left(\left(\Pi(n^{-1/2} (\a^{(n)})^\trans X^{(n)}),
\frac{1}{n} \sum_{i=1}^n \eta(X_i)
 \right) \in \mathcal S\right)\\
    \leq &\ \limsup_{n\rightarrow\infty}\frac{1}{n}\log\mathbb P\left(\left(\Pi(n^{-1/2} (\a^{(n)})^\trans X^{(n)}),
\frac{1}{n} \sum_{i=1}^n \eta(X_i)
 \right) \in \mathcal C\right)\\
    \leq &\ -\inf_{\left(\w,r,s\right)\in\mathcal C}I\left(\w,r,s\right)\\
    < &\ -\inf_{\left(\w,r,s\right)\in\mathcal S}I\left(\w,r,s\right)+\varepsilon.
\end{split}
\end{align*}
Since $\varepsilon>0$ is arbitrary, for every $\a \in \widehat \Omega$ the upper bound \eqref{eqn:ldp-upper-bound} holds for all closed sets $\mathcal S$. The proof is complete.
\end{proof}

\begin{proof}[Proof of \Cref{l:ldpupper}]
We begin by introducing some useful notation. Given positive integers $k \ge m$, let $\mathcal J_{k,m}$ denote the set of injective mappings from $\{1,\dots, m\}$ to $\{1,\dots , k \}$. For $\x\in \R^k$ and $\tau \in \mathcal J_{k,m}$, we define $\tau (\x) \in \R^k$ as the vector
\begin{equation}\label{d:tau}
\tau (\x) = (x_{\tau(1)}, x_{\tau(2)}, \dots, x_{\tau(m)}, x_{m+1}, x_{m+2}, \dots)
\end{equation}
whose first $m$ coordinates are $x_{\tau(1)}, x_{\tau(2)}, \dots, x_{\tau(m)}$, with the remaining coordinates taken in their original order.

By \Cref{l:topologicalpropertyofX}(3) and \Cref{l:from-compact-to-closed}, to prove the upper bound \eqref{eqn:ldp-upper-bound} for $\sigma$-a.e. sequence $\a \in \V$, it suffices to prove \eqref{eqn:ldp-upper-bound} for compact sets.
For the remainder of this proof, we fix a compact set $\mathcal C$ and a corresponding constant $A$ such that $\max\left(r,s\right)\leq A$ for all $\left(\w,r,s\right)\in\mathcal C$.
The proof consists of two further steps.\\

\noindent \emph{Step 1: Reduction to a finite number of coordinates.}
We start by establishing some general properties of the set $\mathcal C$.
Set
\begin{equation}\label{313}
\alpha =  \alpha(\C) \coloneqq  \inf_{(\w,r,s) \in \C }  I(\w,r,s).
\end{equation}
Let $\delta >0$ be a parameter and define
\be\label{314}
\alpha_\delta = \min(\alpha, \delta^{-1}) - \delta.
\ee
Then from the definition of $\alpha$ in \eqref{ratefunction}, and \eqref{lambdastar}, we see that for every $(\w,r,s )\in \C$, there exists a triple
\begin{equation}\label{abcdef}
(\balpha,b,c) = \big(\balpha(\w,r,s, \delta),b(\w,r,s, \delta), c(\w,r,s, \delta\big) \in \ell^2  \times \R_+ \times \R_+
\end{equation}
such that
\begin{equation}\label{319}
\langle \balpha, \w \rangle + bt + cs  - \Lambda(\balpha,b,c )  > \alpha_\delta,
\end{equation}
with $t = \sqrt{r^2 - \|\w\|^2_2 }$.
By \eqref{monotonic}, there exists $m=m(\w,r,s,\delta)\in \N$ such that
\begin{equation}
\langle \balpha_\head , \w_\head  \rangle +  b t_m + c s - \Lambda_m(\balpha_\head, b, c)  > \alpha_\delta,
\end{equation}
where 
$t_m=\sqrt{r_1^2 - \|\w_\head\|_2^2 }$, and $\balpha_\head = \balpha_{\head m}$.

Consider the open set of near-optimizers
\begin{equation}\label{bdef}
B_{\w,r,s}
=
\left\{
(\hatw,t,s) \in \R^m \times \R_+ \times \R_+: \balpha_\head \cdot \hatw + bt + cs -
\Lambda_m(\balpha_\head, b, c) > \alpha_\delta
\right\},
\end{equation}
where $(\balpha,b,c)$ are chosen as in \eqref{abcdef}, and for $(\w,r,s ) \in \X \times \R_+$, define the set
\begin{equation}
\mathcal W_{\w,r,s} = \left\{
(\w,r, s) \in \mathcal X \times \R_+ : \left( \w_\head, \sqrt{r^{2} - |\w_{\head}|^2 }, s  \right) \in B_{\w,r,s}, r < A, s < A
\right\}.
\end{equation}
This is an open set that contains $(\w,r,s)$, by \eqref{319}.
Since  $\C$  is compact, there exists a collection of finitely many open sets 
$\mathcal W^{(j)} = \mathcal W_{\w_j,r_j,s_j}$, $j=1, \dots, m_0$, with $(\w_j,r_j,s_j) \in \mathcal X \times \R_+$, such that 
\begin{equation}\label{compactcover}
\C \subset \bigcup_{j=1}^{m_0} \mathcal W^{(j)}.
\end{equation}
Let $B^{(j)} = B_{ \w_j, r_j, s_j}$, and let $(\balpha^{(j)},b^{(j)},c^{(j)})$ be the triple in \eqref{abcdef} associated with $(\w_j, r_j, s_j)$.

Next, we write $k = k_n$, suppressing the dependence on $n$, and recall the definition of $\mathcal J_{k,m}$ made before \eqref{d:tau}. Given $\w\in \R^k$, we consider the map $\tau^\w \in \mathcal J_{k,m}$ such that the corresponding induced map $\tau^\w \colon \R^k \rightarrow \R^m$ defined in \eqref{d:tau} satisfies $\tau^\w (\w) _{\head m}= [ \w ]_{\head m}$.
Then we see that for any $j=1, \dots, m_0$,
\begin{align}\label{bhat0} &\left\{ (\w, s) \in \R^k \times \R_+ : ( [\w], \|\w\|_2  , s ) \in \mathcal W^{(j)} \right\}\\
&\subset \left\{ (\w, s) \in \R^k \times \R_+ : ([\w]_\head , \| [\w]_\tail\|_2, s )\in B^{(j)} , \| \w \|_2\le A\right\}\notag \\
&=
\left\{  (\w, s) \in \R^k \times \R_+ : (\tau^\w (\w)_\head , \| \tau^\w (\w) _\tail\|_2, s )\in B^{(j)}, \| \w \|_2 \le A  \right\}.
\end{align}

Now, for $j=1, \dots, m_0$, define, for any $\tau \in \mathcal J_{k,m}$,
\begin{equation*}
\widehat B_{k, \tau}^{(j)} \coloneqq
\left\{ (\w,s) \in \R^k \times \R_+ :
( \tau (\w)_\head, \| \tau (\w)_\tail\|_2 , s) \in B,
s\le A, \| \w \|_2 \le A
\right\},
\end{equation*}
and note that \eqref{bhat0} implies 
\begin{equation}\label{bhat}
\{ (\w, s) \in \R^k \times \R_+ : ([\w], \|\w\|_2 , s ) \in \mathcal W \} \subset \widehat B^{(j)}_k
\coloneqq \bigcup_{\tau \in \mathcal J_{k,m}}  \widehat B^{(j)}_{k, \tau}.
\end{equation}

To apply these properties of $\mathcal C$, we introduce the notation
\begin{equation}\label{def-of-W-L}
W= W^{(n)} \coloneqq  n^{-1/2} (\a^{(n)})^\trans X^{(n)}, \qquad L  = L^{(n)} \coloneqq \frac{1}{n} \sum_{i=1}^n \eta(X_i).
\end{equation}
By \eqref{bhat0} and \eqref{bhat}, if $([W],\|W \|_2, L)  \in \mathcal W^{(j)}$, then $(W,L) \in \widehat B_k^{(j)}$.
Combining this observation with \eqref{compactcover} and a union bound, we find
\be
\P \left(( \Pi(W), L)  \in \mathcal{C} \right) \le \sum_{j=1}^{m_0}
\P \left(( \Pi(W), L)  \in \mathcal{V}^{(j)} \right) \le \sum_{j=1}^{m_0} \P\left( (W,L) \in \widehat B^{(j)}_k\right).\label{penultimate}
\ee

It therefore suffices to bound the probabilities  $
\P( (W,L) \in \widehat B^{(j)}_k)$. For concreteness, we focus on $
\P( (W,L) \in \widehat B^{(1)}_k)$. The argument for the other terms is analogous.\\

\noindent \emph{Step 2: Probability bound for $\widehat B^{(1)}_k$.} 
Set $\widehat B_k = \widehat B^{(1)}_k$ and $(\balpha, b,c) = (\balpha^{(1)}, b^{(1)}, c^{(1)})$. 
From \eqref{bhat}, we have
\begin{equation}\label{Hsummand}
\one_{\{(W, L) \in \widehat B_k\}}
\le
\sum_{\tau \in \mathcal{J}_{k,m}} \one_{\{ (\tau (W)_\head, \| \tau (W)_\tail\|_2, L) \in B\}}\ \one_{\{\| W \|_2 \le A \} } \one_{\{ L \le A\} }.
\end{equation}
Using the quantity
\begin{equation}\label{Habc}
H_{\balpha,b,c }(B) \coloneqq \inf_{(\hatw,t, s) \in B} \left\{ \langle \balpha_\head , \hatw \rangle + bt + cs\right\}
\end{equation}
to bound the first indicator in the summand in \eqref{Hsummand}, we obtain
\begin{multline}\label{firstsum}
\one_{\{(W,L) \in \widehat B_k\}}
\le
\exp\big( - n H_{\balpha,b,c}(B)\big) \\\times
\sum_{\tau \in \mathcal{J}_{k,m}} \exp\big(n (\balpha_\head \cdot \tau (W)_\head + b\| \tau (W)_\tail\|_2 + c L )\big) \one_{\{
\| W \|_2 \le A \} }\one_{\{ L  \le A \}}.
\end{multline}

With a view toward bounding the right-hand side of \eqref{firstsum}, we first let $\kappa>0$ be a parameter. 
We define
\begin{equation*}
\mathcal G \coloneqq \left\{ \v' \in \R^{k-m} : \| \v'\|_2 = b
\right\},
\end{equation*}
and let $\operatorname{Cov}(\mathcal G)$ denote a minimal cover of the set $\mathcal G$
by open $\ell^2$ balls of radius $b\kappa$ with centers in $\mathcal G$.
We let $\mathcal U'$ denote the set of the centers of balls in $\operatorname{Cov}(\mathcal G)$, and define
\begin{equation}\label{d:bigU}
\mathcal U \coloneqq \bigcup_{\tau \in \mathcal{J}_{k,m}}
\left\{
\v \in \R^k : \tau (\v)_\head = \balpha_\head, \tau (\v)_\tail \in \mathcal U' \cup \{0\}
\right\}.
\end{equation}
A standard volume estimate (see \cite[Proposition~4.2.12]{Ver18}) shows that
\[| \mathcal U'| \le (1 + 2/\kappa)^{k -m} \le (1 + 2/\kappa)^{k}.
\]
Together with the bound $|\mathcal J_{k,m} | \le k^m $, this implies that there exist $\hat \eps >0$ and $n_0=n_0( \{k_n\}_\sequence{n}, \kappa) \in \N$ such that for all $n \ge n_0$,
\begin{align}\label{volumeestimate}
\begin{split}
    |\mathcal U|\le \exp\big( (1-\hat \eps) n\big).
\end{split}
\end{align}

We now show that we can approximate the sum in \eqref{firstsum} by a sum over elements in $\mathcal U$. Fix $\tau \in \mathcal{J}_{k,m}$.
We first consider the event where $\| W \|_2 \le A $ and $\|\tau (W)_\tail\|_2  > \kappa$. Since
\[
U \coloneqq  b \frac{ \tau (W)_\tail}{\| \tau (W)_\tail \|_2 }
\] lies in $ \mathcal G$, there exists $\v \in \mathcal U$ 
such that $\tau (\v)_\head =\balpha_\head$ and $\|U - \tau (\v)_\tail \|_2 < b\kappa$.
Then
\begin{align}
&\balpha_\head \cdot \tau (W)_\head + b \| \tau (W)_\tail \|_2 \notag \\
&= \tau (\v)_\head \cdot \tau (W)_\head + U\cdot\tau (W)_\tail \notag \\
&= \tau (\v)_\head \cdot \tau (W)_\head
+ \tau(\v)_\tail \cdot \tau (W)_\tail
+ ( U - \tau(\v)_\tail) \cdot \tau (W)_\tail\notag  \\
&\le \v\cdot W + A b \kappa,\label{Wupper1}
\end{align}
where to obtain the last inequality, we used the Cauchy--Schwarz inequality, and the estimates $\|\tau(W)_\tail\|_2 \le \| W \|_2 \le A$ and $\| U - \tau(\v)_\tail \|_2 \le b \kappa$. 
On the other hand, on the event $\|W \|_2 \le A $ and $\|\tau (W)_\tail\|_2  \le  \kappa$, a direct bound shows that for any $\v$ such that $\tau (\v)_\head = \balpha_{\head}$ and $\tau (\v)_\tail = 0$ (which in particular satisfies $\v \in \mathcal U$), we have
\begin{equation}\label{Wupper}
\balpha_\head \cdot \tau (W)_\head + b \| \tau (W)_\tail \|_2
= \tau (\v)_\head \cdot  \tau (W)_\head
+ b \| \tau (W)_\tail \|_2
\le \v\cdot W + b \kappa.
\end{equation}
The upper bounds \eqref{Wupper1} and \eqref{Wupper} together with \eqref{firstsum} show that
\begin{equation}
\P ( (W,L)\in \widehat B_k) \le \exp( - n (H_{\u,b,c}(B) - (A+1) b \kappa ) ) \left| \mathcal J_{k,m} \right|
\sum_{\v \in \mathcal U } \E\left[ \exp(n (\v \cdot W  + c L ) ) \right].
\end{equation}
Taking logarithms, we find
\begin{align}\label{boundmax}
n^{-1} \log \P ( (W,L) \in \widehat B_k) \le&  -  (H_{\u,b,c}(B) - (A+1) b \kappa)
+ n^{-1} \log \left| \mathcal J_{k,m} \right| | \mathcal U |\\  & +
 \max_{\v \in \mathcal U }  n^{-1} \log \E \left[\exp(n (\v \cdot W  + c L ) )\right].\notag
\end{align}

Recalling the definitions of $W$ and $L$ in \eqref{def-of-W-L}, the fact that the variables $\{X_i \}_\sequence{i}$ are i.i.d., the definition of $\overline{\Lambda}$ in \eqref{asprev}, and the definition of $F_n$ in \eqref{eqn:quenched-log-moment}, we have
\begin{align*}
n^{-1} &\log \E \left[\exp(n (\v \cdot W  + c L ) )\right] \\
&= n^{-1}\log \E \left[\exp\left(n^{1/2} \sum_{i=1}^n  X_i \left( \sum_{j=1}^{k_n}   v_j a^{(n)}_{ji}  \right) + c\sum_{i=1}^n \eta(X_i) \right)\right]\\
&=n^{-1}\sum_{i=1}^n \log \E \left[\exp\left(  X_i
\langle \v, \sqrt{n} \a_i^{(n)} \rangle
+ c\eta(X_i)  \right) \right]\\
&= F_n(\v,\a^{(n)}, c).
\end{align*}
Further, by \eqref{d:bigU}, each $\v \in \mathcal U$ satisfies either $\| \v \|^2 = \|\bm\u_\head\|_2^2$ or $\| \v \|^2 = \|\bm\u_\head\|_2^2 + b^2$; that is, the $\ell^2$ norms of all vectors $\v \in \mathcal U$ take one of two values. 
Recalling the previous estimate on $|\mathcal U|$ from \eqref{volumeestimate}, taking the limit superior of both sides of \eqref{boundmax},  using the last display and \Cref{l:lln} to control the maximum therein, and recalling $k_n/n \rightarrow 0$ from the assumption \eqref{knhypo},
we obtain
\begin{align*}
\limsup n^{-1} \log \P \big( (W,L) \in \widehat B_k\big)
&\le -H_{\bm\u,b,c}(B) + (A+1) b \kappa + \max(\Lambda_m (\bm\u_\head,b,c),\Lambda_m (\bm\u_\head,0,c) )\\
&\le -H_{\bm\u,b,c}(B) + (A+1) b \kappa + \Lambda_m (\bm\u_\head,b,c) .
\end{align*}
In the last line, we used Jensen's inequality (as in \eqref{monotonic}) to simplify the maximum.
By the definitions of the sets $H_{\balpha, u ,c}$ and  $B$ in \eqref{Habc} and \eqref{bdef}, respectively, this becomes
\begin{align*}
\limsup n^{-1} \log \P \big( (W ,L)\in \widehat B_k\big)
&\le  - \alpha_\delta  - \Lambda_m (\bm\u_\head, b,c) + (A+1) b \kappa + \Lambda_m(\bm\u_\head,b,c)\\
&\le  - \alpha_\delta + (A+1) b \kappa.
\end{align*}
Since this bound holds for all $\kappa >0$, it implies that
\begin{equation}
\limsup n^{-1} \log \P \big( (W,L) \in \widehat B_k\big)
\le - \alpha_\delta.
\end{equation}
Finally, since this holds for all $\delta >0$, we have by \eqref{313} and \eqref{314} that
\begin{equation}
\limsup n^{-1} \log \P \big( (W,L) \in \widehat B_k\big)
\le - \alpha = \inf_{(\w,r,s) \in \mathcal C} I(\w, r, w).
\end{equation}
Inserting this and the analogous bounds for all $B_k^{(j)}$, $j=1,\dots, m_0$, into \eqref{penultimate} completes the proof.
\end{proof}

\section{Lower Bound}\label{s:lower}

\subsection{Preliminary Lemmas} \label{s:lowerpreliminary}
We begin by stating some preliminary results.
Given a metric space $(\mathcal T,d)$, $t_0 \in \mathcal T$ and $r \ge 0$, set
\be\label{eqn:ball-def}
B(t_0, r) \coloneqq \{ t \in \mathcal T : d(t_0, t) < r\}.
\ee
Given $m \in \N$ and $\mathbf v \in \ell^2$, we recall the quantities $\mathbf v_{\tail m}$ and $\mathbf v_{\head m}$ defined in \eqref{mtail}. Recalling the constant $T$ from \Cref{assume3}, we set
\be
\mathcal D_{m}
\coloneqq \{ (\v, b, c) : (\v,b) \in \R^m \times \R, c \in (-\infty, T) \}.
\ee
Let $\Lambda^*_m$ denote the Legendre transform of the function $\Lambda_m$ from \eqref{lambdam}, defined by
\be\label{lambdamstar}
\Lambda_m^*(\hat  \w, t,s)\coloneqq
\sup_{(\hat \balpha,b,c) \in \R^m \times \R \times \R}
\left\{
\langle \hat \balpha,  \hat \w \rangle + bt + cs  - \Lambda_m (\hat \balpha,b,c ) \right\}.
\ee
We denote the domain of $\Lambda_m^*$ by
\begin{equation}
\label{def-of-Lambdam-star}
\mathcal D_m^*\defeq{}\left\{(\hatw,r,s):\Lambda_m^*(\hatw,r,s)<\infty\right\}.
\end{equation}
Finally, let $\nabla \Lambda_m \in \R^{m+2}$ denote the gradient of 
$\Lambda_m$:
\begin{multline}
\nabla \Lambda_m (\v, b, c)
= \\\left(
\partial_{v_1} \Lambda_m (\v, b, c),
\partial_{v_2} \Lambda_m (\v, b, c),
\dots,
\partial_{v_m} \Lambda_m (\v, b, c),
\partial_{b} \Lambda_m (\v, b, c),
\partial_c \Lambda_m (\v, b, c)
\right).
\end{multline}

The first result pertains to the existence of a point with useful properties in any open set $\mathcal O\subset\mathcal X\times \R$. Its proof is relegated to \Cref{s:top-lem}.
\begin{lem}\label{l:additional-good-properties}
Fix $\eps>0$, an open set $\mathcal O \subset \mathcal X \times \R$, and a point $(\mathbf w,r,s) \in \mathcal O$ such that $I(\mathbf w,r,s)< \infty$.
Then there exist $\bar \kappa >0$, $m_0 \in \N$ and $(\overline{\mathbf w},\bar t,\bar s)\in\mathbb R^{m_0}\times\mathbb R_+\times\mathbb R_+$ such that the following claims hold:
\begin{enumerate}
\item $(\bar{\mathbf w},\bar t,\bar s)=\nabla\Lambda_{m_0}(\bar{\mathbf v},\bar b,\bar c)$ for some $(\bar{\mathbf v},\bar b,\bar c)\in\mathcal D_{m_0}$;
\item $|\bar{\mathbf w}_j|>|\bar{\mathbf w}_{j+1}|>0,$
for all $1\le j \le m_0 -1$;
\item $\Lambda_{m_0}^*\left(\bar{\mathbf w},\bar t,\bar s\right)\leq I(\mathbf w,r,s)+\varepsilon$;
\item For all $(\mathbf w',r',s') \in \X \times \R_+$, if
\begin{equation}\label{claim:2}
\left(\mathbf w'_{\head m_0},\sqrt{r'^2-\|\mathbf w'_{\head m_0}\|_2^2},s'\right)\in B\left((\overline{\mathbf w},\bar t,\bar s),\bar \kappa/2\right),
\end{equation}
then $(\mathbf w',r',s')\in \mathcal O$.
\end{enumerate}
\end{lem}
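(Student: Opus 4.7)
The plan is to construct $(\bar{\mathbf w}, \bar t, \bar s)$ in four stages: first, localize using openness of $\mathcal O$; second, truncate $\mathbf w$ to a finite-dimensional approximation; third, perturb to enforce the strict ordering and non-vanishing in claim (2) and to place the triple in the interior of the domain of $\Lambda_{m_0}^*$; fourth, invoke essential smoothness of $\Lambda_{m_0}$ to realize it as a gradient, securing claim (1). Claim (3) is controlled by the following structural observation that makes the finite-dimensional and infinite-dimensional Legendre transforms compatible: by independence of $g_0,g_1,g_2,\ldots$, for any $\balpha\in\ell^2$ one has $\Lambda(\balpha,b,c) = \E[\overline{\Lambda}(\sqrt{\|\balpha\|_2^2+b^2}\,g,c)]$, and the same formula holds for $\Lambda_m$ (applied to $\balpha=(\hat\balpha^\trans,0,0,\ldots)$). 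A Cauchy--Schwarz maximization then shows that $\Lambda_m^*(\hat{\mathbf w},t,s)$ depends on $(\hat{\mathbf w},t)$ only through $\sqrt{\|\hat{\mathbf w}\|_2^2+t^2}$, and likewise for $\Lambda^*$. Choosing $(\bar{\mathbf w},\bar t)$ on the same level set as $(\mathbf w,\sqrt{r^2-\|\mathbf w\|_2^2})$ and $\bar s=s$ therefore matches $\Lambda_{m_0}^*(\bar{\mathbf w},\bar t,\bar s)$ exactly to $I(\mathbf w,r,s)$, so claim (3) only needs to absorb the small loss from the subsequent perturbation.

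Concretely, I would fix $\delta>0$ so that the ball $B((\mathbf w,r,s),\delta)$ lies inside $\mathcal O$, and then pick $m_0$ large enough that $|w_{m_0+1}|<\delta/10$, which is possible because $\mathbf w\in\ell^2$. The starting candidate is $(\mathbf w_{\head m_0},\sqrt{r^2-\|\mathbf w_{\head m_0}\|_2^2},s)$; I then perturb slightly within the aforementioned level set to produce $\bar{\mathbf w}\in\R^{m_0}$ with $|\bar w_1|>\cdots>|\bar w_{m_0}|>0$ and $\bar t>0$. Essential smoothness of $\overline{\Lambda}$ from \Cref{assume3} passes to $\Lambda_{m_0}$ through the Gaussian integral representation above, so by standard Legendre duality the range of $\nabla\Lambda_{m_0}$ is precisely the interior of the domain of $\Lambda_{m_0}^*$; after a further small perturbation if necessary, $(\bar{\mathbf w},\bar t,\bar s)$ lies in this interior and hence admits the gradient representation $(\bar{\mathbf w},\bar t,\bar s)=\nabla\Lambda_{m_0}(\bar{\mathbf v},\bar b,\bar c)$ demanded by claim (1).

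The step I expect to be the main obstacle is claim (4). The hypothesis \eqref{claim:2} controls $\mathbf w'_{\head m_0}$ in the Euclidean topology and constrains the tail mass $r'^2-\|\mathbf w'_{\head m_0}\|_2^2$, but says nothing directly about $\mathbf w'_{\tail m_0}$, whereas the metric on $\X$ requires $\ell^\infty$-closeness to $\mathbf w$ across all coordinates. I would propagate as follows: head closeness is routine from the triangle inequality once $\bar{\mathbf w}\approx \mathbf w_{\head m_0}$ and $\bar t\approx\sqrt{r^2-\|\mathbf w_{\head m_0}\|_2^2}$; for the tail, the ordering constraint defining $\X$ yields $|w'_i|\le |w'_{m_0}|\le |\bar w_{m_0}|+\bar\kappa/2$ for every $i>m_0$, and $|\bar w_{m_0}|$ is close to $|w_{m_0}|$, which was made small at the truncation step. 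A parallel computation, using the ball condition on $\sqrt{r'^2-\|\mathbf w'_{\head m_0}\|_2^2}$ together with $\bar t\approx\sqrt{r^2-\|\mathbf w_{\head m_0}\|_2^2}$, controls $|r'-r|$ and $|s'-s|$. Taking $\bar\kappa$ small enough then places $(\mathbf w',r',s')$ inside $B((\mathbf w,r,s),\delta)\subset \mathcal O$. The delicate point motivating the truncation index is that this $\ell^\infty$-tail control hinges on the supremum $|w_{m_0}|$ being small, which is strictly stronger than $\|\mathbf w_{\tail m_0}\|_2$ being small and must be read off from the coordinate-wise decay of $\mathbf w$ rather than from its $\ell^2$ tail mass.
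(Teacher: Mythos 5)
Your proposal follows the same skeleton as the paper's proof (truncate to $m_0$ coordinates, perturb to enforce strict ordering and to land in the relative interior of $\mathcal D_{m_0}^*$, invoke essential smoothness and the Rockafellar inclusion $\operatorname{ri}(\mathcal D_{m_0}^*)\subset\nabla\Lambda_{m_0}(\mathcal D_{m_0})$ for claim (1), and handle claim (4) with a topological argument), but you reach claim (3) by a genuinely different route. You observe that both $\Lambda^*$ and $\Lambda_m^*$ factor through $(\hat\w,t)\mapsto\sqrt{\|\hat\w\|_2^2+t^2}$ because of the rotational symmetry of the Gaussian in \eqref{d:g0}, so the truncated Legendre transform at $(\w_{\head m_0},\sqrt{r^2-\|\w_{\head m_0}\|_2^2},s)$ is \emph{exactly} $I(\w,r,s)$. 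That observation is correct (the paper uses it, but only in the proof of Corollary \ref{c:convex}); the paper's own proof of this lemma instead obtains the weaker but sufficient inequality $\Lambda_{m_0}^*(\w_{\head m_0},\sqrt{r^2-\|\w\|_2^2},s)\le I(\w,r,s)$ via the Jensen step in Lemma \ref{l:ratefunction} (Step 1), and then absorbs the loss by moving along a segment towards a strictly ordered interior point $(\tilde\w,\tilde t,\tilde s)$, using convexity of $\Lambda_{m_0}^*$ for continuity along the segment. Your version makes the claim (3) bookkeeping cleaner, at the cost of needing two perturbations (one tangential, one transversal to the level set).

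Three things need tightening. First, there is an off-by-one: you choose $m_0$ so that $|w_{m_0+1}|<\delta/10$ but your tail control for claim (4) uses $|w_{m_0}|$ small; since $\w$ is ordered and in $\ell^2$ you should simply pick $m_0$ so that $|w_{m_0}|<\delta/10$. Second, the ``further small perturbation if necessary'' to reach the interior of $\mathcal D_{m_0}^*$ is under-justified: you need to know $\operatorname{int}(\mathcal D_{m_0}^*)\neq\emptyset$ (the paper proves this via the positive-definiteness of $\nabla^2\Lambda_{m_0}(\mathbf 0,0,0)$ and the inverse function theorem), and you need the perturbation to travel along a segment towards an interior point so that $\Lambda_{m_0}^*$ is continuous along it (lower semicontinuity alone would allow a jump at the boundary); without this, the ``small loss'' needed for claim (3) is unsupported. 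Third, ``the range of $\nabla\Lambda_{m_0}$ is precisely the interior of the domain of $\Lambda_{m_0}^*$'' overstates the standard duality result, which only gives $\operatorname{ri}(\mathcal D_{m_0}^*)\subset\nabla\Lambda_{m_0}(\mathcal D_{m_0})\subset\mathcal D_{m_0}^*$; fortunately only the first inclusion is used. All three points are fillable with tools already in the paper, so the proposal is essentially correct modulo these details.
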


We now present the second result, whose proof is deferred to \Cref{proof:lemma-small-ball}. Recall the definitions of $W^{(n)}$ and $L^{(n)}$ from \eqref{def-of-W-L}.
\begin{lem}\label{lemma:small-ball-probability}
Fix $m\in\mathbb N$, $\kappa>0$, and $(\hatu,b,c)\in\mathcal D_{m}$. Define $(\hatw,t,s)\coloneqq\nabla\Lambda_m(\hatu,b,c)$ and $B\coloneqq B\left((\hatw,t,s),\kappa \right)$. Then, for $\sigma$-a.e. $\mathbf a=(\a^{(1)},\a^{(2)},\cdots)$, the sequence $\{W^{(n)},L^{(n)}\}$, $n\in\mathbb N$, satisfies
\begin{align}\label{eqn:small-ball-probability}
\begin{split}
    \liminf _{n \rightarrow \infty} \frac{1}{n} \log \P \left(\left(W_{\head m}^{(n)},\left\|W_{\tail m }^{(n)}\right\|_{2},L^{(n)}\right) \in B,\left\|W_{ \tail m }^{(n)}\right\|_{\infty} \leq 2 m^{-1 / 2}\right)\geq-\Lambda_m^*(\hatw,t,s).
\end{split}
\end{align}
\end{lem}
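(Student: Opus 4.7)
The plan is to establish this small-ball lower bound via exponential tilting. Since $(\hat{\bm w},t,s)=\nabla\Lambda_m(\hat{\bm u},b,c)$, the natural tilt parameters are $(\hat{\bm u},b,c)$. The event constrains only the $\ell^2$-magnitude $\|W^{(n)}_{\tail m}\|_2$ (not the direction of the tail), and also requires $\|W^{(n)}_{\tail m}\|_\infty\le 2m^{-1/2}$. To realize both simultaneously, spread the $b$-mass uniformly: let $\bm q^{(n)}\in\R^{k_n-m}$ have entries $q_j^{(n)}=1/\sqrt{k_n-m}$, and define $\bm v^{(n)}\in\R^{k_n}$ by $v_j^{(n)}=\hat u_j$ for $j\le m$ and $v_j^{(n)}=bq_{j-m}^{(n)}$ for $j>m$, so $\|\bm v^{(n)}\|_2=\sqrt{\|\hat{\bm u}\|_2^2+b^2}\eqdef D$ is constant in $n$. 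With $y_i^{(n)}\coloneqq\langle\bm v^{(n)},\sqrt n\a_i^{(n)}\rangle$, introduce the product tilt
\begin{equation*}
\frac{d\tilde\Pn}{d\Pn}(X)\coloneqq\prod_{i=1}^n\exp\!\big(y_i^{(n)}X_i+c\,\eta(X_i)-\overline\Lambda(y_i^{(n)},c)\big),
\end{equation*}
under which the $X_i$ remain independent and uniformly sub-Gaussian by \Cref{assume4}, with $\tilde\E[X_i]=\partial_1\overline\Lambda(y_i^{(n)},c)$ and $\tilde\E[\eta(X_i)]=\partial_2\overline\Lambda(y_i^{(n)},c)$.

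The main step is to show that the event $A$ in the lemma has $\tilde\Pn(A)\to 1$. Applying \Cref{l:lln} (and its extension \Cref{r:explanatoryremark}) to the singleton $\{\bm v^{(n)}\}$ with test functions $\overline\Lambda(\cdot,c)$ and $\partial_2\overline\Lambda(\cdot,c)$ gives, for $\sigma$-a.e.\ $\a$, $F_n(\bm v^{(n)},\a^{(n)},c)\to\Lambda_m(\hat{\bm u},b,c)$ and $\tilde\E[L^{(n)}]\to\partial_c\Lambda_m=s$. A Stein (Gaussian integration-by-parts) identity on $\Lambda_m$ gives $\hat w_j=\hat u_j K$ and $t=bK$ with $K\coloneqq\E[\partial_1^2\overline\Lambda(Dg,c)]$. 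For the projection means, I use the identity $\tilde\E[W_j^{(n)}]=\partial_{v_j} F_n(\bm v,\a^{(n)},c)|_{\bm v=\bm v^{(n)}}$; since $F_n(\bm v)\to\Phi(\|\bm v\|_2)\coloneqq\E[\overline\Lambda(\|\bm v\|_2 g,c)]$ with appropriate uniformity in $\bm v$ near $\bm v^{(n)}$ (by applying \Cref{l:lln} to a countable dense family of perturbations of $\bm v^{(n)}$ and invoking smoothness of both sides via \Cref{assume4}), the derivative passes through the limit to yield $\tilde\E[W_j^{(n)}]\to v_j^{(n)} K$; this equals $\hat w_j$ for $j\le m$ and gives $\tilde\E[\langle\bm q^{(n)},W^{(n)}_{\tail m}\rangle]\to bK=t$. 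Variances are $O(1/n)$ by the column-orthonormality $\sum_i(a_{ij}^{(n)})^2=1$ and uniform boundedness of $\partial_1^2\overline\Lambda$. For the tail $\ell^2$ norm, decompose
\begin{equation*}
\tilde\E\big[\|W^{(n)}_{\tail m}\|_2^2\big]=\tfrac{1}{n}\sum_{i=1}^n\partial_1^2\overline\Lambda(y_i^{(n)},c)\!\sum_{j>m}(a_{ij}^{(n)})^2+\|\tilde\E[W^{(n)}_{\tail m}]\|_2^2,
\end{equation*}
whose first summand is $O(k_n/n)\to 0$ and whose second converges to $\sum_{j>m}(v_j^{(n)} K)^2=b^2K^2=t^2$; a matching variance bound gives $\|W^{(n)}_{\tail m}\|_2\to t$ in tilted probability. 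The $\ell^\infty$ bound follows because $\tilde\E[W_j^{(n)}]\to v_j^{(n)} K=O(1/\sqrt{k_n-m})$ uniformly in $j>m$, and each $W_j^{(n)}$ is $O(1/\sqrt n)$-sub-Gaussian under $\tilde\Pn$ (by \Cref{assume4}), so a union bound yields $\|W^{(n)}_{\tail m}\|_\infty=O(1/\sqrt{k_n-m})+O(\sqrt{\log k_n/n})\to 0$, well below $2m^{-1/2}$ for large $n$.

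The proof concludes by a direct change-of-measure estimate. Fix $\delta\in(0,\kappa)$ and let $A_\delta'\subset A$ additionally require $\|W^{(n)}_{\head m}-\hat{\bm w}\|_2<\delta$, $|\langle\bm q^{(n)},W^{(n)}_{\tail m}\rangle-t|<\delta$, $|L^{(n)}-s|<\delta$, and $|F_n(\bm v^{(n)},\a^{(n)},c)-\Lambda_m(\hat{\bm u},b,c)|<\delta$; by the previous step, $\tilde\Pn(A_\delta')\to 1$. Using $\sum_i y_i^{(n)}X_i=n\langle\hat{\bm u},W^{(n)}_{\head m}\rangle+nb\langle\bm q^{(n)},W^{(n)}_{\tail m}\rangle$, on $A_\delta'$ the log density satisfies
\begin{equation*}
\log\tfrac{d\Pn}{d\tilde\Pn}=-n\langle\hat{\bm u},W^{(n)}_{\head m}\rangle-nb\langle\bm q^{(n)},W^{(n)}_{\tail m}\rangle-cnL^{(n)}+nF_n\ge -n\Lambda_m^*(\hat{\bm w},t,s)-C\delta n,
\end{equation*}
using the duality identity $\Lambda_m^*(\hat{\bm w},t,s)=\langle\hat{\bm u},\hat{\bm w}\rangle+bt+cs-\Lambda_m(\hat{\bm u},b,c)$ at a gradient point, with $C=C(\hat{\bm u},b,c)<\infty$. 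Integrating,
\begin{equation*}
\Pn(A)\ge\Pn(A_\delta')\ge\tilde\Pn(A_\delta')\,\exp(-n\Lambda_m^*(\hat{\bm w},t,s)-C\delta n),
\end{equation*}
so $\liminf_n n^{-1}\log\Pn(A)\ge-\Lambda_m^*(\hat{\bm w},t,s)-C\delta$; sending $\delta\to 0$ finishes.

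The main technical obstacle lies in the tilted analysis of the $(k_n-m)$-dimensional tail $W^{(n)}_{\tail m}$ in Step~2, specifically verifying $\|W^{(n)}_{\tail m}\|_2\to t$ and $\|W^{(n)}_{\tail m}\|_\infty\to 0$ under $\tilde\Pn$. Both go beyond the scalar-average scope of \Cref{l:lln}, and are reduced to scalar statements along $\bm q^{(n)}$ (for the $\ell^2$ norm) or to a coordinatewise sub-Gaussian union bound (for the $\ell^\infty$ norm), with the second-moment error $O(k_n/n)$ vanishing precisely because of the sublinearity $k_n/n\to 0$. A secondary concern is ensuring that the countably many almost-sure statements coming from \Cref{l:lln} hold on a common $\sigma$-full subset, handled by a countable intersection.
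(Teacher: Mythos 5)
Your overall strategy---exponential tilting at $(\hatu,b,c)$, showing the tilted probability of the event tends to $1$, and concluding via the Radon--Nikodym derivative and the duality identity $\Lambda_m^*(\hatw,t,s)=\langle\hatu,\hatw\rangle+bt+cs-\Lambda_m(\hatu,b,c)$---matches the paper's. The structural difference is in how the ``$b$-mass'' of the tilt vector is spread over the tail coordinates: you distribute it uniformly over all $k_n-m$ coordinates, whereas the paper distributes it over a \emph{fixed, finite} number $M=\lceil(t+1)^2m\rceil$ of coordinates (see \eqref{eqn:def-of-M}--\eqref{eqn:def-of-u}). This is not a cosmetic difference: the paper's choice of $M$ both makes the target tail entries $tM^{-1/2}<m^{-1/2}$ (so the $\ell^\infty$ constraint is automatically satisfied after a small perturbation) and, crucially, it cleanly decouples the tilted coordinates $j\le m+M$ (finitely many, each converging via Lipschitz concentration and Gaussian moment matching, \Cref{lemma:second-third-concentration}) from the untilted coordinates $j>m+M$, whose $\ell^2$ mass $\sum_{\ell>m+M}(f_\ell^{(n)})^2\to0$ is handled by a separate geometric argument (\Cref{lemma:second-third-convergence3}). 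That argument hinges on the fact that, because $u^{(n)}_\ell=0$ for $\ell>m+M$, the tilt weights $y_i^{(n)}$---and hence the vector $H=(h(y_1^{(n)}),\dots,h(y_n^{(n)}))$---depend on $\a^{(n)}$ only through its first $m+M$ columns. Conditionally, the remaining columns are uniform on the orthogonal complement, and the sliced-sphere sub-Gaussian bound (\Cref{lem:sub-Gaussian-sliced}) gives the needed decay.

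The genuine gap in your argument is the claim $\|\tilde\E^{(n)}[W^{(n)}_{\tail m}]\|_2^2\to t^2$. Coordinatewise convergence $\tilde\E^{(n)}[W_j^{(n)}]\to v_j^{(n)}K$ (even granting it, see below) does \emph{not} yield convergence of a sum of squares over $k_n-m\to\infty$ coordinates: if the error in the $j$-th coordinate is of order $\epsilon_n$ and $k_n\epsilon_n^2\not\to0$, the sum of squares fails to converge, and your argument provides no uniform rate. More precisely, writing $H_i=\partial_1\overline\Lambda(y_i^{(n)},c)$, one has $\tilde\E^{(n)}[W^{(n)}]=\frac{1}{\sqrt n}(\a^{(n)})^{\trans}H$; decomposing $H$ into its component along $\a^{(n)}\v^{(n)}$ and the orthogonal remainder $H^\perp$ gives $\|\tilde\E^{(n)}[W^{(n)}_{\tail m}]\|_2^2=t^2+\frac{1}{n}\|Z^{\trans}H^\perp\|_2^2+o(1)$ where $Z$ is the residual column matrix, and showing $\frac{1}{n}\|Z^{\trans}H^\perp\|_2^2\to0$ requires a separate geometric argument in the spirit of \Cref{lemma:second-third-convergence3}---but an argument that \emph{cannot} simply cite that lemma, because with your tilt $H$ depends on \emph{all} the columns of $\a^{(n)}$, so the conditional-uniformity decomposition used there does not apply directly. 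Reducing to the scalar statistic $\langle\bm q^{(n)},W^{(n)}_{\tail m}\rangle$ only controls the ``aligned'' part and gives a lower bound on $\|W^{(n)}_{\tail m}\|_2$; the residual is exactly what you have not bounded. Secondarily, the step ``the derivative passes through the limit to yield $\tilde\E[W_j^{(n)}]\to v_j^{(n)}K$'' is not justified by uniform convergence of $F_n$ alone; you would need some uniform control of second derivatives in $\v$, or (as the paper does) compute $\tilde\E^{(n)}[W_j^{(n)}]=f_j^{(n)}(\a^{(n)})$ directly and apply Gromov-type concentration together with the moment-matching result (\Cref{l:Gaussianapprox1}) coordinate by coordinate---which is feasible when $j$ ranges over a fixed finite set, as in the paper's $M$-coordinate tilt, but is precisely what your uniform tilt disallows.
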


\subsection{Proof of the Lower Bound.} \label{s:lowerproof}
\begin{proof}[Proof of \Cref{l:ldplower}]
Recall the definitions of $W^{(n)},L^{(n)}$ in \eqref{def-of-W-L}. We first establish the following claim.

\emph{Claim 1.} For every $\eps>0$,  open set $\mathcal O\subset \mathcal X\times \R_+$, and point $(\mathbf w,r,s)\in \mathcal O$, there exists a set $\Omega_{\mathcal O,\varepsilon,(\mathbf w,r,s)} \subset \V$ of sequences $\a=\{\a^{(n)}\}_{n \in \N}$ with $\sigma (\Omega_{\mathcal O,\varepsilon,(\mathbf w,r,s)}) = 1$ and such that 
\begin{align}\label{eqn:simplified-ldp-lower-bound}
\begin{split}
    \liminf_{n\rightarrow\infty} \frac{1}{n} \log \P \left(\left([W^{(n)}],\|W^{(n)}\|_2,L^{(n)}\right)\in \mathcal O\right)\geq-I(\mathbf w,r,s)-\varepsilon.
\end{split}
\end{align}

We may assume that $I(\mathbf w,r,s)<\infty$, since otherwise the claim is trivial.
In this case, there exist $\bar \kappa>0$, $m_0 \in \N$, and
 $(\bar{\mathbf w},\bar t,\bar s) \in \R^{m_0} \times \R_+ \times \R_+$ that satisfy the properties of \Cref{l:additional-good-properties}.

Fix $m>m_0$ such that $3m^{-1/2}<|\bar{\mathbf w}_{m_0}|$ (such an $m$ exists since $|\bar{\bm w}_{m_0}|>0$ by property (2) of \Cref{l:additional-good-properties}) and
set
\[\kappa\coloneqq\min\left\{\frac{\bar \kappa}{2},m^{-1/2},\frac{1}{3}\min_{1\leq j\leq m_0-1}\big(|\bar{\mathbf w}_j|-|\bar{\mathbf w}_{j+1}|\big)\right\}.\]
Note that property (2) of \Cref{l:additional-good-properties} ensures $\kappa>0$. Set
\begin{align*}
\begin{split}
 \widehat{\mathbf w}\coloneqq (\bar{\mathbf w},0,\ldots,0)\in\mathbb R^m,\quad
 \,B_1\coloneqq B\left((\widehat{\mathbf w},\bar t,\bar s),\kappa\right)
 , \quad
B_2 \coloneqq  B\left((\overline{\mathbf{w}}, \bar{t},\bar s), \bar\kappa /2\right)
.
\end{split}
\end{align*}
Property (1) of \Cref{l:additional-good-properties} implies that there exists $\hatu\coloneqq(\bar{\bm v},0,\ldots,0)\in\mathbb R^m$ such that 
\begin{equation}\label{eqn:4.2setup}
    (\hatu,\bar b,\bar c)\in\mathcal D_{m_0},\text{ and }\nabla\Lambda_m(\hatu,\bar b,\bar c)=(\hatw,\bar t,\bar c).
\end{equation}
Given  $(\bm w',r',s')\in\ell^2\times \mathbb R_+\times \mathbb R_+$ with $r'\geq \|\bm w'\|_2$, we write
\begin{align*}
    t'_{\bm w',r'}\coloneqq \sqrt{(r')^2-\|\bm w'_{\head m}\|_2^2},\quad  t''_{\bm w',r'}\coloneqq \sqrt{(r')^2-\|\bm w_{\head m_0}\|_2^2}.
\end{align*}
Consider  $\left(\mathbf{w}^{\prime}, r^{\prime},s'\right) \in \ell^2 \times\R_+ \times\R_+$ with $(\mathbf{w}_{\head m}^{\prime}, t'_{\bm w',r'},s') \in B_1$. By the definition of $B_1$, it follows that $\|\bm w'_{\head m}-\hat{\bm w}\|_\infty \leq \kappa$. The definition of $\kappa,m$ and the strict ordering in the first $m_0$ coordinates of $\hat{\bm w}$ imply that $\bm w'$ satisfy the following properties:
\begin{enumerate}
    \item $|\bm w'_j|>|\bm w'_{j+1}|>2m^{-1/2}$ for all $1\leq j\leq m_0-1$;
    \item $|\bm w'_j|\leq m^{-1/2}$ for all $m_0+1\leq j\leq m$.
\end{enumerate}
If we further require that $\|\bm w'_{\tail m}\|_\infty \leq 2m^{-1/2}$, then it follows that $[\bm w']_{\head m_0}=\bm w'_{\head m_0}$.

The above discussion, and the definitions of $t'_{\bm w',r'}$ and $t''_{\bm w',r'}$, imply that
\begin{align}\label{eqn:reduction-to-tail}
\begin{split}
&\{\left(\mathbf{w}^{\prime}, r^{\prime},s'\right) \in \ell^2 \times\R_+ \times\R_+:\left(\mathbf{w}_{\head m}^{\prime}, t'_{\bm w',r'},s'\right) \in B_1\text{ and }\left\|\mathbf{w}_{\tail m}^{\prime}\right\|_\infty \leq 2 m^{-1 / 2}\}\\
&\subset\{\left(\mathbf{w}^{\prime}, r^{\prime},s'\right) \in \ell^2 \times\R_+ \times\R_+:[\mathbf w']_{\head m_0}=\mathbf w'_{\head m_0}\text{ and }\left(\mathbf{w}^{\prime}_{\head m_0}, t^{\prime \prime}_{\bm w',r'},s'\right) \in B_2\}.
\end{split}
\end{align}
Thus,
for $\sigma$-a.e. $\mathbf a=(\a^{(1)},\a^{(2)},\cdots)$, we see that
\begin{align}\label{eqn:lower-bound-step-1}
\begin{split}
    \liminf_{n\rightarrow\infty}&\ \frac{1}{n} \log \P\left(([W^{(n)}],\|W^{(n)}\|_2,L^{(n)})\in \mathcal O\right)\\
    \geq&\liminf_{n\rightarrow\infty}\ \frac{1}{n}\log \P\big(\left([W^{(n)}]_{\head m_0},\|W^{(n)}_{\tail m_0}\|_2,L^{(n)}\right)\in B_2\big)\\
    \geq&\liminf_{n\rightarrow\infty}\ \frac{1}{n}\log \P\left(\left(W^{(n)}_{\head m},\|W^{(n)}_{\tail m}\|_2,L^{(n)}\right)\in B_1,\|W^{(n)}_{\tail m}\|_\infty\leq 2m^{-1/2}\right)\\
    \geq&-\Lambda_m^{*}(\widehat{\mathbf w},\bar t,\bar s)\\
    \geq&-I(\mathbf w,r,s)-\varepsilon,
\end{split}
\end{align}
where we used \eqref{claim:2} in the first step, \eqref{eqn:reduction-to-tail} in the second step, \eqref{eqn:4.2setup} and \Cref{lemma:small-ball-probability} (with $\delta = \kappa$) in the third step and property (3) of \Cref{l:additional-good-properties} in the last step.
This proves the claim \eqref{eqn:simplified-ldp-lower-bound}.

Next let $\{\mathbf w_l,r_l,s_l\}_{l=1}^\infty$ be a
sequence such that
\[
\lim_{l \rightarrow \infty} I(\mathbf w_l,r_l,s_l) = \inf_{(\mathbf w,r,s) \in \mathcal O}
I(\mathbf w,r,s),
\]
and define $\Omega_{\mathcal O}\coloneqq\cap_{l=1}^{\infty}\Omega_{\mathcal O,\frac{1}{l},(\mathbf w_l,r_l,s_l)}$, where $\Omega_{\mathcal O,\frac{1}{\ell},(\bm w_\ell,r_\ell,s_\ell)}$ is the set in from Claim 1. Then $\sigma(\Omega_{\mathcal O})=1$ and
\begin{align}\label{eqn:remove-epsilon}
\begin{split}
    \liminf_{n\rightarrow\infty}n^{-1}\log \P_n\left(\left([W^{(n)}],\|W^{(n)}\|_2,L^{(n)}\right)\in \mathcal O\right)\geq-\inf_{(\mathbf w,r,s)\in \mathcal O}I(\mathbf w,r,s).
\end{split}
\end{align}
Next, observe that since $\mathcal X \times \R_+$ is a separable metric space, it possesses a countable basis $\{\mathcal O_i\}_\sequence{i}$. Setting $\widehat\Omega\coloneqq\cap_\sequence{i}\Omega_{\mathcal O_i}$, we have $\sigma(\widehat\Omega)=1$.
Now consider an arbitrary open set $\mathcal O \subset \mathcal X\times\mathbb R_+$ and any $(\mathbf w,r,s)\in \mathcal O$. By the definition of a basis, there exists a basis element $\mathcal O_i$ such that $(\mathbf w,r,s)\in \mathcal O_i \subset \mathcal O$. For sequence $\bm a\in\widehat\Omega\subset \Omega_{\mathcal O_i}$, the corresponding sequence $W^{(n)}\coloneqq W^{\bm a^{(n)}}$ satisfies
\begin{align}\label{prevdisp}
\begin{split}
    &\liminf_{n\rightarrow\infty} \frac{1}{n} \log \P \left(\left([W^{(n)}],\|W^{(n)}\|_2,L^{(n)}\right)\in \mathcal O \right) \\ &\geq \liminf_{n\rightarrow\infty} \frac{1}{n} \log \P \left(\left([W^{(n)}],\|W^{(n)}\|_2,L^{(n)} \right)\in \mathcal O_i \right)\\
&\geq-I(\mathbf w,r,s),
\end{split}
\end{align}
where the latter follows from \eqref{eqn:remove-epsilon} applied with $\mathcal O$ replaced with $\mathcal O_i$.
Since $(\mathbf w,r,s)\in \mathcal O$ is arbitrary, taking the supremum of the right-hand side of \eqref{prevdisp} over $(\bm w,r,s)\in\mathcal O$ yields \eqref{l:ldplower}.
\end{proof}

\section{Proof of the Main Result}\label{s:mainresult}

\begin{proof}[Proof of \Cref{t:main}]
Combining \Cref{l:ldpupper} and \Cref{l:ldplower}, we conclude that, for $\sigma$-a.e. $\a=(\a^{(1)}, \a^{(2)}, \dots ) \in \V$, the sequence of random variables
\begin{align}
\begin{split}
\left(\Pi\left(n^{-1/2}\left(\mathbf a^{(n)}\right)^\trans X^{(n)}\right),\frac{1}{n}\sum_{i=1}^n\eta(X_i)\right),\quad n\in\mathbb N,
\end{split}
\end{align}
satisfies an LDP with speed $n$ and
good rate function $I(\w,r,s)=\Lambda^{*}(\mathbf x,\sqrt{r_1^2-\|\mathbf x\|_2^2},r_2)$.
By \Cref{assume1} and the definition of $\Pi$ in \eqref{def-Pi},
\begin{align*}
\begin{split}
\Pi\left(n^{-1/2}\left(\mathbf a^{(n)}\right)^\trans Y^{(n)}\right)\stackrel{d}{=}\Pi\left(n^{-1/2}\left(\mathbf a^{(n)}\right)^\trans X^{(n)}\right)\times\rho\left(\frac{1}{n}\sum_{i=1}^n\eta(X_i)\right),
\end{split}
\end{align*}
where $\rho:\mathbb R_+\rightarrow\mathbb R_+$ is continuous.
By applying the contraction principle (see \cite[Theorem 4.2.1]{DZ93}) to the continuous map
$
f\colon \mathcal X\times\mathbb R_+ \rightarrow \mathcal X$ given by 
$f(\w,r,s) \coloneqq (\rho(r_2)\mathbf x,\rho(r_2)r_1),$
we obtain an LDP for $\{\Pi(n^{-1/2}\left(\mathbf a^{(n)}\right)^\trans Y^{(n)})\}_{n \in \N}$ with speed $n$ and good rate function
\begin{align*}
\begin{split}
\mathcal I(\mathbf y,r)=\inf\{I\left(\w,r,s\right):\mathbf y=\rho(r_2)\mathbf x,\ \rho(r_2)r_1=r\}=
\inf_{r_2\in \R_+}I\left(\frac{\mathbf y}{\rho(r_2)},\frac{r}{\rho(r_2)},r_2\right).
\end{split}
\end{align*}
\end{proof}

Given \Cref{t:main}, the proof of \Cref{t:dnp} closely follows \cite[Section 5]{kim2021large}. We include the details for completeness.

\begin{proof}[Proof of \Cref{t:dnp}]

We now verify that $Y^{(n)}$ satisfies Assumptions 1--4
when uniformly distributed on the $\ell_n^p$ sphere for some $p\in[2,\infty)$.

Assumption 1: 
By \cite[Lemma~1]{SZ90}, the relation \eqref{ourform} holds with $\{X_i\}_{i\in\mathbb N}$ being the i.i.d.\ sequence with common distribution equal to the $p$-Gaussian distribution (the probability measure on $\mathbb R$ with density proportional to $e^{-|y|^{p} / p}$), $\eta(x)=|x|^p$, and $\rho(y)=y^{-1/p}$.

Assumption 2: This is a direct consequence of the fact that $X_1$ is a $p$-Gaussian distribution, and hence sub-Gaussian for $p\in[2,\infty)$.

Assumption 3: It is straightforward to see that the domain of $\overline\Lambda_p$ is $\mathbb R\times (-\infty,\frac{1}{p})$. The essential smoothness is established in \cite[Lemma~5.8]{GKR17}.

Assumption 4: The growth conditions on $\overline\Lambda_p$ and the derivatives of $\overline\Lambda_p$ are established in Lemma \ref{l:bound-on-Lambda}.

By \Cref{t:main}, this proves the LDP when $Y^{(n)}$ is uniformly distributed on $\S^p_n$, and the corresponding rate function is $\mathcal I_p$ defined in \eqref{calI}.

Next, we consider the case where $Y^{(n)}$ is uniformly distributed on $\mathbb B_p^n$.
Let $U$ be a uniform random variable on $[0,1]$. We recall that if the random vector $X^{(n)} \in \R^n$ is uniformly distributed on the sphere $\mathbb S_p^n$ and independent from $U$, then $U^{1/n} X^{(n)}$ is uniform on $\mathbb B_p^n$ \cite[Lemma~1]{SZ90}. It then suffices to prove the LDP for $Y^{(n)} =U^{1/n} X^{(n)}$.
By \cite[Lemma 3.3]{GKR17}, the sequence $\{U^{1/n}\}_{n\in\N}$ satisfies an LDP with good rate function $I_U$ given by $I_U(u) = - \log u$ for $u \in (0,1]$, and $I_U(u) = + \infty$ otherwise. Recalling that $U$ and $X^{(n)}$ are independent, and using \Cref{t:main}, we find that $
\{ (\Pi\left(n^{-1/2} (\a^{(n)})^\trans X^{(n)}\right), U^{1/n} ) \}_{n\in\N}$ 
satisfies an LDP in $\mathcal X \times \R_+$ with good rate function $ I_U(u) + \mathcal I_p (\x, r) $. By the contraction principle applied to the continuous map
    $\mathcal X\times\R_+\ni (\bm x,r,u)\mapsto (u\bm x,ur)\in\mathcal X,$
and the positive homogeneity of the map $\Pi$ defined in \eqref{def-Pi}, it follows that $\left\{\Pi\left(U^{1/n} n^{-1/2} (\a^{(n)})^\trans X^{(n)}\right)\right\}_{n\in\N}$
satisfies an LDP on $\mathcal X$ with rate function
\begin{align}
\begin{split}
\widehat I_p(\x,r)
&= \inf_{ (\y, s) \in\mathcal X, u\in\R}
\{I_U(u) + \mathcal I_p (\y, s)
: (\x, r) = (u \y, us)
\}\\
\label{theinfimum}
& = \inf_{u \in (0,1]}
\left\{
-\log u  + \mathcal I_p
\left( \frac{\x}{u}, \frac{r}{u} \right)
\right\}.
\end{split}
\end{align}

It is  clear from the definition \eqref{calI} that the map $t \mapsto \mathcal I_p( t \x, tr )$ is increasing for $t \in \R_+$. Since $u \mapsto u^{-1}$ is monotonically decreasing, we deduce that $u \mapsto \mathcal I_p( u^{-1} \x, u^{-1} r )$ is decreasing.
Combined with the fact that $ - \log u$ is decreasing, we find that the infimum in \eqref{theinfimum} is attained at $u = 1$. Thus
\be
\widehat I_p(\x,r) = \mathcal I_p(\x ,r),\quad (\bm x,r)\in\X,
\ee
as claimed.
\end{proof}

\begin{proof}[Proof of \Cref{c:convex}]
It was shown in the  proof of \Cref{t:dnp} that $Y^{(n)}$ satisfies the assumptions of \Cref{t:main} when uniformly distributed on $\mathbb S^p_n$ for some $p\in[2,\infty)$.
By applying the contraction principle to \Cref{t:main} with the continuous projection map $\X\ni (\bm y,r)\mapsto r\in\mathbb R_+$,
we conclude that $\{\|n^{-1/2}\left(\mathbf a^{(n)}\right)^\trans Y^{(n)}\|_2\}_{n\in\N}$
satisfies an LDP with speed $n$ and good rate function
\begin{align*}
\begin{split}
    \mathbb I_p(r)=\inf\left\{\mathcal I_p(\mathbf y,r):(\mathbf y,r)\in\X\right\}.
\end{split}
\end{align*}
Substituting the definition of $\mathcal I_p$ from \eqref{calI} and recalling the definition of $I_p$ from \eqref{ratefunction}, we have
\begin{align}\label{eqn:rewriteIr}
\begin{split}
    \mathbb I_p(r)&=\inf\left\{I_p\left(\w,r,s\right):\left(\w,r,s\right)\in\X\times\mathbb R_+,\ r_1|r_2|^{-1/p}=r\right\}\\
    &=\inf\left\{I_p\left(\mathbf x,r_1,\left(\frac{r_1}{r}\right)^{p}\right):\left(\mathbf x,r_1\right)\in\mathcal X\right\}\\
    &=\inf\left\{\Lambda_p^*\left(\mathbf x,\sqrt{r_1^2-\|\mathbf x\|_2^2},\left(\frac{r_1}{r}\right)^p\right):\left(\mathbf x,r_1\right)\in\mathcal X\right\}.
\end{split}
\end{align}
By definitions  of $\Lambda_p$ and $\Lambda_p^*$ from \eqref{d:g0} and \eqref{lambdastar}, respectively, and an elementary property of Gaussian variables, we see that 
\begin{align}\label{eqn:rewriteLambdaStar}
\begin{split}
    &\Lambda_p^*\left(\mathbf x,\sqrt{r_1^2-\|\mathbf x\|_2^2},\left(\frac{r_1}{r}\right)^p\right)\\
    &=\sup_{\left(\mathbf u,b,c\right)\in\ell^2\times\mathbb R\times\mathbb R}\left\{\left\langle \mathbf u,\mathbf x\right\rangle+b\sqrt{r_1^2-\|\mathbf x\|_2^2}+c\left(\frac{r_1}{r}\right)^p-\Lambda_p\left(\mathbf u,b,c\right)\right\}\\
    &=\sup_{\left(\mathbf u,b,c\right)\in\ell^2\times\mathbb R\times\mathbb R}\left\{\left\langle \mathbf u,\mathbf x\right\rangle+b\sqrt{r_1^2-\|\mathbf x\|_2^2}+c\left(\frac{r_1}{r}\right)^p-\mathbb E\left[\overline\Lambda_p\left(\|\left(\mathbf u,b\right)\|_2g,c\right)\right]\right\}
\end{split}
\end{align}
By the Cauchy--Schwarz inequality and the fact that $\| \x \|_2 \le r_1$, we have $\left\langle \mathbf u,\mathbf x\right\rangle+b\sqrt{r_1^2-\|\mathbf x\|_2^2}\leq \|(\bm u,b)\|_2r_1$. Further, if $\x$ and $r_1$ are given, there exists a pair $(\u, b) \in \ell^2 \times \R$ such that equality is attained. 
Hence, \eqref{eqn:rewriteLambdaStar} implies
\begin{align}\label{eqn:rewriteLambdaStar2}
    \begin{split}
        &\Lambda_p^*\left(\mathbf x,\sqrt{r_1^2-\|\mathbf x\|_2^2},\left(\frac{r_1}{r}\right)^p\right)\\
        &=\sup_{\left(\mathbf u,b,c\right)\in\ell^2\times\mathbb R\times\mathbb R}\left\{\left\|\left(\mathbf u,b\right)\right\|_2r_1+c\left(\frac{r_1 }{r}\right)^p-\mathbb E\left[\overline\Lambda_p\left(\|\left(\mathbf u,b\right)\|_2g,c\right)\right]\right\}\\
        &=\sup_{\left(v,c\right)\in\mathbb R_+\times\mathbb R}\left\{vr_1+c\left(\frac{r_1 }{r}\right)^p-\mathbb E\left[\overline\Lambda_p\left(vg,c\right)\right]\right\}\\
        &=\sup_{(v,c)\in\mathbb R\times\mathbb R}\left\{vr_1+c\left(\frac{r_1 }{r}\right)^p-\mathbb E\left[\overline\Lambda_p\left(vg,c\right)\right]\right\},
    \end{split}
\end{align}
where the last equality follows from the observation that, since $r_1\geq 0$ and $g$ is symmetric, replacing $v$ by $|v|$ increases the quantity we are taking the supremum over.
Combining \eqref{eqn:rewriteIr} and \eqref{eqn:rewriteLambdaStar2}, we have
\begin{align*}
\begin{split}
    \mathbb I_p(r)=\inf_{\tau\in\mathbb R_+}\sup_{\left(v,c\right)\in\mathbb R\times\mathbb R}\left\{v\tau+c\left(\frac{\tau}{r}\right)^p-\mathbb E\left[\Lambda\left(vg,c\right)\right]\right\}.
\end{split}
\end{align*}
By \cite[Lemma~2.1]{liao2020geometric}, this implies
\begin{align*}
\begin{split}
    \mathbb I_p(r)=\sup_{(v,c)\in\mathbb R\times\mathbb R}\left\{vr+c-\mathbb E\left[\Lambda\left(vg,c\right)\right]\right\}.
\end{split}
\end{align*}
The convexity of $\mathbb I$ now follows from a standard result in convex analysis (see \cite[Theorem~11.1]{RW09}).
\end{proof}

\begin{rmk}\label{r:lqnorms}
The space $\mathcal X$ is defined in \eqref{Xdefinition} using the $\ell_2$ norm in one coordinate.
This allowed us to deduce
\Cref{c:convex} from \Cref{t:dnp} by applying the contraction principle in the previous proof.
The $\ell_2$ norm in this definition could be replaced by $\ell_q$ for any $1 \le q < \infty$, and a parallel argument would give the analogue of \Cref{c:convex} for the $\ell_q$ norm of the projections. For brevity, we do not take this up here.
\end{rmk}

\begin{proof}[Proof of \Cref{c:largest}]
Consider the map $\pi \colon \mathcal X \rightarrow \R$ given by $\pi(\w, r) = w_1$, the first element of the ordered sequence $\w$. By \Cref{t:main} and the contraction principle, the sequence \eqref{maxldp} satisfies an LDP with
good rate function
\be
\mathcal I_{\max{}} (r) = \inf_{(\w,s) \in \mathcal X}
\{ \mathcal I_p (\w,s) : r = \pi(\w,s) \}
= \mathcal I_p ((r,0,0\dots),r),
\ee
where the last inequality follows
from the definition \eqref{calI} and the fact that $\Lambda$ is even in its first two arguments.

The proof for $\mathbb B_p^n$ is nearly identical, so we omit it.
\end{proof}

\section{Preliminary Lemmas: Upper Bound}\label{s:preliminaryproof}
\subsection{Topological properties of \texorpdfstring{$\mathcal X$}{X}}
\label{s:61}
\begin{proof}[Proof of \Cref{l:topologicalpropertyofX}]
Due to the ordering of the first component in $\mathcal X$, each $\left(\mathbf w,r\right)\in\mathcal X$ satisfies
\begin{align}\label{eqn:orderedproperty}
\begin{split}
    |w_{m}|\leq m^{-1/2}\|\mathbf w\|_2\leq m^{-1/2}r.
\end{split}
\end{align}

\emph{Proof of Claim (1).}
Suppose $(\mathbf w_n,r_n)\rightarrow(\mathbf w,r)$ in $\mathcal X$. Then, by the definition of the metric in \eqref{linfinitydistance}, $r_n \rightarrow r$, and $\mathbf w_n\rightarrow\mathbf w$ in $\ell_\infty$. The latter implies that for any coordinate projection map $p_i:\mathbf y\mapsto y_i$, we have $p_i(\mathbf w_n-\mathbf w)\rightarrow0$ as $n\rightarrow\infty$. Thus $(\mathbf w_n,r_n)\rightarrow(\mathbf w,r)$ in $\mathbb R^{\mathbb N}\times\mathbb R_+$ when $\mathbb R^{\mathbb N}$ is equipped with the product topology.

On the other hand, suppose $(\mathbf w_n,r_n)\rightarrow(\mathbf w,r)$ in $\mathbb R^{\mathbb N}\times\mathbb R_+$ when $\mathbb R^{\mathbb N}$ is equipped with the product topology.
To show that $(\w_n,r_n) \rightarrow (\w, r)$ in $\mathcal X$, by \eqref{linfinitydistance} it suffices to show that $\| \w_n - \w\|_\infty \rightarrow 0$ as $n\rightarrow \infty$.
Pick any $\varepsilon>0$. Then there exists $N\in\mathbb N$ such that $\left\|(\mathbf w_n)_{\head N}-\mathbf w_{\head N}\right\|_{\infty}<\varepsilon$ and $N^{-1/2}<\varepsilon/(2r+1)$, and for all $n>N$, $r_n<r+1$. Let $n>N$ be arbitrary. By \eqref{eqn:orderedproperty}, we have
\begin{align*}
\begin{split}
    \left\|\mathbf w_n-\mathbf w\right\|_{\infty}&=\max\left(\left\|(\mathbf w_n)_{\head N}-\mathbf w_{\head N}\right\|_{\infty},\left\|(\mathbf w_n)_{\tail N}-\mathbf w_{\tail N}\right\|_{\infty}\right)\\
    &\leq\max\left(\left\|(\mathbf w_n)_{\head N}-\mathbf w_{\head N}\right\|_{\infty},N^{-1/2}r_n+N^{-1/2}r\right)\\& \leq\varepsilon.
\end{split}
\end{align*}
Sending first $n\rightarrow \infty$, and then $\eps \rightarrow 0$, we obtain the desired conclusion.

\emph{Proof of Claim (2).} Suppose $(\mathbf w_n,r_n)\rightarrow(\mathbf w,r)$ in $\mathcal X$. Pick any $\mathbf u\in\ell^2$. 
It suffices to show that $\left\langle \mathbf u,\mathbf w_n-\mathbf w\right\rangle\rightarrow0$. Without loss of generality, assume $\mathbf u\neq0$.
Fix $\eps >0$.
Since $\mathbf u\in\ell^2$, there exists $m\in\mathbb N$ such that $\|\mathbf u_{\tail m}\|_2<\varepsilon/(2r+1)$. 
Let $N = N(\eps, m)\in\mathbb N$ be large enough so that $\|\mathbf w_n-\mathbf w\|_{\infty}<\varepsilon m^{-1/2}/\|\mathbf u\|_2$ and for all $n>N$, $r_n<r+1$. Then applying the Cauchy--Schwarz and triangle inequalities, and 
invoking the identities $\| w_n \| =r$ and $\| w \|_2 =r$ (which follow from the definition of $\mathcal X$ in  \eqref{Xdefinition}), we conclude that for any $n>N$,
\begin{align*}
\begin{split}
    \left\langle \mathbf u,\mathbf w_n-\mathbf w\right\rangle&=\left\langle \mathbf u_{\head m},\left(\mathbf w_n-\mathbf w\right)_{\head m}\right\rangle+\left\langle \mathbf u_{\tail m},\left(\mathbf w_n-\mathbf w\right)_{\tail m}\right\rangle\\
    &\leq \|\mathbf u\|_2\|\left(\mathbf w_n-\mathbf w\right)_{\head m}\|_2+\|\mathbf u_{\tail m}\|_2\|\left(\mathbf w_n-\mathbf w\right)\|_2\\
    &\leq \|\mathbf u\|_2\|\left(\mathbf w_n-\mathbf w\right)_{\head m}\|_{\infty}m^{1/2}+\|\mathbf u_{\tail m}\|_2\left(r_n+r\right)\\
    &\leq2\varepsilon.
\end{split}
\end{align*}
Since $\varepsilon$ is arbitrary, this shows
$\langle \mathbf u,\mathbf w_n-\mathbf w\rangle = 0$, as desired.

On the other hand, suppose $(\mathbf w_n,r_n)\rightarrow(\mathbf w,r)$ in $\mathbb R^{\mathbb N}\times\mathbb R_+$ when $\mathbb R^{\mathbb N}$ is equipped with the weak-$\ell^2$ topology. Pick any $\varepsilon>0$. Choose $m\in\mathbb N$ large enough such that $m^{-1/2}<\varepsilon/(2r+1)$. Choose $N\in\mathbb N$ large enough such that for all $n>N$, $\left\|(\mathbf w_n)_{\head m}-(\mathbf w)_{\head m}\right\|_2<\varepsilon$ and $r_n<r+1$. 
Together with \eqref{eqn:orderedproperty}, this implies that for any $n>N$,
\begin{align*}
\begin{split}
    \left\|\mathbf w_n-\mathbf w\right\|_{\infty}&=\max\left(\left\|\left(\mathbf w_n\right)_{\head m}-\mathbf w_{\head m}\right\|_\infty,\left\|\left(\mathbf w_n\right)_{\tail m}-\mathbf w_{\tail m}\right\|_{\infty}\right)\\
    &\leq \max\left(\left\|\left(\mathbf w_n\right)_{\head m}-\mathbf w_{\head m}\right\|_2,m^{-1/2}\left(r_n+r\right)\right)\\
    &<2\varepsilon.
\end{split}
\end{align*}
Since $\varepsilon$ is arbitrary, by \eqref{linfinitydistance} this implies $(\mathbf w_n,r_n)\rightarrow(\mathbf w,r)$ in $\mathcal X$.

\emph{Proof of Claim (3).} By definition, for all $\left(\mathbf w,r\right)\in\X$, $|w_i|\leq \|\mathbf w\|_2\leq r$. Therefore, $\left\{\left(\w,r\right)\in\X:r\leq A\right\}$ is a closed subset of $\prod_\sequence{i} [-A,A]\times [0,A]$. By item (1), the topology of $\X$ is equivalent to $\mathbb R^{\mathbb N}\times\mathbb R_+$ where $\mathbb R^{\mathbb N}$ is equipped with the product topology, $\prod_\sequence{i} [-A,A]\times [0,A]$ is compact by Tychonoff's Theorem. Thus, as a closed subset of $\prod_\sequence{i} [-A,A]\times [0,A]$, the set $\left\{\left(\w,r\right)\in\X:r\leq A\right\}$ is compact.
\end{proof}
\subsection{Rate Function}\label{s:62}
\begin{proof}[Proof of \Cref{l:ratefunction}]
\textit{Step 1.}
Fix $m \in \mathbb{N}$, and recall from \Cref{s:lowerpreliminary} that $\Lambda^*_m$ denotes the Legendre transform of the function $\Lambda_m$ defined in \eqref{lambdam}.
We first show that the following inequality holds for all $(\mathbf w,r,s)\in\mathcal X$:
\begin{align}\label{eqn:good-rate-fcn-step1}
\begin{split}
    I(\mathbf w,r,s)\geq \Lambda_m^*\left(\mathbf w_{\head},\sqrt{r^2-\|\mathbf w_\head\|_2^2},s\right).
\end{split}
\end{align}
If $\mathbf w_\tail=\mathbf 0$, 
then $\Lambda(\w , r ,s) = \Lambda_m(\w_{\head}, r , s)$, and for any $\u \in \ell^2$, $\langle \u , \w \rangle = \beta$ if and only if $\langle \u_\head , \w_\head \rangle = \beta$. It follows that $\Lambda^*(\w,b,c) = \Lambda^*_m(\w_\head , b, c)$ for all $b,c \in \R_+$. Hence, the definition of \eqref{ratefunction} of $I$ shows that the inequality in \eqref{eqn:good-rate-fcn-step1} becomes an equality if $\w_\tail = \mathbf 0$. 

We henceforth suppose that $\| w_\tail \|_2 >0$, which then implies $r>\|\mathbf w_{\head}\|_2$.
Pick any $\hatu\in\mathbb R^m$ and $b\geq 0$. Let $\mathbf u\coloneqq \left(\hatu,b\mathbf w_{\tail}/\sqrt{r^2-\|\mathbf w_{\head}\|_2^2}\right)$. Then by the definition of $\Lambda^*$ in \eqref{lambdastar}, we have for any $c\in \R$ that
\begin{align*}
\begin{split}
    I\left(\mathbf w,r,s\right)&=\Lambda^*\left(\mathbf w,\sqrt{r^2-\|\mathbf w\|_2^2},s\right)\\
    &\geq\langle \mathbf u,\mathbf w\rangle+\left(\frac{b\sqrt{r^2-\|\mathbf w\|_2^2}}{\sqrt{r^2-\|\mathbf w_\head\|_2^2}}\right)\sqrt{r^2-\|\mathbf w\|_2^2}+cs-\Lambda\left(\mathbf u,\frac{b\sqrt{r^2-\|\mathbf w\|_2^2}}{\sqrt{r^2-\|\mathbf w_\head\|_2^2}},c\right)\\
    &=\left\langle \hatu,\bm w_\head\right\rangle+\frac{b\|\bm w_\tail\|_2^2}{\sqrt{r^2-\|\bm w_\head\|_2^2}}+\left(\frac{b\sqrt{r^2-\|\mathbf w\|_2^2}}{\sqrt{r^2-\|\mathbf w_\head\|_2^2}}\right)\sqrt{r^2-\|\mathbf w\|_2^2}+cs\\&\quad -\Lambda\left(\mathbf u,\frac{b\sqrt{r^2-\|\mathbf w\|_2^2}}{\sqrt{r^2-\|\mathbf w_\head\|_2^2}},c\right)\\
    &=\left\langle \hatu,\bm w_\head\right\rangle+\frac{b(\|\bm w_\tail\|_2^2+r^2-\|\bm w\|_2^2)}{\sqrt{r^2-\|\bm w_\head\|_2^2}}+cs-\Lambda\left(\mathbf u,\frac{b\sqrt{r^2-\|\mathbf w\|_2^2}}{\sqrt{r^2-\|\mathbf w_\head\|_2^2}},c\right)\\
    &=\langle \hatu,\mathbf w_\head\rangle+b\sqrt{r^2-\|\mathbf w_\head\|_2^2}+cs -\Lambda_m\left(\hatu,b,c\right),
\end{split}
\end{align*}
where we used $\Lambda(\mathbf u,b',c)=\Lambda_m(\mathbf u_{\head},\sqrt{(b^{\prime})^2+\|\mathbf u_{\tail}\|_2^2},c)$ in the last step.
Taking the supremum of the right-hand side over $\w \in \R^m$ and $b \in \R_+$, we obtain \eqref{eqn:good-rate-fcn-step1}.

\textit{Step 2.} We next claim that $I$ is lower semicontinuous.

First note that $\Lambda_m^*$ is lower semicontinuous, since it is the supremum of a collection of continuous functions. By claim (2) of \Cref{l:topologicalpropertyofX}, the topology on $\X$ is equivalent to the topology on $\mathbb R^{\mathbb N}\times\mathbb R_+$, where $\mathbb R^{\mathbb N}$ is equipped with the weak-$\ell^2$ topology. Therefore $\Lambda^*$, which is equal to  the supremum of a collection of continuous functions,  is lower semicontinuous.
Let $(\mathbf w^{(n)},r^{(n)},s^{(n)})_{n \in \N}$ be a sequence converging to $(\mathbf w,r,s)$ in $\mathcal X\times\mathbb R_+$. Applying 
\eqref{eqn:good-rate-fcn-step1}, the lower semicontinuity of $\Lambda_m^*$, and \eqref{lambdam},
we obtain
\begin{align*}
\begin{split}
    \liminf_{n\rightarrow\infty}I\left(\mathbf w^{(n)},r^{(n)},s^{(n)}\right)&\geq \liminf_{n\rightarrow\infty}\Lambda_m^*\left(\mathbf w^{(n)}_{\head},\sqrt{\left(r^{(n)}\right)^2-\left\|\mathbf w^{(n)}_{\head}\right\|_2^2},s^{(n)}\right)\\
    &\geq \Lambda_m^*\left(\mathbf w_{\head},\sqrt{r^2-\|\mathbf w_\head\|_2^2},s\right)\\
    &=\Lambda^*\left(\left(\mathbf w_{\head},\mathbf 0\right),\sqrt{r^2-\|\mathbf w_\head\|_2^2},s\right).
\end{split}
\end{align*}
Recalling that $\w_{\head}$ is the abbreviation for $\w_{\head m}$, passing to the $m\rightarrow\infty$ limit in the last display,  and using the lower semicontinuity of $\Lambda^*$, the lower semicontinuity of $I$ follows.

\textit{Step 3.} We claim $\{I\leq \alpha\}$ is compact for any $\alpha\geq 0$.

Since $I$ is lower semicontinuous, by Step 2, $\{I\leq \alpha\}$ is closed. We will show that
\begin{equation}\label{containment}
\{ (\w, r,s ) \in \mathcal X \times \R :
I(\w , r,s) \le \alpha \}
\subset \{ (\w ,r ) \in \X : r \le A \} \times [0 , B].
\end{equation}
for some $A,B>0$. 
This suffices to prove the claim, because the latter set is compact by \Cref{l:topologicalpropertyofX}, and closed subsets of compact sets are compact.
To show \eqref{containment}, we argue by contradiction. Fix $\alpha \ge 0$, and suppose that there exists a sequence
\begin{equation}\label{starassumption}
(\mathbf w^{(n)},r^{(n)},s^{(n)})\in\{I\leq \alpha\}, \qquad n \in \mathbb{N}\end{equation}
such that $r^{(n)}\rightarrow\infty$ or $s^{(n)}\rightarrow\infty$. Fix $0<c<T$, where $\mathbb R\times(-\infty,T)$ is the domain of $\Lambda$ from \Cref{assume3}.
By \Cref{assume4} on $\overline{\Lambda}$, and the definition of $\Lambda_m$ in \eqref{lambdam} and \eqref{d:g0}, it follows that there exists an absolute constnat $C \in (0, \infty)$ such that for all $\hatu=(u_1,\ldots,u_m) \in \R^m$ and $b \ge 0$,
\begin{equation}\label{62}
    \Lambda_m\left(\hatu,b,0\right)=\mathbb E\left[\overline{\Lambda}\left(\sum_{i=1}^mu_ig_i+bg_0,0\right)\right]\leq C\left(1+\|(\hatu,b)\|_2^2\right).
\end{equation}

Suppose first that $r^{(n)}\rightarrow\infty$. By \eqref{eqn:good-rate-fcn-step1} and \eqref{62}, it follows that
\begin{align*}
\begin{split}
    I\left(\mathbf w^{(n)},r^{(n)},s^{(n)}\right)&\geq \Lambda_m^*\left(\mathbf w^{(n)}_{\head},\sqrt{\left(r^{(n)}\right)^2-\|\mathbf w^{(n)}_{\head}\|_2^2},s^{(n)}\right)\\
    &=\sup_{(\hatu,b,c) \in \X \times \R_+}\left\{\langle \hatu,\mathbf w^{(n)}_{\head}\rangle+b\sqrt{\left(r^{(n)}\right)^2-\|\mathbf w^{(n)}_{\head}\|_2^2}+cs^{(n)}-\Lambda_m\left(\hatu,b,c\right)\right\}\\
    &\geq\sup_{(\hatu,b) \in \X}\left\{\langle \hatu,\mathbf w^{(n)}_{\head}\rangle+b\sqrt{\left(r^{(n)}\right)^2-\|\mathbf w^{(n)}_{\head}\|_2^2}-C\left(1+\|(\hatu,b)\|_2^2\right)\right\}.
\end{split}
\end{align*}
Using the Cauchy--Schwarz inequality, we note that
\begin{align*}
 \begin{split}
     \langle \hatu,\mathbf w_{\head}^{(n)}\rangle+b\sqrt{\left(r^{(n)}\right)^2-\|\mathbf w^{(n)}_{\head}\|_2^2}\leq r^{(n)}\|(\hatu,b)\|_2,
 \end{split}
 \end{align*}
and equality holds when $(\hatu,b)=\alpha (\mathbf w_{\head}^{(n)},\sqrt{\left(r^{(n)}\right)^2-\|\mathbf w^{(n)}_{\head}\|_2^2})$ for some $\alpha\in\mathbb R$. Therefore
\begin{align*}
    &\sup_{(\hatu,b) \in \X}\left\{ \langle \hatu,\mathbf w^{(n)}_{\head}\rangle+b\sqrt{\left(r^{(n)}\right)^2-\|\mathbf w^{(n)}_{\head}\|_2^2}-C\left(1+\|(\hatu,b)\|_2^2\right)\right\}\\
    &=\sup_{(\hatu,b)\in \X}\left\{ r^{(n)}\|(\hatu,b)\|_2-C\|(\hatu,b)\|_2^2-C \right\}=\frac{\left(r^{(n)}\right)^2}{4C}-C,
\end{align*}
where to get the last equality, we compute the supremum directly as a function of $\|(\hatu,b)\|_2$.
The last three displays together show that
\[
I(\mathbf w^{(n)},r^{(n)},s^{(n)}) \ge \frac{(r^{(n)})^2}{4C} - C,
\]
and this lower bound goes to infinity as $n\rightarrow \infty$, due to the assumption that $r^{(n)} \rightarrow \infty$. However, this contradicts the assumption \eqref{starassumption}.

Next, suppose $s^{(n)}\rightarrow\infty$. By \eqref{eqn:good-rate-fcn-step1} and \eqref{lambdam},
\begin{align*}
\begin{split}
    I\left(\mathbf w^{(n)},r^{(n)},s^{(n)}\right)\geq \Lambda_m^*\left(\mathbf w^{(n)}_{\head},\sqrt{\left(r^{(n)}\right)^2-\|\mathbf w^{(n)}_{\head}\|_2^2},s^{(n)}\right)
    \ge
    \sup_{t \in \R} \left\{
    t s^{(n)}  - \Lambda(\mathbf 0,0, t)
    \right\}.
\end{split}
\end{align*}
The supremum on the right side of the previous display is infinite, by \eqref{d:g0}, \eqref{asprev}, and \eqref{assume4}.
Again, this contradicts the assumption that $(\mathbf w^{(n)},r^{(n)},s^{(n)})\in\{I\leq \alpha\}$. The proof is complete.
\end{proof}
\subsection{Exponential Tightness}\label{s:63}
Our proof will appeal to the Hanson--Wright concentration inequality for quadratic forms.
We recall the following definition from \cite[Section 2.5]{Ver18}.
\begin{defn}\label{d:subGaussian}
A random variable $X$ is said to be sub-Gaussian
if there exists a constant $C>0$ such that
\be
\P( |X| \ge x ) \le 2 \exp\left(
- \frac{x^2}{C^2}
\right).
\ee
In this case, the sub-Gaussian norm of $X$ is defined as
\begin{equation}
\| X \|_{\psi_2} \coloneqq \inf \Big\{ t > 0 : \E \big[ \exp (X^2/t^2) \big]  \le 2 \Big\}.
\end{equation}
Moreover, a random vector $X\in\R^n$ is called sub-Gaussian if all one dimensional marginals $\left\langle X,\bm x\right\rangle$, $\x \in \R^n$, are sub-Gaussian random variables. The sub-Gaussian norm of $X$ is defined as
\begin{align}\label{d:subGaussian-vector}
\begin{split}
    \left\|X\right\|_{\psi_2}\coloneqq \sup_{\bm x\in\mathbb S^{n-1}}\left\|\left\langle X,\bm x\right\rangle\right\|_{\psi_2}.
\end{split}
\end{align}
\end{defn}
\begin{thm}[{\cite[Theorem 6.2.1]{Ver18}}]\label{t:hansonwright}
There exists a constant $\gamma_0 >0$ such that the following holds.
Let $X = (X_1,\dots, X_n)$ be a random vector with i.i.d., mean zero, sub-Gaussian entries. Let $A$ be an $n \times n$ matrix. Then for every $t\ge 0$,
\begin{equation}
\P\left( \left| X^\trans A X - \E [ X^\trans A X ]  \right| \ge t   \right) \le 2 \exp
\left[ - \gamma_0 \min\left(\frac{t^2}{R^4 \|A \|_F^2}, \frac{t}{R^2 \| A \|_2}  \right)  \right],
\end{equation}
where $R\coloneqq\| X \|_{\psi_2}$ is the sub-Gaussian norm of $X_1$ defined in \Cref{d:subGaussian}, and $\| A \|_2$ denotes the Frobenius norm of $A$.
\end{thm}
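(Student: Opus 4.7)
The plan is to prove the Hanson--Wright inequality by splitting the quadratic form $X^\trans A X$ into its diagonal and off-diagonal contributions and treating each piece by an appropriate concentration tool. Writing
\[
X^\trans A X = \sum_{i=1}^n A_{ii} X_i^2 + \sum_{i\neq j} A_{ij} X_i X_j \eqqcolon D(X) + O(X),
\]
and noting that $\E[X_i X_j] = 0$ for $i \neq j$ since the $X_i$ are independent and mean zero, we have $\E[X^\trans A X] = \E[D(X)]$, so it suffices to bound the deviations of $D(X) - \E[D(X)]$ and of $O(X)$ separately (and then allocate $t/2$ to each by a union bound).

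For the diagonal part, I would use the fact that $X_i$ sub-Gaussian with norm $R$ implies that $X_i^2$ is sub-exponential with norm bounded by a universal constant times $R^2$. Then $D(X) - \E[D(X)] = \sum_i A_{ii}(X_i^2 - \E[X_i^2])$ is a sum of independent centered sub-exponentials, and Bernstein's inequality gives
\[
\P\bigl(|D(X) - \E D(X)| \ge t/2\bigr) \le 2\exp\!\left(-c \min\!\left(\frac{t^2}{R^4 \sum_i A_{ii}^2},\; \frac{t}{R^2 \max_i|A_{ii}|}\right)\right),
\]
which is of the desired form since $\sum_i A_{ii}^2 \le \|A\|_F^2$ and $\max_i|A_{ii}| \le \|A\|_{\mathrm{op}} \le \|A\|_F$.

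For the off-diagonal part, the natural approach is to pass to an independent copy of $X$ via a decoupling inequality (in the spirit of Bourgain; see \cite[Theorem~6.1.1]{Ver18}), which replaces $\E[\exp(\lambda O(X))]$ by $\E[\exp(4\lambda \,X^\trans A' Y)]$ for an independent copy $Y$ of $X$, where $A'$ is $A$ with the diagonal zeroed out. Conditioning on $Y$, the random variable $X^\trans(A' Y)$ is a linear combination of the sub-Gaussian entries of $X$ with coefficient vector $A'Y$, so its conditional MGF satisfies
\[
\E\bigl[\exp(\lambda X^\trans A' Y)\,\big|\,Y\bigr] \le \exp\!\bigl(C\lambda^2 R^2 \|A' Y\|_2^2\bigr).
\]
Taking expectation in $Y$ reduces the problem to bounding the MGF of the nonnegative quadratic form $\|A'Y\|_2^2 = Y^\trans(A'^\trans A')Y$ for small $\lambda$; one obtains by a direct computation (expanding to second order and controlling tails via the sub-Gaussian norm of $Y$) a bound of the form $\exp(C'\lambda^2 R^4 \|A\|_F^2)$ valid on the scale $\lambda R^2 \|A\|_{\mathrm{op}} \lesssim 1$. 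Markov's inequality with $\lambda$ optimized then produces the mixed Gaussian/exponential tail with parameters $R^4\|A\|_F^2$ and $R^2\|A\|_{\mathrm{op}}$.

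The main obstacle is justifying the decoupling step cleanly and then closing the recursion that appears when bounding the MGF of $\|A'Y\|_2^2$ in terms of Hanson--Wright itself. The standard resolution is to invoke Bourgain's decoupling inequality as a black box and to observe that for sufficiently small $\lambda$ the MGF bound on $\|A'Y\|_2^2$ can be obtained by elementary sub-Gaussian estimates without invoking the full Hanson--Wright conclusion, thereby avoiding circularity. Combining the two pieces yields the asserted bound, with the constant $\gamma_0$ determined by absolute constants from Bernstein's inequality and the decoupling constant.
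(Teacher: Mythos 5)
The paper does not prove this result; it is quoted directly from Vershynin's textbook (Theorem 6.2.1 of \cite{Ver18}). Your sketch reproduces the proof strategy of that cited reference—split the quadratic form into diagonal and off-diagonal parts, apply Bernstein to the diagonal, decouple the off-diagonal part and control the MGF by a Gaussian comparison—so it is essentially the same approach, correctly outlined. One small remark: the paper's phrasing ``$\|A\|_2$ denotes the Frobenius norm'' is a slip—later, in the proof of \Cref{l:tightness}, the paper computes $\|A\|_2^2 = \max_{\|v\|=1}\|Av\|_2^2$, i.e.\ the operator norm, which is what the standard Hanson--Wright inequality has in the second slot and what your argument also delivers.
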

\begin{proof}[Proof of \Cref{l:tightness}]
For $n\in \N$, we set
\begin{equation}\label{W}
A^{(n)} = \frac{1}{n} \a^{(n)} (\a^{(n)})^\trans, \quad
Q^{(n)} = \frac{1}{n} (X^{(n)})^\trans \a^{(n)} (\a^{(n)})^\trans X^{(n)} =
\frac{1}{n} \sum_{i,j=1}^n  A^{(n)}_{ij}  X_{i} X_j.
\end{equation}
For the remainder of this proof, we suppress the dependence on $n$ in the notation by omitting all superscripts.
Observe that
\begin{equation}\label{expectationQ}
\E\left[ X^\trans A X  \right] = \frac{1}{n} \sum_{i=1}^n A_{ii} \E[X_i^2] = \frac{1}{n} \sum_{\ell}^{k_n} \sum_{i=1}^n
a_{i\ell}^2  = \frac{k_n}{n}.
\end{equation}
We  write the matrix $A$ as a sum of rank one matrices, $A = n^{-1} \sum_{\ell=1}^{k_n} Y_\ell Y^\trans_\ell$, where
$Y_\ell
=
[a_{1\ell}, \dots, a_{n\ell}]^\trans$.
Since $\a \in \mathbb V_{n, k_n}$, the vectors $\{ Y_{\ell} \}_{1 \le \ell \le k_n}$ are orthonormal. This implies
\begin{align}
\| A \|_F^2 &= \tr A A^\trans  \notag \\
&=  n^{-2} \tr \left( \sum_{\ell=1}^{k_n} Y_\ell Y^\trans_\ell\right)
\left( \sum_{\ell=1}^{k_n} Y_\ell Y^\trans_\ell \right)^\trans\notag \\
&= n^{-2} \tr  \sum_{\ell=1}^{k_n} Y_\ell Y^\trans_\ell\notag \\
&= n^{-2} \tr A\notag \\ &= n^{-2} k_n.\label{norm1}
\end{align}
Further, we also have
\begin{align}
\| A \|^2_2 &= \max_{\|v \| =1 } \| A v \|^2_2 \notag \\
& = \max_{\|v \| =1 } v^\trans A^\trans A v  \notag \\
& = \max_{\|v \| =1 }  n^{-2} \sum_{\ell=1}^{k_n}
 v^\trans Y_\ell   Y^\trans_\ell v\notag \\
&=  \max_{\|v \| =1 }  n^{-2} \langle Y_\ell, v \rangle^2.\label{norm2}
\end{align}
The last quantity is equal to $n^{-2}$ because the $Y_\ell$ are unit vectors. 
Also, note that by \Cref{d:subGaussian}, $\| X \|_{\psi_2} = C_2$, where $C_2$ is the constant in Assumption 2.
By applying \Cref{t:hansonwright} to the quantity $Q$ from \eqref{W} (and hence with $A= A^{(n)}$ and $R=C_2$), and using \eqref{expectationQ}, \eqref{norm1}, and \eqref{norm2}, we find that for any $\a \in \V_{n,k_n}$,
\begin{equation}
\P\left( \left| \|n^{-1/2} \a^\trans X^{(n)}\|_2^2 - \frac{k_n}{n} \right| \ge t   \right) \le 2 \exp
\left[ -  \gamma_0 \min\left( k_n^{-1} C_2^{-4} (nt)^2 , C_2^{-2} nt  \right)  \right],
\end{equation}
where $\gamma_0 >0$ is the constant from \Cref{t:hansonwright}.
This implies the conclusion after noting that 
since $k_n/n \le 1$ for $n\in \N$, for $t\ge C^2$, we have
\be
\min\left( k_n^{-1} (C_2^{-2} nt)^2 , C_2^{-2} nt  \right) = C_2^{-2} nt.
\ee
\end{proof}

\begin{proof}[Proof of \Cref{l:tightness2}]
Pick any $M>0$ and fix $\lambda>0$ such that $\overline{\Lambda}(0,\lambda)<\infty$. We note that such a $\lambda$ exists by \Cref{assume3}. Fix $\alpha>0$ satisfying $\alpha>(\overline{\Lambda}(0,\lambda)+M\big)/\lambda$. By Markov's inequality, we have
\begin{align*}
\begin{split}
    \P\left(\frac{1}{n}\sum_{i=1}^n\eta\left(X_i\right)\geq \alpha\right)\leq e^{-n\lambda\alpha}\left(\E\Big[\exp\big(\lambda\eta(X_1)\big)\Big]\right)^n.
\end{split}
\end{align*}
This implies that
\begin{align*}
\begin{split}
    \frac{1}{n}\log \P\left(\frac{1}{n}\sum_{i=1}^n\eta\left(X_i\right)\geq \alpha\right)\leq-\lambda\alpha+\E\Big[\exp\big(\lambda\eta(X_1)\big)\Big]<-M,
\end{split}
\end{align*}
which proves the exponential tightness of $(\frac{1}{n}\sum_{i=1}^n\eta(X_i))_{n\in\N}$.
\end{proof}

\subsection{Preliminary Lemmas for Gaussian Approximation}\label{s:gauss1}
Let $\mu$ be the standard Gaussian measure on $\R$, corresponding to a Gaussian random variable with mean zero and variance one.
 Fix $k, n \in \N$ with $k \le n$, and   $\a = \a^{(n)} \in \V_{n,k}$. Let $\a_i$ be the $i$-th row of $\a$.
For any choice of $\u \in \R^k$, let
$\mu_{n,\bm u}$ be the measure on $\R$ given by
\begin{equation}\label{d:mun}
\mu_n \equiv \mu_{n,\bm u}\defeq \frac{1}{n} \sum_{i=1}^n \delta_{\langle \u, \sqrt{n}\a_i\rangle}.
\end{equation}
\begin{lem}\label{Gaussianmoments}
Fix $k,n \in \N$ with $k \le n$, and $\u \in \R^k$. Let $\a$
be a random element of $\V_{n,k}$  distributed according to the Haar measure $\sigma_n$.
Then for every $m \in \N$, the quantity $\mu_n$ from \eqref{d:mun} satisfies
\begin{equation}
\E \left[\int x^m \, d \mu_{n,\bm u} \right]= \| \u \|^m_2 \left(\int x^m \, d \mu \right) \left( 1 + T_{m,n} \right),
\qquad | T_{m,n} | \le \frac{C_m}{n},
\end{equation}
for some $T_{m,n} \in \R$ and $C_m > 0$, where $C_m$ depends only on $m$ (and not $k$ or $n$).

\end{lem}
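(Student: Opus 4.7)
The plan is to reduce the moment computation to a single scalar expectation using the two-sided invariance of the Haar measure on $\V_{n,k}$, and then evaluate that expectation via the classical Gaussian representation of the uniform measure on $\mathbb S^{n-1}$. First, by left $O_n$-invariance the rows $\mathbf a_1,\dots,\mathbf a_n$ are exchangeable, so that
\[
\E\!\left[\int x^m\,d\mu_{n,\bm u}\right] \;=\; n^{m/2}\,\E\bigl[\langle \bm u, \mathbf a_1\rangle^m\bigr].
\]
Next, by right $O(k)$-invariance, $\mathbf a R\stackrel{d}{=}\mathbf a$ for every $R\in O(k)$, and a direct computation gives $\langle \bm u, (\mathbf a R)_1\rangle = \langle R\bm u, \mathbf a_1\rangle$. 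Choosing $R$ with $R\bm u = \|\bm u\|_2 e_1$ therefore yields $\langle \bm u, \mathbf a_1\rangle \stackrel{d}{=} \|\bm u\|_2\,a_{1,1}$, so the target expectation reduces to $\|\bm u\|_2^m\,n^{m/2}\,\E[a_{1,1}^m]$.

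For this remaining scalar moment, I would use that the first column of a Haar element of $\V_{n,k}$ is uniform on $\mathbb S^{n-1}$, which follows from the transitivity of the $O_n$-action on the sphere combined with left $O_n$-invariance. Hence $a_{1,1}\stackrel{d}{=} X_1/\|X\|_2$ for $X\sim\mathcal N(0,I_n)$, and the classical independence of direction and magnitude for an isotropic Gaussian produces
\[
\E[X_1^m] \;=\; \E\bigl[(X_1/\|X\|_2)^m\bigr]\,\E[\|X\|_2^m],
\]
whence $\E[a_{1,1}^m]=\E[X_1^m]/\E[\|X\|_2^m]$. For odd $m$ both sides vanish by symmetry and the claim holds trivially with $T_{m,n}=0$. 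For $m=2\ell$, substituting the standard Gaussian moments $\E[X_1^{2\ell}]=(2\ell-1)!!=\int x^{2\ell}\,d\mu$ and the chi-squared moments $\E[\|X\|_2^{2\ell}]=\prod_{j=0}^{\ell-1}(n+2j)$ gives
\[
n^{\ell}\,\E[a_{1,1}^{2\ell}] \;=\; \frac{(2\ell-1)!!}{\prod_{j=0}^{\ell-1}\bigl(1+\tfrac{2j}{n}\bigr)} \;=\; (2\ell-1)!!\,(1+T_{2\ell,n}),
\]
with $T_{2\ell,n} \coloneqq \prod_{j=0}^{\ell-1}\bigl(1+\tfrac{2j}{n}\bigr)^{-1} - 1$.

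Finally, the bound $|T_{m,n}|\le C_m/n$ follows from an elementary Bernoulli-type estimate: for $n\ge 2\ell$ one has $1\le \prod_{j=0}^{\ell-1}(1+2j/n)\le \exp(\ell(\ell-1)/n)$, so the reciprocal differs from $1$ by at most a constant depending only on $\ell$ times $1/n$; for the finitely many smaller values of $n$, the crude bound $|T_{m,n}|<1$ is absorbed into the constant $C_m$. I do not foresee any serious obstacle here — the whole argument is a two-step symmetry reduction followed by an explicit Gaussian moment computation, with the error bound being a one-line estimate.
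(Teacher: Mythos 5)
Your proof is correct, and it takes a genuinely different route from the paper's for the central moment computation. Both arguments begin identically: exchangeability of the rows reduces the left side to $n^{m/2}\E[\langle\u,\a_1\rangle^m]$, and right $O(k)$-invariance of Haar measure rotates $\u$ to $\|\u\|_2 e_1$, isolating $\|\u\|_2^m\,\E[(\sqrt n\,a_{11})^m]$. At that point the paper invokes Wick's theorem together with its Weingarten-calculus lemma (Lemma~\ref{momentexpectation}, proved in Appendix~\ref{s:weingarten} via asymptotics of the orthogonal Weingarten function $\wg^{(n)}(\rho)$), whereas you observe that the first column of a Haar element of $\V_{n,k}$ is uniform on $\mathbb S^{n-1}$ and exploit the independence of direction and magnitude for an isotropic Gaussian, obtaining the \emph{exact} identity $n^{\ell}\E[a_{11}^{2\ell}] = (2\ell-1)!!\,/\prod_{j=0}^{\ell-1}(1+2j/n)$. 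Your route is more elementary and gives a closed form for $T_{m,n}$ (incidentally, $|T_{2\ell,n}|\le 1-e^{-\ell(\ell-1)/n}\le \ell(\ell-1)/n$ holds for all $n$, so the case split on $n\ge 2\ell$ is unnecessary). What the paper's heavier machinery buys is generality: its Lemma~\ref{momentexpectation} handles mixed products $\E[\prod_{i}\sqrt n\,a_{1j_i}]$ with arbitrary, possibly distinct column indices $j_i$, which is needed again in the proof of Lemma~\ref{l:Gaussianapprox1} for the joint empirical measures $\nu^{(n)}_j$, and cannot be reduced to a single spherical coordinate by rotation. So your argument is a clean shortcut for this particular lemma but would not substitute for the Weingarten lemma elsewhere in the paper.
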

\begin{proof}
We just treat the even $m$ case. The odd $m$ case is clear by symmetry, as the moments on the left and right side of the equality both vanish.
By exchangeability 
of the rows of $\a$,
\begin{equation}
\E\left[\int x^m \, d \mu_{n,\bm u}\right] =
\frac{1}{n} \E \left[ \sum_{i=1}^n \left(\sum_{j=1}^k u_j (\sqrt{n}a_{ij}^{(n)} ) \right)^m \right]  = \E \left[  \left(\sum_{j=1}^k u_j (\sqrt{n}a_{1j} ) \right)^m \right] .
\end{equation}
We recognize this quantity as
\be
\E\left[ \langle  \u, \sqrt{n} \a_1 \rangle^m \right]=
\| \u \|^m_2 \cdot \E\left[ (\sqrt{n} \a_{11})^m \right].
\ee
where the equality is justified by the orthogonal invariance of $\mu_n$.
By Wick's theorem (\Cref{thm:wicktheorem}) and \Cref{momentexpectation},
\be
\E\left[ (\sqrt{n} \a_{11})^m \right]
=\left(\int x^m \, d \mu \right) \left( 1 + T_{m,n}\right), \qquad | T_{m,n} | \le \frac{C_m}{n}.
\ee
This completes the proof.

\end{proof}

The next lemma is a minor adaptation of a modified version of Gromov's concentration inequality for Haar measure on the special orthogonal group $SO_n\defeq\{\an\in O_n: \det(\an)=1\}$ (presented in \cite[Corollary~4.4.28]{anderson2010introduction}), tailored to the Haar measure $\sigma_n = \sigma_{n,k}$ on the Stiefel manifold $\V_{n,k}$.

\begin{lem}\label{t:gromov}
Fix $k,n\in\N$ with $k<n$. Let $f\colon \V_{n,k}\rightarrow \R$ be a Lipshitz function with Lipschitz constant $L$ in the Hilbert--Schmidt norm, meaning
\begin{equation}
| f(\an) - f(\bn) | \le L \| \an-\bn\|_{\mathrm{HS}}
\end{equation}
for all $\an,\bn \in \V_{n,k}$. 
Then there is a constant $C>0$ such that for all $\delta\geq0$,
\begin{equation}
\sigma_n\left( | f(\an) - {\E}_n[f(\an)] \ge  \delta   \right) \le 2 \exp\left( - \frac{C\delta^2n}{L^2}\right).
\end{equation}
\end{lem}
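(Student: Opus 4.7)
The plan is to lift $f$ from the Stiefel manifold $\V_{n,k}$ to the special orthogonal group $SO_n$ and invoke the cited concentration inequality from \cite{anderson2010introduction} there. The key structural observation is that $\V_{n,k}$ is a homogeneous space for $SO_n$: the map
\[
\pi\colon SO_n \longrightarrow \V_{n,k}, \qquad \pi(\mathbf O) \coloneqq [\,\mathbf O e_1 \; | \; \mathbf O e_2 \; | \; \cdots \; | \; \mathbf O e_k\,],
\]
which sends an orthogonal matrix to the matrix of its first $k$ columns, is surjective (this uses $k<n$, so the last column of $\mathbf O$ may be flipped to enforce $\det(\mathbf O)=1$). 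By uniqueness of Haar measure on homogeneous spaces, the pushforward of the Haar measure on $SO_n$ under $\pi$ equals the Haar measure $\sigma_n$ on $\V_{n,k}$.

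Next I would check that $\pi$ is $1$-Lipschitz with respect to the Hilbert--Schmidt norm. For $\mathbf O_1, \mathbf O_2\in SO_n$,
\[
\|\pi(\mathbf O_1)-\pi(\mathbf O_2)\|_{\mathrm{HS}}^2 = \sum_{j=1}^k \sum_{i=1}^n \bigl((\mathbf O_1)_{ij} - (\mathbf O_2)_{ij}\bigr)^2 \;\le\; \|\mathbf O_1-\mathbf O_2\|_{\mathrm{HS}}^2.
\]
Therefore the lift $\tilde f \coloneqq f\circ \pi\colon SO_n \to \R$ satisfies
\[
|\tilde f(\mathbf O_1) - \tilde f(\mathbf O_2)| \le L\,\|\pi(\mathbf O_1)-\pi(\mathbf O_2)\|_{\mathrm{HS}} \le L\,\|\mathbf O_1-\mathbf O_2\|_{\mathrm{HS}},
\]
so $\tilde f$ is $L$-Lipschitz in the Hilbert--Schmidt norm on $SO_n$.

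Applying \cite[Corollary~4.4.28]{anderson2010introduction} to $\tilde f$ on $SO_n$ yields a constant $C>0$ (independent of $f,k,n,\delta$) such that
\[
\mathbb P_{SO_n}\bigl(|\tilde f(\mathbf O) - \E[\tilde f(\mathbf O)]|\ge \delta\bigr) \le 2\exp\!\left(-\frac{C\delta^2 n}{L^2}\right).
\]
Because $\pi$ pushes Haar on $SO_n$ to $\sigma_n$, we have $\E[\tilde f(\mathbf O)] = \E_n[f(\a^{(n)})]$ and the event above coincides with $\{|f(\a^{(n)})-\E_n[f(\a^{(n)})]|\ge \delta\}$ in distribution; substituting gives the desired bound.

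The argument is essentially a lifting plus contraction check, so the main point requiring care is verifying that the projection $\pi$ transports Haar to Haar; this is standard but should be invoked explicitly. The rest is a direct Lipschitz comparison, and the dimensional constant $n$ in the exponent is inherited unchanged from the $SO_n$ inequality because the $1$-Lipschitz contraction does not degrade the Lipschitz constant.
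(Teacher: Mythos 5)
Your argument is correct and reaches the result by a cleaner route than the paper. The paper lifts $f$ to the full orthogonal group $O_n$ via the first-$k$-columns projection and then must reckon with the disconnectedness of $O_n$ — since \cite[Corollary~4.4.28]{anderson2010introduction} concerns the connected group $SO_n$ — by factoring an arbitrary $\bm c\in O_n$ through the two-element subgroup generated by the diagonal matrix flipping the last coordinate (which leaves the first $k$ columns unchanged because $k<n$) and checking that the lifted function's expectation is unaffected. You instead lift directly to $SO_n$, using the same $k<n$ degree of freedom to observe that $SO_n$ already acts transitively on $\V_{n,k}$; hence the first-$k$-columns map $\pi\colon SO_n\to\V_{n,k}$ is surjective and, by uniqueness of the invariant probability measure on a compact homogeneous space, pushes Haar$_{SO_n}$ forward to $\sigma_n$. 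The coordinate-projection estimate showing $\pi$ is $1$-Lipschitz in Hilbert--Schmidt norm is immediate, so $f\circ\pi$ inherits the constant $L$, and \cite[Corollary~4.4.28]{anderson2010introduction} applies with no component-by-component bookkeeping. Both proofs hinge on the same fact — that discarding the last $n-k$ columns makes the sign of the determinant invisible — but your version invokes it once, at the level of identifying the acting group, rather than inside a decomposition of $O_n$; this is a modest but genuine simplification. The only step worth stating a little more explicitly than you did is the transitivity of the $SO_n$-action on $\V_{n,k}$ for $k<n$, since that is what licenses both the surjectivity of $\pi$ and the appeal to uniqueness of invariant measure.
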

\begin{proof} We suppress the superscript $(n)$ in the proof for brevity. 
Fix $k\in\N$, and let $\pi=\pi_k:O_n\rightarrow \V_{n,k}$ be the canonical projection map, i.e.
$\pi: \a=(a_{ij})_{1\leq i,j\leq n}\mapsto \left(a_{ij}\right)_{1\leq i\leq n,1\leq j\leq k}$, for all $\a\in O_n$, let $\bar\sigma_n$ be the Haar measure on $O_n$ and let $\bar\E_n$ be the corresponding expectation. Then $\sigma_n=\sigma_{n,k}$ is the pushforward of $\bar\sigma_n$ under $\pi$. Finally, set $\bar f=f\circ\pi$ and let $H_n\subset O_n$ be the cyclic group generated by 
$$\bm h=\left(\begin{array}{lllll}1 & & & &  \\
& 1 & & &  \\
& & \ddots & &  \\& & & 1 & \\& & & & -1\end{array}\right),$$
that is, $H_n$ consists of the identity matrix and $\bm h$. Observe that $O_n=\{\bm b_1\bm b_2:\,\bm b_1\in SO_n,\bm b_2\in H_n\}$. Fix $\bm c\in O_n$. Then there exists $\bm c_1\in SO_n$ and $ \bm c_2\in H_n$ such that $\bm c=\bm c_1\bm c_2$. Let $\a$ be uniformly distributed on $O_n$ with respect to Haar measure. Using right-invariance of Haar measure in the second equality and combining the definition of $\bar f$, the fact that $k<n$ and $\bm c_2\in H_n$ in the third equality below, we have
\begin{align*}
\begin{split}
    \bar\E_n\left[\bar f(\a \bm c)\right]=\bar\E_n\left[\bar f\left(\a\bm c_1\bm c_2\right)\right]=\bar \E_n\left[\bar f\left(\a\bm c_2\right)\right]=\bar\E_n\left[\bar f\left(\a\right)\right].
\end{split}
\end{align*}
This implies that the function $O_n\ni \bm c\mapsto \bar\E_n[\bar f(\bm a\bm c)]$ is a constant over $O_n$ equal to $\bar\E_n[\bar f(\bm a)]$. This observation, together with \cite[Corollary~4.4.28]{anderson2010introduction} and the definitions of $\bar\sigma_n$ and $\bar f$, completes the proof.
\end{proof}

\subsection{Proof of Gaussian Approximation}\label{s:65}
\begin{proof}[Proof of \Cref{l:lln}.]
Fix $c \in (-\infty, T)$ and a sequence $\{d_n\}_{n \in \N}$ as in the lemma statement. 
To rewrite $F_n$ in \eqref{eqn:quenched-log-moment} more succinctly, set $\Lambda_1(t) = \overline \Lambda(t, c)$ with $\overline{\Lambda}$ as in \eqref{asprev}, and define the maps $f^{(n,j)}:\mathbb V_{n,k_n}\rightarrow \mathbb R$
\begin{align}\label{def:fnj}
\begin{split}
    f^{(n,j)}(\an)\defeq \frac{1}{n}\sum_{i=1}^n\Lambda_1\left(\left\langle \bm v^{(n,j)},\sqrt n\an_i\right\rangle\right) ,\quad j=1,2,\ldots,d_n.
\end{split}
\end{align}
A direct calculation shows that, for $i=1,\ldots,n$ and $l=1,\ldots,k_n$,
\begin{align*}
\begin{split}
    \partial_{il}f^{(n,j)}(\an)=\frac{1}{\sqrt n}\Lambda_1'\left(\left\langle \bm v^{(n,j)},\sqrt n\an_i\right\rangle\right) v^{(n,j)}_l,
\end{split}
\end{align*}
where $\partial_{il}f^{(n,j)}(\an)$ represents the derivative of $f^{(n,j)}$ with respect to $\an_{il}$. 
Hence, by \Cref{assume4}, there exists $\overline{C}\equiv \overline{C}(c)< \infty$ such that
\begin{align}\label{eqn:gradient-norm}
\begin{split}
    \left\|\nabla f^{(n,j)}(\an)\right\|_2^2=&\sum_{i=1}^n\sum_{l=1}^{k_n}\left(\partial_{il}f^{(n,j)}(\an)\right)^2\\
    =&\frac{1}{n}\sum_{i=1}^n\left(\Lambda_1'\left(\left\langle \bm v^{(n,j)},\sqrt n\an_i\right\rangle\right)\right)^2\sum_{l=1}^{k_n}\left(v^{(n,j)}_l\right)^2\\
    \leq&\frac{\left\|\bm v^{(n,j)}\right\|_2^2}{n}\sum_{i=1}^n C\left(1+\left\langle \bm v^{(n,j)},\sqrt n\an_i\right\rangle^2\right)\\
    =& \overline{C}\|\bm v^{(n,j)}\|_2^2\left(1+\left\|\bm v^{(n,j)}\right\|_2^2\right)\\
    =& \overline{C} D^2\left(1+D^2\right).
\end{split}
\end{align}
\Cref{eqn:gradient-norm} implies that $f^{(n,j)}$ is Lipshitz continuous in the Hilbert--Schmidt topology with Lipshitz constant $\sqrt{CD^2\left(1+D^2\right)}$. By \Cref{t:gromov} and union bound, it follows that there exists a constant $C'>0$ such that  for any $\epsilon>0$,
\begin{align}\label{gaussian-approx-01}
\begin{split}
    \sigma_n\left(\max_{1\leq j\leq d_n}\left|f^{(n,j)}(\an)-\E_n\left[f^{(n,j)}(\an)\right]\right|>\epsilon\right)\leq 2 d_n\exp\left(-\frac{C'\epsilon^2n}{D^2(1+D^2)}\right).
\end{split}
\end{align}
Recalling $d_n=\exp(o(n))$, the inequality in \eqref{gaussian-approx-01} and the Borel--Cantelli lemma together imply that for $\sigma$-a.e. $\bm a=\left(\bm a^{(1)},\bm a^{(2)},\ldots\right)\in\V$,
\begin{align}\label{gaussian-approx-02}
\begin{split}
    \lim_{n\rightarrow\infty}\max_{1\leq j\leq d_n}\left|f^{(n,j)}(\an)-\E_n\left[f^{(n,j)}(\an)\right]\right|= 0.
\end{split}
\end{align}

Next, consider the random measures
\begin{equation*}
\mu^{(n,j)} \coloneqq \frac{1}{n} \sum_{i=1}^n \delta_{\langle \v^{(n,j)}, \sqrt{n}\a_i^{(n)}\rangle},\qquad j=1,\dots, d_n. 
\end{equation*}
By the definition of $f^{(n,j)}$ in \eqref{def:fnj}, we have
\begin{align}\label{rewrite-fnj}
\begin{split}
    f^{(n,j)}\left(\an\right)=\int_\R \Lambda_1(t)\, d\mu^{(n,j)}(t).
\end{split}
\end{align}
By \Cref{Gaussianmoments}, the sequence 
\begin{equation*}
\left\{\E_1\left[\mu^{(1,1)}\right],\ldots,\E_1\left[\mu^{(1,d_1)}\right],\E_2\left[\mu^{(2,1)}\right],\ldots,\E_2\left[\mu^{(2,d_2)}\right],\ldots,\E_n\left[\mu^{(n,1)}\right],\ldots,\E_n\left[\mu^{(n,d_n)}\right],\ldots\right\}
\end{equation*}
converges in the sense of moments to a Gaussian measure, with mean zero and variance $D^2$. Combined with \cite[Lemma~B.1]{BS10} and \cite[Theorem~1.7]{ollivier2014optimal}, this implies that for any $f:\R\rightarrow\R$ with at most polynomial growth,
\begin{equation}\label{wass-converge}
\lim_{n\rightarrow\infty}\max_{1\leq j\leq d_n}\left|\E\left[\int fd\mu^{(n,j)}\right]-\E\left[f(Dg)\right]\right|=0,
\end{equation}
where $g$ is a Gaussian random variable with zero mean and unit variance.
By \eqref{rewrite-fnj}, \eqref{wass-converge} and the fact that $\Lambda_1$ can be bounded by a quadratic polynomial due to \Cref{assume4}, it follows that
\begin{equation}\label{gaussian-approx-03}
\lim_{n\rightarrow\infty}\max_{1\leq j\leq d_n}\left|\E\left[f^{(n,j)}\left(\an\right)\right]-\E\left[\Lambda_1(Dg)\right]\right|=0.
\end{equation}

To complete the proof, combine \eqref{gaussian-approx-02} and \eqref{gaussian-approx-03}, and recall the definition of $F_n$ in \eqref{eqn:quenched-log-moment}.
\end{proof}

\section{Preliminary Lemmas: Lower Bound}\label{s:preliminarylowerProof}
\subsection{Notations and Conventions}\label{s:lowerBoundNotations}
We adopt the following notation for this section.
Let $T$ be the constant from \Cref{assume3} and let $m\in\mathbb N$ and $(\hatu,b,c)\in\R^m\times\R\times(-\infty,T)$ be parameters. 
Recall that $\Lambda_m$ was defined in \eqref{lambdam}, and set 
\be\label{gradrel}
(\hatw,t,s)\coloneqq\nabla\Lambda_m(\hatu,b,c).
\ee
Let $\{k_n\}_\sequence{n}$ be an increasing sequence of positive integers such that $\lim_{n\rightarrow \infty} k_n = \infty$. Recall that
$\sigma=\sigma_n$ denotes the Haar measure on $\mathbb V_{n,k_n}$.
Define $m \in \N$ and $\bm b \in \R^M_+$ by
\begin{align}\label{eqn:def-of-M}
\begin{split}
    M= M(m)\=\left\lceil (t+1)^2m\right\rceil,\quad\bm b\=\underbrace{\left(bM^{-1/2},\ldots,bM^{-1/2}\right) }_{M\text{ times}},
\end{split}
\end{align}
and also define a sequence of vectors $\{ \u^{(n)} \}_{n > m+M} \in \R^{k_n}$ as follows:
\begin{align}\label{eqn:def-of-u}
\begin{split}
    \mathbf u^{(n)}:=(\hatu,\mathbf b,\underbrace{0,0,\ldots,0}_{k_n-m-M\text{ times}}).
\end{split}
\end{align}
For $j\in \{1,\ldots,m+M\}$, define\footnote{We sometimes write $\delta(x)$ instead of $\delta_x$ in this subsection for legibility. The difference is only notational.}
\begin{align}\label{eqn:nu-n-j}
\begin{split}
\nun_j\coloneqq \frac{1}{n}\sum_{i=1}^n\delta\big(\sqrt na_{ij}^{(n)},\langle \mathbf u^{(n)},\sqrt n\mathbf a_i^{(n)}\rangle\big).
\end{split}
\end{align}
Finally, define
\begin{align}\label{eqn:lawOfJointGaussians}
\begin{split}
    \nu_j\coloneqq\operatorname{Law}\left(g_{j}, \sum_{l=1}^{m+M} u_{l} g_{l}\right),
\end{split}
\end{align}
where $g_0,\ldots,g_{m+M}$ are independent, mean zero, variance one Gaussian random variables.

\subsection{Proof of \Cref{l:additional-good-properties}}\label{s:top-lem}
We begin with an auxiliary result that allows us to analyze open neighborhoods of a point in $\X \times \R$ by looking at open neighborhoods around the truncated version of the point in $\R^{m} \times \R^2_+$, for sufficiently large $m$.
Recall our notation for a metric ball  given in \eqref{eqn:ball-def}.
\begin{lem}\label{l:product-topology}
Fix an open set $O \subset \mathcal X \times \R_+$ and a point $(\mathbf w,r,s) \in O$.
Then there exists $m_0 \in \N$ and $\rho' >0$ such that the following holds for all $ m \ge m_0$: 
If $(\mathbf w',r',s')\in\mathcal X \times \R_+$ and
\begin{equation}\label{eqn:cond-lem}
    \left(\left(\mathbf w'_{\head m},\sqrt{(r')^2-\|\mathbf w'_{\head m}\|_2^2}\right),s'\right)\in B\left(\left(\mathbf w_{\head m},\sqrt{r^2-\|\mathbf w\|_2^2},s\right),\rho'\right),
\end{equation}
then $(\mathbf w',r',s')\in O$.
\end{lem}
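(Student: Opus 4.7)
The plan is to transfer the openness of $O$ in $\X\times\R_+$ into the ball condition on just the first $m$ coordinates described in \eqref{eqn:cond-lem}, using two key facts: (i)~$\mathbf w\in\ell^2$ has $\mathbf w_{\tail m}$ of vanishing $\ell^2$-norm as $m\to\infty$; and (ii)~any ordered $\mathbf w'\in\ell^2$ with $\|\mathbf w'\|_2\leq r'$ satisfies the pointwise bound $|w'_{m+1}|\leq r'/\sqrt{m+1}$.

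First, since $O$ is open, I would fix $\rho>0$ with $B_{\X\times\R_+}\!\left((\mathbf w,r,s),\rho\right)\subset O$; by \eqref{linfinitydistance} this ball is the set of triples $(\mathbf w',r',s')$ with $\|\mathbf w'-\mathbf w\|_\infty+|r'-r|+|s'-s|<\rho$, so it suffices to make each of the three summands smaller than $\rho/3$. I would then pick $m_0\in\N$ so large that $\|\mathbf w_{\tail m_0}\|_2$ and $(r+1)/\sqrt{m_0+1}$ are both less than a prescribed small fraction of $\rho$; both are possible by (i)~and~(ii) applied to $\mathbf w$. Using the identity
\[
r^2-\|\mathbf w_{\head m}\|_2^2=\|\mathbf w_{\tail m}\|_2^2+\bigl(r^2-\|\mathbf w\|_2^2\bigr),
\]
the choice of $m_0$ also ensures $\bigl|\sqrt{r^2-\|\mathbf w_{\head m}\|_2^2}-\sqrt{r^2-\|\mathbf w\|_2^2}\bigr|$ is small for all $m\geq m_0$. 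Finally, I would fix $\rho'>0$ sufficiently small in terms of $\rho$, $r$, and $\|\mathbf w\|_2$.

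For any $m\geq m_0$ and any $(\mathbf w',r',s')$ satisfying \eqref{eqn:cond-lem}, the ball condition yields $O(\rho')$-bounds on $\|\mathbf w'_{\head m}-\mathbf w_{\head m}\|_\infty$, on $|\hat t'-\hat t|$ where $\hat t'\coloneqq\sqrt{(r')^2-\|\mathbf w'_{\head m}\|_2^2}$ and $\hat t\coloneqq\sqrt{r^2-\|\mathbf w\|_2^2}$, and on $|s'-s|$. The first and third bounds control the head part of $\|\mathbf w'-\mathbf w\|_\infty$ and the $s$-difference directly. To bound $|r'-r|$, I would subtract the identity $(r')^2=\|\mathbf w'_{\head m}\|_2^2+(\hat t')^2$ from its counterpart $r^2=\|\mathbf w_{\head m}\|_2^2+\hat t^2+\|\mathbf w_{\tail m}\|_2^2$, apply the three bounds together with an a priori estimate $r'\leq r+O(\sqrt{\rho'})$ (also derived from these identities), and conclude $|(r')^2-r^2|\leq O(\rho')+\|\mathbf w_{\tail m_0}\|_2^2<(\rho/3)^2$. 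For the remaining tail term of $\|\mathbf w'-\mathbf w\|_\infty$, the ordering bound (ii) gives $\|\mathbf w'_{\tail m}\|_\infty\leq r'/\sqrt{m+1}$ and $\|\mathbf w_{\tail m}\|_\infty\leq r/\sqrt{m+1}$, both of which are smaller than $\rho/12$ by the choice of $m_0$.

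The main subtlety is that the hypothesis \eqref{eqn:cond-lem} gives information about $\hat t'=\sqrt{(r')^2-\|\mathbf w'_{\head m}\|_2^2}$ rather than about $r'$ directly, so $r'$ must be disentangled from the contribution of $\|\mathbf w'_{\head m}\|_2$ hidden inside $\hat t'$. This is precisely where the algebraic identity above, combined with condition (i) (i.e.\ the smallness of $\|\mathbf w_{\tail m_0}\|_2$), is used to absorb the discrepancy. Everything else reduces to careful triangle-inequality bookkeeping.
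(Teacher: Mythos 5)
Your proposal is correct and follows essentially the same route as the paper: reduce to finitely many coordinates, use the ordering bound $|w'_{m+1}|\le r'/\sqrt{m+1}$ to control the tail of $\|\mathbf w'-\mathbf w\|_\infty$, and recover $|r'-r|$ from the ball condition by an algebraic manipulation on the identity $r^2-\|\mathbf w_{\head m}\|_2^2=\|\mathbf w_{\tail m}\|_2^2+(r^2-\|\mathbf w\|_2^2)$ and its counterpart for $(\mathbf w',r')$. The paper packages the finite-coordinate reduction as a citation of the product-topology characterization (\Cref{l:topologicalpropertyofX}(1)), which itself rests on the same ordering bound, so the two arguments share all ingredients; just note that in the $|(r')^2-r^2|$ step you should invoke the Euclidean bound $\|\mathbf w'_{\head m}-\mathbf w_{\head m}\|_2<\rho'$ supplied by the ball condition rather than the sup-norm bound you record, since the latter alone would introduce a spurious $\sqrt m$ factor when estimating $\left|\|\mathbf w'_{\head m}\|_2^2-\|\mathbf w_{\head m}\|_2^2\right|$.
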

\begin{proof} Fix $(\bm w,r,s)\in O$. By \Cref{l:topologicalpropertyofX}(1),
the topology on $\mathcal X$ is equivalent to the product topology on $\mathbb R^{\mathbb N}\times \mathbb R_+\times\mathbb R_+$. Therefore, using the definition of the product topology, there exists $m_0\in \mathbb N$ and $\delta_1,\delta_2,\delta_3>0$ such that for every $m>m_0$, the following claim holds:
\begin{align}\label{eqn:base-in-O}
\begin{split}
    \text{If }(\tilde{\bm w},\tilde r,\tilde s)\in\mathcal X\times \R_+ \text{ and } (\tilde{\mathbf w}_{\head m},\tilde r,\tilde s)\in B(\mathbf w_{\head m},\delta_1)\times B(r,\delta_2)\times B(s,\delta_3),\\\text{ then }(\mathbf w',r',s')\in O.
\end{split}
\end{align}
By increasing $m_0$ if necessary, we may also suppose that for all $m\ge m_0$, we have 
\begin{equation}\label{eqn:w-small-tail}
    \|\mathbf w_{\tail m}\|_2\leq \frac{\delta_2r}{2}.
\end{equation}
We set $t\defeq\sqrt{r^2-\|\mathbf w\|_2^2}$ and $\rho'\defeq\min\left(\delta_1,\delta_2/4,\delta_3\right)$.

Now, let $(\bm w',r',s')\in\mathcal X\times \mathbb R_+$ be such that \eqref{eqn:cond-lem} holds for some $m \ge m_0$ and our choice of $\rho'$, and let this choice of $m$ be fixed for the remainder of the proof. Then clearly,
\begin{gather}
    \mathbf w'_{\head m}\in B\left(\mathbf w_{\head m},\rho'\right)\subset B\left(\mathbf w_{\head m},\delta_1\right),\label{eqn:first-coord}\\
    s'\in B\left(s,\rho'\right)\subset B\left(s,\delta_3\right),\label{eqn:third-coord}
\end{gather}
and
\begin{align*}
\begin{split}
    \rho'>\left|\sqrt{r'^2-\|\mathbf w'_{\head m }\|_2^2}-\sqrt{r^2-\|\mathbf w\|_2^2}\right|.
\end{split}
\end{align*}
Multiplying both sides by $\sqrt{r'^2-\|\mathbf w'_{\head m}\|_2^2}+\sqrt{r^2-\|\mathbf w\|_2^2}$ we have
\begin{align*}
\begin{split}
\rho'\left(\sqrt{r'^2-\|\mathbf w'_{\head m}\|_2^2}+\sqrt{r^2-\|\mathbf w\|_2^2}\right)>\left|r^2-(r')^2-\|\mathbf w_{\head m}\|_2^2+\|\mathbf w'_{\head m}\|_2^2  - \|\mathbf w_{\tail m}\|_2^2\right|.
\end{split}
\end{align*}
Note because that $(\w, r, s)$ and $(\bm w',r',s')$ lie in $ \X\times\mathbb R_+$, we have  $\| \w \|_2 \le r$ and $\|\bm w'\|_2\leq r'$.
Combining this fact with the triangle inequality,  \eqref{eqn:w-small-tail}, \eqref{eqn:first-coord}, and the previous display, we have 
\begin{align*}
\begin{split}
    |r^2-r'^2|&<\left|\|\mathbf w_{\head m_0}\|_2^2-\left\|\mathbf w'_{\head m}\right\|_2^2\right|+\|\mathbf w_{\tail m}\|^2+\rho'(r+r')\\
    &\leq \left\|\mathbf w_{\head m}-\mathbf w'_{\head m}\right\|_2\big(\|\mathbf w_{\head m}\|_2+\|\mathbf w'_{\head m}\|_2\big)+\delta_2 r/2+\rho'(r+r')\\
    &\leq 2\rho'(r+r')+\delta_2 r/2.
\end{split}
\end{align*}
Dividing both sides by $(r+r')$ and recalling $2 \rho' \le \delta_2/2$ , this implies
\begin{align}\label{eqn:second-coord}
\begin{split}
    |r-r'|<2\rho'+\delta_2/2\leq \delta_2.
\end{split}
\end{align}
When combined, \eqref{eqn:first-coord}, \eqref{eqn:third-coord}, and \eqref{eqn:second-coord} show that the supposition in \eqref{eqn:base-in-O} holds with $(\tilde{\bm w},\tilde r,\tilde s)=(\bm w',r',s')$. Thus \eqref{eqn:base-in-O} implies $(\bm w',r',s')\in O$, which proves the lemma.
\end{proof}

In the proof of the next lemma, we use the following concept from convex analysis.
\begin{defn}[Relative interior]
For every non-empty convex set $C$, the relative interior of $C$, denoted $\operatorname{ri}(C)$, is defined as the set
\[\operatorname{ri}(C) = \{x \in C: \text{for all } y \in C, \text{ there exists some } \mu > 1 \text{ such that }
\mu x  + ( 1 - \mu) y \in C
\}.
\]
\end{defn}

\begin{proof}[Proof of Lemma \ref{l:additional-good-properties}] 

We first show that $\Lambda_{m}$, defined in \eqref{lambdam}, is essentially smooth for all $m\in \N$ (see \Cref{d:esmooth}). The finiteness of $\Lambda_{m}$ on $\R^{m_0}\times\R\times(-\infty,T)$ follows from the finiteness of $\bar\Lambda$ on $\R\times(-\infty,T)$.
The differentiability of $\Lambda_{m_0}$ at $(\mathbf v',b',c')\in\R^{m_0}\times\R\times(-\infty,T)$ is a direct consequence of the differentiability of $\bar\Lambda$ on $\R\times(-\infty,T)$, the bounds in \Cref{assume4}, and the dominated convergence theorem.
To check the third condition in \Cref{d:esmooth}, first note that for $t_2\in(-\infty,T)$,
\begin{equation}
\label{eqn:second-d-pos}
\partial_2\bar\Lambda\left(t_1,t_2\right)=e^{-\bar\Lambda(t_1,t_2)}\E\left[\eta(X_1)e^{t_1X_1+t_2\eta(X_1)}\right]\geq 0.
\end{equation}
Hence, we have
\begin{align*}
\begin{split}
\|\nabla\Lambda_{m_0}(\mathbf v',b',c')\|_{2}\geq \E\left[\partial_2\bar\Lambda\left(\sum_{l=1}^{m_0}v'_lg_l+b'g_0,c'\right)\right]\geq0.
\end{split}
\end{align*}
Taking the limit infimum over $(\bm v',b',c')\in \R^{m_0}\times\R\times(-\infty,T)$ above and using Fatou's lemma, one obtains
\begin{align*}
\begin{split}
\liminf_{\mathbf v',b',c'\rightarrow \mathbf v_0,b_0,T}\|\nabla\Lambda_{m_0}(\mathbf v',b',c')\|_{2}\geq \E\left[\liminf_{\mathbf v',b',c'\rightarrow \mathbf v_0,b_0,T}\partial_2\bar\Lambda\left(\sum_{l=1}^{m_0}v'_lg_l+b'g_0,c'\right)\right]=\infty,
\end{split}
\end{align*}
where the last equality follows from the essential smoothness of $\bar\Lambda$, which in turn follows from \Cref{assume3}. This establishes the essential smoothness of $\Lambda_{m}$.

Fix an open set $\mathcal O$ and a point $(\mathbf w,r,s)\in\mathcal O$ such that $I(\w, r, s) < \infty$. Let $m_0$ be as in \Cref{l:product-topology} and recall the definition of $\Lambda_{m_0}$ from \eqref{lambdam}, and the definitions  of  $\Lambda^*_{m_0}$ and $\mathcal D_{m_0}^*$ from \eqref{lambdamstar} and \eqref{def-of-Lambdam-star}. 
Since $\Lambda_{m_0}$ is essentially smooth, by \cite[Corollary~26.4.1]{Rock15} we have
\begin{align}
\label{eqn:convex-inclusion}
     \nabla\Lambda_{m_0}(\mathcal D_{{m_0}})\subset \mathcal D_{{m_0}}^*.
 \end{align}
Note that $\Lambda_{m_0}(0,0,0)=0$ and the cumulant generating function $\bar\Lambda(s_1,s_2)=\log\mathbb E[e^{s_1X_1+s_2\eta(X_1)}]$ satisfies $\bar\Lambda(s_1,0)\geq\mathbb E[s_1X_1]=0$. Hence, the mapping $(\mathbf v',b')\mapsto\Lambda_{m_0}(\mathbf v',b',0)$ achieves its minimum of 0 at $(\mathbf 0,0)$. Combining this with \eqref{eqn:second-d-pos}, we have $\nabla\Lambda_{m_0}(\mathbf 0,0,0)=(\mathbf 0,0,s_0)$ for some $s_0\in\mathbb R_+$. By differentiating $\Lambda_{m_0}$ twice, we have
\begin{align*}
\begin{split}
    \nabla^2\Lambda_{m_0}(\mathbf 0,0,0)=\begin{pmatrix}
    \partial_1^2\bar\Lambda(\mathbf 0,0)I_{m_0+1}&\mathbf 0\\
    \mathbf 0^\trans&\partial_2^2\bar\Lambda(\mathbf 0,0)
    \end{pmatrix}=\begin{pmatrix}
    \operatorname{Var}(X_1)I_{m_0+1}&\mathbf 0\\
    \mathbf 0^\trans&\operatorname{Var}(\eta(X_1))
    \end{pmatrix},
\end{split}
\end{align*}
which is positive definite  (since, by \Cref{assume1}, $X_1$ is not degenerate and $\eta$ is not a constant function). Thus, the inverse function theorem implies that $\nabla\Lambda_{m_0}$ is locally a diffeomorphism, which, along with \eqref{eqn:convex-inclusion}, implies there exists $\varepsilon_0>0$ such that
$$B_{\varepsilon_0}(0,0,s_0)\subset\operatorname{ri}(\mathcal D_{m_0}^*).$$

Now, pick $(\tilde{\mathbf w},\tilde t,\tilde s)\in B_{\varepsilon_0}(0,0,s_0)$ such that $\tilde{\mathbf w}$ is strictly ordered in the sense that $|\tilde{\mathbf w}_1|>\ldots>|\tilde{\mathbf w}_{m_0}|$.
Set $t=\sqrt{r^2-\|\bm w\|^2_2}$. By Jensen's inequality, $\Lambda_{m_0}(\bm w_{\head m_0},t,s)\leq I(\bm w,r,s)<\infty$ by \eqref{lambdamstar}, and hence, $(\bm w_{\head m_0},t,s)\in\mathcal D_{m_0}$.
Since $(\tilde{\mathbf w},\tilde t,\tilde s)\in \operatorname{ri}(\mathcal D_{{m_0}}^*)$, \cite[Theorem~6.1]{Rock15} and \cite[Corollary~26.4.1]{Rock15} imply that, for all $\lambda\in[0,1)$,
\begin{equation*}
\lambda (\mathbf w_{\head {m_0}},t,s)+(1-\lambda)(\tilde{\mathbf w},\tilde t,\tilde s)\in \operatorname{ri}(\mathcal D_{{m_0}}^*)\subset
\{ \nabla \Lambda_{m_0}(y) : y \in \mathcal D_{m_0} \}.
\end{equation*}
 By the definitions of $(\mathbf w_{\head {m_0}},t,s)$ and $(\tilde{\mathbf w},\tilde t,\tilde s)$, note that the points on the line segment
$$\ell\defeq\{\lambda (\mathbf w_{\head {m_0}},t,s)+(1-\lambda)(\tilde{\mathbf w},\tilde t,\tilde s):\lambda\in(0,1)\}$$
are strictly ordered in the first component. Since $\Lambda_{m_0}^*$ is a convex function (by definition of the Legendre transform), it is continuous along $\ell\subset\mathcal D_{{m_0}}^*$. Together with \Cref{l:product-topology} (to establish \eqref{claim:2}), this implies that there exists a point $(\bar{\mathbf w},\bar t,\bar s)\in \ell$ that satisfies all conditions of the lemma.
\end{proof}

\subsection{Proof of the modified lower bound}\label{proof:lemma-small-ball}
The proof of \Cref{lemma:small-ball-probability}, which is given in \Cref{s:proofSmallBall}, uses a change of measure.

Throughout this section, we fix $\mu$ to be the law of a random variable satisfying Assumptions 2, 3 and 4, with associated constants $C_2, T>0$ and function $\tilde C(\cdot)$. For $s_1\in\mathbb R,s_2<T$, recalling $\bar\Lambda$ defined in \eqref{asprev}, we define the exponentially tilted measure $\tilde\mu_{s_1,s_2}$ by
\begin{align}\label{def:tilted-mu}
\begin{split}
\frac{d\tilde \mu_{s_1,s_2} }{d\mu}(x)\coloneqq\exp(s_1x+s_2\eta(x) -\overline\Lambda\left(s_1,s_2\right)),\quad x\in\mathbb R.
\end{split}
\end{align}

\subsubsection{Preparatory results}\label{s:prelim-proof4.2} The first lemma claims that $\{\tilde\mu_{s_1,s_2}\}_{(s_1,s_2)\in\mathbb R\times (-\infty,T)}$ is uniformly sub-Gaussian in $s_1$.
\begin{lem}\label{lemma:sub-Gaussian-norm-after-translation}
Fix $(s_1,s_2)\in\mathbb R\times (-\infty, T)$. Let $\tilde X_1$ be a random variable on a probability space $(\Omega, \mathcal F,\P_{s_1,s_2})$ that has law $\tilde\mu_{s_1,s_2}$, 
and let $\tilde{\E}_{s_1,s_2}$ denote expectation with respect to $\tilde{\P}_{s_1,s_2}$.
Then there exists a constant $C = C(s_2)>0$ such that
\begin{align*}
\begin{split}
\tilde{\P}_{s_1,s_2}\left(|\tilde X_1-\tilde{\E}_{s_1,s_2}[\tilde X_1]|>t\right)\leq 2\exp(-t^2/C^2).
\end{split}
\end{align*}
\end{lem}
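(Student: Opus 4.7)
The plan is to bound the centered moment generating function of $\tilde{X}_1$ under $\tilde{\P}_{s_1,s_2}$ uniformly in $s_1$, and then apply a standard Chernoff argument. The crucial observation is that Assumption 4 with $\alpha=2, \beta=0$ yields $|\partial_1^2 \overline{\Lambda}(t_1,t_2)| \leq 2\tilde{C}(t_2)$, a bound which is independent of $t_1$. This is precisely what produces sub-Gaussianity with constant depending on $s_2$ alone.

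First, I would compute from the definition \eqref{def:tilted-mu} of $\tilde{\mu}_{s_1,s_2}$ that, for all $\lambda \in \R$,
\[
\tilde{\E}_{s_1,s_2}\!\left[e^{\lambda \tilde{X}_1}\right] = \exp\!\left(\overline{\Lambda}(s_1+\lambda, s_2) - \overline{\Lambda}(s_1, s_2)\right),
\]
which, upon differentiating at $\lambda = 0$, also gives $\tilde{\E}_{s_1,s_2}[\tilde{X}_1] = \partial_1 \overline{\Lambda}(s_1, s_2)$ (the existence of this derivative, along with the continuity of $\partial_1^2\overline{\Lambda}(\cdot,s_2)$, is guaranteed by Assumptions 3 and 4). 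A second-order Taylor expansion of $\overline{\Lambda}(\cdot, s_2)$ at $s_1$ then yields, for some $\xi$ between $0$ and $\lambda$,
\[
\log \tilde{\E}_{s_1,s_2}\!\left[e^{\lambda(\tilde{X}_1 - \tilde{\E}_{s_1,s_2}[\tilde{X}_1])}\right] = \tfrac{\lambda^2}{2}\, \partial_1^2 \overline{\Lambda}(s_1 + \xi, s_2) \leq \tilde{C}(s_2)\,\lambda^2,
\]
where the last inequality follows from the $t_1$-uniform bound on $\partial_1^2\overline{\Lambda}$ supplied by \eqref{polygrowth}. Note that the right-hand side is uniform in $s_1$.

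Finally, I would invoke Markov's inequality: for $t>0$ and $\lambda>0$,
\[
\tilde{\P}_{s_1,s_2}\!\left(\tilde{X}_1 - \tilde{\E}_{s_1,s_2}[\tilde{X}_1] > t\right) \leq \exp\!\left(-\lambda t + \tilde{C}(s_2)\lambda^2\right),
\]
and optimize at $\lambda = t/(2\tilde{C}(s_2))$ to obtain an upper bound of $\exp(-t^2/(4\tilde{C}(s_2)))$. Applying the same argument with $\tilde{X}_1$ replaced by $-\tilde{X}_1$ and combining the two one-sided bounds via a union bound yields the claim with $C^2 = 4\tilde{C}(s_2)$. There is no serious obstacle beyond recognizing that the $t_1$-independence of the bound on $\partial_1^2 \overline{\Lambda}$ (which corresponds exactly to the exponent $2 - \alpha = 0$ when $\alpha = 2$ in Assumption 4) is what makes the resulting sub-Gaussian constant depend only on $s_2$.
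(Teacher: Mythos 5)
Your proof is correct and follows essentially the same route as the paper's: express the tilted MGF via the shift $\overline{\Lambda}(s_1+\lambda,s_2)-\overline{\Lambda}(s_1,s_2)$, center by $\partial_1\overline{\Lambda}(s_1,s_2)$, use the $t_1$-uniform bound on $\partial_1^2\overline{\Lambda}$ from \Cref{assume4} (with $\alpha=2$), and finish with a Chernoff argument. The paper compresses the Taylor-remainder step and cites \cite[Proposition~2.5.2]{Ver18} for the final Chernoff bound, whereas you write both out explicitly, but the argument is the same.
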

\begin{proof}[Proof of \Cref{lemma:sub-Gaussian-norm-after-translation}]
Fix $(s_1,s_2)\in\R\times(-\infty,T)$.
By the definition of $\bar\Lambda$ in \eqref{asprev} and the definition of $\tilde\mu_{s_1,s_2}$ in \eqref{def:tilted-mu}, we have
\begin{align*}
\begin{split}
    \tilde\E_{s_1,s_2}\left[\tilde X_1\right]=\int xe^{s_1x+s_2\eta(x)-\bar\Lambda\left(s_1,s_2\right)}d\mu(x)=\partial_1\bar\Lambda\left(s_1,s_2\right).
\end{split}
\end{align*}
Now \Cref{assume4} implies there exists a continuous function $\tilde C_0\colon (- \infty, T) \rightarrow \R$ such that
\begin{align*}
\begin{split}
    \tilde{\E}_{s_1,s_2}[\exp(\lambda (\tilde X_1-\tilde{\E}_{s_1,s_2}[\tilde X_1]))]=&e^{-\lambda\tilde{\E}_{s_1,s_2}[\tilde X_1]-\Lambda\left(s_1,s_2\right)}\int\exp(\lambda x)\exp(s_1x+s_2\eta(x))\, d\mu(x)\\
    =&\exp(\bar\Lambda(s_1+\lambda,s_2)-\bar\Lambda(s_1,s_2)-\lambda\partial_1\bar\Lambda(s_1,s_2))\\
    \leq&\exp\left(\tilde C_0(s_2)^2\lambda^2\right).
\end{split}
\end{align*}
The lemma then follows from a standard argument using Chernoff's bound (see \cite[Proposition~2.5.2]{Ver18}).
\end{proof}
Next, we state  a Gaussian approximation result for the Haar measure on $\mathbb V_{n,k_n}$ in \Cref{lemma:second-third-concentration}, and establish an asymptotic decorrelation result in \Cref{lemma:second-third-convergence3}. 
The proofs of these lemmas are deferred to \Cref{s:aux-proof}. 

Recall from \Cref{s:lowerBoundNotations} that $T$ denotes the constant from \Cref{assume3}, and that $M$ was defined in \eqref{eqn:def-of-M} in terms of $m$ and a point $(\hatu,b,c)\in\R^m\times\R\times(-\infty,T)$. Recall from \eqref{eqn:lawOfJointGaussians} that $g_1,g_2,\ldots,g_{m+M}$ denote i.i.d.\ Gaussian random variables with zero mean and unit variance. Additionally, given any $n\in \N$ and $j\in \{1, \dots, k_n\}$, and any function $h:\mathbb R\rightarrow\mathbb R$, 
we define a map $f_j^{(n)}\colon \V_{n,k_n} \rightarrow \R$ as follows:
\begin{align}\label{def:fjn}
\begin{split}
    f_j^{(n)}\left(\mathbf a^{(n)}\right)\coloneqq \frac{1}{n} \sum_{i=1}^{n} \sqrt{n} a_{i j}^{(n)} h\left(\left\langle\mathbf{u}^{(n)}, \sqrt{n} \mathbf{a}_{i}^{(n)}\right\rangle\right)=\int x h(y) \, d \nun_j(x, y),
\end{split}
\end{align}
where we recall that $\bm u^{(n)}=(u_1,\ldots,u_{k_n})$ and $\nun_j$ were specified in \eqref{eqn:def-of-u} and \eqref{eqn:nu-n-j} respectively.

\begin{lem}\label{lemma:second-third-concentration}
Fix $C_0\in (0, \infty)$ and a function $h:\mathbb R\rightarrow\mathbb R$ satisfying
\begin{align}\label{eqn:h-growth}
\begin{split}
    \left|\frac{d^\alpha}{dx^\alpha}h(x)\right|\leq C_0(1+|x|^{1-\alpha}), \qquad \alpha=0,1.
\end{split}
\end{align}
Fix $m \in \N$ and $(\hatu,b,c)\in\R^m\times\R\times(-\infty,T)$.
Then for $\sigma$-a.e. $\mathbf a=(\bm a^{(1)},\bm a^{(2)},\ldots)\in\mathbb V$, we have for  all $j\in \{1,\ldots,m+M\}$ that
\begin{equation*}
\lim_{n\rightarrow \infty} f_j^{(n)}\left(\mathbf a^{(n)}\right)= \mathbb E\left[g_jh\left(\sum_{l=1}^{m+M}u_lg_l\right)\right].
\end{equation*}
\end{lem}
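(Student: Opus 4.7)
My plan is to follow the same two-part template used in the proof of \Cref{l:lln}: first establish a concentration estimate via Gromov's inequality to reduce the almost-sure convergence of $f_j^{(n)}(\mathbf a^{(n)})$ to the convergence of its mean, and then identify the limit of the mean using the Weingarten-based Gaussian moment approximation already available from \Cref{Gaussianmoments}.

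For the concentration step, I would fix $j \in \{1,\dots,m+M\}$ and view $f_j^{(n)}$ as a function on $\V_{n,k_n}$. A direct computation shows
\begin{equation*}
\partial_{kq}f_j^{(n)}(\mathbf a^{(n)}) = \tfrac{1}{\sqrt n}\delta_{jq}h(y_k) + a_{kj}h'(y_k)u_q^{(n)},
\qquad y_k \coloneqq \langle \mathbf u^{(n)},\sqrt n \mathbf a_k^{(n)}\rangle.
\end{equation*}
Using the growth bound \eqref{eqn:h-growth} on $h$ and $h'$, the orthonormality identities $\sum_k a_{kj}^2 = 1$ and $\sum_k y_k^2 = n\|\mathbf u^{(n)}\|_2^2$, and the fact that $\|\mathbf u^{(n)}\|_2$ is bounded uniformly in $n$ (since only the first $m+M$ coordinates are nonzero), one obtains a bound $\|\nabla f_j^{(n)}\|_2^2 \le L^2$ for some constant $L$ independent of $n$. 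Thus $f_j^{(n)}$ is Lipschitz in Hilbert--Schmidt norm uniformly in $n$, and \Cref{t:gromov} together with a union bound over the finite set $\{1,\dots,m+M\}$ gives, for every $\epsilon>0$,
\begin{equation*}
\sigma_n\Big(\max_{1\le j\le m+M}\big|f_j^{(n)}(\mathbf a^{(n)}) - \E_n[f_j^{(n)}(\mathbf a^{(n)})]\big| > \epsilon\Big) \le 2(m+M)\exp(-C\epsilon^2 n/L^2).
\end{equation*}
The Borel--Cantelli lemma then yields $f_j^{(n)}(\mathbf a^{(n)}) - \E_n[f_j^{(n)}(\mathbf a^{(n)})] \to 0$ almost surely.

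For the convergence of the mean, I would use exchangeability of the rows of a Haar-distributed $\mathbf a^{(n)}$ to write
\begin{equation*}
\E_n[f_j^{(n)}(\mathbf a^{(n)})] = \E\big[\sqrt n a_{1j}^{(n)}\, h(\langle \mathbf u^{(n)},\sqrt n\mathbf a_1^{(n)}\rangle)\big].
\end{equation*}
The random vector $(\sqrt n a_{1j}^{(n)}, \langle \mathbf u^{(n)}, \sqrt n\mathbf a_1^{(n)}\rangle) \in \R^2$ is a linear image of the first row under a deterministic matrix whose Gram matrix converges to the covariance $\begin{pmatrix}1 & u_j \\ u_j & \|\mathbf u\|_2^2\end{pmatrix}$ appearing in the definition \eqref{eqn:lawOfJointGaussians} of $\nu_j$. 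A bivariate version of \Cref{Gaussianmoments}, proved identically by invoking Wick's theorem (\Cref{thm:wicktheorem}) together with \Cref{momentexpectation}, shows that the joint moments $\E\big[(\sqrt n a_{1j}^{(n)})^\alpha(\langle \mathbf u^{(n)},\sqrt n\mathbf a_1^{(n)}\rangle)^\beta\big]$ converge to the corresponding Gaussian moments with an $O(1/n)$ error. Combining this moment convergence with the quadratic-growth bound $|xh(y)| \le C_0|x|(1+|y|)$ from \eqref{eqn:h-growth}, together with uniform-in-$n$ control on the fourth moments (which follows from the same moment method, or directly from the sub-Gaussian concentration on the sphere), yields uniform integrability of $\sqrt n a_{1j}^{(n)} h(\langle \mathbf u^{(n)},\sqrt n\mathbf a_1^{(n)}\rangle)$, and hence
\begin{equation*}
\lim_{n\to\infty}\E_n[f_j^{(n)}(\mathbf a^{(n)})] = \E\Big[g_j\, h\Big(\sum_{l=1}^{m+M} u_l g_l\Big)\Big].
\end{equation*}
Combining this with the concentration step gives the claim.

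The main technical obstacle is the bivariate moment convergence in the second step. The univariate statement in \Cref{Gaussianmoments} is not directly applicable because it handles only the linear combination $\langle \mathbf u^{(n)},\sqrt n\mathbf a_1^{(n)}\rangle$, not its joint distribution with the marginal $\sqrt n a_{1j}^{(n)}$; however, since $j\le m+M$ is a fixed coordinate of $\mathbf u^{(n)}$, this joint distribution is itself the linear image of $(\sqrt n a_{1,1}^{(n)},\dots,\sqrt n a_{1,k_n}^{(n)})$ by a $2 \times k_n$ matrix with bounded entries, so the same Weingarten-calculus bookkeeping that underlies \Cref{Gaussianmoments} applies verbatim. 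The only care needed is in controlling the polynomial growth of $xh(y)$ under the pre-limit measures, which is handled through the sub-Gaussian tail estimates on the rows of a Haar-distributed Stiefel matrix (a special case of \Cref{l:tightness}).
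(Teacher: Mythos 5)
Your proposal is correct and follows essentially the same two-step template as the paper's proof: (i) a Lipschitz bound on $\nabla f_j^{(n)}$ feeding Gromov's concentration inequality (\Cref{t:gromov}) and Borel--Cantelli to reduce to convergence of means, and (ii) a Weingarten-based bivariate moment comparison with the Gaussian law $\nu_j$ to identify the limit of the mean. The only cosmetic difference is in step (ii), where you pass from moment convergence to convergence of $\E[\sqrt n a_{1j}^{(n)} h(\langle\un,\sqrt n\a_1^{(n)}\rangle)]$ via uniform integrability (quadratic growth plus bounded higher moments), whereas the paper packages the same step through \Cref{l:Gaussianapprox1}, \Cref{l:multidimensionalMomentConvergenceTheorem}, and a Wasserstein-convergence result; both routes are standard and equivalent here since the Gaussian law is moment-determined.
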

\begin{rmk}\label{rmk:condition-Lambda}
For any fixed $x \in(-\infty,T)$, by \Cref{assume4}, $\partial_1\bar\Lambda(\cdot,x )$ satisfies the condition imposed on $h$ in \Cref{lemma:second-third-concentration}.
\end{rmk}
\begin{lem}\label{lemma:second-third-convergence3} 
Fix $m \in \N$ and $(\hatu,b,c)\in\R^m\times\R\times(-\infty,T)$, and retain the assumption \eqref{eqn:h-growth}. 
Then for $\sigma$-a.e. $\mathbf a=(\bm a^{(1)},\bm a^{(2)},\ldots)\in\mathbb V$,
\begin{align}\label{eqn:remaining-convergence}
\begin{split}
    \lim_{n\rightarrow \infty} \sum_{\ell=m+M+1}^{k_n}\left(f_\ell^{(n)}\left(\an\right)\right)^2= 0.
\end{split}
\end{align}
\end{lem}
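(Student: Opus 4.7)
The plan is to exploit the block structure of $\an$ induced by the support of $\bm u^{(n)}$, combining a conditional expectation computation with Gromov-type concentration on the Stiefel manifold.

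First, I would write $\an = [\bm A, \bm B]$ where $\bm A \in \R^{n \times (m+M)}$ consists of the first $m+M$ columns of $\an$ and $\bm B \in \R^{n \times (k_n - m - M)}$ the remainder. Since $\bm u^{(n)}$ is supported on its first $m+M$ coordinates, the scalars $Y_i \coloneqq \langle \bm u^{(n)}, \sqrt n \bm a_i^{(n)}\rangle$ and the vector $\bm \phi \coloneqq (h(Y_1), \ldots, h(Y_n))^\trans$ depend only on $\bm A$, and a direct rewriting shows that the sum in \eqref{eqn:remaining-convergence} equals $\tfrac{1}{n}\|\bm B^\trans \bm \phi\|_2^2$. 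The crucial deterministic observation is that $(\an)^\trans \an = I_{k_n}$ forces the identity $\sum_i Y_i^2 = n\|\bm u^{(n)}\|_2^2 = n(\|\hatu\|_2^2 + b^2)$, which combined with \eqref{eqn:h-growth} yields the uniform bound $\|\bm \phi\|_2^2 \leq C n$ for some constant $C = C(\hatu, b, C_0)$ independent of $\an$.

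Conditioning on $\bm A$, the matrix $\bm B$ is Haar distributed on the orthonormal $(k_n - m - M)$-frames in the $(n - m - M)$-dimensional subspace $\operatorname{col}(\bm A)^\perp$, so rotational symmetry yields $\E[\bm B \bm B^\trans \mid \bm A] = \tfrac{k_n - m - M}{n - m - M}(I - \bm A \bm A^\trans)$ and hence
\begin{equation*}
\E\left[\tfrac{1}{n}\|\bm B^\trans \bm \phi\|_2^2 \,\Big|\, \bm A \right] \leq \tfrac{k_n - m - M}{n(n - m - M)} \|\bm \phi\|_2^2 \leq C \cdot \tfrac{k_n - m - M}{n - m - M},
\end{equation*}
which tends to $0$ uniformly in $\bm A$ since $k_n/n \to 0$. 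For concentration around this conditional mean, the function $F(\bm B) \coloneqq \tfrac{1}{n}\|\bm B^\trans \bm \phi\|_2^2$ has Hilbert--Schmidt Lipschitz constant bounded by $\tfrac{2\|\bm \phi\|_2^2}{n} \leq 2C$. Identifying the conditional Stiefel manifold with $\V_{n-m-M,\, k_n - m - M}$ via an orthonormal basis of $\operatorname{col}(\bm A)^\perp$ and invoking \Cref{t:gromov}, I would obtain $\sigma_n(|F(\bm B) - \E[F(\bm B) \mid \bm A]| > \epsilon) \leq 2\exp(-c \epsilon^2 n)$ uniformly in $\bm A$, for each $\epsilon > 0$. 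Summability in $n$ and Borel--Cantelli then give the $\sigma$-a.e. convergence $F(\bm B) - \E[F \mid \bm A] \to 0$, which, combined with the conditional-mean estimate above, completes the proof.

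The only real obstacle is ensuring that the concentration rate is strong enough to be Borel--Cantelli-summable. This is accomplished by the exact identity $\sum_i Y_i^2 = n\|\bm u^{(n)}\|_2^2$, which supplies a uniform $O(n)$ bound on $\|\bm \phi\|_2^2$ and hence on the Lipschitz constant of $F$. Without this identity, one would need additional probabilistic tail bounds on $\|\bm \phi\|_2^2$ to maintain summability, substantially complicating the argument.
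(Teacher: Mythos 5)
Your proof is correct and takes a genuinely different route from the paper's. The paper's argument linearizes the quadratic quantity $S_n = \sum_{\ell>m+M} (f_\ell^{(n)})^2$ by introducing auxiliary Gaussians $\{Z_\ell\}$ via the identity $\E[\exp(\lambda^2 x^2/2)] = \E_Z[\exp(\lambda x Z)]$, collapses the resulting linear functional $\sum_\ell Z_\ell f_\ell^{(n)}$ onto a single direction by rotational invariance of $\bm Z$, controls its sub-Gaussian norm via the auxiliary \Cref{lem:sub-Gaussian-sliced}, and closes with a moment-generating-function bound plus Markov and Borel--Cantelli. Your argument instead works directly with the block decomposition $\an = [\bm A, \bm B]$: it writes $S_n = n^{-1}\|\bm B^\trans \bm\phi\|_2^2$ with $\bm\phi$ depending only on $\bm A$, uses column-orthonormality of $\an$ to get the exact identity $\sum_i Y_i^2 = n\|\bm u^{(n)}\|_2^2$ (and hence the \emph{deterministic} bound $\|\bm\phi\|_2^2 \le Cn$), computes the conditional mean via $\E[\bm B\bm B^\trans\mid\bm A] = \tfrac{k_n-m-M}{n-m-M}(I-\bm A\bm A^\trans)$, and then applies the Gromov-type concentration of \Cref{t:gromov} on the conditional Stiefel manifold $\V_{n-m-M,k_n-m-M}$. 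What your route buys is elementarity and transparency: it sidesteps the Gaussian decoupling trick and \Cref{lem:sub-Gaussian-sliced} entirely, and the crux — that orthonormality of $\an$ forces a uniform $O(n)$ bound on $\|\bm\phi\|_2^2$ and hence a bounded Lipschitz constant for $\bm B \mapsto n^{-1}\|\bm B^\trans\bm\phi\|_2^2$ regardless of $\bm A$ — is a clean structural observation that plays the role the sub-Gaussianity estimate plays in the paper. Both proofs yield summable exponential tails; yours exposes the underlying quadratic-form structure more directly and relies on fewer auxiliary results. Minor points worth recording if you write this up: the application of \Cref{t:gromov} to the conditional distribution of $\bm B$ requires $k_n - m - M < n - m - M$, which holds for $n$ large since $k_n = o(n)$; and since $\bm B^\trans\bm\phi = \bm B^\trans(I - \bm A\bm A^\trans)\bm\phi$, the identification with $\V_{n-m-M,k_n-m-M}$ only reduces $\|\bm\phi\|_2$, so the Lipschitz bound survives the change of coordinates.
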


\subsubsection{Proof of \Cref{lemma:small-ball-probability}}\label{s:proofSmallBall}
We now show how the preparatory results from \Cref{s:prelim-proof4.2} can be combined to establish \Cref{lemma:small-ball-probability}.
Let  $m\in\mathbb N$, $(\hatu,b,c)\in\mathcal D_m$ and  $(\hatw,t,s)=\nabla\Lambda_m(\hatu,b,c)$ be as in the statement of the lemma. 
Let $M$ be defined in terms of $t$ and $m$ as in \eqref{eqn:def-of-M} and let $\bm u^{(n)}\in\mathbb R^{k_n}$ be defined in terms of $\hatu$ and $b$ as in \eqref{eqn:def-of-u}.
Let $(X_1, \dots, X_n)$ be random variables defined on a measure space $(\Omega, \mathcal F)$ equipped with the probability measures $\P^{(n)}$ and $\tilde \P^{(n)}$. Suppose that under $\P^{(n)}$, the variables $(X_1, \dots, X_n)$ are i.i.d.\ with a common law $\mu$ that satisfies Assumptions 1--4, and let $\mu^{(n)}\coloneqq \mu^{\otimes n}$.
Let $\tilde\mu_{s_1,s_2}$ be defined in terms of $\mu$ as in \eqref{def:tilted-mu}, and let $\tPn$ be a probability measure on $(\Omega, \mathcal F)$ such that under $\tPn$, $(X_1, \dots , X_n)$ has law $\tmun$, where
\be\label{tildemun}
        \tilde\mu^{(n)}\coloneqq\otimes_{i=1}^n\tilde\mu_{\lambda_i,c},\quad\lambda_i\equiv\lambda_i^{(n)}\=\left\langle \bm u^{(n)},\sqrt n\an_i\right\rangle,\quad i=1, \dots, n.
\ee
Let $(\tilde X_1,\ldots,\tilde X_n)$ be an $\R^n$-valued random vector with law $\tmun$. 
Let $\En$ and $\tEn$ denote expectations with respect to $\P^{(n)}$ and $\tPn$, respectively.
Note that the Radon--Nikodym derivative of $\tilde\mu^{(n)}$ with respect to $\mu^{(n)}$ can be rewritten in terms of the function $F_n$ from \eqref{eqn:quenched-log-moment} as follows:
\begin{align}\label{eqn:def-tilde-mu-n}
    \begin{split}
        \frac{d\tilde\mu^{(n)}}{d\mu^{(n)}}(\bm x)=e^{n\left\langle\boldsymbol{u}^{(n)}, \frac{1}{\sqrt{n}}\left(\boldsymbol{a}^{(n)}\right)^\trans \boldsymbol{x}\right\rangle+c \sum_{i=1}^n \eta\left(x_i\right)-n F_n\left(\boldsymbol{u}^{(n)}, \boldsymbol{a}^{(n)}, c\right)},\ \forall \boldsymbol{x}=\left(x_1, \ldots, x_n\right) \in \mathbb{R}^n.
    \end{split}
\end{align}

Next, recall that $\kappa >0$ and $B=B((\hatw,t,s),\kappa )$ were fixed in the statement of \Cref{lemma:small-ball-probability}, and
for $n$ such that $k_n \ge M + m$, let 
\begin{align}\label{717}
\begin{split}
    \mathbf w^{(n)}=(w_1,\ldots,w_{k_n})\defeq(\hatw,\underbrace{tM^{-1/2},\ldots,tM^{-1/2}}_{M\text{ times}},\underbrace{0,\ldots,0}_{k_n-m-M\text{ times}})\in\mathbb R^{k_n}.
\end{split}
\end{align}
Fix $\delta\in(0,\min(\kappa,m^{-1/2}))$ and define
\begin{align*}
    E_n\coloneqq\bigg\{
    \bm x=(x_1,\cdots,x_n)\in\mathbb R^n:\left(\frac{1}{\sqrt n}(\an)^\trans\bm x,\,\frac{1}{n}\sum_{i=1}^n\eta(x_i)\right)\in B\left((\bm w^{(n)},s),\delta\right)
    \bigg\}.
\end{align*}
Let  $W^{(n)}$ and $L^{(n)}$ be defined in terms of $(X_1,\dots, X_n)$ as in \eqref{def-of-W-L}.
Then using the definitions of $\mu^{(n)}$, $\tilde\mu^{(n)}$, and $E_n$, and \eqref{eqn:def-tilde-mu-n}, we have
\begin{align}\label{eqn:lower-bound-derivations}
\begin{split}
&\Pn\left((W^{(n)}_{\head m},\|W^{(n)}_{\tail m}\|_2,L^{(n)})\in B,\|W^{(n)}_{\tail m}\|_\infty\leq 2m^{-1/2}\right)\\
\geq& \P^{(n)}\left(\left(\Wn,L^{(n)}\right)\in B\left((\mathbf w^{(n)},s),\delta\right)\right)\\
=&\int_{E_n}d \mun (\bm x)\\
=&\int_{E_n}e^{-n\left( \left\langle\boldsymbol{u}^{(n)}, \frac{1}{\sqrt{n}}\left(\boldsymbol{a}^{(n)}\right)^\trans \boldsymbol{x}\right\rangle+\frac{c}{n}\sum_{i=1}^n \eta\left(x_i\right)\right)+nF_n(\mathbf u^{(n)},\mathbf a^{(n)},c)}d\tmun(\bm x)\\
\geq&\tilde \P^{(n)}\left(\left(\Wn,L^{(n)}\right)\in B\left((\mathbf w^{(n)},s),\delta\right)\right)e^{-n\left(\langle \hatu,\hatw\rangle+bt+cs-F_n(\mathbf u^{(n)},\mathbf a^{(n)},c)+\delta\|(\hatu,b,c)\|_2\right)}.
\end{split}
\end{align}
We show below that the following three claims hold for $\sigma$-a.e. $\mathbf a\in\mathbb V$ and any $\varepsilon>0$:
\begin{align}
    \lim_{n\rightarrow\infty}\tilde \P^{(n)}\left(|W^{(n)}_j-w_j|\geq\varepsilon\right)&=0,\ j=1,\ldots,m+M,\label{eqn:auxiliary-inequality-1}\\
    \lim_{n\rightarrow\infty}\tilde \P^{(n)}\left(\sum_{l=m+M+1}^{k_n}(W^{(n)}_l)^2\geq\varepsilon\right)&=0,\label{eqn:auxiliary-inequality-3}\\
    \lim_{n\rightarrow\infty}\tilde \P^{(n)}\left(\left|L^{(n)}-s\right|\geq\varepsilon\right)&=0.\label{eqn:auxiliary-inequality-4}
\end{align}
In essence, the limits \eqref{eqn:auxiliary-inequality-1} and \eqref{eqn:auxiliary-inequality-4} are weak laws of large numbers for $\Wn_j,j=1,\ldots,m$, and $\Ln$ respectively, under the tilted measure, and \eqref{eqn:auxiliary-inequality-3} states that the $\ell^2$-norms of $\Wn_{\tail m+M}$ are asymptotically negligible under the tilted measure.

Postponing the proofs of \eqref{eqn:auxiliary-inequality-1}--\eqref{eqn:auxiliary-inequality-4}, we first note that together with  \eqref{eqn:def-tilde-mu-n}, they imply that for $\sigma$-a.e. $\mathbf a\in\mathbb V$,
\begin{align}\label{eqn:auxiliary-result}
\begin{split}
    \liminf_{n\rightarrow\infty}\tilde \P^{(n)}\left(\left(W^{(n)},L^{(n)}\right)\in B\left((\mathbf w^{(n)},s),\delta\right)\right)=1.
\end{split}
\end{align}
Moreover, $F_n(\mathbf u^{(n)},\mathbf a^{(n)},c)\rightarrow \Lambda_m(\hatu,b,c)$ for $\sigma$-a.e.\ $\mathbf a\in\mathbb V$ by Lemma \ref{l:lln}, with $d_n=1$. Thus, combining \eqref{eqn:lower-bound-derivations} and \eqref{eqn:auxiliary-result}, and recalling the defining of $\Lambda^*_m$ in \eqref{lambdamstar}, 
we conclude that for $\sigma$-a.e.\ $\mathbf a\in\mathbb V$,
\begin{align*}
\begin{split}
    \liminf_{n\rightarrow\infty}&\ n^{-1}\log \P^{(n)}\left((\Wn_{\head m},\|\Wn_{\tail m}\|_2,L)\in B,\|\Wn_{\tail m}\|_\infty\leq 2m^{-1/2}\right)\\
    \geq &-\left(\langle \hatu,\hatw\rangle+bt+cs-\Lambda_m\left(\hatu,b,c\right) +\delta\|(\hatu,b,c)\|_2\right)\\
    \ge &-\Lambda_m^*(\hatw,t,s)-\delta\|(\hatu,b,c)\|_2.
\end{split}
\end{align*}
Taking $\delta\searrow0$ completes the proof of \eqref{eqn:small-ball-probability}, given \eqref{eqn:auxiliary-inequality-1}--\eqref{eqn:auxiliary-inequality-4}.

We now turn to the proofs of \eqref{eqn:auxiliary-inequality-1}--\eqref{eqn:auxiliary-inequality-4}.

\begin{proof}[Proof of \eqref{eqn:auxiliary-inequality-1}]
By the definition of $\tilde \mu^{(n)}$ in \eqref{eqn:def-tilde-mu-n} and Lemma \ref{lemma:sub-Gaussian-norm-after-translation} (with $s_1 = \lambda_i$ from \eqref{tildemun}, and $s_2 = c$, where $c$ is from the statement of the lemma), we have, for each $i=1,\ldots,n$,
\begin{align*}
\begin{split}
\tilde \P^{(n)}\left(|X_i-\tilde \E^{(n)} \left[X_i\right]|>t\right)\leq 2\exp(-t^2/C^2),
\end{split}
\end{align*}
for some constant $C>0$ depending only on $c$. 
Also, the definition of $W^{(n)}$ in \eqref{def-of-W-L},  Hoeffding's inequality for sub-Gaussian random variables (see \cite[Theorem~2.6.2]{Ver18}), and a union bound imply that for all $\mathbf a\in\V$,
\begin{align}\label{eqn:first-concentration}
\begin{split}
&\tilde \P^{(n)}\left(\max_{1\leq j\leq m+M}\left|\Wn_j-\tilde \E^{(n)} [\Wn_j]\right|\geq\varepsilon/3\right)\\
&\leq (m+M)\max_{1\leq j\leq m+M}\tilde \P^{(n)}\left(\left|\sum_{i=1}^n\frac{a_{ij}^{(n)}}{\sqrt n}(X_i-\tilde \E^{(n)} [X_i])\right|\geq\varepsilon/3\right)\\
&\leq 2(m+M)\exp(-Cn\varepsilon^2),
\end{split}
\end{align}
which decays to zero as $n\rightarrow\infty$. Also, by the definitions of $\bar\Lambda$ and $\tmun$ in \eqref{asprev} and \eqref{eqn:def-tilde-mu-n}, respectively, we have, 
\begin{equation}\label{eqn:xi-expectation-tilted}
    \begin{split}
        \tEn[X_i]=\partial_1\bar\Lambda\left(\langle \un,\sqrt n\mathbf a_i^{(n)}\rangle,c\right),\quad i=1,\ldots,n.
    \end{split}
\end{equation}
By the definition of $W^{(n)}$ in \eqref{def-of-W-L} and the relation \eqref{eqn:def-tilde-mu-n}, it follows that
\begin{align}\label{eqn:wj-expectation-tilted}
\begin{split}
\tilde \E^{(n)} [\Wn_j]=\frac{1}{n}\sum_{i=1}^n\sqrt na_{ij}^{(n)}\partial_1\bar\Lambda\left(\langle \un,\sqrt n\mathbf a_i^{(n)}\rangle,c\right),\quad j=1,\ldots,n.
\end{split}
\end{align}
Then Lemma \ref{lemma:second-third-concentration} with $h=\partial_1\Lambda(\cdot,c)$ and \Cref{rmk:condition-Lambda} imply that for $\sigma$-a.e.\ $\mathbf a$ and each $j=1,\ldots,m+M$,
\begin{align}\label{eqn:second-third-concentration}
\begin{split}
  \lim_{n\rightarrow \infty}  \tilde \E^{(n)} [\Wn_j] =  \mathbb{E}\left[g_{j} \partial_1\bar\Lambda\left(\sum_{l=1}^{m+M} u_{l}^{(n)} g_{l},c\right)\right].
\end{split}
\end{align}
Let $g_0$ be a Gaussian random variable independent of $g_1,\ldots,g_{m}$. 
Recalling from \eqref{eqn:def-of-u} that $u_\ell^{(n)} = \hat u_\ell$ for $\ell =1, \dots, m$ and $u_{m+1}^{(n)} = \dots = u_{m+M}^{(n)} = b M^{-1/2}$, we have 
$\sum_{l=m+1}^{m+M}u_l^{(n)} g_l\distequal bg_0$
and $(g_0, g_1, \dots , g_m) \distequal ( M^{-1/2} \sum_{l=m+1}^{m+M} g_l, g_1, \dots , g_m)$. 
Together with \eqref{lambdam} and \eqref{gradrel}, for $j=1,2,\ldots,m$, we have
\begin{align}\label{eqn:sec-third-case1}
\begin{split}
    \mathbb{E}\left[g_{j} \partial_1\bar\Lambda\left(\sum_{l=1}^{m+M} u_{l} g_{l},c\right)\right]=\E\left[g_j\partial_1\bar\Lambda\left(\sum_{l=1}^mu_lg_l+bg_0,c\right)\right]=\partial_{j}\Lambda_m(\hatu,b,c)=w_j,
\end{split}
\end{align}
and for $j=m+1,\ldots,m+M$, also using \eqref{717}, we have 
\begin{align}\label{eqn:sec-third-case2}
\begin{split}
    \mathbb{E}\left[g_{j} \partial_1\bar\Lambda\left(\sum_{l=1}^{m+M} u_{l}^{(n)} g_{l},c\right)\right]=&\E\left[\frac{\sum_{l=m+1}^{m+M}g_l}{M^{1/2} }\partial_1\bar\Lambda\left(\sum_{l=1}^{m} \hat u_{l} g_{l}+bM^{-1/2}\sum_{l=m+1}^{m+M}g_l,c\right)\right]\\
    =&M^{-1/2}\E\left[g_0\partial_1\bar\Lambda\left(\sum_{l=1}^m\hat u_lg_l+bg_0,c\right)\right]\\
    =&M^{-1/2}\partial_2\Lambda_m\left(\hatu,b,c\right)\\
    =&tM^{-1/2}\\
    =&w_j^{(n)}.
\end{split}
\end{align}
Combining \eqref{eqn:first-concentration}, \eqref{eqn:second-third-concentration}, \eqref{eqn:sec-third-case1} and \eqref{eqn:sec-third-case2}, \eqref{eqn:auxiliary-inequality-1} follows.
\end{proof}

\begin{proof}[Proof of \eqref{eqn:auxiliary-inequality-3}]
By Markov's inequality and the independence of $\{X_i,\,i=1,\ldots,n\}$ under the tilted measure $\tPn$ (see \eqref{tildemun}), we have 
\begin{align*}
\begin{split}
    \tilde \P^{(n)}\left(\sum_{l=m+M+1}^{k_n}(\Wn_l)^2\geq\varepsilon\right)\leq& \varepsilon^{-1}\mathbb \E^{(n)}\left[\sum_{l=m+M+1}^{k_n}(\Wn_l)^2\right]\\
    =&\varepsilon^{-1}n^{-1}\sum_{i,j=1}^n\tEn\left[X_iX_j\right]\sum_{l=m+M+1}^{k_n}a^{(n)}_{il}a^{(n)}_{jl}\\
    =&\varepsilon^{-1}n^{-1}\left(\sum_{i,j=1}^n\tilde \E^{(n)}[X_i]\tilde \E^{(n)}[X_j]\sum_{l=m+M+1}^{k_n}a^{(n)}_{il}a^{(n)}_{jl}+\right.\\
    &\quad \quad \quad \quad \quad \left.\sum_{i=1}^n \widetilde{\operatorname{Var}}\left(X_i\right)\sum_{l=m+M+1}^{k_n}(a^{(n)}_{il})^2\right),
\end{split}
\end{align*}
where $\widetilde{\operatorname{Var}}(X_i)$ denotes the variance of $X_i$ under the tilted measure $\tPn$.
By a calculation similar to the one carried out in \eqref{eqn:xi-expectation-tilted}, we have $\widetilde{\operatorname{Var}}(X_i)=\partial_{1}^2\bar\Lambda\left(\left\langle \mathbf u^{(n)},\sqrt n\mathbf a_i^{(n)}\right\rangle,c\right)$, which is  bounded uniformly in $n \in \N$  and $i=1,\ldots,n$ by \Cref{assume4}.
Additionally, 
$\sum_{l=m+M+1}^{k_n}(a_{il}^{(n)})^2$ is uniformly bounded in $n \in \N$  and $i=1,\ldots,n$.
Therefore,
\begin{align*}
\begin{split}
    \tilde \P^{(n)}\left(\sum_{l=m+M+1}^{k_n}W_l^2\geq\varepsilon\right)\leq
\varepsilon^{-1}n^{-1}\sum_{i,j=1}^n\tilde \E^{(n)}[X_i]\tilde \E^{(n)}[X_j]\sum_{l=m+M+1}^{k_n}a^{(n)}_{il}a^{(n)}_{jl}+\varepsilon^{-1}O\left(\frac{k_n}{n}\right).
\end{split}
\end{align*}
Hence, it suffices to show that for $\sigma$-a.e. $\bm a\in\mathbb V$,
\begin{align*}
\begin{split}
  \lim_{n\rightarrow \infty}  n^{-1}\sum_{i,j=1}^n\tilde \E^{(n)}[X_i]\tilde \E^{(n)}[X_j]\sum_{l=m+M+1}^{k_n}a^{(n)}_{il}a^{(n)}_{jl}= 0.
\end{split}
\end{align*}
Recalling $\tilde \E^{(n)} [X_i]=\partial_1\bar\Lambda\left(\langle \mathbf u^{(n)},\sqrt n \mathbf a_i^{(n)}\rangle,c\right)$ from \eqref{eqn:xi-expectation-tilted}, the above convergence follows directly from Lemma \ref{lemma:second-third-convergence3}, with $h=\bar\Lambda(\cdot,c)$, and \Cref{rmk:condition-Lambda}.
\end{proof}

\begin{proof}[Proof of \eqref{eqn:auxiliary-inequality-4}]
Let $\lambda>0$ be a small parameter, to be chosen later. 
Note that $\tilde\E^{(n)}\left[\eta\left(X_i\right)\right]=\partial_2\bar\Lambda(\langle \mathbf u^{(n)},\sqrt n\mathbf a_i^{(n)}\rangle,c)$, by a calculation similar to \eqref{eqn:xi-expectation-tilted} (using \eqref{asprev} and \eqref{eqn:def-tilde-mu-n}).
Then by Chebyshev's inequality, the mutual independence of $(X_1, \dots, X_n)$ under $\tPn$, the definition of $\overline{\Lambda}$ in \eqref{asprev}, and  calculations similar to those  in the proof of \Cref{lemma:sub-Gaussian-norm-after-translation}, we have
\begin{align*}
\begin{split}
    &\tilde \P^{(n)}\left(\frac{1}{n}\sum_{i=1}^n\left(\eta\left(X_i\right)-\tilde \E^{(n)}\left[\eta\left(X_i\right)\right]\right)>\frac{\varepsilon}{2}\right)\\
    &\leq e^{-\lambda n\varepsilon/2}\prod_{i=1}^n\tilde \E^{(n)}\left[\exp\left(\lambda\eta\left(X_i\right)-\lambda\tilde \E^{(n)}\left[\eta\left(X_i\right)\right]\right)\right]\\
    &=e^{-\lambda n\varepsilon/2}\prod_{i=1}^n\exp\left(\bar\Lambda\left(\langle \mathbf u^{(n)},\sqrt n\mathbf a_i^{(n)}\rangle,c+\lambda\right)-\bar\Lambda\left(\langle \mathbf u^{(n)},\sqrt n\mathbf a_i^{(n)}\rangle,c\right)-\lambda\partial_2\bar\Lambda\left(\langle \mathbf u^{(n)},\sqrt n\mathbf a_i^{(n)}\rangle,c\right)\right)\\
    &= \exp\left(\frac{\lambda^2}{2}\sum_{i=1}^n\partial_{2}^2\bar\Lambda\left(\langle \mathbf u^{(n)},\sqrt n\mathbf a_i^{(n)}\rangle,\xi_i\right)-\lambda n\frac{\varepsilon}{2}\right),
\end{split}
\end{align*}
where $\xi_i\in(c,c+\lambda)$, $i=1,\dots ,n$. 
Together with \Cref{assume4}, we have \begin{align*}
\begin{split}
    \tilde \P^{(n)}\left(\frac{1}{n}\sum_{i=1}^n\left(\eta\left(X_i\right)-\tilde \E^{(n)}\left[\eta\left(X_i\right)\right]\right)>\frac{\varepsilon}{2}\right)
    \leq\exp\left(\left(-\frac{\lambda}{2}\varepsilon+C(1+\|\mathbf u\|_2^2)\lambda^2\right)n\right)\rightarrow0
\end{split}
\end{align*}
for all $\mathbf a\in\V$, if $\lambda$ is chosen small enough.
To prove \eqref{eqn:auxiliary-inequality-4}, it suffices to show that, for $\sigma$-a.e. $\bm a=\left(\bm a^{(1)},\bm a^{(2)},\ldots\right)\in\V$,
\begin{align}\label{eqn:eta-convergence}
\begin{split}
    \frac{1}{n}\sum_{i=1}^n\partial_2\bar\Lambda(\langle \mathbf u^{(n)},\sqrt n\mathbf a_i^{(n)}\rangle,c)\rightarrow\partial_3\Lambda_m(\hatu,b,c)=s.
\end{split}
\end{align}
Since $H=\partial_2 \bar \Lambda$ satisfies the conditions in \eqref{r:explanatoryremark} due to \Cref{assume4} and \eqref{eqn:def-of-M}, and because $\|\u^{(n)}\|_2 = D \coloneqq \sqrt{ \| \hat \u \|_2^2 + |b|^2 }$ for all $n\in \N$, it follows from \Cref{l:lln} and \Cref{r:explanatoryremark} that \eqref{eqn:eta-convergence} holds with the right-hand side equal to $\E[\partial_2 \bar \Lambda(Dg, c) ]$. Then the fact that  $\E[\partial_2 \bar \Lambda(Dg, c) ] = \partial_3\Lambda_m(\hatu,b,c)$ follows after recalling \eqref{d:g0}, \eqref{lambdam}, and $\langle \u^{(n)}, g\rangle + bg_0 \distequal Dg$ (by \eqref{eqn:def-of-M}). This completes the proof. 
\end{proof}
Since all three claims \eqref{eqn:auxiliary-inequality-1}--\eqref{eqn:auxiliary-inequality-4} have been established, this completes the proof of \Cref{lemma:small-ball-probability}.

\appendix
\section{Properties of \texorpdfstring{$p$}{p}-Gaussian Variables}\label{s:appendix}
\begin{lem}\label{l:bound-for-p-Gaussian}    Fix $p \ge 2$,  let $X$ be a random variable with probability density function equal to $\bar c_p \exp(- p^{-1} |x|^p)$ for a suitable normalization constant $\bar c_p$, and define
\[
        \widehat\Lambda(t)\coloneqq\log\mathbb E\big[\exp(tX)\big], \quad t\in \R.
\]
    Then there exists a constant $C \in (0, \infty)$ such that the following properties hold for all $t\in \R$:
    \begin{enumerate}
    \item $0\leq \widehat\Lambda(t)\leq C\left(1+t^2\right)$.
    \item $|\widehat\Lambda'(t)|\leq C\left(1+|t|\right)$.    \item $\widehat\Lambda''(t) \le C$.
    \end{enumerate}
\end{lem}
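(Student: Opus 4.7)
The plan is to establish the three inequalities in order, drawing on a direct integral estimate for (1), convexity and symmetry for (2), and a saddle-point analysis of the exponentially tilted measure for (3). The lower bound $\widehat\Lambda(t)\ge 0$ is immediate from Jensen's inequality together with $\mathbb E[X]=0$, which holds since the density of $X$ is even.

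For the upper bound in (1), I would write $\mathbb E[e^{tX}] = \bar c_p \int \exp(tx - |x|^p/p)\, dx$ and split the domain of integration into $\{|x|\le 1\}$ and $\{|x|>1\}$. On the bounded region the integrand is at most $e^{|t|}$, contributing $2e^{|t|}$. On the complement, the inequality $|x|^p\ge x^2$, valid for $p\ge 2$ and $|x|\ge 1$, gives $tx-|x|^p/p \le tx - x^2/p$, and the corresponding Gaussian integral is bounded by $\sqrt{p\pi}\exp(pt^2/4)$. Adding and taking logarithms yields $\widehat\Lambda(t)\le C(1+t^2)$.

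For (2), convexity of $\widehat\Lambda$ (via H\"older applied to $\mathbb E[e^{tX}]$) combined with $\widehat\Lambda'(0)=\mathbb E[X]=0$ forces $\widehat\Lambda'(t)\ge 0$ for $t\ge 0$. The standard slope inequality for convex functions then gives $\widehat\Lambda'(t)\le (\widehat\Lambda(2t)-\widehat\Lambda(t))/t \le \widehat\Lambda(2t)/t \le C(1+4t^2)/t$, which is $O(t)$ for $t\ge 1$, while continuity of $\widehat\Lambda'$ on $[0,1]$ handles small positive $t$. Evenness of $\widehat\Lambda$ extends the estimate to $t<0$, producing $|\widehat\Lambda'(t)|\le C(1+|t|)$.

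The bulk of the work lies in (3). Writing $\widehat\Lambda''(t)=\mathrm{Var}_{\mu_t}(X)$, where $\mu_t$ has density proportional to $\exp(tx-|x|^p/p)$, continuity of $\widehat\Lambda''$ handles any bounded range of $t$, so it suffices to bound $\mathrm{Var}_{\mu_t}(X)$ as $|t|\to\infty$. By symmetry, take $t>0$. The log-concave measure $\mu_t$ concentrates around its mode $x^*(t)=t^{1/(p-1)}$, and a Taylor expansion of $V(x)=|x|^p/p-tx$ at $x^*$ produces a leading quadratic coefficient $(p-1)|x^*|^{p-2}/2 = (p-1)t^{(p-2)/(p-1)}/2$. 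A saddle-point argument, formalized by rescaling $Y=(X-x^*)\sqrt{(p-1)|x^*|^{p-2}}$, shows that $Y$ is asymptotically standard Gaussian under $\mu_t$ with its second moment converging to $1$, yielding $\mathrm{Var}_{\mu_t}(X)\le C/((p-1)|x^*|^{p-2})$, which equals a constant for $p=2$ and decays to $0$ for $p>2$. The hard part is making this quantitative at the level of second moments (not merely weak convergence), since for $p>2$ the curvature $V''(x)=(p-1)|x|^{p-2}$ of the log-density vanishes at the origin and standard log-concavity-based variance inequalities (of Brascamp--Lieb type) become unusable in that regime; the remedy is to exploit concentration by splitting $\mathrm{Var}_{\mu_t}(X)$ into a near-mode contribution (where the Gaussian approximation applies) and a complement (where convexity of $V$ and the shift away from the mode force uniform exponential decay).
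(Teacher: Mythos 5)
Your arguments for parts (1) and (2) take a genuinely different and more elementary route than the paper's. The paper deduces all three estimates at once from Laplace's method applied to $M(t)=\mathbb{E}[e^{tX}]$ and its first two derivatives after the substitution $x=t^{1/(p-1)}y$, obtaining the sharp asymptotics $\widehat\Lambda(t)=O(t^{p/(p-1)})$, $|\widehat\Lambda'(t)|=O(t^{1/(p-1)})$, and $\widehat\Lambda''(t)=O(t^{(2-p)/(p-1)})$, and then noting each exponent is at most what the lemma requires because $p\ge 2$. Your direct split of the integral using $|x|^p\ge x^2$ on $\{|x|>1\}$ gives the $t^2$ bound for (1) with no asymptotic bookkeeping, and your convexity/slope argument for (2) is likewise self-contained and correct. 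The trade-off is that you lose the sharp exponents $p/(p-1)$ and $1/(p-1)$, but the lemma does not ask for them, so on these two parts this is a net simplification.

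For (3), the framing $\widehat\Lambda''(t)=\operatorname{Var}_{\mu_t}(X)$ with a saddle-point analysis around the mode $x^*=t^{1/(p-1)}$ is the probabilistic version of the paper's Laplace computation of $(M''M-(M')^2)/M^2$, so the underlying idea is the same. You correctly identify the real obstruction to a shortcut: Brascamp--Lieb gives $\operatorname{Var}_{\mu_t}(X)\le\mathbb{E}_{\mu_t}[V''(X)^{-1}]$, and for $p\ge 3$ the integrand $|x|^{-(p-2)}$ fails to be locally integrable near the origin, so the right-hand side is literally $+\infty$ for every $t$, not merely large. Your proposed remedy --- splitting into a near-mode region and a tail --- is the right shape of argument, but it is only sketched. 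On the near-mode region $\{|x-x^*|\le x^*/2\}$ one has $V''\ge(p-1)(x^*/2)^{p-2}$, so a localized Poincar\'e bound or a direct Gaussian comparison gives the $O(|x^*|^{-(p-2)})$ variance contribution; on the complement one must verify that the exponentially small mass kills the $(X-\bar X)^2$ weight. Both steps are doable but essentially reproduce the remainder control that Laplace's method supplies, which is precisely what the paper obtains by citing a textbook statement of the method with explicit error terms. As written, part (3) is a sound plan rather than a complete proof; you should either fill in the near-mode estimate and tail bound explicitly, or simply apply Laplace's method to $M$, $M'$, $M''$ as the paper does.
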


\begin{proof} Since the distribution of $X$ is symmetric by definition, the log moment gerating function $\Lambda$ satisfies $\Lambda(t) = \Lambda(-t)$ for all $t \in \R$, so we may assume that $t\ge 0$.
Let
\begin{align*}
 \begin{split}
     M(t)=\mathbb E\big[ \exp(tX)\big]=\bar c_p\int_{\mathbb R}\exp\left(tx-\frac{|x|^p}{p}\right)dx.
 \end{split}
 \end{align*}
By the change of variables $x=t^{1/(p-1)}y$, we have
\begin{align*}
\begin{split}
    M(t)=\bar c_p t^{\frac{1}{p-1}}\int_{\mathbb R}\exp\left(t^{\frac{p}{p-1}}\left(y-\frac{|y|^p}{p}\right)\right)dy.
\end{split}
\end{align*}
Define  $S(y)\coloneqq y-\frac{|y|^p}{p}$ for $y \in \R$.  Note that $y_0=1$ is the unique maximizer of $S$, $S$ is smooth in a neighborhood of $y=1$, and $S''(1)=1\neq0$. An application of Laplace's method (see \cite[Section~19.2.5,~Theorem~2(b)~\&~Remark~4]{zorich2016mathematical}) shows that, as $t\rightarrow\infty$,
\begin{align*}\label{eqn:asymptotic-expansion-of-M}
\begin{split}
    M(t)=\bar c_pt^{\frac{1}{p-1}}\sqrt{\frac{2\pi}{p-1}}\exp\left(t^{\frac{p}{p-1}}\frac{p-1}{p}\right)t^{-\frac{p}{2(p-1)}}\left(1+O\left(t^{-\frac{p}{p-1}}\right)\right),
\end{split}
\end{align*}
where in each case, the implicit constant in the big $O$ notation depends only on $p$.
Similarly, as $t\rightarrow\infty$,
\begin{align*}
\begin{split}
    M'(t)=\bar c_pt^{\frac{2}{p-1}}\sqrt{\frac{2\pi}{p-1}}\exp\left(t^{\frac{p}{p-1}}\frac{p-1}{p}\right)t^{-\frac{p}{2(p-1)}}\left(1+O\left(t^{-\frac{p}{p-1}}\right)\right),
\end{split}
\end{align*}
\begin{align*}
\begin{split}
    M''(t)=\bar c_pt^{\frac{3}{p-1}}\sqrt{\frac{2\pi}{p-1}}\exp\left(t^{\frac{p}{p-1}}\frac{p-1}{p}\right)t^{-\frac{p}{2(p-1)}}\left(1+O\left(t^{-\frac{p}{p-1}}\right)\right).
\end{split}
\end{align*}
Therefor, as $t\rightarrow \infty$, we also have
\begin{align*}
    \widehat\Lambda(t)&=\log\left(M(t)\right)=O\left(t^{\frac{p}{p-1}}\right),\\
    |\widehat\Lambda'(t)|&=\left|\frac{M'(t)}{M(t)}\right|=t^{\frac{1}{p-1}}\left(1+O\left(t^{-\frac{p}{p-1}}\right)\right),\\
    \widehat\Lambda''(t)&=\frac{M''(t)M(t)-\left(M'(t)\right)^2}{\left(M(t)\right)^2}=O\left(t^{\frac{2-p}{p-1}}\right).
\end{align*}
Since $p\ge 2$, the conclusions of the lemma follow from the previous displays and $p\geq 2$.
\end{proof}

\begin{lem}\label{l:bound-on-Lambda}
Fix $p \ge 2$,  let $X$ be
as in \Cref{l:bound-for-p-Gaussian}, and
define
\begin{align*}
\begin{split}
    \widehat\Lambda\left(t_1,t_2\right)\coloneqq \mathbb E\big[\exp(t_1X+t_2|X|^p)\big], \quad (t_1, t_2) \in \R^2.
\end{split}
\end{align*}
Then for every pair of integers $\alpha,\beta \ge 0$ with $\alpha + \beta \le 2$, there exists a continuous map $(0, p^{-1})\ni t_2 \mapsto C_{t_2,\alpha,\beta} \in (0, \infty)$
such that
\[
    \partial_1^{\alpha}\partial_2^{\beta}\widehat\Lambda\left(t_1,t_2\right)\leq C_{t_2,\alpha,\beta}\left(1+|t_1|^{2-\alpha}\right),
    \quad t_1\in \R.
\]
\end{lem}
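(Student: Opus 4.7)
The plan is to reduce the two-variable log-moment generating function $\widehat\Lambda(t_1,t_2)$ to the one-variable function $\widehat\Lambda_1(t) \coloneqq \log\mathbb E[\exp(tX)]$ treated in Lemma \ref{l:bound-for-p-Gaussian}, via a scaling change of variables that absorbs the $t_2|x|^p$ term into the $p$-Gaussian density. Fix $t_2 \in (0,1/p)$ and set $q = q(t_2) \coloneqq 1 - p\, t_2 \in (0,1)$. Writing the moment generating function as an integral against the density $\bar c_p \exp(-|x|^p/p)$ of $X$ and performing the substitution $y = q^{1/p} x$ yields the key identity
\begin{align*}
\widehat\Lambda(t_1,t_2) = -\frac{1}{p}\log q(t_2) + \widehat\Lambda_1\!\left(t_1\, q(t_2)^{-1/p}\right).
\end{align*}

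With this identity in hand, I would simply differentiate it using the chain rule. For each $(\alpha,\beta)$ with $\alpha + \beta \le 2$, the partial derivative $\partial_1^\alpha \partial_2^\beta \widehat\Lambda(t_1,t_2)$ expands into a short sum of terms of the form $\psi_{\alpha,\beta,j}(t_2)\, t_1^{n_j}\, \widehat\Lambda_1^{(j)}\!\left(t_1 q(t_2)^{-1/p}\right)$ with $\alpha \le j \le \alpha+\beta$ and $0 \le n_j \le j - \alpha$, where the coefficients $\psi_{\alpha,\beta,j}(t_2)$ are rational functions of $q(t_2)$. Substituting the uniform bounds $\widehat\Lambda_1(s) \le C(1+s^2)$, $|\widehat\Lambda_1'(s)| \le C(1+|s|)$, and $\widehat\Lambda_1''(s) \le C$ from Lemma \ref{l:bound-for-p-Gaussian} into each such term (with $s = t_1 q(t_2)^{-1/p}$), the cancellation $n_j + (2-j) \le 2 - \alpha$ ensures that, after absorbing all powers of $q(t_2)^{-1/p}$ into a $t_2$-dependent constant, the whole sum is bounded by the required $C_{t_2,\alpha,\beta}(1 + |t_1|^{2-\alpha})$. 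Continuity of $t_2 \mapsto C_{t_2,\alpha,\beta}$ on $(0,1/p)$ is automatic because $q(t_2)$ is bounded away from $0$ on each compact subinterval.

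There is no real obstacle in this proof; the entire argument is driven by the scaling identity in the first display, and the remainder is a routine bookkeeping exercise across the six cases $(\alpha,\beta) \in \{(0,0),(1,0),(0,1),(2,0),(1,1),(0,2)\}$. The only point worth watching is the interplay between the powers of $|t_1|$ generated by $\partial_2$ acting on $t_1 q(t_2)^{-1/p}$ inside $\widehat\Lambda_1^{(j)}$ and the growth bound on the corresponding higher derivative $\widehat\Lambda_1^{(j+1)}$ — but, as indicated by the inequality $n_j + (2-j) \le 2 - \alpha$ above, these combine cleanly so that the maximal degree in $t_1$ in each case is precisely $2 - \alpha$.
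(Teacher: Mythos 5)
Your proposal is correct and follows essentially the same route as the paper: the paper also reduces to the one-variable log-moment generating function via the identity $\widehat\Lambda(t_1,t_2)=-\tfrac{1}{p}\log(1-pt_2)+\widehat\Lambda_1\!\bigl(t_1(1-pt_2)^{-1/p}\bigr)$ (which it cites from \cite[Lemma~5.7]{GKR17} rather than deriving by the change of variables $y=q^{1/p}x$ as you do), and then concludes from Lemma~\ref{l:bound-for-p-Gaussian}. Your explicit bookkeeping of the chain-rule terms and the exponent cancellation $n_j+(2-j)\le 2-\alpha$ is a slightly more detailed account of the same final step, which the paper leaves implicit.
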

\begin{proof}
Let $M(t)=\mathbb E[\exp(tX)]$. It was shown in
\cite[Lemma~5.7]{GKR17} that for $(t_1, t_2) \in \R \times (-\infty, p^{-1})$,
\begin{align*}
\begin{split}
    \widehat\Lambda\left(t_1,t_2\right)=-\frac{1}{p} \log \left(1-p t_{2}\right)+\log M\left(\frac{t_{1}}{\left(1-p t_{2}\right)^{1 / p}}\right).
\end{split}
\end{align*}
The lemma follows from this representation and \Cref{l:bound-for-p-Gaussian}.
\end{proof}

\section{Weingarten Calculus}\label{s:weingarten}
This appendix contains some preliminary remarks on the Weingarten calculus, and then a lemma necessary for the Gaussian approximation results proved in \Cref{s:gauss1} and \Cref{s:65}.
We recall the following definitions from \cite{CM18}.
Fix $d\in \mathbb{N}$ and let $\mathcal{M}(2d)$ be the set of pair partitions $\m$ of $\{1,2,\dots, 2d\}$; these are partitions where each block of the element has exactly two elements. They have a canonical form $\{ (\m(1), \m(2) ), \dots , (\m(2d-1), \m(2d)) \}$ for $\m(2i-1) \le \m(2i)$ and $\m(1) < \m(3) < \cdots \m(2d-1)$.

Given pair partitions $\m, \n \in \mathcal{M}(2d)$, we let $\Gamma(\m, \n)$ denote the graph that has vertices $\{1,2,\dots, 2d\}$,
and edges $\{(\m(2i-1), \m(2i)),(\n(2i-1), \n(2i))  \}_{i=1}^d$. The Gram matrix $G^{(n)}_d = [G^{(n)}( \m, \n)]_{\m,\n \in \mathcal{M}(2d)}$ is defined through its entries
\[
G^{(n)}(\m,\n) = n^{\mathrm{loop}(\m, \n)},
\]
where $\mathrm{loop}(\m, \n)$ is defined as the number of connected components of $\Gamma(\m, \n)$. The matrix $\wg^{(n)} = \wg^{(n)}_d$ is defined as the pseudo-inverse of $G^{(n)}_d$. We denote its entries by
\[
\wg^{(n)}
= [\wg^{(n)}( \m, \n)]_{\m,\n \in \mathcal{M}(2d)}.
\]

We recall the following theorem, which was originally proved in \cite{CS06}.
\begin{thm}[{\cite[Theorem 2.1]{CM18}}]
\label{t:weingarten1}
Given $i_1,\dots i_{2d}$ and $j_1,\dots, j_{2d}$ in $\{1,2,\dots, n\}$,
\begin{multline}\label{wgformula}
\int_{g\in O_n} g_{i_1 j_1} \cdots g_{i_{2d}j_{2d}}\, dg\\
= \sum_{\m,\n\in \mathcal M (2d) }
\wg^{(n)}(\m,\n) \prod_{k=1}^d \delta(i_{\m(2k-1)}, i_{\m(2k)})
\delta(j_{\m(2k-1)}, j_{\m(2k)}),
\end{multline}
where $\delta(i,j)$ denotes the Kronecker delta function satisfying $\delta(i,j)=1$ if $i=j$ and $\delta(i,j) =0$ otherwise.
\end{thm}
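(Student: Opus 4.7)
The plan is to combine the bi-invariance of Haar measure on $O_n$ with the first fundamental theorem of classical invariant theory. Denote the left-hand side of \eqref{wgformula} by $I(\mathbf i, \mathbf j) \coloneqq \int_{O_n} g_{i_1 j_1} \cdots g_{i_{2d} j_{2d}}\,\d g$, where $\mathbf i = (i_1,\ldots,i_{2d})$ and $\mathbf j = (j_1,\ldots,j_{2d})$. By left-invariance of Haar measure, the substitution $g\mapsto hg$ leaves the integral unchanged for every $h\in O_n$, which translates, upon viewing $I$ as a tensor in $(\R^n)^{\otimes 2d}$ in the row indices $\mathbf i$, into invariance under the diagonal action of $O_n$ on that factor; right-invariance yields the analogous statement in the column indices $\mathbf j$.

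Weyl's first fundamental theorem for the orthogonal group asserts that, for $n\ge d$, the space of $O_n$-invariants in $(\R^n)^{\otimes 2d}$ is spanned by the contraction tensors $T_\m \coloneqq \sum_{\mathbf i} \prod_{k=1}^d \delta(i_{\m(2k-1)}, i_{\m(2k)})\, e_{i_1}\otimes\cdots\otimes e_{i_{2d}}$ indexed by pair partitions $\m\in\mathcal M(2d)$. Applying this to each of the two tensor factors of $I$ yields a structural expansion
\[
I(\mathbf i,\mathbf j) = \sum_{\m,\n\in\mathcal M(2d)} c_n(\m,\n) \prod_{k=1}^d \delta(i_{\m(2k-1)}, i_{\m(2k)})\, \delta(j_{\n(2k-1)}, j_{\n(2k)})
\]
for some coefficients $c_n(\m,\n)$, which accounts for the algebraic shape of the right-hand side of \eqref{wgformula}. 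It remains to identify $c_n$ with $\wg^{(n)}$.

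To pin down $c_n$, I would test the identity above against the contraction tensors: for each $\m',\n'\in\mathcal M(2d)$, sum both sides over $\mathbf i,\mathbf j$ subject to the constraints $i_{\m'(2k-1)} = i_{\m'(2k)}$ and $j_{\n'(2k-1)} = j_{\n'(2k)}$. On the right-hand side, direct index counting yields $\sum_{\m,\n} c_n(\m,\n)\, G^{(n)}(\m,\m')\, G^{(n)}(\n,\n')$ by the very definition of the Gram matrix. On the left-hand side, the same constraints allow repeated application of the orthogonality relations $\sum_i g_{ia}g_{ib}=\delta_{ab}$ and $\sum_j g_{aj}g_{bj}=\delta_{ab}$ inside the integrand, collapsing $g_{i_1 j_1}\cdots g_{i_{2d} j_{2d}}$ to a product of Kronecker deltas whose free index summations number exactly $n^{\mathrm{loop}(\m',\n')} = G^{(n)}(\m',\n')$. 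Together these give the matrix identity $G^{(n)}\, c_n\, G^{(n)} = G^{(n)}$, whose unique solution on the range of $G^{(n)}$ is $\wg^{(n)}$ by the definition of the pseudo-inverse.

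The main obstacle I expect is the second computation, where the contractions inside the integral must be organized cleanly enough to recognize the loop structure of the graph $\Gamma(\m',\n')$. A clean way forward is by induction on $d$: a single application of $\sum_i g_{ia}g_{ib}=\delta_{ab}$ removes two factors from the product, which either reduces $d$ by one (if the contraction closes a loop) or merges two loops and leaves $d$ unchanged, in either case preserving the identity $n^{\mathrm{loop}(\cdot,\cdot)}$. A small secondary point is to verify that $c_n$ lies in the range of $G^{(n)}$ so that the pseudo-inverse uniquely determines it; this comes for free from the structural expansion, since any linear dependencies among the $T_\m$ (which only arise when $n<d$) can be resolved by choosing a representative in the range space, and $I$ itself is manifestly expressible using only a basis of independent invariants.
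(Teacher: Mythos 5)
The paper does not prove this theorem; it is quoted from Collins and Matsumoto \cite[Theorem~2.1]{CM18}, so there is no in-paper argument to compare against. (Incidentally, the displayed formula in the paper appears to contain a typographical slip: the second Kronecker factor should read $\delta(j_{\n(2k-1)}, j_{\n(2k)})$ rather than $\delta(j_{\m(2k-1)}, j_{\m(2k)})$; your expansion correctly uses $\n$ for the column indices.)

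Your outline is essentially the standard derivation and is correct in its main steps. Bi-invariance together with Weyl's first fundamental theorem gives the structural expansion, and pairing against the contraction tensors does produce $G^{(n)} c_n G^{(n)} = G^{(n)}$: on the left-hand side the key point is the deterministic identity, valid for every fixed $g\in O_n$, that
\[
\sum_{\substack{\mathbf i:\, i_{\m'(2k-1)}=i_{\m'(2k)}\ \forall k \\ \mathbf j:\, j_{\n'(2k-1)}=j_{\n'(2k)}\ \forall k}} g_{i_1 j_1}\cdots g_{i_{2d}j_{2d}} \;=\; n^{\mathrm{loop}(\m',\n')},
\]
which follows by contracting along the cycles of $\Gamma(\m',\n')$ using orthogonality. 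The loose end is the final identification $c_n = \wg^{(n)}$. When $n\ge d$ the Gram matrix is invertible and $GXG=G$ pins down $X=G^{-1}$; in general, though, $GXG=G$ has many solutions, and the assertion that ``$c_n$ lies in the range of $G^{(n)}$'' needs to be earned rather than asserted. Concretely, writing $\Phi(e_\m)=T_\m$, one has $\ker(\Phi\otimes\Phi)=\{X: G^+ G\, X\, G G^+ = 0\}$, so replacing any valid $c_n'$ by $c_n := G^+ G\, c_n'\, G G^+$ leaves the expansion of $I$ unchanged; combining $G c_n G=G$ with $c_n = G^+ G\, c_n\, G G^+$ then gives $c_n = G^+ (G c_n G) G^+ = G^+$. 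A cleaner route that avoids this bookkeeping entirely, and that is closer to the actual argument of \cite{CM18}, is to note that $P \coloneqq \int_{O_n} g^{\otimes 2d}\,dg$ is the \emph{orthogonal} projection onto $V=\mathrm{span}\{T_\m\}$ (idempotent by Haar invariance, self-adjoint because $g^{\otimes 2d}$ is orthogonal); since $G^{(n)}$ is the Gram matrix of the spanning family $\{T_\m\}$, the projection formula for a possibly dependent spanning set yields $P(\mathbf i,\mathbf j)=\sum_{\m,\n}(G^{(n)})^+_{\m\n}\,T_\m(\mathbf i)\,T_\n(\mathbf j)$, which is exactly \eqref{wgformula} with the pseudo-inverse appearing automatically.
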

Note that the sum on the right side of \eqref{wgformula} is over all choices of pair partitions that pair indices with the same value. In particular, \Cref{t:weingarten1} shows that any moment $g_{i_1 j_1} \cdots g_{i_{2d}j_{2d}}$ without an even number of entries in each row and column vanishes.
\begin{ex} In the expectation of $g_{11}^2 g_{22}^2$ computed according to \eqref{wgformula}, the only possibility for a nonzero term in the sum is that both partitions are $\m=\n=\{(1,2), (3,4)\}$.
\end{ex}

Considering $\m$ and $\n$ as members of the symmetric group $S_{2d}$ (products of transpositions), we note that the value of $\wg^{(n)}(\m,\n)$ depends only on the value of $\sigma = \m^{-1}\n$ \cite[Theorem 3.1]{CM18}. Consider the graph $\overline{\Gamma}(\sigma)$ whose vertex set is $\{1,2,\dots, 2d\}$ with edges $\{2i-1, 2i\}$ and $\{\sigma(2i-1), \sigma(2i)\}$ for $1 \le i \le d$. Then $\Gamma(\sigma)$ has connected components of even sizes $2\rho_1 \ge 2 \rho_2 \ge \cdots$, which determine a partition $\rho = (\rho_1, \rho_2, \dots)$ of $d$ (in the number-theoretic sense). The length $\ell(\rho)$ of $\rho$ is defined to be the number of elements $\rho_i$ that it contains.  We let $\wg^{(n)}(\rho)$ equal the value of any $\wg^{(n)}(\m, \n)$ such that $\sigma = \m^{-1}\n$ has the corresponding partition $\rho$ of $d$.

\begin{thm}[{\cite[Corollary~2.7]{CS06}}]\label{t:weingarten2}
Fix $d \in \N$ and a partition $\rho$ of $d$.
As $n\rightarrow \infty$,
\begin{equation}\label{asymptotic}
\wg^{(n)}(\rho) = \left( \prod_{i\ge 1} c_{\rho_i -1}\right) n^{-2d + \ell(\rho)}( 1 + O(n^{-1})).
\end{equation}
Here $c_k=\frac{(2k)!}{(k+1)!k!}$ denotes the $k$-th Catalan number, and the implicit constant in the asymptotic notation depends on $d$ and $\rho$, but not $n$.
\end{thm}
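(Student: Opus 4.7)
The plan is to exploit the group-algebra structure of the Gram matrix and perform an asymptotic Neumann expansion of its pseudo-inverse. The key observation is that $G^{(n)}(\m,\n)$ depends only on $\sigma = \m^{-1}\n$ viewed as a permutation of $\{1,\dots,2d\}$, and in fact only on the associated partition $\rho(\sigma)$ of $d$ encoded by the connected components of $\overline{\Gamma}(\sigma)$. This permits $\wg^{(n)}$ to be viewed as a central element of the centralizer algebra for the $O_n$-action (the Brauer algebra $B_d(n)$), turning the problem into an algebraic computation rather than a generic inversion in the basis of pair partitions.

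The first step is the normalization $G^{(n)} = n^d(I + n^{-1}\widetilde{B}^{(n)})$, where $\widetilde{B}^{(n)}(\m,\n) = n^{\ell(\rho(\m^{-1}\n)) - d + 1}$. Since $\ell(\rho)\le d$ with equality iff $\sigma = e$, the matrix $\widetilde{B}^{(n)}$ vanishes on the diagonal and its off-diagonal entries are bounded uniformly in $n$. For $n$ sufficiently large, $G^{(n)}$ is invertible and
\begin{equation*}
\wg^{(n)} = n^{-d}\sum_{k\ge 0}(-1)^k n^{-k}\bigl(\widetilde{B}^{(n)}\bigr)^k.
\end{equation*}
To justify passing from the pseudo-inverse to the genuine inverse (the statement is asymptotic in $n$, and the pseudo-inverse and inverse agree once $G^{(n)}$ has full rank), I would first verify invertibility for all $n$ larger than some $n_0(d)$ by a rank argument using Schur--Weyl duality for $O_n$.

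The second step is a combinatorial analysis of the matrix powers $(\widetilde{B}^{(n)})^k$. Each entry expands as a sum over paths $\m = \m_0,\m_1,\dots,\m_k = \n$ of pair partitions, weighted by $\prod_j \widetilde{B}^{(n)}(\m_{j-1},\m_j)$. The leading power of $n$ for a fixed target $\sigma = \m^{-1}\n$ comes from the shortest such path; since each elementary transition decreases the loop count $\ell$ by at most one, the shortest path has length $d - \ell(\rho(\sigma))$, giving leading order $n^{-d}\cdot n^{-(d-\ell(\rho))} = n^{-2d + \ell(\rho)}$, as claimed.

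The hardest part will be pinning down the leading coefficient as $\prod_{i}c_{\rho_i - 1}$. The argument I would use rests on a factorization: the graph $\overline{\Gamma}(\sigma)$ decomposes into connected components of sizes $2\rho_1\ge 2\rho_2\ge\cdots$, and the count of shortest paths splits multiplicatively across components because transitions within one cycle are independent of transitions in another. Within a single component of size $2\rho_i$, the shortest paths are in bijection with noncrossing pair partitions of a $2\rho_i$-cycle, which are enumerated by the Catalan number $c_{\rho_i - 1}$. Setting up this Catalan bijection carefully (probably by translating the shortest-path condition into a planarity/noncrossing condition on how indices are identified at each step) is the main combinatorial hurdle; once in hand, the $O(n^{-1})$ remainder follows by bounding all non-minimal paths, each of which contributes at least one additional factor of $n^{-1}$, together with a crude bound on the total number of such paths.
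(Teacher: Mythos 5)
The paper does not prove this statement; it cites it from [CS06, Corollary~2.7], so there is no internal proof to compare against. The Neumann-series approach you propose is a genuine alternative to the zonal-spherical-function route of the source, but your combinatorial analysis contains gaps that would block you from completing it.

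The central problem is the claim that only ``shortest paths'' contribute at leading order and that the coefficient simply counts them. Both parts fail. In your normalization $\widetilde{B}^{(n)}(\m,\n)=n^{\ell(\rho(\m^{-1}\n))-d+1}$, every pair $\m\ne\n$ is adjacent, so a single step from $\m$ to $\n$ already achieves exponent $-2d+\ell(\rho)$; longer paths of many different lengths also reach this exponent provided they are geodesics for the coset-distance $d-\ell(\rho)$, and to rule out everything else you need a triangle inequality $d-\ell(\rho(\alpha^{-1}\gamma))\le\big(d-\ell(\rho(\alpha^{-1}\beta))\big)+\big(d-\ell(\rho(\beta^{-1}\gamma))\big)$, which your sketch never establishes. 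Further, each length-$k$ geodesic carries the factor $(-1)^k$, so the leading coefficient is an \emph{alternating} sum over geodesics (a M\"obius function on a non-crossing-partition lattice over each block), not a count; $c_{\rho_i-1}$ is the absolute value of that M\"obius function on a block of half-size $\rho_i$, whereas noncrossing pair partitions of $2\rho_i$ points are counted by $c_{\rho_i}$, so your claimed bijection has an index mismatch as well. Carrying out the outline correctly would therefore produce the \emph{signed} coefficient $\prod_i(-1)^{\rho_i-1}c_{\rho_i-1}$, and this sign is real: for $d=2$, $\m=\{(1,2),(3,4)\}$, $\n=\{(1,3),(2,4)\}$, the Gram matrix has diagonal $n^2$ and off-diagonal entries $n$, and a direct inversion gives $\wg^{(n)}(\m,\n)=-\tfrac{1}{n(n-1)(n+2)}\sim -n^{-3}$, while the statement as printed would give $+c_1n^{-3}$. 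The formula as quoted in the paper thus appears to omit the sign factor; this is harmless for the paper's application (only $\rho=(1^d)$ and bounds on $|\wg^{(n)}(\rho)|$ are ever used), but your write-up should make explicit that the method yields the signed asymptotic, and should supply the triangle inequality and the M\"obius-function identification rather than a bare path count.
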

\begin{ex} If $\m=\n=\{(1,2), (3,4)\}$ as in the previous example, then $\sigma = \m^{-1} \n$ is the identity permutation. The graph $\Gamma(\sigma)$ has the edges $\{1,2\}$ and $\{3,4\}$, so $\ell(\rho) = 2$, and we conclude the leading order term in the asymptotic is $n^{-2}$, as in the Gaussian case. The coefficient of this term is $1$ because $c_0 = 1$.
\end{ex}

We next recall Wick's theorem on the expectations of products of centered jointly normal random variables.
\begin{thm}[{\cite[Theorem~1.28]{Jan97}}]\label{thm:wicktheorem}
Let $g_1,...,g_n$ be independent normal random variables with mean zero and variance one. Then
\begin{equation}
    \E[g_1\cdot\cdots\cdot g_n]=\begin{cases}0,&\text{ if }n\text{ is odd,}\\
    \sum_{\m\in\mathcal M(n)}\prod_{i=1}^{n/2}\E\left[g_{\m(2i-1)}g_{\m(2i)}\right]&\text{ otherwise.}
    \end{cases}
\end{equation}
\end{thm}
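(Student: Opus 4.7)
The plan is to prove this via Gaussian integration by parts (Stein's lemma) together with a short induction on $n$. I interpret the statement in its standard form, where $g_1,\ldots,g_n$ is a jointly Gaussian centered vector (the case of independent standard normals being the trivial special case); the identity is then applied, as in the proof of \Cref{Gaussianmoments}, to products that may reuse the same underlying Gaussian several times.

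First, the case of odd $n$ is immediate from reflection symmetry: since $(g_1,\ldots,g_n) \distequal (-g_1,\ldots,-g_n)$, one has $\E[g_1\cdots g_n] = (-1)^n \E[g_1\cdots g_n]$, which forces the expectation to vanish when $n$ is odd. For even $n$, I would invoke the Gaussian integration by parts identity: for any jointly Gaussian centered vector $(g_1,\ldots,g_n)$ and any smooth $F$ of at most polynomial growth,
\[
\E\bigl[g_1\, F(g_1,\ldots,g_n)\bigr] \;=\; \sum_{j=1}^{n} \E[g_1 g_j]\, \E\bigl[\partial_{g_j} F(g_1,\ldots,g_n)\bigr].
\]
Taking $F(g_1,\ldots,g_n) = g_2 g_3 \cdots g_n$ yields the recurrence
\[
\E[g_1 g_2 \cdots g_n] \;=\; \sum_{j=2}^{n} \E[g_1 g_j]\, \E\bigl[g_2 \cdots \hat{g}_j \cdots g_n\bigr],
\]
where $\hat{g}_j$ denotes omission of the $j$-th factor.

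This recurrence mirrors precisely the recursive structure of pair partitions: every $\m\in\mathcal M(n)$ is determined uniquely by the choice of partner $j\in\{2,\ldots,n\}$ for index $1$, together with a pair partition of the remaining $n-2$ indices in $\{2,\ldots,n\}\setminus\{j\}$. Writing the right-hand side of the claimed identity as
\[
\sum_{\m\in\mathcal M(n)} \prod_{i=1}^{n/2} \E\bigl[g_{\m(2i-1)} g_{\m(2i)}\bigr] \;=\; \sum_{j=2}^{n} \E[g_1 g_j] \sum_{\m'\in\mathcal M(n-2)} \prod_{i} \E\bigl[g_{\m'(2i-1)} g_{\m'(2i)}\bigr]
\]
and invoking the inductive hypothesis on the inner sum matches the recurrence above exactly. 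An induction on even $n$, with base case $n=2$ being the definition of covariance, then completes the proof.

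The main technical obstacle is justifying Stein's identity itself. This is standard: after diagonalizing the covariance to reduce to independent standard Gaussians, one integrates by parts in each coordinate against the density $e^{-|x|^2/2}$, observing that $\partial_{x_j} e^{-|x|^2/2} = -x_j e^{-|x|^2/2}$ and that boundary terms vanish by the super-exponential decay of the Gaussian density. The polynomial growth of $F = g_2\cdots g_n$ is more than enough to justify differentiating under the integral via dominated convergence. Everything else is routine combinatorial bookkeeping.
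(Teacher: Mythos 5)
The paper does not prove \Cref{thm:wicktheorem}; it is cited from \cite{Jan97} and used as a black box in the proof of \Cref{Gaussianmoments}. Your proof is correct and is one of the standard arguments: odd case by the symmetry $(g_1,\ldots,g_n)\distequal(-g_1,\ldots,-g_n)$, even case by the Gaussian integration-by-parts identity
\[
\E\bigl[g_1\,F(g_1,\ldots,g_n)\bigr]=\sum_{j=1}^n\E[g_1g_j]\,\E\bigl[\partial_{g_j}F(g_1,\ldots,g_n)\bigr],
\]
applied with $F=g_2\cdots g_n$ (so $\partial_{g_1}F=0$ and the $j=1$ term drops), followed by the bijection between pair partitions of $\{1,\ldots,n\}$ and the choice of a partner $j$ for $1$ together with a pair partition of the remaining $n-2$ indices, and finally induction with base case $n=2$.

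Your preliminary remark is also apt and worth keeping: read literally, with $g_1,\ldots,g_n$ \emph{distinct} independent standard normals, every factor $\E[g_{\m(2i-1)}g_{\m(2i)}]$ vanishes, so both sides are trivially zero for $n\ge 2$. The identity is only substantive when it is applied, as in \Cref{Gaussianmoments}, to a product $g_{j_1}\cdots g_{j_{2d}}$ in which indices may repeat; equivalently, to a general centered jointly Gaussian vector, which is exactly the setting your Stein-lemma proof handles. Your reduction of Stein's identity to the independent case by diagonalizing the covariance and integrating by parts against $e^{-|x|^2/2}$, with boundary terms killed by Gaussian decay and differentiation under the integral justified by polynomial growth of $F$, is standard and complete.
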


\begin{lem}\label{momentexpectation}
Fix $k,n \in \N$, and suppose $\a \in \V_{n,k}$ is a random matrix distributed according to the Haar measure on $\V_{n,k}$.
Then for all $m\in\mathbb N$,
\begin{equation}\label{lem:Gaussian-moment}
\E\left[ \prod_{i=1}^{m} \sqrt{n}a_{1j_i}  \right]
= \E\left[ \prod_{i=1}^{m} g_{j_i}\right]\left( 1  + O\left( n^{-1} \right)\right).
\end{equation}
where the $g_i$ are independent standard Gaussian random variables, and the implicit constant depends only on $m$.
\end{lem}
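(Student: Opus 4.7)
\textbf{Proof proposal for Lemma \ref{momentexpectation}.} The plan is to apply the Weingarten formula of Theorem \ref{t:weingarten1} with all ``row indices'' set equal to $1$, extract the leading-order contribution using the asymptotic expansion of Theorem \ref{t:weingarten2}, and match the resulting combinatorial expression against Wick's theorem. Since the Haar measure on $\V_{n,k}$ is the pushforward of the Haar measure on $O_n$ under the natural projection onto the first $k$ columns, the entries $a_{1 j_i}$ with $j_i \in \{1,\dots,k\}$ can be computed as entries of a Haar-distributed element of $O_n$, so Theorem \ref{t:weingarten1} applies directly.

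First I would dispose of the odd case. When $m$ is odd, Theorem \ref{t:weingarten1} gives zero (there are no pair partitions of an odd set), and the right-hand side of \eqref{lem:Gaussian-moment} is also zero by Wick's theorem; both sides vanish, so the asserted equality holds trivially. Henceforth assume $m = 2d$ is even. Applying Theorem \ref{t:weingarten1} with $i_\ell = 1$ for all $\ell$, every Kronecker delta on the $i$-indices is automatically $1$, leaving
\begin{equation*}
\E\!\left[\prod_{\ell=1}^{m} a_{1 j_\ell}\right]
= \sum_{\n \in \mathcal M(m)} \left( \sum_{\m \in \mathcal M(m)} \wg^{(n)}(\m,\n) \right) \prod_{l=1}^{d} \delta\bigl(j_{\n(2l-1)}, j_{\n(2l)}\bigr).
\end{equation*}

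Next I would analyze the inner sum $S_n(\n) \coloneqq \sum_{\m} \wg^{(n)}(\m,\n)$ using Theorem \ref{t:weingarten2}. Since $\wg^{(n)}(\m,\n)$ depends only on the partition $\rho$ of $d$ associated to $\sigma = \m^{-1}\n$, the unique term with $\ell(\rho)=d$ is the one with $\m = \n$ (i.e.\ $\sigma = \mathrm{id}$, giving $\rho = (1,\dots,1)$ and Catalan coefficient $c_0^d = 1$); every other $\m \ne \n$ produces $\ell(\rho) \le d-1$. Therefore Theorem \ref{t:weingarten2} yields
\begin{equation*}
S_n(\n) = n^{-d}\bigl(1 + O(n^{-1})\bigr) + \sum_{\m \ne \n} O\bigl(n^{-d-1}\bigr) = n^{-d}\bigl(1 + O(n^{-1})\bigr),
\end{equation*}
where the implicit constants depend only on $m$ since $|\mathcal M(m)|$ is a finite function of $m$. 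Here I am using that every partition $\rho$ of $d$ with $\ell(\rho) \le d-1$ contributes at most $O(n^{-d-1})$.

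Finally, multiplying both sides by $n^{d} = n^{m/2}$ and collecting,
\begin{equation*}
\E\!\left[\prod_{\ell=1}^{m} \sqrt n\, a_{1 j_\ell}\right]
= \bigl(1 + O(n^{-1})\bigr) \sum_{\n \in \mathcal M(m)} \prod_{l=1}^{d} \delta\bigl(j_{\n(2l-1)}, j_{\n(2l)}\bigr),
\end{equation*}
and the combinatorial sum on the right is precisely $\E[\prod_{\ell=1}^{m} g_{j_\ell}]$ by Wick's theorem (Theorem \ref{thm:wicktheorem}), which for independent standard Gaussians reduces the expectation of each pair to $\delta(j_{\n(2l-1)}, j_{\n(2l)})$. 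This establishes \eqref{lem:Gaussian-moment}. The only mildly delicate step is the uniform $O(n^{-1})$ control of $S_n(\n)$, but this is immediate from Theorem \ref{t:weingarten2} once one observes that the gap in powers of $n$ between the leading partition $(1,\dots,1)$ and every other partition of $d$ is at least one, with a constant depending only on $m$.
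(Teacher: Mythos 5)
Your proposal is correct and follows essentially the same route as the paper's proof: both apply the Weingarten formula of Theorem \ref{t:weingarten1} with all row indices equal to $1$, isolate the diagonal $\m=\n$ contribution (whose coset type $(1,\dots,1)$ gives the leading $n^{-d}$ asymptotic with Catalan coefficient $1$ by Theorem \ref{t:weingarten2}), bound the $\m\neq\n$ remainder by $O(n^{-d-1})$ since those coset types have $\ell(\rho)\le d-1$, and then match the surviving combinatorial sum over $\n$ with Wick's theorem. The only cosmetic difference is the order of bookkeeping — you fix $\n$ and sum over $\m$ to form $S_n(\n)$, while the paper groups the double sum by coset type $\rho$ — and your version is arguably slightly cleaner because it makes the uniform relative error $O(n^{-1})\cdot\E[\prod g_{j_\ell}]$ transparent without any discussion of multiplicities of pairs sharing a coset type.
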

\begin{proof}
By symmetry of Haar measure and Gaussian random variables, both sides of \eqref{lem:Gaussian-moment} are zero when $m$ is odd. It remains to consider the case $m=2d$ for some $d\in\mathbb N$.

Let $\Xi(\sigma)$ denote the coset type of $\sigma\in S_{2d}$ and set
\[\binom{\mathfrak m}{i_1\ldots i_{2d}}=\prod_{l=1}^d\delta_{i_{\mathfrak m(2l-1),\mathfrak m(2l)}}.\]
Let $\mathbf 1_{2d} \in S_{2d}$ be the identity permutation.
With these notations, we have
\begin{align}
\mathbb E&\left[\prod_{i=1}^{2d}\sqrt na_{1j_{i}}\right]\\
=&n^d\sum_{\mathfrak m,\mathfrak{n}\in\mathcal M(2d)}\wg^{(n)}(\mathfrak{m},\mathfrak{n})\binom{\mathfrak n}{j_1\ldots j_{2d}}\\
=&n^d\sum_{\rho \vdash d}\wg^{(n)}(\rho)\sum_{\Xi(\mathfrak m^{-1}\mathfrak{n})=\rho}\binom{\mathfrak n}{j_1\ldots j_{2d}}\\
=&n^{d}\wg^{(n)}\left((\mathbf{1}_{2d}),n\right)\mathbb E\left[\prod_{i=1}^{2d}g_{j_i}\right]+n^d\sum_{\substack{\rho\vdash d\\\rho\neq\mathbf 1_{2d}}}\wg^{(n)}(\rho)\sum_{\Xi(\mathfrak m^{-1}\mathfrak{n})=\rho}\binom{\mathfrak n}{j_1\ldots j_{2d}},\label{eqn:split-leading}
\end{align}
where the first equality follows from \Cref{t:weingarten1}, the second equality follows from the fact that $\wg^{(n)}(\mathfrak{m},\mathfrak{n})$ depends only on the coset type of $\m^{-1} \n$ (see \cite[Theorem~3.1]{CM18}), and the third equality follows from Wick's Theorem, \Cref{thm:wicktheorem}.

By \Cref{t:weingarten2}, the first term in \eqref{eqn:split-leading} is
\begin{align}\label{eqn:asym-leading}
\begin{split}
\mathbb E\left[\prod_{i=1}^{2d}g_{j_i}\right]\left(1+O\left(n^{-1}\right)\right),
\end{split}
\end{align}
and the absolute value of the second term of \eqref{eqn:split-leading} is upper bounded by
\begin{align}\label{eqn:asym-lots}
\begin{split}
n^d\mathbb E\left[\prod_{i=1}^{2d}g_{j_i}\right]\sum_{\substack{\rho\vdash d\\\rho\neq\mathbf 1_{2d}}}|\wg^{(n)}(\rho)|
\leq C_d\E\left[\prod_{i=1}^{2d}g_{j_i}\right]n^{-1}
\end{split}
\end{align}
for $n$ sufficiently large,
where $C_d=\max_{\rho\vdash d}\prod_{i\geq 1}c_{\rho_i-1}+1<\infty$.

Combining \eqref{eqn:split-leading}, \eqref{eqn:asym-leading} and \eqref{eqn:asym-lots}, we obtain the desired conclusion.
\end{proof}

\section{Proof of Auxiliary Lemmas in \texorpdfstring{\Cref{s:preliminarylowerProof}}{PDFstring}}\label{s:aux-proof}
Maintain the notations in \Cref{s:lowerBoundNotations}.
The proof of \Cref{lemma:second-third-concentration} is split into several parts, which we first briefly summarize:
\begin{enumerate}
\item Establish the concentration of the quantity of interest around the expectation taken over the projection direction $\bm a$. This step requires a truncation argument and good control of the truncated part , which is the subject of \Cref{s:c1} (see \Cref{lemma:deterministic-tail-bound}).
\item Prove the Gaussian approximation using the result of the first step. This second step is achieved through moment calculations, which are the subject of  \Cref{s:c2} (see \Cref{l:Gaussianapprox1}
and \Cref{l:multidimensionalMomentConvergenceTheorem}). \Cref{l:Gaussianapprox1} 
uses the Weingarten calculus (see \Cref{s:weingarten}) and \Cref{l:multidimensionalMomentConvergenceTheorem} is a multi-dimensional moment convergence theorem, which we include for completeness.
\item Finally, \Cref{s:c3} contains the proof of the asymptotic decorrelation result of \Cref{lemma:second-third-convergence3}.
\end{enumerate}
\subsection{Preliminary estimate}\label{s:c1}
\begin{rmk}\label{r:exchangeable}
In this appendix, we will use the fact that the rows $\{\a^{(n)}_i\}_{i=1}^n$ of the matrix $\a^{(n)} \in \V_{n, k_n}$ are exchangeable. This follows from the fact that $\a^{(n)}$ has the same distribution as the matrix obtained by taking a Haar-distributed element of $O_n$ and removing the last $n-k_n$ columns. 
\end{rmk}
\begin{lem}\label{lemma:deterministic-tail-bound} Fix $C,R\in(0,\infty)$, and let $h_R: \R \rightarrow \R$ be a function satisfying $|h_R(y)|\leq C R^{-1} y^2$ for all $y \in \R$.
Then the measure $\nun_j$ defined in \eqref{eqn:nu-n-j} 
satisfies
\begin{align}\label{eqn:expectation-bound-1}
\begin{split}
\left|\mathbb E\left[ \int_{\R^2} xh_R(y)\, d\nun_j(x,y)\right]\right|&\leq \frac{C_1\|\mathbf u^{(n)}\|_2^2}{R},\quad  \text{ for }j=1,\ldots,m+M,
\end{split}
\end{align}
where the constant $C_1>0$ depends only on $C$.
We also have the deterministic bounds
\begin{align}\label{eqn:deterministic-bound-1}
    \int_{\{|x|\geq R\}\times\R} |x|\, d\nun_j(x,y)&\leq\frac{1}{R},\text{ for }j=1,\ldots,m+M.
\end{align}
\end{lem}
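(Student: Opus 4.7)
The plan is to derive both bounds by direct computation starting from the defining expression \eqref{eqn:nu-n-j} of $\nun_j$, with exchangeability of the rows of $\mathbf a^{(n)}$ (see \Cref{r:exchangeable}) and the Stiefel column orthonormality $\sum_{i=1}^n (a_{ij}^{(n)})^2 = 1$ as the main structural inputs.

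For the expectation bound \eqref{eqn:expectation-bound-1}, I would unpack the integral as
\[
\int_{\R^2} x h_R(y)\, d\nun_j(x,y) = \frac{1}{n}\sum_{i=1}^n \sqrt{n}\, a_{ij}^{(n)}\, h_R\!\left(\langle \mathbf u^{(n)}, \sqrt{n}\, \mathbf a_i^{(n)}\rangle\right),
\]
reduce by exchangeability to controlling a single $i=1$ term, and apply the quadratic envelope $|h_R(y)| \le CR^{-1}y^2$ together with Cauchy--Schwarz to obtain
\[
\left|\E\!\left[\sqrt{n}\, a_{1j}^{(n)}\, h_R\!\left(\langle \mathbf u^{(n)}, \sqrt{n}\mathbf a_1^{(n)}\rangle\right)\right]\right| \le \frac{C}{R}\sqrt{\E[n(a_{1j}^{(n)})^2]}\,\sqrt{\E[\langle \mathbf u^{(n)}, \sqrt{n}\mathbf a_1^{(n)}\rangle^{4}]}.
\]
The first square root equals $1$ by the column orthonormality together with exchangeability (which forces $\E[(a_{1j}^{(n)})^2] = 1/n$). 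For the second square root, I would appeal to \Cref{Gaussianmoments} with $m=4$: since $\int x^4\, d\mu_{n,\mathbf u^{(n)}} = \frac{1}{n}\sum_i \langle \mathbf u^{(n)}, \sqrt{n}\mathbf a_i^{(n)}\rangle^4$ by the definition \eqref{d:mun}, exchangeability and \Cref{Gaussianmoments} give $\E[\langle \mathbf u^{(n)}, \sqrt{n}\mathbf a_1^{(n)}\rangle^4] \le C_0\|\mathbf u^{(n)}\|_2^4$ uniformly in $n$. Combining produces \eqref{eqn:expectation-bound-1} with $C_1 = C\sqrt{C_0}$.

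The deterministic bound \eqref{eqn:deterministic-bound-1} needs no randomness. After expanding as
\[
\int_{\{|x|\ge R\}\times \R} |x|\, d\nun_j(x,y) = \frac{1}{n}\sum_{i=1}^n |\sqrt{n}\, a_{ij}^{(n)}|\,\one_{\{|\sqrt{n}\, a_{ij}^{(n)}| \ge R\}},
\]
I would use the pointwise estimate $|\sqrt{n}\, a_{ij}^{(n)}|\,\one_{\{|\sqrt{n}\, a_{ij}^{(n)}| \ge R\}} \le n(a_{ij}^{(n)})^2/R$ (valid since the indicator forces $|\sqrt{n}\, a_{ij}^{(n)}|/R \ge 1$) and then apply the Stiefel column orthonormality to conclude the bound is $1/R$.

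Neither step presents a serious obstacle. The only ingredient beyond elementary manipulations is the fourth-moment control via \Cref{Gaussianmoments}, which itself rests on the Weingarten calculus from \Cref{s:weingarten}; otherwise both estimates are one-line consequences of row exchangeability and Stiefel column orthonormality.
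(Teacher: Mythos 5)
Your proof is correct; the bound \eqref{eqn:deterministic-bound-1} is handled essentially identically to the paper, so the interesting comparison is the estimate \eqref{eqn:expectation-bound-1}. You begin with exchangeability to reduce to the $i=1$ term, then apply the envelope on $h_R$ and Cauchy--Schwarz to factor the expectation into $\bigl(\E[n(a_{1j}^{(n)})^2]\bigr)^{1/2}\bigl(\E[\langle \mathbf u^{(n)},\sqrt n\,\mathbf a_1^{(n)}\rangle^4]\bigr)^{1/2}$, control the first factor by column orthonormality plus row exchangeability, and control the fourth moment via \Cref{Gaussianmoments}. The paper instead squares the entire sum via Jensen, applies Cauchy--Schwarz over the $i$-index (using $\sum_i (a_{ij}^{(n)})^2 = 1$ to drop the weight $a_{ij}^{(n)}$ entirely), then uses exchangeability, and finally invokes rotational invariance to replace $\langle\mathbf u^{(n)},\sqrt n\,\mathbf a_1^{(n)}\rangle$ by $\|\mathbf u^{(n)}\|_2\sqrt n\,a_{11}$, after which the required fourth moment is bounded by the uniform sub-Gaussianity of $\sqrt n\,a_{11}$ (\cite[Theorem~3.4.6]{Ver18}). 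The main substantive difference is the tool used for the fourth moment: you lean on \Cref{Gaussianmoments}, which already needs the full Weingarten machinery, while the paper's rotational-invariance reduction is more elementary and requires only a classical single-coordinate sub-Gaussian bound; on the other hand your route avoids the rotational-invariance step and is a touch shorter. One small point to make explicit in your write-up: $\E[(a_{1j}^{(n)})^2]=1/n$ does follow, but from exchangeability of the \emph{rows} (giving equal marginal second moments across $i$) combined with the column normalization $\sum_{i=1}^n (a_{ij}^{(n)})^2 = 1$; stating both inputs keeps the step airtight. Also be sure to note that the $1+T_{4,n}$ factor in \Cref{Gaussianmoments} is bounded uniformly in $n$ by $1+C_4$, so your constant $C_0$ is indeed $n$-independent.
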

\begin{proof}
For $j=1,\ldots,m$, applying first Jensen's inequality, next the Cauchy-Schwartz inequality and the fact that $(\bm a^{(n),\trans })_j$, the $j$-th column of $\a^{(n)}$, lies in $\mathbb S^{n-1}$, then \Cref{r:exchangeable},  the rotational invariance of the Haar measure $\sigma_n$, and  the fact that $\sqrt na_{11}$ is sub-Gaussian with $\|\sqrt n a_{11}\|_{\psi_2}$ bounded by a constant independent of $n$ \cite[Theorem~3.4.6]{Ver18}, we have
\begin{align*}
    \left(\mathbb E\left[\sum_{i=1}^na_{ij}^{(n)}h_R\left(\left\langle \un,\sqrt n\mathbf a_i^{(n)}\right\rangle\right)\right]\right)^2&\leq\mathbb E\left[\left(\sum_{i=1}^na_{ij}^{(n)}h_R\left(\left\langle \un,\sqrt n\mathbf a_i^{(n)}\right\rangle\right)\right)^2\right]\\
    &\leq\mathbb E\left[\sum_{i=1}^nh_R\left(\left\langle \un,\sqrt n\mathbf a_i^{(n)}\right\rangle\right)^2\right]\\
    &=n\mathbb E\left[h_R\left(\left\langle \un,\sqrt n\mathbf a_1\right\rangle\right)^2\right]\\
    &=n\mathbb E\left[h_R\left(\|\un\|_2\sqrt na_{11}\right)^2\right]\\
    &\leq C^2n\|\un\|_2^{4}\mathbb E\left[\frac{\left(\sqrt na_{11}\right)^{4}}{R^2}\right]\\
    &\leq \frac{C_1n\|\un\|_2^{4}}{R^2}.
\end{align*}
Inequality \eqref{eqn:expectation-bound-1} now follows by the definition of $\nu_{n}^{(j)}$. 
The inequality \eqref{eqn:deterministic-bound-1} can be proved by noticing that
\begin{align*}
\begin{split}
    \int_{\{|x|\geq R\}\times\R}|x|\,d\nun_j(x,y)\leq\frac{1}{R}\int_{\R^2} |x|^2d\nun_j(x,y)=\frac{1}{R}\sum_{i=1}^n\left(a_{ij}^{(n)}\right)^2=\frac{1}{R},
\end{split}
\end{align*}
where the last inequality uses the fact that $(\a^{(n),\trans})_j \in \S^{n-1}$. 
\end{proof}

\subsection{Proofs of Gaussian approximations}\label{s:c2}
\begin{lem}\label{l:Gaussianapprox1}
For any fixed $\alpha,\beta\in\mathbb N\cup\{{0}\}$, we have
\begin{align*}
\begin{split}
    \E\left[\int x^\alpha y^\beta d\nun_j\right]=\left(\int x^\alpha y^\beta d\nu_j\right)\left(1+O\left(\frac{1}{n}\right)\right),
\end{split}
\end{align*}
where the implicit constant depends only on $\alpha,\beta$.
\end{lem}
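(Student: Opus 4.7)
My plan is to reduce the expectation to a moment of a single row by exchangeability, expand the inner product by the multinomial theorem, and then invoke \Cref{momentexpectation} (the Weingarten-based Gaussian moment comparison) term-by-term, finally recombining to obtain a moment of $(g_j,\sum_l u_l g_l)$ with a multiplicative $(1+O(n^{-1}))$ error.

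\textbf{Step 1: Parity and reduction to a single row.} If $\alpha+\beta$ is odd, both sides vanish: on the Gaussian side by symmetry of $(g_0,\g)$, and on the Stiefel side because replacing $\a^{(n)}$ by $\a^{(n)}$ with the sign of the $j$th column flipped is a measure-preserving map on $\V_{n,k_n}$ that negates the integrand. Assume henceforth that $\alpha+\beta$ is even. By \Cref{r:exchangeable} the rows $\{\mathbf a_i^{(n)}\}_{i=1}^n$ are exchangeable, so
\[
\E\!\left[\int x^\alpha y^\beta\,d\nu^{(n)}_j\right]
= \E\!\left[(\sqrt n a_{1j}^{(n)})^\alpha\langle \u^{(n)},\sqrt n\mathbf a_1^{(n)}\rangle^\beta\right].
\]

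\textbf{Step 2: Multinomial expansion.} Since $u_l^{(n)}=0$ for $l>m+M$ by \eqref{eqn:def-of-u}, the inner product reduces to a sum over $l\le m+M$. The multinomial theorem gives
\[
\langle\u^{(n)},\sqrt n\mathbf a_1^{(n)}\rangle^\beta
=\sum_{\substack{\gamma\in\mathbb N^{m+M}\\|\gamma|=\beta}}\binom{\beta}{\gamma}\prod_{l=1}^{m+M}(u_l^{(n)})^{\gamma_l}(\sqrt n a_{1l}^{(n)})^{\gamma_l},
\]
and hence
\[
\E\!\left[(\sqrt n a_{1j}^{(n)})^\alpha\langle\u^{(n)},\sqrt n\mathbf a_1^{(n)}\rangle^\beta\right]
=\sum_{|\gamma|=\beta}\binom{\beta}{\gamma}\prod_l(u_l^{(n)})^{\gamma_l}\,\E\!\left[(\sqrt n a_{1j}^{(n)})^\alpha\prod_{l=1}^{m+M}(\sqrt n a_{1l}^{(n)})^{\gamma_l}\right].
\]
The number of $\gamma$'s is a finite quantity depending only on $\beta$ and $m+M$, and the coefficients $u_l^{(n)}$ are uniformly bounded in $n$ by construction (they equal $\hat u_l$ for $l\le m$ and $bM^{-1/2}$ for $m<l\le m+M$).

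\textbf{Step 3: Weingarten moment comparison.} Each expectation inside the sum is of the form $\E[\prod_{i=1}^{\alpha+\beta}\sqrt n a_{1 j_i}^{(n)}]$, to which \Cref{momentexpectation} applies to yield
\[
\E\!\left[(\sqrt n a_{1j}^{(n)})^\alpha\prod_l(\sqrt n a_{1l}^{(n)})^{\gamma_l}\right]
=\E\!\left[g_j^\alpha\prod_l g_l^{\gamma_l}\right]\bigl(1+O(n^{-1})\bigr),
\]
where the implicit constant depends only on $\alpha+\beta$ (in particular, not on $\gamma$ or $n$). Substituting this into the display of Step~2 and factoring out the uniform $(1+O(n^{-1}))$ (there are finitely many $\gamma$'s, each with bounded coefficient) gives
\[
\E\!\left[(\sqrt n a_{1j}^{(n)})^\alpha\langle\u^{(n)},\sqrt n\mathbf a_1^{(n)}\rangle^\beta\right]
=\Bigl(\sum_{|\gamma|=\beta}\binom{\beta}{\gamma}\prod_l u_l^{\gamma_l}\,\E\!\left[g_j^\alpha\prod_l g_l^{\gamma_l}\right]\Bigr)\bigl(1+O(n^{-1})\bigr).
\]

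\textbf{Step 4: Recombination.} Reversing the multinomial expansion on the Gaussian side, the parenthesized sum equals $\E[g_j^\alpha(\sum_{l=1}^{m+M}u_l g_l)^\beta]=\int x^\alpha y^\beta\,d\nu_j$ by \eqref{eqn:lawOfJointGaussians}, completing the proof. The only conceivable obstacle is ensuring uniformity of the $O(n^{-1})$ error across all multinomial terms, which however is automatic because $\alpha,\beta,m,M$ and the constants $u_l$ are fixed independently of $n$.
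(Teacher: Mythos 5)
Your proposal is correct and follows essentially the same route as the paper: reduce to a single row by exchangeability, expand the power of the inner product into a sum of products of entries, apply the Weingarten-based comparison (\Cref{momentexpectation}) to each term, and recombine. The only cosmetic difference is that you organize the expansion by multi-index via the multinomial theorem, whereas the paper simply expands the $\beta$-fold product over indices $i_1,\dots,i_\beta$; your added parity observation and the explicit note about uniformity of the $O(n^{-1})$ across the finitely many terms make the bookkeeping slightly more transparent, but the argument is the same.
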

\begin{proof}
Using first \eqref{eqn:nu-n-j}, then \Cref{r:exchangeable}, and then \Cref{momentexpectation} with $m=\alpha+\beta$, we have
\begin{align}\label{moment-matching-calc}
\begin{split}
    \E_n\left[ \int x^{\alpha} y^{\beta} d \nun_j\right]
   &=\frac{1}{n} \E_n\left[\sum_{i=1}^{n}\left(\sqrt{n} a^{(n)}_{ij}\right)^{\alpha} \cdot\left\langle\un, \sqrt{n} \an_{i}\right\rangle^{\beta}\right]\\
   &=\mathbb{E}_n\left[\left(\sqrt{n} a^{(n)}_{1 j}\right)^{\alpha} \cdot\left(\sum_{\ell=1}^{k_n} u_{\ell} \sqrt{n} a^{(n)}_{1 \ell}\right)^{\beta}\right]\\
   &=\sum_{i_{1}, \cdots, i_{\beta}=1}^{k_n} \prod_{r=1}^\beta u_{i_r}^{(n)}\cdot \mathbb{E}_n\left[\left(\sqrt na^{(n)}_{1j}\right)^{\alpha} \cdot \prod_{r=1}^{\beta}\sqrt na^{(n)}_{1i_r}\right]\\
   &=\sum_{i_{1}, \cdots, i_{\beta}=1}^{k_n} \prod_{r=1}^\beta u_{i_r}^{(n)} \cdot \mathbb{E}\left[\left(g_{j}\right)^{\alpha} \cdot \prod_{r=1}^\beta g_{i_r}\right]\left(1+O\left(\frac{1}{n}\right)\right)\\
   &=\E\left[\left(g_j\right)^{\alpha} \cdot\left(\sum_{i=1}^{k_n} u^{(n)}_{i} g_i\right)^{\beta}\right]\left(1+O\left(\frac{1}{n}\right)\right),
\end{split}
\end{align}
where the implicit constant depends only on $\alpha,\beta$.
By the 
definition of $\nu_j$ in \eqref{eqn:lawOfJointGaussians}, we have
\begin{align*}
\begin{split}
    \E\left[\left(g_j\right)^{\alpha} \cdot\left(\sum_{i=1}^{k_n} u^{(n)}_{i} g_i\right)^{\beta}\right]=\int x^{\alpha} y^{\beta} d \nu_j.
\end{split}
\end{align*}
Together with \eqref{moment-matching-calc}, this proves the lemma.
\end{proof}

\begin{lem}\label{l:multidimensionalMomentConvergenceTheorem}
A sequence of probability measures $\{\mu_n\}$ on $\mathbb R^2$ converges weakly to a probability measure $\mu$ if the following conditions are satisfied:
\begin{enumerate}
\item All moments of $\mu$ are finite.
\item All moments of $\mu_n$ are finite and
\begin{align*}
\begin{split}
    \int x^\alpha y^\beta d\mu_n(x,y)\rightarrow \gamma_{\alpha,\beta},\quad\forall\,\alpha,\beta\in\N\cup\{0\},
\end{split}
\end{align*}
where
\begin{align*}
\begin{split}
    \gamma_{\alpha,\beta}\coloneqq \int x^{\alpha}y^\beta d\mu_n(x,y),\quad\forall\,\alpha,\beta\in\N\cup\{0\}.
\end{split}
\end{align*}
\item $\mu$ is uniquely determined by $\{\gamma_{\alpha,\beta}\}_{\alpha,\beta\in\N\cup\{0\}}$.
\end{enumerate}
\end{lem}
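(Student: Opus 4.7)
The plan is to follow the classical multidimensional method of moments: establish tightness of $\{\mu_n\}$, extract subsequential weak limits via Prohorov's theorem, verify that any such limit has the prescribed moments $\{\gamma_{\alpha,\beta}\}$, and then invoke the moment-determinacy hypothesis (3) to conclude that every subsequential limit equals $\mu$, which forces $\mu_n \Rightarrow \mu$.

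First, I would prove tightness. Since $\int (x^2 + y^2)\, d\mu_n(x,y) \to \gamma_{2,0} + \gamma_{0,2} < \infty$ by hypothesis (2), this sequence is bounded by some constant $K$. Chebyshev's inequality then yields
\begin{equation*}
\mu_n\bigl(\{(x,y) : x^2 + y^2 > R^2\}\bigr) \le \frac{K}{R^2}
\end{equation*}
uniformly in $n$, so for any $\varepsilon > 0$ one can choose $R$ large enough that $\mu_n$ concentrates $1-\varepsilon$ of its mass in the compact disk $\{x^2 + y^2 \le R^2\}$. By Prohorov's theorem, $\{\mu_n\}$ is relatively compact in the weak topology; hence every subsequence admits a further subsequence $\{\mu_{n_k}\}$ converging weakly to some probability measure $\nu$ on $\mathbb{R}^2$.

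The key technical step is to show $\int x^{\alpha} y^{\beta}\, d\nu = \gamma_{\alpha,\beta}$ for every $\alpha, \beta \in \mathbb{N} \cup \{0\}$. Since $x^\alpha y^\beta$ is unbounded, one cannot apply the Portmanteau theorem directly; instead I would use uniform integrability. By hypothesis (2), the sequence $\int x^{2\alpha} y^{2\beta}\, d\mu_{n_k}$ converges to $\gamma_{2\alpha, 2\beta}$ and is therefore bounded. The Cauchy--Schwarz inequality gives
\begin{equation*}
\int_{\{|x^\alpha y^\beta| > M\}} |x^\alpha y^\beta|\, d\mu_{n_k} \le \frac{1}{M}\int x^{2\alpha} y^{2\beta}\, d\mu_{n_k},
\end{equation*}
so the family $\{x^\alpha y^\beta\}$ is uniformly integrable along the subsequence. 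Combined with weak convergence $\mu_{n_k} \Rightarrow \nu$, a standard truncation argument (approximate $x^\alpha y^\beta$ by the bounded continuous functions $\chi_M(x,y) \cdot x^\alpha y^\beta$ with $\chi_M$ a smooth cutoff equal to $1$ on $\{|x|+|y| \le M\}$ and $0$ outside $\{|x|+|y| \le M+1\}$) then yields
\begin{equation*}
\int x^{\alpha} y^{\beta}\, d\nu = \lim_{k \to \infty} \int x^{\alpha} y^{\beta}\, d\mu_{n_k} = \gamma_{\alpha,\beta}.
\end{equation*}

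Finally, by hypothesis (3), the moments $\{\gamma_{\alpha,\beta}\}$ uniquely determine a probability measure, so $\nu = \mu$. Since every weakly convergent subsequence of $\{\mu_n\}$ has the same limit $\mu$, a standard subsequence argument (if $\mu_n \not\Rightarrow \mu$, there would be a bounded continuous $f$ and a subsequence with $\int f\, d\mu_{n_k}$ bounded away from $\int f\, d\mu$, but tightness would then force a further subsequence converging to some $\nu \ne \mu$, contradicting what we showed) yields $\mu_n \Rightarrow \mu$. The main obstacle is the moment-convergence step under weak convergence of a subsequence, since the test functions $x^\alpha y^\beta$ are unbounded; the uniform integrability argument based on the boundedness of strictly higher-order moments is what makes this work.
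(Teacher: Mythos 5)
Your proof is correct and follows essentially the same route as the paper's: tightness from the boundedness of second moments, Prohorov to extract a subsequential weak limit $\nu$, verification that $\nu$ has moments $\{\gamma_{\alpha,\beta}\}$, and then the moment-determinacy hypothesis to conclude $\nu=\mu$. The only meaningful difference is in how you execute the moment-convergence step: you invoke uniform integrability of $x^\alpha y^\beta$ via the bound by the $(2\alpha,2\beta)$-moment, whereas the paper does an explicit smooth truncation $\phi_R$, applies Cauchy--Schwarz to control the tail $\int(1-\phi_R)P\,d\mu_{n_k}$, and uses monotone and dominated convergence to pass to $\int P\,d\nu$; these are two standard packagings of the same idea. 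One small labeling slip: the inequality $\int_{\{|x^\alpha y^\beta|>M\}}|x^\alpha y^\beta|\,d\mu_{n_k}\le M^{-1}\int x^{2\alpha}y^{2\beta}\,d\mu_{n_k}$ is a Markov/Chebyshev-type bound, not Cauchy--Schwarz (the paper's corresponding step is the one that actually uses Cauchy--Schwarz).
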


\begin{proof}
By property (1) and (2), for each polynomial $P$, we have
\begin{align}\label{apple1}
\begin{split}
    C_p\coloneqq \sup_{n\geq 1}\int P d\mu_n<\infty.
\end{split}
\end{align}
Let $B_R\coloneqq \{\mathbf x\in\mathbb R^2:\|\mathbf x\|_2\leq R\}$. Then Markov's inequality implies that
\begin{align}\label{apple2}
\begin{split}
    \mu_{n}\left(\left(B_R\right)^c\right)\leq \frac{C_{x^2+y^2}}{R^2}.
\end{split}
\end{align}
Therefore, $\mu_n$ is tight. By Prokhorov's Theorem, it suffices to show that if $\mu_{n_k}\rightarrow\nu$ weakly, then $\nu=\mu$.

Pick any polynomial $P$. For any $R \in (0, \infty)$ define $\phi_R$ to be a nonnegative continuous function such that $\mathbf 1_{B_R}\leq \phi_R\leq \mathbf 1_{B_{R+1}}$ and $\phi_R$ is monotonically increasing in $R$. We write
\begin{align}\label{moment-convergence-decomposition}
\begin{split}
    \int P d\mu_{n_k}=\int \phi_R Pd\mu_{n_k}+\int (1-\phi_R)Pd\mu_{n_k}.
\end{split}
\end{align}
For the first term  the right-hand side of \eqref{moment-convergence-decomposition}, since $\mu_{n_k}$ weakly converges to $\nu$, and $\phi_R P$ is bounded and continuous, it follows that
\begin{align}\label{moment-proof-1}
\begin{split}
    \lim_{k\rightarrow\infty}\int \phi_R Pd\mu_{n_k}=\int\phi_R Pd\nu.
\end{split}
\end{align}
For the second term, by the Cauchy--Schwarz inequality, \eqref{apple1}, and \eqref{apple2}, we have
\begin{align}\label{moment-proof-2}
\begin{split}
    \left|\int(1-\phi_R)Pd\mu_{n_k}\right|^2\leq \mu_{n_k}\left((B_R)^c\right)\int P^2d\mu_{n_k}\leq \frac{C_{x^2+y^2}C_{P^2}}{R^2}.
\end{split}
\end{align}
Moreover, properties (1) and (2) imply that
\begin{align}\label{moment-proof-3}
\begin{split}
    \lim_{k\rightarrow\infty}\int P\, d\mu_{n_k}=\int P\, d\mu < \infty.
\end{split}
\end{align}
Combining \eqref{moment-convergence-decomposition}-\eqref{moment-proof-3}, we have
\begin{align}\label{moment-proof-4}
\begin{split}
    \lim_{R\rightarrow\infty}\int\phi_RP\, d\nu=\int P\, d\mu .
\end{split}
\end{align}
Since $P$ is arbitrary, replacing $P$ by $P^2$ in the above display and applying the monotone convergence theorem implies that 
\begin{align*}
\begin{split}
    \int P^2d\nu=\lim_{R\rightarrow\infty} \int \phi_R P^2d\nu=\int P^2\, d\mu < \infty.
\end{split}
\end{align*}
Therefore, $P\in L^2(\nu)\subset L^1(\nu)$. An application of the dominated convergence theorem shows that the left-hand side of  \eqref{moment-proof-4} is equal to $\int P\, d \nu$ and hence that
\begin{align*}
\begin{split}
    \int P\, d\nu=\int P\, d\mu.
\end{split}
\end{align*}
Again, since $P$ is arbitrary, setting $P=x^\alpha y^\beta\,(\alpha,\beta\in\N\cup\{0\})$, and invoking property (3), we have $\mu=\nu$. This completes the proof.
\end{proof}

\begin{proof}[Proof of \Cref{lemma:second-third-concentration}] In this proof, we use $C$ to denote a constant that may depend on $C_0$, whose value might change from line to line.

A straightforward differentiation of the function $f_j^{(n)}$ defined in \eqref{def:fjn} shows that for 
 $i=1,\ldots,n$ and $l=1,2,\ldots,k_n$,
\begin{align*}
\begin{split}
    \partial_{il}f_j^{(n)}\left(\an\right) =\begin{cases}
    \frac{1}{\sqrt n}h\left(\left\langle \un,\sqrt n\an_i\right\rangle\right)+a^{(n)}_{ij}h'\left(\left\langle \un,\sqrt n\an_i\right\rangle\right) u^{(n)}_j,&l=j,\\
    a_{ij}^{(n)}h'\left(\left\langle \un,\sqrt n\an_i\right\rangle\right) u_l^{(n)} ,&l\neq j.
    \end{cases}
\end{split}
\end{align*}
Hence, by \eqref{eqn:h-growth} and the definition of $\un$ in \eqref{eqn:def-of-u}, it follows that for $j=1,\dots, n$ that 
\begin{align}\label{eqn:gradient-norm-fj}
\begin{split}
    \left\|\nabla f_j^{(n)}\left(\an\right)\right\|_2^2=& \sum_{i=1}^n\sum_{l=1}^{k_n}\left(\partial_{il}f_j^{(n)}\left(\an\right)\right)^2\\
    \leq& \frac{2}{n}\sum_{i=1}^n\left(h\left(\left\langle \un,\sqrt n\an_i\right\rangle\right)\right)^2\\
    &+2\sum_{i=1}^n\left(a_{ij}^{(n)}h'\left(\left\langle \un,\sqrt n\an_i\right\rangle\right)\right)^2\sum_{l=1}^{k_n}\left(u_j^{(n)}\right)^2\\
    \leq &\frac{C}{n}\sum_{i=1}^n\left(1+n\left\langle \un,\an_i\right\rangle^2\right)+C\sum_{i=1}^n\left(a_{ij}^{(n)}\right)^2\sum_{l=1}^{k_n}\left(u_j^{(n)}\right)^2\\
    \leq &C\left(1+\left\|\un\right\|_2^2\right)\\
    =&C\left(1+\left\|\hatu\right\|_2^2+b^2\right)
\end{split}
\end{align}
This implies that $f_j^{(n)}$ is  Lipshitz continuous  with Lipshitz constant $\sqrt{C\left(1+\|\hatu\|_2^2+b^2\right)}$.
By \Cref{t:gromov}, there exists $C'>0$ such that for any $\epsilon>0$
\begin{align}\label{apply-gromov-2}
\begin{split}
    \sigma_n\left(\left|f_j^{(n)}\left(\an\right)-\E_n\left[f_j^{(n)}\left(\an\right)\right]\right|>\epsilon\right)\leq \exp\left(-\frac{C'\epsilon^2n}{1+\|\hatu\|_2^2+b^2}\right).
\end{split}
\end{align}
Together with the Borel--Cantelli lemma, this implies that,
for $\sigma$-a.e. $\bm a=\left(\bm a^{(1)},\bm a^{(2)},\ldots\right)\in\V$, 
\begin{align}\label{concentration-fj}
\begin{split}
    \lim_{n\rightarrow\infty}\left|f_j^{(n)}\left(\an\right)-\E_n\left[f_j^{(n)}\left(\an\right)\right]\right|= 0.
\end{split}
\end{align}
Recall the definition of $\nu_j^{(n)}$ in \eqref{eqn:nu-n-j}, we may rewrite
\begin{align}\label{rewrite-fj}
\begin{split}
    f_j^{(n)}\left(\an\right)=\int_{\R^2}x\partial_1\bar\Lambda\left(y,c\right)d\nu_{j}^{(n)}(x,y).
\end{split}
\end{align}
By \Cref{l:Gaussianapprox1}, \Cref{l:multidimensionalMomentConvergenceTheorem}, and \cite[Theorem~1.7]{ollivier2014optimal}, it follows that $\E\left[\int fd\nu_j^{(n)}\right]\rightarrow \int fd\nu_j$ for all $f:\R^2\rightarrow\R$ with polynomial growth. By \Cref{assume4}, $f(x,y)\defeq x\partial_1\bar\Lambda(y,c)$ can be bounded by a quadratic polynomial. Hence, by \eqref{rewrite-fj}
\begin{align}\label{concentration-fj2}
\begin{split}
    \lim_{n\rightarrow\infty}\left|\E_{n}\left[f_j^{(n)}\left(\an\right)\right]-\int_{\R^2}x\partial_1\bar\Lambda(y,c)d\nu_j\right|=0.
\end{split}
\end{align}
Combining \eqref{concentration-fj}, \eqref{concentration-fj2} and recalling the definition of $\nu_j$ in \eqref{eqn:lawOfJointGaussians}, the proof of the lemma is complete.
\end{proof}
\subsection{Proof of asymptotic decorrelation}\label{s:c3}
The proof of \Cref{lemma:second-third-convergence3} relies on the sub-Gaussianity of a random vector chosen uniformly from a ``large enough'' portion of the scaled unit sphere, which is the content of the next lemma. We recall that the sub-Gaussian norm $\|\cdot \|_{\psi_2}$ was defined in \eqref{d:subGaussian-vector}.
\begin{lem}\label{lem:sub-Gaussian-sliced}
There exists a universal constant $K \in (0, \infty)$ such that the following holds. 
Fix $m ,n  \in \N$ such that $m < n/2$, and a collection $\bm v_1,\ldots,\bm v_{m}\in \mathbb R^n$ of random vectors that are almost surely orthonormal. 
Let $\bm v$ be a random vector whose distribution conditional on  $\{\bm v_i\}_{i=1}^{m}$ is 
uniform on the subset of $\mathbb S^{n-1}$ orthogonal to the subspace generated by $\{\bm v_i\}_{i=1}^{m}$. 
Then 
\begin{align}
\begin{split}
    \|\sqrt n\bm v\|_{\psi_2}\leq K.
\end{split}
\end{align}
\end{lem}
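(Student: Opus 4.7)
The plan is to condition on the random orthonormal collection $\{\bm v_i\}_{i=1}^{m}$ and use rotational invariance to reduce to a fixed subspace. Throughout, let $V$ denote the (random) orthogonal complement in $\mathbb R^n$ of $\operatorname{span}\{\bm v_i\}_{i=1}^m$, which has dimension $n-m$ almost surely, and let $P_V$ denote orthogonal projection onto $V$. Conditional on $\{\bm v_i\}$, the vector $\bm v$ is distributed uniformly on $\mathbb S(V):=V\cap\mathbb S^{n-1}$, i.e.\ uniformly on the unit sphere of the $(n-m)$-dimensional subspace $V$.

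The key input is the classical fact (e.g.\ \cite[Theorem~3.4.6]{Ver18}) that if $\bm w$ is distributed uniformly on the unit sphere $\mathbb S^{d-1}$ of $\mathbb R^d$, then $\sqrt d\,\bm w$ is sub-Gaussian with $\|\sqrt d\,\bm w\|_{\psi_2}\le K_0$ for a universal constant $K_0$. Applying this to $\bm v$ regarded as a uniformly distributed unit vector in the $(n-m)$-dimensional Hilbert space $V$ yields that, conditional on $\{\bm v_i\}$, for any deterministic unit vector $\bm e \in V$,
\[
\big\|\langle \bm v,\bm e\rangle\big\|_{\psi_2\mid\{\bm v_i\}}\le \frac{K_0}{\sqrt{n-m}}.
\]

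Now fix any $\bm x\in\mathbb S^{n-1}$. Conditional on $\{\bm v_i\}$, the rotational invariance of the uniform distribution on $\mathbb S(V)$ implies
\[
\langle \bm v,\bm x\rangle = \langle \bm v, P_V\bm x\rangle \,{\buildrel d \over =}\, \|P_V\bm x\|_2\,\langle \bm v,\bm e\rangle,
\]
where $\bm e$ is any fixed unit vector in $V$ (the equality is trivial if $P_V\bm x=0$). Since $\|P_V\bm x\|_2\le \|\bm x\|_2=1$ and since $m<n/2$ gives $n-m>n/2$, combining with the previous display yields
\[
\big\|\sqrt n\,\langle \bm v,\bm x\rangle\big\|_{\psi_2\mid\{\bm v_i\}}\le K_0\sqrt{\tfrac{n}{n-m}}\le K_0\sqrt{2}
\]
almost surely. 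Because this conditional bound is uniform in the realization of $\{\bm v_i\}$, it passes to the unconditional sub-Gaussian norm: if $t=K_0\sqrt 2$, then $\E[\exp(n\langle\bm v,\bm x\rangle^2/t^2)\mid\{\bm v_i\}]\le 2$ a.s., hence $\E[\exp(n\langle\bm v,\bm x\rangle^2/t^2)]\le 2$. Taking the supremum over $\bm x\in\mathbb S^{n-1}$ yields the stated bound with $K=K_0\sqrt 2$.

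No serious obstacle is anticipated; the argument is essentially bookkeeping around conditioning and rotational invariance. The only subtlety worth flagging is ensuring that the conditional sub-Gaussian norm bound, established via \cite[Theorem~3.4.6]{Ver18} applied inside the random subspace $V$, descends to a bound on the unconditional sub-Gaussian norm. This is immediate from the definition $\|X\|_{\psi_2}=\inf\{t>0:\E[\exp(X^2/t^2)]\le 2\}$ together with the tower property, provided the conditional bound $t$ does not depend on $\{\bm v_i\}$, which is exactly what our application ensures once we exploit $m<n/2$.
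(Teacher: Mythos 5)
Your proof is correct and takes essentially the same approach as the paper: reduce, via rotational invariance conditional on $\{\bm v_i\}$, to the sub-Gaussianity of a uniformly distributed point on an $(n-m-1)$-dimensional sphere (quoting \cite[Theorem~3.4.6]{Ver18}), then use $m<n/2$ to replace $\sqrt{n-m}$ by $\sqrt n$ at the cost of a factor $\sqrt 2$. The paper phrases this by explicitly conjugating with a random orthogonal matrix $O$ satisfying $O\bm v_i=\bm e_i$ and reading off the last $n-m$ coordinates, whereas you phrase it via projection $P_V$ onto the orthogonal complement and the tower property to pass from conditional to unconditional; these are the same argument in two notations, and if anything you are a bit more explicit than the paper about why the conditional sub-Gaussian bound (which is uniform in the realization of $\{\bm v_i\}$) descends to the unconditional one.
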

\begin{proof}
Let $O\in O_n$ be a random matrix such such that $O\bm v_i=\bm e_i$ for all $i=1,\ldots,m  $. By the assumption that $\bm v$ is perpendicular to each $\bm v_i$, it follows that $(O\bm v)_i=0$ for all $i=1,\ldots,m$ and $\hat{\bm v}\coloneqq \left((O\bm v)_{i}\right)_{i=m+1}^n$ is distributed uniformly on $\mathbb S^{n-m}$.
Hence, by the definition of the sub-Gaussian norm in \eqref{d:subGaussian-vector}, which shows it is invariant under orthogonal transformations, 
\begin{align}\label{use-of-hatv}
\begin{split}
\left\| \bm v\right\|_{\psi_2} = 
    \left\| O\bm v\right\|_{\psi_2}=\sup_{\bm x\in\mathbb S^{n-1}}\left\|\left\langle O\bm v,\bm x\right\rangle\right\|_{\psi_2}=\sup_{\substack{\bm x\in\mathbb S^{n-1}\\x_i=0,\,1\leq i\leq m}}\left\|\left\langle O\bm v,\bm x\right\rangle\right\|_{\psi_2}=\left\|\hat{\bm v}\right\|_{\psi_2}.
\end{split}
\end{align}
Since $\hat{\bm v}$ is uniformly distributed on $\mathbb S^{n-m}$, by \cite[Theorem~3.4.6]{Ver18} there exists a universal constant $K\in(0,\infty)$ such that
\begin{align}\label{sub-gaussian-hatv}
\begin{split}
    \|\sqrt{(n-m)}\hat{\bm v}\|_{\psi_2}\leq \frac{K}{\sqrt{2}}.
\end{split}
\end{align}
Combining \eqref{use-of-hatv}, \eqref{sub-gaussian-hatv}, and using the hypothesis that $m < n/2$, the proof is complete.
\end{proof}

\begin{proof}[Proof of \Cref{lemma:second-third-convergence3}]
   Let 
   \[S_n(\an) \coloneqq \sum_{\ell=m+M+1}^{k_n}\left(f_\ell^{(n)}\left(\an\right)\right)^2.\]
Fix $\lambda>0$, and let $\bm Z\coloneqq (Z_{m+M+1},\ldots,Z_{k_n})^\trans$ be a random vector of i.i.d. standard Gaussian random variables independent of $\bm a^{(n)}$. Since $\E\left[\exp\left(xZ_\ell \right)\right]=\exp(x^2/2)$ for any $x\in\R$ and $\ell=m+M+1,\ldots,k_n$, we have, recalling the definition of $f_\ell (\an)$ in \eqref{def:fjn},
    \begin{align}\label{eqn:momentSn}
    \begin{split}
        \E\left[\exp\left(\frac{\lambda^2}{2} S_n(\an)\right)\right]=&\,\E\left[\prod_{\ell=m+M+1}^{k_n}\E\left[\exp\left(\frac{1}{2}\left(\lambda f_\ell^{(n)}(\bm a^{(n)})\right)^2 \right)\bigg|\, \bm a^{(n)}\right]\right]\\
        =&\,\mathbb E\left[\prod_{\ell=m+M+1}^{k_n}\E\left[\exp\left(\lambda  f_\ell^{(n)}(\an)Z_\ell\right)\bigg|\,\an\right]\right]\\
        =&\,\E\left[\E\left[\exp\left(\lambda\sum_{\ell=m+M+1}^{k_n} f_{\ell}^{(n)}(\an)Z_\ell\right)\bigg|\, \bm a^{(n)}\right]\right]\\
        =&\,\E\left[\exp\left(\lambda\sum_{\ell=m+M+1}^{k_n} f_{\ell}^{(n)}(\an)Z_\ell\right)\right].
    \end{split}
    \end{align}
    Next, note that since $\{Z_\ell\}_{\ell=m+M+1}^{k_n}$ are symmetric and independent of $\an$, we have 
    \be\label{eqn:cond-0}
        \E\left[\lambda\sum_{\ell=m+M+1}^{k_n}Z_\ell f_{\ell}(\an) \right] = 
        \lambda \sum_{\ell=m+M+1}^{k_n} \E\left[ Z_\ell\right] \E\left[ f_{\ell}(\an) \right]= 0.
    \ee

Now, for $i=1,\ldots,n$, set 
    \begin{equation}\label{d:ydef}
    Y_i\equiv Y_i^{(n)}(\an)\coloneqq 
    \frac{1}{n} h\left(\sum_{j=1}^{m+M}\sqrt nu_ja_{ij}^{(n)}\right), \qquad 
    \bm Y\equiv\bm Y^{(n)}(\an)\coloneqq(Y_1,\ldots,Y_{n})^\trans,
    \end{equation}
    \[\hat A\equiv\hat A^{(n)}\coloneqq(\sqrt na^{(n)}_{i\ell})_{\substack{1\leq i\leq n,m+M+1\leq \ell\leq k_n}}.\]
    Since $\bm Z$ is a vector of standard Gaussians (and in particular has a rotationally symmetric distribution), there exists a $\R^{k_n - m - M} \times \R^{k_n - m - M}$-valued, Haar-distributed random orthogonal matrix $O$ such that $O^\trans \bm Z=\|\bm Z\|_2 \bm e_1$, where $\bm e_1 \in \R^{k_n -m -M}$. 
    By 
    the identity $O O^\trans = I$, it follows that
      \begin{align}\label{changeZtoZnorm}
    \begin{split}
        \sum_{\ell=m+M+1}^{k_n}Z_\ell f_{\ell}^{(n)}(\an)=\bm Y^\trans \hat A\bm Z
        \distequal \|\bm Z\|_2\bm Y^\trans \hat AO \bm e_1,
    \end{split}
    \end{align}
    Note that $\hat AO\bm e_1$ has norm $1$ and lies in the subset of $\mathbb{S}^{n-1}$ orthogonal to the first $m+M$ columns of $\an$ (that is, $(a^{(n)}_{i\ell})_{\substack{1\leq i\leq n,1\leq \ell\leq m+M}}$). Moreover, conditional on the first $m+M$ columns of $\an$, it is uniformly distributed on this subset (since $O$ is Haar-distributed). 
    By \Cref{lem:sub-Gaussian-sliced}, with $m$ in that lemma statement equal to the quantity $m+M$ in this proof, there exists a constant $K \in (0, \infty)$ such that for all sufficiently large $n$ (depending only on the choice of $m$ and $(\hat \u, b, c)$), we have 
    $\left\|\hat AO\bm e_1\right\|_{\psi_2}\leq K$, where $\| \cdot \|_{\psi_2}$ denotes the sub-Gaussian norm defined in \Cref{d:subGaussian}.
    Then it is immediate from \eqref{d:subGaussian-vector} that the sub-Gaussian norm  of $\bm Y^\trans \hat AO\bm e_1$ satisfies the estimate
    \begin{align}\label{eqn:y-norm}
    \begin{split}
        \|\bm Y^\trans \hat AO\bm e_1\|_{\psi_2}\leq K\|\bm Y\|_2.
    \end{split}
    \end{align}
    Combining \eqref{eqn:cond-0}, \eqref{changeZtoZnorm}, 
    \eqref{eqn:y-norm}, \eqref{d:subGaussian}, and the exponential moment estimate \cite[Proposition~2.5.2(v)]{Ver18}, we have
    \begin{align}\label{eqn:boundGivenA}
    \begin{split}
        \E\left[\exp\left(\lambda\sum_{\ell=m+M+1}^{k_n}Z_\ell f_{\ell}(\an)\right)\right]\leq \exp\left(K^2\lambda^2\|\bm  Y\|_2^2\|\bm Z\|_2^2\right).
    \end{split}
    \end{align}
    By \eqref{d:ydef} and the assumption \eqref{eqn:h-growth} on $h$, there exists a constant $C\in (0, \infty)$ such that
    \begin{align}\label{eqn:uniformBoundA}
    \begin{split}
        \|\bm Y\|_2^2\leq \frac{C}{n^2}\sum_{i=1}^n\left(1+\left\langle \un,\sqrt n\bm a_i^{(n)}\right\rangle^2\right)\leq\frac{C}{n}(1+\|\un\|_2^2).
    \end{split}
    \end{align}

    Set $\lambda\coloneqq\sqrt{\frac{n}{8CK^2(1+\|\un\|_2^2)}}$. Combining \eqref{eqn:momentSn}, \eqref{eqn:boundGivenA} and \eqref{eqn:uniformBoundA}, with Markov's inequality, and recalling the moment generating function for a Gaussian, we have
    \begin{align*}
    \begin{split}
        \P\left(S_n>\epsilon\right)\leq e^{-\frac{\lambda^2}{2}\epsilon}\E\left[e^{\frac{\lambda^2}{2}S_n}\right]\leq e^{-\frac{\lambda^2}{2} \epsilon }\prod_{\ell=m+M+1}^{k_n}\E\left[e^{\frac{ Z_\ell^2}{8}}\right]= e^{-\frac{\epsilon n}{16CK^2(1+\|\un\|_2^2)}}\left(\frac{4}{3}\right)^{k_n/2}.
    \end{split}
    \end{align*}
    Since $k_n=o(n)$ by assumption, by the Borel--Cantelli lemma, we have $S_n\rightarrow 0$ $\sigma$-a.s.
\end{proof}

\bibliographystyle{amsplain}
\bibliography{project_quenched.bib}

\providecommand{\bysame}{\leavevmode\hbox to3em{\hrulefill}\thinspace}
\providecommand{\MR}{\relax\ifhmode\unskip\space\fi MR }
\providecommand{\MRhref}[2]{%
  \href{http://www.ams.org/mathscinet-getitem?mr=#1}{#2}
}
\providecommand{\href}[2]{#2}
\begin{thebibliography}{10}

\bibitem{alonso2018large}
David Alonso-Guti{\'e}rrez, Joscha Prochno, and Christoph Th{\"a}le,
  \emph{Large deviations for high-dimensional random projections of
  $\ell_p^n$-balls}, Advances in Applied Mathematics \textbf{99} (2018), 1--35.

\bibitem{anderson2010introduction}
Greg Anderson, Alice Guionnet, and Ofer Zeitouni, \emph{An {I}ntroduction to
  {R}andom {M}atrices}, no. 118, Cambridge University Press, 2010.

\bibitem{AntBalPer03}
Milla Anttila, Keith Ball, and Irini Perissinaki, \emph{The central limit
  problem for convex bodies}, Transactions of the American Mathematical Society
  \textbf{355} (2003), no.~12, 4723--4735.

\bibitem{BS10}
Zhidong Bai and Jack Silverstein, \emph{Spectral {A}nalysis of {L}arge
  {D}imensional {R}andom {M}atrices}, Springer, 2010.

\bibitem{CM18}
Beno{\^\i}t Collins and Sho Matsumoto, \emph{On some properties of orthogonal
  {W}eingarten functions}, Journal of Mathematical Physics \textbf{50} (2009),
  no.~11, 113516.

\bibitem{CS06}
Beno{\^\i}t Collins and Piotr {\'S}niady, \emph{Integration with respect to the
  {H}aar measure on unitary, orthogonal and symplectic group}, Communications
  in Mathematical Physics \textbf{264} (2006), no.~3, 773--795.

\bibitem{CD01}
Francis Comets and Amir Dembo, \emph{Ordered overlaps in disordered mean-field
  models}, Probability Theory and Related Fields \textbf{121} (2001), no.~1,
  1--29.

\bibitem{DZ93}
Amir Dembo and Ofer Zeitouni, \emph{Large {D}eviations {T}echniques and
  {A}pplications}, 2nd ed., Springer, 2009.

\bibitem{DiaFre84}
Persi Diaconis and David Freedman, \emph{Asymptotics of graphical projection
  pursuit}, The Annals of Statistics (1984), 793--815.

\bibitem{FriTuk74}
Jerome Friedman and John Tukey, \emph{A projection pursuit algorithm for
  exploratory data analysis}, IEEE Transactions on Computers \textbf{100}
  (1974), no.~9, 881--890.

\bibitem{GKR16}
Nina Gantert, Steven~Soojin Kim, and Kavita Ramanan, \emph{Cram{\'e}r’s
  theorem is atypical}, Advances in the mathematical sciences, Springer, 2016,
  pp.~253--270.

\bibitem{GKR17}
\bysame, \emph{Large deviations for random projections of $\ell^p$ balls}, The
  Annals of Probability \textbf{45} (2017), no.~6B, 4419--4476.

\bibitem{Jan97}
Svante Janson, \emph{Gaussian {H}ilbert {S}paces}, no. 129, Cambridge
  University Press, 1997.

\bibitem{Jia05}
Tiafeng Jiang, \emph{Maxima of entries of {H}aar distributed matrices},
  Probability Theory and Related Fields \textbf{131} (2005), 121--144.

\bibitem{Jia03}
\bysame, \emph{{How many entries of a typical orthogonal matrix can be
  approximated by independent normals?}}, The Annals of Probability \textbf{34}
  (2006), no.~4, 1497 -- 1529.

\bibitem{Johnson84}
William~B Johnson, \emph{Extensions of {L}ipschitz mappings into a {H}ilbert
  space}, Contemp. Math. \textbf{26} (1984), 189--206.

\bibitem{kabluchko2021high}
Zakhar Kabluchko, Joscha Prochno, and Christoph Th{\"a}le,
  \emph{High-dimensional limit theorems for random vectors in
  $\ell_p^n$-balls}, Communications in Contemporary Mathematics \textbf{23}
  (2021), no.~03, 1950073.

\bibitem{kaufmann2022large}
Tom Kaufmann, Holger Sambale, and Christoph Th{\"a}le, \emph{Large deviations
  for uniform projections of $ p $-radial distributions on $\ell_p^n$-balls},
  arXiv preprint arXiv:2203.00476 (2022), 1--11.

\bibitem{KimLiaRam22}
Steven~Soojin Kim, Yin-Ting Liao, and Kavita Ramanan, \emph{An asymptotic thin
  shell condition and large deviations for random multidimensional
  projections}, Advances in Applied Mathematics \textbf{134} (2022), 102306.

\bibitem{kim2021large}
Steven~Soojin Kim and Kavita Ramanan, \emph{Large deviation principles induced
  by the {S}tiefel manifold, and random multi-dimensional projections}, arXiv
  preprint arXiv:2105.04685 (2021), 1--18.

\bibitem{Klar07}
Bo'az Klartag, \emph{A central limit theorem for convex sets}, Inventiones
  mathematicae \textbf{168} (2007), no.~1, 91--131.

\bibitem{Klar07power}
\bysame, \emph{Power-law estimates for the central limit theorem for convex
  sets}, Journal of Functional Analysis \textbf{245} (2007), no.~1, 284--310.

\bibitem{LiaoThesis22}
Yin-Ting Liao, \emph{Sharp large deviation estimates and their applications to
  asymptotic convex geometry}, Ph.D. thesis, Brown University, 2022.

\bibitem{liao2020geometric}
Yin-Ting Liao and Kavita Ramanan, \emph{Geometric sharp large deviations for
  random projections of $\ell_p^n$ spheres and balls}, arXiv preprint
  arXiv:2001.04053 (2020), 1--38.

\bibitem{MaiMun09}
Odalric Maillard and R{\'e}mi Munos, \emph{Compressed least-squares
  regression}, Advances in neural information processing systems \textbf{22}
  (2009), 1--9.

\bibitem{MukSen22}
Sumit Mukherjee and Subhabrata Sen, \emph{Variational inference in
  high-dimensional linear regression}, The Journal of Machine Learning Research
  \textbf{23} (2022), no.~1, 13703--13758.

\bibitem{ollivier2014optimal}
Yann Ollivier, Herv{\'e} Pajot, and C{\'e}dric Villani, \emph{Optimal
  transport: Theory and applications}, vol. 413, Cambridge University Press,
  2014.

\bibitem{JiaSen23}
Jiaze Qiu and Subhabrata Sen, \emph{The {TAP} free energy for high-dimensional
  linear regression}, The Annals of Applied Probability \textbf{33} (2023),
  no.~4, 2643--2680.

\bibitem{Rock15}
Ralph~Tyrrell Rockafellar, \emph{Convex {A}nalysis}, Princeton University
  Press, 2015.

\bibitem{RW09}
Ralph~Tyrrell Rockafellar and Roger Wets, \emph{Variational {A}nalysis}, vol.
  317, Springer Science \& Business Media, 2009.

\bibitem{SZ90}
Gideon Schechtman and Joel Zinn, \emph{On the volume of the intersection of two
  ${L}_p^n$ balls}, Proceedings of the American Mathematical Society
  \textbf{110} (1990), no.~1, 217--224.

\bibitem{Ver18}
Roman Vershynin, \emph{High-{D}imensional {P}robability: {A}n {I}ntroduction
  with {A}pplications in {D}ata {S}cience}, Cambridge University Press, 2018.

\bibitem{zorich2016mathematical}
Vladimir~Antonovich Zorich and Octavio Paniagua, \emph{Mathematical {A}nalysis
  {II}}, Springer, 2016.

\end{thebibliography}

\end{document}